%2multibyte Version: 5.50.0.2953 CodePage: 65001
%\usepackage{comment}
%\makeatletter
%\@namedef{subjclassname@2020}{\textup{2020} Mathematics Subject Classification}
%\makeatother

\documentclass{amsart}
%%%%%%%%%%%%%%%%%%%%%%%%%%%%%%%%%%%%%%%%%%%%%%%%%%%%%%%%%%%%%%%%%%%%%%%%%%%%%%%%%%%%%%%%%%%%%%%%%%%%%%%%%%%%%%%%%%%%%%%%%%%%%%%%%%%%%%%%%%%%%%%%%%%%%%%%%%%%%%%%%%%%%%%%%%%%%%%%%%%%%%%%%%%%%%%%%%%%%%%%%%%%%%%%%%%%%%%%%%%%%%%%%%%%%%%%%%%%%%%%%%%%%%%%%%%%
\usepackage[foot]{amsaddr}
\usepackage{amssymb}
\usepackage{amsfonts}
\usepackage{amsmath}
\usepackage{hyperref}
\usepackage{geometry}
\usepackage{xcolor}

\setcounter{MaxMatrixCols}{10}
%TCIDATA{OutputFilter=LATEX.DLL}
%TCIDATA{Version=5.50.0.2953}
%TCIDATA{Codepage=65001}
%TCIDATA{<META NAME="SaveForMode" CONTENT="1">}
%TCIDATA{BibliographyScheme=Manual}
%TCIDATA{Created=Friday, November 28, 2014 12:27:03}
%TCIDATA{LastRevised=Tuesday, March 15, 2022 13:38:26}
%TCIDATA{<META NAME="GraphicsSave" CONTENT="32">}
%TCIDATA{<META NAME="DocumentShell" CONTENT="Articles\SW\AMS Journal Article">}
%TCIDATA{Language=American English}
%TCIDATA{CSTFile=amsartci.cst}

\newtheorem{theorem}{Theorem}[section]
\theoremstyle{plain}
\newtheorem{acknowledgement}{Acknowledgement}

\newtheorem{claim}{Claim}

\newtheorem{corollary}[theorem]{Corollary}

\newtheorem{definition}[theorem]{Definition}

\newtheorem{lemma}[theorem]{Lemma}

\newtheorem{problem}{Problem}

\newtheorem{remark}[theorem]{Remark}

\numberwithin{equation}{section}
\usepackage{enumerate}
\input{tcilatex}
\geometry{left=1in,right=1in,top=1in,bottom=1in}

\begin{document}
	\title[Sums of squares I]{Sums of squares I: scalar functions}
	\author{Lyudmila Korobenko}
	%\address[Corresponding author]{Reed College, Portland, Oregon, USA,
	%korobenko@reed.edu}
	\author{Eric Sawyer}
	%\address{McMaster University, Hamilton, Ontario, Canada, sawyer@mcmaster.ca}
	%\thanks{Second author is partially supported by NSERC grant number 12409 and
	%McKay Research Chair grant at McMaster University}
	\maketitle

\begin{abstract}
This is the first in a series of three papers dealing with sums of squares
and hypoellipticity in the infinitely degenerate regime. A result of C.
Fefferman and D. H. Phong shows that every $C^{3,1}$ nonnegative function on 
$\mathbb{R}^{n}$ can be written as a finite sum of squares of $C^{1,1}$
functions, and was used by them to improve G\aa rding's inequality, and
subsequently by P. Guan to prove regularity for certain degenerate operators.

In this paper we investigate sharp criteria sufficient for writing a smooth
nonnegative function $f$ on $\mathbb{R}^{n}$ as a finite sum of squares of $%
C^{2,\delta }$ functions for some $\delta >0$, and we denote this property
by saying $f$ is $SOS_{\func{regular}}$. The emphasis on $C^{2,\delta }$, as
opposed to $C^{1,1}$, arises because of applications to hypoellipticity for
smooth infinitely degenerate operators in the spirit of M. Christ, which are
pursued in the third paper of this series.

Thus we consider the case where $f$ is smooth and flat at the origin, and
positive away from the origin. Our sufficient condition for such an $f$ to
be $SOS_{\func{regular}}$ is that $f$ is $\omega $-monotone for some modulus
of continuity $\omega _{s}\left( t\right) =t^{s}$, $0<s\leq 1$, where $%
\omega $-monotone means 
\begin{equation*}
f\left( y\right) \leq C\omega \left( f\left( x\right) \right) ,\ \ \ \ \
y\in B_{x}\ ,
\end{equation*}%
and where $B_{x}=B\left( \frac{x}{2},\frac{\left\vert x\right\vert }{2}%
\right) $ is the ball having a diameter with endpoints $0$ and $x$ (this is
the interval $\left( 0,x\right) $ in dimension $n=1$). On the other hand, we
show that if $\omega $ is any modulus of continuity with $\lim_{t\rightarrow
0}\frac{\omega \left( t\right) }{\omega _{s}\left( t\right) }=\infty $ for
all $s>0$, then there exists a smooth nonnegative function $f$ that is flat
at the origin, and positive away from the origin, that is \emph{not} $SOS_{%
\func{regular}}$, answering in particular a question left open by Bony.

Refinements of these result are given for $f\in C^{4,2\delta }$, and the
related problem of extracting smooth positive roots from such smooth
functions is also considered.
\end{abstract}

\tableofcontents

%\subjclass[2020]{Primary 26B35, 26B05, 26B99; Secondary 35J70.}
%\keywords{sums of squares, nonnegative functions, modulus of continuity, weak monotonicity, hypoellipticity}

\section{Introduction}

It is an open problem whether or not there are smooth nonnegative functions $%
\lambda $ on the real line (even vanishing only at the origin, and to
infinite order there), such that they \textbf{cannot} be written as a finite
sum $\lambda =\sum_{n=1}^{N}f_{n}^{2}$ of squares of smooth functions $f_{n}$%
. Examples of such functions are attributed to\ Paul Cohen in both \cite{Bru}
and \cite{BoCoRo}, but apparently no example has ever appeared in the
literature, and the existence of such an example is an open problem, see 
\cite[Remark 5.1]{Pie}\footnote{%
See also\ https://mathoverflow.net/a/106072}. Such sum of squares
decompositions are relevant to hypoellipticity questions in partial
differential equations, see e.g. H\"{o}rmander \cite{Ho}, and especially in
the infinitely degenerate regime, see e.g. Christ \cite{Chr} and references
given there. In particular we point to the theorem of Christ there that
asserts hypoellipticity for a second order differential operator $L$ if it
is a finite sum $\sum X_{k}^{\func{tr}}X_{k}$ of squares of smooth vector
fields $X_{k}$ satisfying certain conditions relevant to hypoellipticity in
the infinitely degenerate regime. In the third paper \cite{KoSa3} of this
series, the authors have extended this theorem to $C^{2,\delta }$ vector
fields, which is essentially optimal for second order operators. Thus for
partial differential equations, the crucial sum of squares question is this.

\begin{problem}
When can a nonnegative scalar or matrix function $f\left( x\right) $ on $%
\mathbb{R}^{n}$ be written as a sum of squares of $C^{2,\delta }$ scalar or
vector functions for some $\delta >0$?\footnote{%
In this paper we consider the scalar problem. The equally relevant problem of writing a nonnegative matrix function as a
sum of squares is treated in \cite{KoSa2}.}
\end{problem}

A well known and important construction of Fefferman and Phong in 1978, with
only a bare sketch of a proof given in \cite{FePh}, was used by Guan \cite[%
see the end of the paper]{Gua} in the mid 1990's to prove the following
result that Guan attributed to Fefferman: every smooth (even $C^{3,1}$)
nonnegative function $f$ on $\mathbb{R}^{n}$ can be written as a sum of
squares of $C^{1,1}$ functions. However, while this decomposition was a
perfect fit for the $C^{2}$ \emph{a priori} estimates proved for the
Monge-Ampere equation by Guan, this decomposition falls short for
applications which require $C^{2,\delta }$ coefficients or vector fields for
some $\delta >0$. The classical such application is Schauder theory, where $%
C^{2,\delta }$ coefficients play a pivotal role, and more importantly for us
is a generalization of a sum of squares theorem of Christ that we prove in 
\cite{KoSa3} using $C^{2,\delta }$ vector fields. As a consequence, we will
refer to a function $g$ in $\dbigcup\limits_{\delta >0}C^{2,\delta }$ as a 
\emph{regular} function, so that in the context of partial differential
equations in the infinitely degenerate regime at the origin, where some of
the coefficients are flat (i.e. vanish to an infinite order) at the origin,
the scalar question becomes this. We say that a scalar or matrix function is 
\emph{elliptical} if it is positive definite away from the origin.

\begin{problem}
\label{main prob}When can an elliptical flat smooth scalar function $f$ on $%
\mathbb{R}^{n}$ be written as a sum of squares of regular scalar functions?
\end{problem}

The corresponding question for elliptical \emph{finite type} smooth scalar
functions has been well studied in the wake of Hilbert's $17^{th}$ problem,
and there are algebraic obstructions to writing a smooth function as a sum
of squares of smooth functions. For example, the homogeneous Motzkin
polynomial $M$ in $n=3$ dimensions, and a generalization $L$ to dimension $%
n=4$,%
\begin{eqnarray*}
M\left( x,y,z\right) &=&z^{6}+x^{2}y^{2}\left( x^{2}+y^{2}-3\lambda
z^{2}\right) ,\ \ \ \ \ \left( x,y,z\right) \in \mathbb{R}^{3}, \\
L\left( x,y,z,w\right) &=&w^{4}+x^{2}y^{2}+y^{2}z^{2}+z^{2}x^{2}-4\lambda
xyzw,\ \ \ \ \ \left( x,y,z,w\right) \in \mathbb{R}^{4},
\end{eqnarray*}%
are nonnegative for $0\leq \lambda \leq 1$, vanish only at the origin for $%
0<\lambda <1$, and are \emph{not} finite sums of squares of polynomials for $%
0<\lambda \leq 1$, see \cite{BoBrCoPe}. As pointed out by Bony \cite{Bon}, Taylor expansions can
then be used to show that $M$ cannot be written as a finite sum of squares
of $C^{3}$ functions, and that $L$ cannot be written as a finite sum of
squares of $C^{2}$ functions. This latter observation, along with $L\left(
x,y,z,w\right) $ itself, will play a critical role in establishing sharpness
for finite sums of squares of regular functions.

Our main sum of squares theorem for scalar nonnegative functions gives a
sharp answer to this question in terms of an $\omega $\emph{-monotone
property}, defined below for any moduolus of continuity $\omega $, namely
that the answer to Problem \ref{main prob} is affirmative if $f$ is \emph{H%
\"{o}lder monotone}. On the other hand, part (2) of Theorem \ref{log
counter'} below, shows that the sum of squares decomposition can fail for
any $\omega $-monotone property weaker than H\"{o}lder monotone. In
particular, this settles a question left open in \cite[Remark 1.4 on page 141%
]{BoBrCoPe}, that asked if there exists an \emph{elliptical} flat smooth
function that is not a sum of squares of $C^{2,\omega }$ functions.

There are several notions of monotonicity\ for nonnegative functions of
several variables used in this paper, and we illustrate them here by giving
these definitions for functions $f\left( x\right) $ defined on the unit
interval $\left[ 0,1\right] $, with higher dimensional definitions given
later. Let $\omega \left( t\right) $ be a modulus of continuity defined on $%
\left[ 0,1\right] $, i.e. $\omega $ is continuous, nondecreasing and
strictly concave, and satisfies $\omega \left( 0\right) =0$ and $\omega
\left( 1\right) =1$. Then we define varying degrees of monotonicity that are
weaker than traditional monotonicity as follows.

\begin{definition}
\label{weakly}Suppose $f:\left[ 0,1\right] \rightarrow \left[ 0,\infty
\right) $ and that $\omega \left( t\right) $ is a modulus of continuity on $%
\left[ 0,1\right] $.

\begin{enumerate}
\item $f$ is $\omega $\emph{-monotone} if $0\leq f\left( y\right) \leq
C\omega \left( f\left( x\right) \right) $ for $0\leq y\leq x\leq 1$ and some
positive constant $C$,

\item $\omega _{s}\left( t\right) =\left\{ 
\begin{array}{ccc}
t\left( 1+\ln \frac{1}{t}\right) & \text{ if } & s=1 \\ 
t^{s} & \text{ if } & 0<s<1 \\ 
\frac{1}{1+\ln \frac{1}{t}} & \text{ if } & s=0%
\end{array}%
\right. $, for $0\leq s,t\leq 1$,

\item $f$ is \emph{nearly monotone} if $f$ is $\omega _{s}$-monotone for
every $0\leq s<1$,

\item $f$ is \emph{H\"{o}lder monotone} if $f$ is $\omega _{s}$-monotone for
some $0<s\leq 1$.
\end{enumerate}
\end{definition}

Then $\omega _{s}$ is a modulus of continuity for all $0\leq s\leq 1$, and 
\begin{equation*}
t\ll \omega _{1}\left( t\right) \ll \omega _{s^{\prime }}\left( t\right) \ll
\omega _{s}\left( t\right) \ll \omega _{0}\left( t\right) \ll 1,\ \ \ \ \
0<s<s^{\prime }\leq 1,
\end{equation*}%
where for positive functions on $\left( 0,1\right) $, $a\left( t\right) \ll
b\left( t\right) $ means lim$_{t\searrow 0}\frac{a\left( t\right) }{b\left(
t\right) }=0$\footnote{%
We define $\omega _{1}\left( t\right) $ to be dual to $\omega _{0}\left(
t\right) $ in the sense that $\omega _{1}\left( t\right) \omega _{0}\left(
t\right) =t$ (note that $\omega _{\frac{1}{p}}\left( t\right) \omega _{\frac{%
1}{p^{\prime }}}\left( t\right) =1$), and $\omega _{1}$ plays a role in
Theorem \ref{main intro} below.}.

We now extend the definition of $\omega $-monotone to several variables.

\begin{definition}
Given a modulus of continuity $\omega $, we say that a function $f:\mathbb{R}
^{n}\rightarrow \left[ 0,\infty \right) $ is $\omega $\emph{-monotone} if $%
0\leq f\left( y\right) \leq C\omega \left( f\left( x\right) \right) $ for $%
y\in B\left( \frac{x}{2},\frac{\left\vert x\right\vert }{2}\right) $ and
some positive constant $C$, and we set 
\begin{equation}
\left\Vert f\right\Vert _{\omega -\func{mon}}\equiv \sup_{x\in B\left(
0,1\right) ,\ y\in B\left( \frac{x}{2},\frac{\left\vert x\right\vert }{2}
\right) }\frac{f\left( y\right) }{\omega \left( f\left( x\right) \right) }.
\label{omega mon}
\end{equation}
We say that $f$ is \emph{H\"{o}lder monotone} if $f$ is $\omega _{s}$
-monotone for some $0<s\leq 1$.
\end{definition}

See Remark \ref{ball remark} below for a discussion of the condition $y\in
B\left( \frac{x}{2},\frac{\left\vert x\right\vert }{2}\right) $ in (\ref%
{omega mon}).

We begin by stating our main results on sums of squares in Section 2.
Section 3 is devoted to connections between vanishing to infinite order,
derivative estimates, the notion of $\omega $-monotone for a modulus of
continuity $\omega $, and smoothness of positive roots. Section 4 then uses
ideas of Fefferman-Phong \cite{FePh}, Tataru \cite{Tat} and Bony \cite{Bon}
to establish conditions under which smooth nonnegative functions can be
written as a sum of squares of regular functions. In the final Section 5 of
the paper, we construct examples that demonstrate sharpness of our sums of
regular squares results.

For the reader's convenience we include a schematic diagram of connections
between some of the lemmas and theorems in this paper. Results in a double
box are logical ends. Theorem \ref{diff prov thm} is the main result that will be used in the
subsequent papers \cite{KoSa2} and \cite{KoSa3}.

\begin{equation*}
\fbox{$%
\begin{array}{ccccccc}
&  & \overset{\func{start}}{\fbox{$%
\begin{array}{c}
\text{Lemma}\ \mathbf{\ref{first high}} \\ 
\max_{I}\left\vert \nabla ^{m}f\right\vert \text{\ }control%
\end{array}%
$}} &  & \overset{\func{start}}{\fbox{$%
\begin{array}{c}
\text{Lemma}s\ \mathbf{\ref{first local}, \ref{second}} \\ 
odd\lesssim even \\ 
\text{slowly\ varying}%
\end{array}%
$}} &  &  \\ 
& \swarrow & \downarrow &  & \downarrow &  &  \\ 
\overset{\func{end}}{\fbox{\fbox{$%
\begin{array}{c}
\text{Theorem }\mathbf{\ref{main intro}} \\ 
f\ EFS\ \text{and }NM \\ 
\Longrightarrow f^{\gamma }\text{ smooth} \\ 
\text{for all }\gamma >0\text{ }%
\end{array}%
$}}} &  & \downarrow &  & \fbox{$%
\begin{array}{c}
\text{Theorem}\ \mathbf{\ref{provisional}} \\ 
\text{diff\ ineq}\Longrightarrow \\ 
SOS_{\func{reg}}+n-1%
\end{array}%
$} &  &  \\ 
& \swarrow & \downarrow &  & \downarrow & \searrow &  \\ 
\overset{\func{end}}{\fbox{\fbox{$%
\begin{array}{c}
\text{Theorem}\ \mathbf{\ref{1/4 thm}} \\ 
s>\frac{3}{4}\Longrightarrow \\ 
\sqrt{f}\ regular%
\end{array}%
$}}} &  & \fbox{$%
\begin{array}{c}
\text{Theorem}\ \mathbf{\ref{s'^m}} \\ 
f\ \omega _{s}\text{-}M\Longrightarrow \\ 
\left\vert \nabla ^{m}f\right\vert \lesssim f^{\left( s^{\prime }\right)
^{m}}%
\end{array}%
$} &  & \fbox{$%
\begin{array}{c}
\text{Theorem}\ \mathbf{\ref{efs eps}} \\ 
\text{diff\ ineq}\Longrightarrow \\ 
SOS_{\func{reg}}%
\end{array}%
$} &  & \overset{\func{end}}{\fbox{\fbox{$%
\begin{array}{c}
\text{Theorem}\ \mathbf{\ref{main 2D}} \\ 
2D\ SOS_{\func{reg}}%
\end{array}%
$}}} \\ 
&  & \downarrow & \swarrow &  &  &  \\ 
&  & \overset{\func{end}}{\fbox{\fbox{$%
\begin{array}{c}
\text{Theorem}\ \mathbf{\ref{diff prov thm}\ }\text{Part }(1) \\ 
f\ EFS\ HM\Longrightarrow SOS_{\func{reg}}%
\end{array}%
$}}} &  & \overset{\func{end}}{\fbox{$\fbox{$%
\begin{array}{c}
\text{Theorem}\ \mathbf{\ref{log counter'}\ }\text{Part }(2) \\ 
\omega \gg \omega _{s}\Longrightarrow \\ 
\exists \ f\text{ }\omega \text{-monotone} \\ 
\text{but not }SOS_{\func{reg}}%
\end{array}%
$}$}} &  & 
\end{array}%
$}
\end{equation*}

\section{Statements of main theorems on sums of squares and extracting roots}

Here is our adaptation of the Fefferman-Phong algorithm, following Tataru 
\cite{Tat} and Bony \cite{Bon}, to sums of squares of regular functions.

\begin{theorem}
\label{diff prov thm} Suppose $0<\delta ,\eta <\frac{1}{2}$ and that $f$ is
a nonnegative $C^{4,2\delta }$ function on $\mathbb{R}^{n}$. If%
\begin{equation}
\left\vert \nabla ^{4}f\left( x\right) \right\vert \leq Cf\left( x\right) ^{%
\frac{\delta }{2+\delta }},\text{ and }\sup_{\Theta \in \mathbb{S}^{n-1}}%
\left[ \partial _{\Theta }^{2}f\left( x\right) \right] _{+}\leq Cf\left(
x\right) ^{\eta },  \label{diff prov'}
\end{equation}%
then $f$ can be decomposed as a finite sum of squares of functions $g_{\ell
}\in C^{2,\delta _{n-1}}\left( \mathbb{R}^{n}\right) $, 
\begin{equation*}
f\left( x\right) =\sum_{\ell =1}^{N}g_{\ell }\left( x\right) ^{2},\ \ \ \ \
x\in \mathbb{R}^{n},
\end{equation*}%
where $\delta _{n-1}$ is defined recursively by $\delta _{0}=\delta $ and%
\begin{equation*}
\frac{\delta _{k+1}}{2+\delta _{k+1}}=\eta \frac{\delta _{k}}{1+\delta _{k}}%
,\ \ \ \ \ 0\leq k\leq n-2.
\end{equation*}%
In the case that $f$ doesn't vanish, except possibly at the origin, both of
the above differential inequalities in (\ref{diff prov'}) hold provided one
of the following three conditions hold:

\begin{enumerate}
\item $f$ is flat, smooth and $\omega _{s}$-monotone for some $s<1$
satisfying%
\begin{equation*}
s>\sqrt[4]{\frac{\delta }{2+\delta }},\ \ \ \text{and \ \ \ }s>\sqrt{\eta }.
\end{equation*}

\item $f$ is strongly finite type, i.e. $f\left( x\right) \geq \left\vert
x\right\vert ^{N}$ for some $N\in \mathbb{N}$, and vanishes to order at
least four.

\item $f$ is bounded below by a positive constant.
\end{enumerate}
\end{theorem}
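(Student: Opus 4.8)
The plan is to split the theorem into its core analytic content---that the two differential inequalities (\ref{diff prov'}) already force the finite sum of squares---and the comparatively soft verification that each of (1)--(3) supplies those inequalities. For the core implication I would argue by induction on the dimension $n$, running the Fefferman--Phong decomposition algorithm in the streamlined forms of Tataru and Bony, but carrying Hölder control of order $2+\delta$ throughout; the price of insisting on this finer control, rather than Lipschitz control of the gradient, is exactly the arithmetic loss $\delta_k\mapsto\delta_{k+1}$ at each reduction of dimension.

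The base case $n=1$ I would isolate as a standalone lemma: a nonnegative $C^{4,2\delta}$ function $f$ on $\mathbb{R}$ with $|f^{(4)}|\le Cf^{\delta/(2+\delta)}$ is a finite sum of squares of $C^{2,\delta}$ functions. Following Fefferman--Phong and Bony one covers $\{f>0\}$ by a Whitney-type family of intervals $I_j$ on which $f$ is bounded above and below by comparable constants and $|I_j|\approx f(\mathrm{center}_j)^{1/(2+\delta)}$, takes a subordinate partition of unity with $\sum_j\phi_j^{\,2}\equiv1$ on $\{f>0\}$, and sets $g_j=\sqrt f\,\phi_j$. The crux is that, after dilating $I_j$ to unit length, the $C^{2,\delta}$ norm of $g_j$ is bounded uniformly in $j$; this is where $|f^{(4)}|\lesssim f^{\delta/(2+\delta)}$ enters, via Glaeser's inequality $|f'|^{2}\le2f\sup|f''|$ and Landau--Kolmogorov interpolation, to bound $f'$, $f''$, and the $2\delta$-Hölder seminorm of $f''$ on $I_j$ by precisely the powers of $\sup_{I_j}f$ that cancel the dilation. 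Then $\sum_jg_j^{\,2}=f$ on $\{f>0\}$, hence everywhere since $f$ is flat along $\partial\{f>0\}$, and because the $I_j$ have bounded overlap a coloring argument collapses the countable sum to a finite one.

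For the inductive step in dimension $n$ the second inequality $\sup_\Theta[\partial^2_\Theta f]_+\le Cf^\eta$ supplies the ``good direction'': on the part of space where $f$ is not bounded below there is a unit $\Theta_0$ with $\partial^2_{\Theta_0}f\approx\sup f$, and completing the square in the $\Theta_0$-variable expresses $f$ there as the square of a $C^{2,\delta}$ function plus a nonnegative remainder $h$ that depends, essentially, on $n-1$ variables and satisfies a pair of inequalities of the shape (\ref{diff prov'}) with $\delta$ replaced by $\delta_1$, where $\delta_1/(2+\delta_1)=\eta\,\delta/(1+\delta)$---the loss coming from the two derivatives spent completing the square together with the Hölder bookkeeping. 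Iterating $n-1$ times lands in dimension one with exponent $\delta_{n-1}$, where the base lemma applies, and patching all the pieces with a partition of unity and another coloring argument yields the finite sum of squares of $C^{2,\delta_{n-1}}$ functions. I expect the main obstacle to be keeping all the local statements quantitative enough to carry the precise Hölder exponents through the recursion while retaining $C^{2,\delta_k}$-uniform control of both the completed square and the remainder at each stage; the gluing itself is routine.

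It remains to deduce (\ref{diff prov'}) from (1)--(3) under the standing assumption that $f$ vanishes only at the origin. Case (3) is immediate, since $f\ge c>0$ makes both right-hand sides bounded below while $\nabla^4f$ and $\nabla^2f$ are bounded. For (1) the key is the pointwise estimate $|\nabla^mf(x)|\le C_mf(x)^{(s')^m}$ for every $s'<s$---itself a substantial lemma, proved by playing $\omega_s$-monotonicity off against interpolation inequalities, using flatness (so $f(x)\le C_N|x|^N$ for all $N$, which keeps the optimal radius in the Landau--Kolmogorov inequality well below $|x|$) and smoothness (so the order of differentiation used may tend to infinity). Taking $m=4$ and $m=2$ and choosing $s'<s$ with $(s')^4\ge\delta/(2+\delta)$ and $(s')^2\ge\eta$---admissible by the hypotheses $s^4>\delta/(2+\delta)$ and $s^2>\eta$---and using $f\le1$ near the origin, so that raising to a larger exponent only decreases $f$, gives both inequalities of (\ref{diff prov'}) near the origin; away from the origin $f$ is bounded below on compacta, which is case (3). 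For (2), away from the origin one is again in case (3), while near the origin the order-of-vanishing hypothesis and the $C^{4,2\delta}$ regularity force $\nabla^2f$ and $\nabla^4f$ to vanish at $0$ and hence $f(x)\le C|x|^{4+2\delta}$; a Landau--Kolmogorov interpolation between $f$ and the $2\delta$-Hölder seminorms of $\nabla^4f$ and $\nabla^2f$, combined with the lower bound $f(x)\ge|x|^N$, then yields (\ref{diff prov'}), the exponents balancing precisely because $\delta,\eta<\tfrac12$.
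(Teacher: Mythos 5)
Your high-level skeleton matches the paper's: run the Fefferman--Phong/Tataru/Bony decomposition with a Whitney-type cover at the scale of the slowly varying function $\rho$, complete the square in the ``good direction'' where $\partial_\Theta^2 f$ is large, use the second inequality in (\ref{diff prov'}) to propagate a weaker differential inequality (with $\delta_1$ in place of $\delta$) to the remainder, and induct down the dimension; the paper carries this out in Theorem \ref{provisional} and then iterates it in Theorem \ref{efs eps}. Your derivation of the differential inequalities from hypotheses (1)--(3) is also the paper's (Theorem \ref{s'^m} supplies the pointwise estimate $|\nabla^m f|\lesssim f^{(s')^m}$ for case (1), cases (2) and (3) are soft).

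However, the base case $n=1$ as you describe it does not work, and this is a real gap rather than a stylistic omission. You propose to cover $\{f>0\}$ by Whitney intervals $I_j$ on which $f$ is comparable to $F_0=f(\text{center}_j)$ and set $g_j=\sqrt f\,\phi_j$. But
\begin{equation*}
\bigl(\sqrt f\bigr)''=\frac{f''}{2\sqrt f}-\frac{(f')^2}{4f^{3/2}},
\end{equation*}
and the only bound the hypotheses give you on $f''$ is $|f''|\lesssim f^\eta$ with $\eta<\tfrac12$; hence $f''/\sqrt f\sim F_0^{\eta-1/2}\to\infty$ as $F_0\to0$. So $g_j$ is not uniformly in $C^{2,\delta}$ on intervals where the second derivative, not $f$ itself, determines the Fefferman--Phong scale $\rho$. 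The dichotomy you invoke in the inductive step---Case I where $f(x^\nu)\gtrsim\rho^{4+2\delta}$ and $\sqrt f\,\Phi_\nu$ is the square root, versus Case II where $\partial_\Theta^2 f$ dominates and one must complete the square---is already needed at $n=1$; there Case II produces a remainder $F$ which is a function of zero variables, i.e.\ a nonnegative constant, and the decomposition on that piece is $(\sqrt F)^2+(G(x)(x-X))^2$, not $\sqrt f\,\phi_j$. (Alternatively, one may simply cite Bony's one-dimensional theorem, as the paper does, in which the differential inequality is not needed at all; but your construction as written is not that, and would fail.) Also, the length scale $|I_j|\approx f(\text{center}_j)^{1/(2+\delta)}$ is not the scale $\rho(x^\nu)=\max\{f^{1/(4+2\delta)},[f'']_+^{1/(2+2\delta)}\}$ on which the rescaled estimates close up, and the Whitney property ``$f$ comparable on $I_j$'' is not guaranteed at your scale unless $\eta\ge\delta/(2+\delta)$, which is not assumed. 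Once the base case is fixed by the same Case I/Case II split you use in the inductive step (or by citing Bony), the argument aligns with the paper's.
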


More detailed information on the size and smoothness of the functions $%
g_{\ell }\left( x\right) $ is given in Theorem \ref{efs eps} below.

\begin{definition}
A nonnegative function $f:B\left( 0,a\right) \rightarrow \mathbb{R}$ is 
\emph{flat}, or vanishes to infinite order at the origin in $\mathbb{R}^{n}$%
, if 
\begin{equation*}
\lim_{x\rightarrow 0}\left\vert x\right\vert ^{-N}f\left( x\right) =0,\ \ \
\ \ \text{for all }N\in \mathbb{N}.
\end{equation*}
\end{definition}

\begin{definition}
A function $g:\mathbb{R}^{n}\rightarrow \mathbb{R}$ is \emph{regular} if $%
g\in \dbigcup\limits_{\delta >0}C^{2,\delta }\left( \mathbb{R}^{n}\right) $,
i.e. $g$ is $C^{2,\delta }$ for some $0<\delta <1$.
\end{definition}

\begin{definition}
A function $f:\mathbb{R}^{n}\rightarrow \mathbb{R}$ is \emph{elliptical} if $%
f\left( x\right) >0$ for $x\neq 0$, and more generally, an $N\times N$
matrix-valued function $F:\mathbb{R}^{n}\rightarrow \mathbb{R}^{N\times N}$
is \emph{elliptical} if $F\left( x\right) $ is positive definite for $x\neq
0 $.
\end{definition}

Here is a strengthening of the counterexample in \cite[Theorem 1.2 (d)]%
{BoBrCoPe}.

\begin{theorem}
\label{log counter'}Let $n\geq 5$.

\begin{enumerate}
\item If $\beta >s>0$, there is an elliptical, flat, smooth $\omega _{s}$%
-monotone function $f$ that cannot be written as a finite sum of squares of $%
C^{2,\beta }\left( \mathbb{R}^{n}\right) $.

\item Suppose $\omega $ is a modulus of continuity such that $\omega _{s}\ll
\omega $ for all $0<s<1$. Then there is an elliptical, flat, smooth $\omega $%
-monotone function $f$ that cannot be written as a finite sum of squares of
regular functions. In particular we can take $\omega =\omega _{0}$.
\end{enumerate}
\end{theorem}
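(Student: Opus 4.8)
The plan is to transplant the obstruction carried by the degree‑$4$ polynomial
\[
L(x,y,z,w)=w^{4}+x^{2}y^{2}+y^{2}z^{2}+z^{2}x^{2}-4\lambda xyzw ,\qquad 0<\lambda<1\ \text{fixed},
\]
which is nonnegative, even, vanishes only at the origin, is not a finite sum of squares of polynomials, and hence — by the Taylor‑jet argument of Bony recalled in the introduction — is not a finite sum of squares of $C^{2}$, a fortiori not of $C^{2,\beta}$, functions. For part (1) I would work first on $\mathbb{R}^{5}$ with coordinates $x=(x_{1},x')$, $x'=(x_{2},x_{3},x_{4},x_{5})$ (and add a flat positive radial term in the remaining variables when $n>5$), and take
\[
f(x)=\Phi(x_{1})+\mu(x_{1})\,L(x')+\nu(|x'|),
\]
where $\Phi,\mu,\nu$ are $C^{\infty}$, nonnegative, even (respectively radial), flat at $0$ and strictly positive away from $0$, of the form (roughly) $e^{-P/|t|}$, $e^{-1/|t|}$, $e^{-Q/|t|}$ with $P,Q$ large constants depending on $\beta$ and $s$, so that $\Phi$ is much flatter than $\mu$. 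Then $f\in C^{\infty}$ is clear, $f$ is elliptical since $f(x)=0$ forces $\Phi(x_{1})=\nu(|x'|)=0$, and $f$ is flat at $0$ since $\mu(x_{1})|x'|^{4}\le\mu(x_{1})|x|^{4}$ is swamped by the flatness of $\mu$. The $\omega_{s}$‑monotonicity is verified by comparing the three summands at $y\in B\left(\tfrac{x}{2},\tfrac{|x|}{2}\right)$ against $f(x)^{s}\gg f(x)$; the binding case is $x$ near the $x_{1}$‑axis, where $f(x)\approx\Phi(|x|)$ while $f(y)$ can be of size $\mu(|x|)|x|^{4}$, forcing $Ps\le1$, i.e.\ a ceiling on how flat $\Phi$ may be chosen relative to $\mu$.

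The heart is a blow‑up contradiction. Assuming $f=\sum_{\ell=1}^{N}g_{\ell}^{2}$ with $g_{\ell}\in C^{2,\beta}(\mathbb{R}^{n})$, flatness of $f$ forces the $2$‑jet of each $g_{\ell}$ to vanish at $0$. Along $p_{k}=(x_{k},0)$ with $x_{k}\searrow0$, expanding $g_{\ell}(x_{k},\cdot)$ in $x'$ to second order, $g_{\ell}(x_{k},x')=c_{\ell}+\ell_{\ell}(x')+q_{\ell}(x')+R_{\ell}(x')$ with $q_{\ell}$ a quadratic form and $|R_{\ell}(x')|\le A|x'|^{2+\beta}$, and squaring and summing, one gets $f(x_{k},\cdot)=Q+E$ with $Q$ a polynomial of degree $\le4$ whose degree‑$4$ part is $\sum_{\ell}q_{\ell}^{2}$. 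Since $\sum_{\ell}c_{\ell}^{2}=\Phi(x_{k})$ and the vanishing of the Hessian of $f(x_{k},\cdot)$ at $0$ pins $\sum_{\ell}c_{\ell}\,\nabla^{2}q_{\ell}$ down and gives $|\nabla\ell_{\ell}|\lesssim\Phi(x_{k})^{1/4}$, the error obeys $|E(x')|\lesssim\sqrt{\Phi(x_{k})}\,|x'|^{2+\beta}+|x'|^{3+\beta}+|x'|^{4+\beta}$. Matching $\Phi(x_{k})+\mu(x_{k})L(x')+\nu(|x'|)=Q(x')+E(x')$ homogeneously and evaluating at $x'=r_{k}v$, $|v|=1$, for a scale $r_{k}\searrow0$, I expect to arrive at
\[
\sum_{\ell}\Bigl(\tfrac{q_{\ell}(v)}{\sqrt{\mu(x_{k})}}\Bigr)^{2}=L(v)+O\!\Bigl(\tfrac{\Phi(x_{k})^{1/4}}{\mu(x_{k})r_{k}}+\tfrac{\sqrt{\Phi(x_{k})}}{\mu(x_{k})r_{k}^{2-\beta}}+\tfrac{r_{k}^{\beta}}{\mu(x_{k})}+\tfrac{\nu(r_{k})}{\mu(x_{k})r_{k}^{4}}\Bigr).
\]
Once the error tends to $0$, the renormalized forms $q_{\ell}/\sqrt{\mu(x_{k})}$ are uniformly bounded, so along a subsequence converge to quadratic forms $P_{\ell}$ with $\sum_{\ell}P_{\ell}^{2}=L$ — an impossible representation — and we are done.

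The main obstacle is exactly the choice of $r_{k}$ killing that error: the pointwise bound on $E$ does not control its degree‑$4$ Taylor coefficient, so one cannot simply let $x'\to0$, and must instead balance the four competing terms, which forces $r_{k}\ll\mu(x_{k})^{1/\beta}$ together with $\Phi(x_{k})^{1/4}\ll\mu(x_{k})r_{k}$, hence $\Phi$ flatter than a fixed power $\mu^{c(\beta)}$ of $\mu$. Reconciling the resulting $c(\beta)$ with the monotonicity ceiling $Ps\le1$ confines the admissible exponents to $\beta>s$; carrying out this optimization — and, if necessary, rescaling or localizing the $L$‑summand to sharpen the constant — is the delicate part of the argument.

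For part (2) I would patch: choose $s_{k}\searrow0$, realize the part‑(1) example with exponent $s_{k}$ on a shell $\{|x|\sim\rho_{k}\}$ with $\rho_{k}\searrow0$ and the shells pairwise disjoint, and glue over a flat positive background. The resulting $f$ is elliptical, flat and smooth; it is $\omega$‑monotone for every modulus of continuity with $\omega_{s}\ll\omega$ for all $s>0$ — in particular for $\omega=\omega_{0}$ — because only $\omega_{s_{k}}$‑monotone pieces with $s_{k}\to0$ appear locally; and if $f=\sum g_{\ell}^{2}$ with $g_{\ell}\in C^{2,\delta}$ for some $\delta>0$, then choosing $k$ with $s_{k}<\delta$ contradicts part (1) on the $k$‑th shell.
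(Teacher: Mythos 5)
Your strategy — using the Choi--Lam quartic $L$ as the algebraic obstruction, modulating it by a flat profile, and blowing up along the distinguished axis — is the same circle of ideas that the paper uses (there $f=\varphi(t)L(W)+\psi(t)+\sigma(r)h(t/\lambda(r))$), but as sketched it does not reach the stated range of parameters. Balancing your four error terms at $r_k\approx\mu(x_k)^{1/\beta}$ (forced by the $r_k^\beta/\mu$ term) against the lower bound $r_k\gg\Phi^{1/4}/\mu$ (forced by the $\Phi^{1/4}/(\mu r_k)$ term) yields the flatness requirement $\Phi\ll\mu^{4+4/\beta}$, while the monotonicity ceiling you identify is $\Phi\gtrsim\mu^{1/s}$; together these confine $s$ to a range strictly smaller than $s<\beta$. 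The paper's non-SOS condition in Lemma \ref{failure} is the far weaker $\psi\ll\varphi^{4/\beta}t^{16/\beta}$, obtained only after an independent second rescaling that upgrades the bound on the linear coefficients to $|S_\ell(W)|\lesssim\|\mathbf{G}\|_{2,\omega}^{1/2}\tau^{1/4}|W|$ (see the proof of part (2)(a) of Theorem \ref{1/4 and epsilon_0}), and the two bounds are reconciled at exactly $s<\beta$ by taking $\varphi(t)=e^{-1/t^2}$, for which $\varphi(t/2)=\varphi(t)^4$ matches the exponent $4/\beta$ through the governing functional $\mathcal{S}^{\omega}_{(\varphi,\psi)}(\tfrac{1}{2}+\delta)$ of Theorem \ref{1/4 and epsilon_0}(1). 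Your profile $e^{-P/|t|}$ and the naive comparison of $f(x)$ against $f(y)$ miss this, and you must also control the other two functionals $\mathcal{R}^{\omega}_{(\varphi,\psi)}$ and $\mathcal{T}^{\omega}_{\varphi}$, not only the axial case. A further structural difference: the paper's correction term carries the cutoff $h(t/\lambda(r))$, which makes it vanish identically on $r\le t$, so the blow-up sees $\varphi(t)L(W)+\psi(t)$ exactly; your $\nu(|x'|)$ persists in that region and injects another competing scale. You flag this optimization as ``the delicate part,'' and it is — it is the bulk of the proof, not a cleanup step.

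For part (2), the shell-patching scheme does not work as described. The part (1) obstruction is a blow-up anchored at the origin: it requires $x_1\to0$ and $x'\to0$ simultaneously so that the renormalized quadratics $q_\ell/\sqrt{\mu(x_1)}$ converge to a forbidden sum-of-squares representation of $L$. A copy transplanted to a shell $\{|x|\sim\rho_k\}$ would either carry its own flat zero point on that shell — destroying ellipticity of the global $f$ — or lose the blow-up and with it the non-SOS conclusion; your closing step ``contradicts part (1) on the $k$-th shell'' invokes an origin-anchored statement away from the origin. The paper instead uses a \emph{single} unpatched construction with $\psi(t)=\omega^{-1}(\varphi(t))$, verifies that $\mathcal{R}^{\omega}_{(\varphi,\psi)}$, $\mathcal{S}^{\omega}_{(\varphi,\psi)}$ and $\mathcal{T}^{\omega}_{\varphi}$ are all finite for the given $\omega$ (Section 5.3), and exploits the infinite-order vanishing of $\omega^{-1}$ to get $\psi\ll\varphi^{4/\beta}t^{16/\beta}$ for every $\beta>0$ simultaneously.
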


The following corollary highlights the\ sharpness of the above results
within the scale of $\omega $-monotone conditions.

\begin{corollary}
Suppose that $f:\mathbb{R}^{n}\rightarrow \left[ 0,\infty \right) $ is
elliptical, flat and smooth.

\begin{enumerate}
\item Then $f$ can written as a finite sum of squares of regular functions
if $f$ is H\"{o}lder monotone.

\item Conversely, for any modulus of continuity $\omega $ satisfying $\omega
\gg \omega _{s}$ for all $0<s<1$, there is an $\omega $-monotone function $f$
that cannot be written as a finite sum of squares of regular functions.
\end{enumerate}
\end{corollary}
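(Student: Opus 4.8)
The plan is to deduce both parts directly from the two main theorems already stated, the only work being the bookkeeping of exponents in part (1).

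For part (1), suppose $f$ is H\"{o}lder monotone, so that $f$ is $\omega _{s}$-monotone for some $0<s\leq 1$. First reduce to the case $s<1$: since $\omega _{1}\left( t\right) \ll \omega _{s}\left( t\right) $ for every $s<1$, and both are moduli of continuity on $\left[ 0,1\right] $, there is a constant $C_{s}$ with $\omega _{1}\left( t\right) \leq C_{s}\,\omega _{s}\left( t\right) $ on $\left[ 0,1\right] $; hence an $\omega _{1}$-monotone function is automatically $\omega _{s}$-monotone for each $s<1$. So fix some $s<1$ with $f$ being $\omega _{s}$-monotone. Now choose $\eta \in \left( 0,\tfrac{1}{2}\right) $ with $\eta <s^{2}$, and choose $\delta \in \left( 0,\tfrac{1}{2}\right) $ small enough that $\frac{\delta }{2+\delta }<s^{4}$, i.e. $\delta <\frac{2s^{4}}{1-s^{4}}$; such $\delta ,\eta $ exist because $s>0$ and $s<1$. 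Since $f$ is smooth it is in particular $C^{4,2\delta }$, and since $f$ is elliptical it does not vanish except possibly at the origin, so hypothesis (1) of Theorem \ref{diff prov thm} holds with these $s,\delta ,\eta $. That theorem then yields a finite decomposition $f=\sum_{\ell =1}^{N}g_{\ell }^{2}$ with $g_{\ell }\in C^{2,\delta _{n-1}}$, and since $\delta _{n-1}>0$ each $g_{\ell }$ is regular. This proves (1).

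For part (2), let $\omega $ be a modulus of continuity with $\omega \gg \omega _{s}$, equivalently $\omega _{s}\ll \omega $, for every $0<s<1$. This is exactly the hypothesis of part (2) of Theorem \ref{log counter'} (with the dimension $n\geq 5$ as in that theorem), which produces an elliptical, flat, smooth $\omega $-monotone function $f$ that is not a finite sum of squares of regular functions. This is precisely the assertion of (2); the stated special case $\omega =\omega _{0}$ is admissible since $\omega _{s}\ll \omega _{0}$ for all $s<1$.

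The argument contains no real obstacle, as it is a repackaging of Theorems \ref{diff prov thm} and \ref{log counter'}. The only point requiring care is the elementary exponent calculus in part (1), together with the observation that the boundary exponent $s=1$ can be traded for any interior exponent $s<1$ via $\omega _{1}\ll \omega _{s}$, so that the strict inequalities $s>\sqrt[4]{\delta /(2+\delta )}$ and $s>\sqrt{\eta }$ demanded by Theorem \ref{diff prov thm}(1) can always be arranged.
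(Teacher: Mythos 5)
Your proof is correct and takes essentially the same route the paper intends: the paper leaves this corollary as an immediate consequence of Theorems \ref{diff prov thm} and \ref{log counter'}, and indeed gives exactly this two-line deduction for the nearly identical Theorem \ref{power weakly mon SOS} at the end (citing Theorem \ref{efs eps}, of which Theorem \ref{diff prov thm} is a repackaging). Your added care with the exponent bookkeeping and with trading the boundary case $s=1$ for an interior $s<1$ via $\omega_{1}\ll\omega_{s}$ is a worthwhile elaboration of details the paper suppresses, but it is the same argument.
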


\begin{remark}
The answer to Problem \ref{main prob} is affirmative in all dimensions $%
n\geq 1$ if $f$ is H\"{o}lder monotone. However, in dimension $n=1$, the
answer is affirmative without any additional $\omega $-monotone assumptions
at all \cite{Bon}, while in dimension $n\geq 5$, the assumption of H\"{o}%
lder monotone is essentially sharp. We do not know if significantly weaker
montonicity assumptions will imply an affirmative answer to Problem \ref%
{main prob} in the remaining dimensions $n=2,3,4$.
\end{remark}

The following theorem on extracting smooth roots motivates our definition of
nearly monotone, and is what initially led us to consider $\omega _{s}$%
-monotone functions, and which then ultimately played a key role in the
above decompositions into sums of regular\ functions.

\begin{theorem}
\label{main intro}Let $n\geq 1$. Suppose that $f:B\left( 0,a\right)
\rightarrow \left[ 0,\infty \right) $ is an elliptical flat smooth function
on $B\left( 0,a\right) \subset \mathbb{R}^{n}$. Then the first three of the
following four conditions are equivalent. Moreover, the fourth condition,
which holds in particular if $f$ is $\omega _{1}$ monotone, implies the
first three conditions, but not conversely. Finally, for any $0<s<1$, there
is an $\omega _{s}$-monotone function $f$ such that $f^{\frac{1}{s}-1}$ is
not smooth.

\begin{enumerate}
\item There is $\delta >0$ such that $f\left( x\right) ^{\gamma }$ is smooth
on $B\left( 0,a\right) $ for all $0<\gamma <\delta $.

\item For every $m\geq 1$ and $0<s<1$, there is a positive constant $\Gamma
_{n,m,s}$ such that%
\begin{equation*}
\left\vert \nabla ^{m}f\left( x\right) \right\vert \leq \Gamma
_{n,m,s}f\left( x\right) ^{s},\ \ \ \ \ \text{for }x\in B\left( 0,a\right) .
\end{equation*}

\item The functions $f\left( x\right) ^{\gamma }$ are flat smooth functions
on $B\left( 0,a\right) $ for all $\gamma >0$.

\item The function $f$ is nearly monotone.
\end{enumerate}
\end{theorem}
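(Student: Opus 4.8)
The plan is to establish the equivalence of (1), (2), (3) by a cyclic chain, then prove (4)$\Rightarrow$(2) together with the counterexamples. The heart of the matter is the implication (2)$\Rightarrow$(3) (and in fact (2)$\Rightarrow$(1)), where we bootstrap from the single differential inequality $|\nabla^m f|\lesssim f^s$ (for every $m$ and $s<1$) to smoothness of \emph{all} positive powers $f^\gamma$. First I would record the easy directions. For (1)$\Rightarrow$(2): if $f^\gamma$ is smooth for some small $\gamma>0$, write $f=(f^\gamma)^{1/\gamma}$ and differentiate; since $f$ is flat, $f^\gamma$ is flat, and the Fa\`a di Bruno expansion of $\nabla^m f$ in terms of derivatives of $g:=f^\gamma$ produces terms $g^{1/\gamma - k}\cdot(\text{products of }\nabla^{j}g)$ with $\sum j\le m$, $k\le m$; each such term is bounded by $C f^{1-k\gamma}\le Cf^{s}$ once $\gamma$ is chosen small relative to $1-s$, using that the derivative factors are bounded because $g$ is smooth. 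For (3)$\Rightarrow$(1): trivial, since (3) asserts smoothness of $f^\gamma$ for \emph{all} $\gamma>0$, in particular for $\gamma<\delta$ for any $\delta$.

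The main work is (2)$\Rightarrow$(3). Fix $\gamma>0$ and set $g=f^\gamma$. Away from the origin $g$ is smooth since $f>0$ there, so the only issue is $C^\infty$-regularity, with all derivatives vanishing, at $0$. By Fa\`a di Bruno again, $\nabla^m g$ is a sum of terms $f^{\gamma - k}\prod_{i=1}^{k}\nabla^{j_i}f$ with $\sum_i j_i = m$ and $1\le k\le m$. Using hypothesis (2) with exponent $s$ (to be chosen close to $1$) on each factor $\nabla^{j_i}f$, such a term is bounded by $C f^{\gamma - k}\cdot f^{ks} = C f^{\gamma - k(1-s)}$. Choosing $s$ close enough to $1$ that $k(1-s)<\gamma/2$ for all $k\le m$, we get $|\nabla^m g(x)|\le C_{m}\, f(x)^{\gamma/2}\to 0$ as $x\to 0$ by flatness of $f$. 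A routine induction on $m$ (using that $\nabla^{m-1}g$ is continuous and differentiable away from $0$, with $\nabla^m g$ bounded and tending to $0$) then shows $g\in C^\infty$ with $\nabla^m g(0)=0$ for all $m$; and the bound $|\nabla^m g|\le C f^{\gamma/2}$ also gives flatness of $g$. Then (2)$\Rightarrow$(1) follows since in particular $f^\gamma$ is smooth, and the cycle (1)$\Rightarrow$(2)$\Rightarrow$(3)$\Rightarrow$(1) closes.

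For (4)$\Rightarrow$(2): this is precisely Theorem \ref{s'^m} (with $s<1$ arbitrary, $\omega_s$-monotonicity gives $|\nabla^m f|\lesssim f^{(s')^m}$ for any $s'<s$; letting $s,s'\uparrow 1$ and noting $(s')^m\uparrow 1$ shows that for every target exponent $t<1$ and every $m$ one can choose $s<1$ with $(s')^m>t$), so nearly monotone $\Rightarrow$ (2). The claim that $\omega_1$-monotone implies (4) is immediate from $\omega_1\ll\omega_s$ for all $s<1$, hence $\omega_1$-monotone $\Rightarrow$ $\omega_s$-monotone for all $s<1$, which is (4). It remains to produce the two counterexamples. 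For "(1)--(3) $\not\Rightarrow$ (4)": one exhibits a flat elliptical smooth $f$ satisfying $|\nabla^m f|\lesssim f^s$ for all $m,s<1$ but which is not $\omega_1$-monotone; a one-dimensional model like $f(x)=e^{-1/|x|}$ oscillating — or rather, a function whose ratio $f(y)/f(x)$ over $y\in(0,x)$ grows faster than any $t(1+\ln\frac1t)$ but slower than every $t^s$ — works, built by the same lacunary/oscillatory construction used in Section 5. For the final assertion, for each $0<s<1$ one builds an $\omega_s$-monotone $f$ with $f^{1/s-1}$ not smooth: here the exponent $1/s-1$ is exactly the threshold where the Fa\`a di Bruno bound above degenerates (taking $\gamma=1/s-1$ and $k=1$, $f^{\gamma-1}f^{s}=f^{\gamma-(1-s)}=f^{0}$ fails to decay), so an $f$ designed so that $f'$ is comparable to $f^s$ along a sequence $x_j\to 0$ will have $(f^{1/s-1})'$ not tending to $0$.

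The step I expect to be the genuine obstacle is \emph{not} the Fa\`a di Bruno bookkeeping in (2)$\Rightarrow$(3), which is mechanical, but rather the sharpness constructions: arranging a single smooth function that is simultaneously $\omega_s$-monotone, elliptical, flat, \emph{and} has a prescribed power failing to be smooth requires carefully interleaving "flat valleys" and "controlled peaks" so that the monotonicity ratio (\ref{omega mon}) stays bounded while a derivative of $f^{1/s-1}$ is forced to oscillate without decay — this is where the real care lies, and I would lean on the explicit lacunary constructions developed in Section 5 of the paper.
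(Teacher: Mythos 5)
Your treatment of the equivalence chain (1)$\Rightarrow$(2)$\Rightarrow$(3)$\Rightarrow$(1) and the implication (4)$\Rightarrow$(2) via Theorem~\ref{s'^m} is essentially the paper's argument. The small presentational difference in (1)$\Rightarrow$(2) --- you differentiate $f=(f^{\gamma})^{1/\gamma}$ directly by Fa\`a di Bruno, while the paper isolates the top-order term $\psi'(f)\,\nabla^{M}f$ from the expansion of $\nabla^{M}(s_{\gamma}\circ f)$ and solves for $\nabla^{M}f$ --- is immaterial: both land on a bound of the form $\lvert\nabla^{m}f\rvert\lesssim f^{\,1-Cm\gamma}$, and shrinking $\gamma$ below $(1-s)/m$ closes the step. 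The (2)$\Rightarrow$(3) step is likewise the same bookkeeping, and your observation that (4)$\Rightarrow$(2) follows by taking $s'$ close enough to $s$ and $s$ close enough to $1$ that $(s')^{m}$ exceeds any target exponent is correct.

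Where the proposal has a genuine gap is in the two sharpness statements, and in one of them there is a concrete error. For the final assertion, you predict the degeneration in the Fa\`a di Bruno bound occurs at $\gamma=1/s-1$, $k=1$, where you compute $f^{\gamma-1}f^{s}=f^{\gamma-(1-s)}=f^{0}$. But with $\gamma=1/s-1$ one has $\gamma-(1-s)=1/s+s-2>0$ for every $0<s<1$ (AM--GM), so this exponent is strictly positive and the first-derivative term \emph{does} decay; no blow-up of $(f^{1/s-1})'$ can be forced in the way you describe. The true degeneration in the paper's Glaeser-type example
\begin{equation*}
f_{\gamma}(x)=\varphi(x)^{\frac{1}{\gamma}-1}\Bigl(\sin^{2}\tfrac{\pi}{x}+\varphi(x)\Bigr)
\end{equation*}
occurs at the \emph{second} derivative of $g_{\gamma}=f_{\gamma}^{\gamma}$, since $g_{\gamma}''(1/n)=2\gamma n^{4}\pi^{2}\to\infty$; one must then separately verify that $f_{\gamma}$ is $\omega_{s}$-monotone precisely for $s\leq\tfrac{1}{1+\gamma}$, which is a computation your proposal does not attempt. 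Similarly, for the ``(1)--(3) $\not\Rightarrow$ (4)'' counterexample you gesture at the lacunary constructions of Section~5, but those are aimed at defeating sum-of-squares decompositions, not near-monotonicity. The paper's example here is simpler and of a different flavor: take $f=e^{-1/g}$ where $g$ is elliptical, flat, smooth, but \emph{not} nearly monotone; then every power $f^{\alpha}=e^{-\alpha/g}$ is again smooth (so (1)--(3) hold), while the identity $\ln\tfrac{1}{f}=\tfrac{1}{g}$ converts $\omega_{s}$-monotonicity of $f$ into a quasi-monotonicity constraint on $g$ that fails by construction. Both counterexamples are thus left essentially unproved, and the heuristic you offer for the second one would lead nowhere.
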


\begin{description}
\item[Terminology] If for $0<s<1$, we set $\mathcal{M}_{s}$ to be the
collection of all $\omega _{s}$-monotone functions, then the \textbf{%
intersection} of all of these collections 
\begin{equation*}
\mathcal{NM\equiv }\dbigcap\limits_{0<s<1}\mathcal{M}_{s},
\end{equation*}%
which is the set of \emph{nearly monotone} functions, is closely related to
the set of $elliptical\ flat\ smooth$ functions all of whose positive powers
are smooth. On the other hand, the \textbf{union} of all of these
collections 
\begin{equation*}
\mathcal{HM\equiv }\dbigcup\limits_{0<s<1}\mathcal{M}_{s},
\end{equation*}%
which is the set of \emph{H\"{o}lder monotone} functions, is closely related
to the set of $elliptical\ flat\ smooth$ functions that can be written as a
finite sum of squares of regular functions.
\end{description}

\section{Smooth nonnegative functions}

Here we discuss some connections between vanishing to infinite order,
derivative estimates, and the notion of $\omega $-monotone for a modulus of
continuity $\omega $, and finish with the proof of Theorem \ref{main intro}
on smoothness of positive roots. We use the following notation,%
\begin{eqnarray}
	\nabla ^{m} &=&\nabla \otimes \nabla \otimes ...\otimes \nabla =\left[
	\partial _{i_{1}}\partial _{i_{2}}...\partial _{i_{m}}\right] _{1\leq
		i_{1},i_{2},...i_{m}\leq n},  \label{not} \\
	\left( \nabla h\right) ^{m} &=&\nabla h\otimes \nabla h\otimes ...\otimes
	\nabla h=\left[ \left( \partial _{i_{1}}h\right) \left( \partial
	_{i_{2}}h\right) ...\left( \partial _{i_{m}}h\right) \right] _{1\leq
		i_{1},i_{2},...i_{m}\leq n},  \notag \\
	\left( x\cdot \nabla \right) ^{m} &=&\left( x_{1}\partial _{1}+\dots+x_{n}\partial
	_{n}\right) ^{m}=\sum_{i=\left( i_{1},i_{2},...i_{m}\right) \in \left\{
		1,2,...,n\right\} ^{m}\equiv \Gamma^{m}}x_{i_{1}}x_{i_{2}}...x_{i_{m}}\partial
	_{i_{1}}\partial _{i_{2}}...\partial _{i_{m}}=\sum_{i\in \Gamma
		^{m}}x^{i}\partial_{i}.  \notag
\end{eqnarray}

\subsection{Infinite order vanishing}

\begin{lemma}
\label{smooth}Suppose that $f:B\left( 0,a\right) \rightarrow \left( -\infty
,\infty \right) $ is a flat smooth function. Then $D^{\mu }f$\ is a flat
smooth function for all multiindices $\mu \in \mathbb{Z}_{+}^{n}$.
\end{lemma}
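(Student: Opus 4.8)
The plan is to reduce the lemma to the classical equivalence: for a $C^\infty$ function $g$ on $B\left( 0,a\right) $, $g$ is flat at the origin if and only if $D^\alpha g(0)=0$ for every multiindex $\alpha\in\mathbb{Z}_+^n$. Granting this, the lemma follows at once, since $D^\mu f$ is automatically smooth on $B\left( 0,a\right) $, and for every $\alpha$ we have $D^\alpha\!\left(D^\mu f\right)(0)=D^{\alpha+\mu}f(0)$, which vanishes because $f$ is flat; hence $D^\mu f$ is flat as well. So the substance is the two directions of the equivalence, both of which I would obtain from Taylor's theorem.

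For the implication ``all derivatives of $g$ vanish at $0$ $\Rightarrow$ $g$ flat'', I would fix $N\in\mathbb{N}$ and write Taylor's formula of order $N$ at the origin with integral remainder: since the derivatives of order less than $N$ vanish at $0$, the polynomial part drops out and $g(x)$ equals a constant (depending on $N$) times $\sum_{|\alpha|=N}\frac{x^\alpha}{\alpha!}\int_0^1(1-s)^{N-1}D^\alpha g(sx)\,ds$. For $|x|\le r<a$ this is bounded by $C_N\,|x|^N$ with $C_N$ proportional to $\max_{|\alpha|=N}\sup_{B(0,r)}|D^\alpha g|$, which is finite by continuity; hence $|x|^{-N}g(x)\to 0$, and since $N$ was arbitrary, $g$ is flat.

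For the implication actually needed here, ``$f$ flat $\Rightarrow$ $D^\alpha f(0)=0$ for all $\alpha$'', I would fix $m$ and expand $f$ to order $m$ with Peano remainder (legitimate since $f\in C^m$): $f(x)=T_m(x)+o(|x|^m)$, where $T_m$ is the Taylor polynomial of $f$ at $0$ of degree $\le m$. Flatness gives $f(x)=o(|x|^m)$, so $T_m(x)=o(|x|^m)$. Restricting to rays $x=t\theta$ with $|\theta|=1$ and $t\to 0^+$, and splitting $T_m=\sum_{k=0}^m P_k$ into homogeneous parts, this reads $\sum_{k=0}^m t^k P_k(\theta)=o(t^m)$; a one-variable polynomial of degree $\le m$ that is $o(t^m)$ at $0$ vanishes identically, so $P_k(\theta)=0$ for every $k$ and every unit vector $\theta$, hence $T_m\equiv 0$. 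Thus $D^\alpha f(0)=0$ for all $|\alpha|\le m$, and letting $m\to\infty$ completes the argument.

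I do not anticipate a real obstacle; the only care needed is in matching the form of Taylor's remainder to the available regularity, and in the elementary observation that a polynomial of degree $\le m$ in one variable which is $o(t^m)$ at the origin must be the zero polynomial. If one prefers to avoid the ray argument, one can instead establish $D^\alpha f(0)=0$ by induction on $|\alpha|$ using the one-dimensional mean value theorem along coordinate directions, dividing successively by the relevant power of the increment; I would use whichever is more economical in the paper's notation.
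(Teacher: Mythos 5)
Your proposal is correct and follows essentially the same approach as the paper: both establish (via Taylor's theorem) that flatness of a smooth function is equivalent to the vanishing of all its derivatives at the origin, and both then conclude $D^\mu f$ is flat from the identity $D^\alpha\bigl(D^\mu f\bigr)(0)=D^{\alpha+\mu}f(0)$. Your packaging into a single two-way equivalence (with the ray/Peano argument replacing the paper's induction on $|\mu|$) is marginally cleaner but not a genuinely different route.
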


\begin{proof}
Clearly $D^{\mu }f$\ is smooth for all multiindices $\mu \in \mathbb{Z}%
_{+}^{n}$. We first use induction on $m$ to establish that%
\begin{equation}
D^{\mu }f\left( 0\right) =0\text{\ \ \ \ \ for all }\left\vert \mu
\right\vert \leq m,  \label{est}
\end{equation}%
holds for all $m\in \mathbb{N}$. The case $m=0$ is a tautology, so suppose $%
M\in \mathbb{N}$ and that (\ref{est}) holds for all $m<M$. Then Taylor's
formula of order $M$ yields%
\begin{equation*}
f\left( x\right) =\sum_{\left\vert \mu \right\vert =M}\left( 
\begin{array}{c}
\left\vert \mu \right\vert \\ 
\mu%
\end{array}%
\right) D^{\mu }f\left( 0\right) x^{\mu }+O\left( \left\vert x\right\vert
^{M+1}\right) ,
\end{equation*}%
and thus%
\begin{equation*}
\left\vert \sum_{\left\vert \mu \right\vert =M}\left( 
\begin{array}{c}
\left\vert \mu \right\vert \\ 
\mu%
\end{array}%
\right) D^{\mu }f\left( 0\right) x^{\mu }\right\vert \leq C\left\vert
x\right\vert ^{M+1},
\end{equation*}%
since $f$ vanishes to infinite order. It follows that the polynomial $%
\sum_{\left\vert \mu \right\vert =M}\left( 
\begin{array}{c}
\left\vert \mu \right\vert \\ 
\mu%
\end{array}%
\right) D^{\mu }f\left( 0\right) x^{\mu }$ vanishes identically, and so $%
D^{\mu }f\left( 0\right) =0$ for all $\left\vert \mu \right\vert =M$, which
gives (\ref{est}) for $m=M$, and completes the inductive proof.

Now apply Taylor's formula of order $k-1$ to the smooth function $%
t\rightarrow D^{\mu }f\left( tx\right) $ to obtain%
\begin{equation*}
D^{\mu }f\left( x\right) =\sum_{\ell =0}^{k-1}\frac{1}{\ell !}\left( x\cdot
\nabla \right) ^{\ell }D^{\mu }f\left( 0\right) +\frac{\left( x\cdot \nabla
\right) ^{k}}{k!}D^{\mu }f\left( \theta _{x}x\right) =\frac{\left( x\cdot
\nabla \right) ^{k}}{k!}D^{\mu }f\left( \theta _{x}x\right) ,
\end{equation*}%
where $0<\theta _{x}<1$, and thus the smoothness of $f$ implies%
\begin{equation*}
\left\vert D^{\mu }f\left( x\right) \right\vert \leq C_{k,\mu }\left\vert
x\right\vert ^{k},
\end{equation*}%
which shows that $D^{\mu }f$ is flat for all multiindices $\mu \in \mathbb{Z}%
_{+}^{n}$.
\end{proof}

\begin{remark}
\label{smooth away}If $f$ is a flat function that is smooth only on $\left(
-a,a\right) \setminus \left\{ 0\right\} $, then its derivative need not be
bounded in any open interval $\left( 0,\varepsilon \right) $ for $%
\varepsilon >0$, e.g. 
\begin{eqnarray*}
f\left( x\right) &=&e^{-\frac{1}{x}}\left( \sin ^{2}e^{\frac{1}{x}}+1\right)
; \\
f^{\prime }\left( x\right) &=&\frac{e^{-\frac{1}{x}}}{x^{2}}\left( \sin
^{2}e^{\frac{1}{x}}+1\right) -e^{-\frac{1}{x}}\left( 2\sin e^{\frac{1}{x}%
}\cos e^{\frac{1}{x}}\right) \frac{e^{\frac{1}{x}}}{x^{2}} \\
&=&\frac{e^{-\frac{1}{x}}}{x^{2}}\left( \sin ^{2}e^{\frac{1}{x}}+1\right) -%
\frac{1}{x^{2}}\left( \sin 2e^{\frac{1}{x}}\right) ,
\end{eqnarray*}%
where $\lim_{n\rightarrow \infty }f^{\prime }\left( x_{n}\right) =-\infty $
if $x_{n}\searrow 0$ and $\sin 2e^{\frac{1}{x_{n}}}=1$ for all $n\geq 1$.
\end{remark}

\subsection{Derivative estimates}

The proof of our first main theorem will use a generalization of Lemma 5.13
from \cite{GuSa}, which is the case $n=m=1$ of the following lemma. We
denote the diameter of a ball $B$ by $\ell \left( B\right) $.

\begin{lemma}
\label{first high}For each triple of integers $k,m,n\in \mathbb{N}$ with $%
k\geq m\geq 1$ and$\ n\geq 1$, there is a constant $C_{k,m,n}>0$ \ such that
for any ball $B$ in $\mathbb{R}^{n}$ and $f\in C^{k-1,1}\left( B\right) $,
we have%
\begin{eqnarray*}
&&\max_{z\in B}\left\vert \nabla ^{m}f\left( z\right) \right\vert \leq
C_{k,m,n}\frac{1}{\ell \left( B\right) ^{m}}\max_{t_{1},t_{2}\in
B}\left\vert f\left( t_{1}\right) -f\left( t_{2}\right) -\left(
t_{1}-t_{2}\right) \cdot \nabla f\left( t_{2}\right) -...-\frac{\left[
\left( t_{1}-t_{2}\right) \cdot \nabla \right] ^{m-1}}{\left( m-1\right) !}%
f\left( t_{2}\right) \right\vert \\
&&+C_{k,m,n}\left( \max_{t_{1},t_{2}\in B}\left\vert f\left( t_{1}\right)
-f\left( t_{2}\right) -\left( t_{1}-t_{2}\right) \cdot \nabla f\left(
t_{2}\right) -...-\frac{\left[ \left( t_{1}-t_{2}\right) \cdot \nabla \right]
^{m-1}}{\left( m-1\right) !}f\left( t_{2}\right) \right\vert \right) ^{1-%
\frac{m}{k}} \\
&&\ \ \ \ \ \ \ \ \ \ \ \ \ \ \ \ \ \ \ \ \ \ \ \ \ \ \ \ \ \ \ \ \ \ \ \ \
\ \ \ \ \ \ \times \left( \max_{t\in B}\left\vert \nabla ^{k}f\left(
t\right) \right\vert \right) ^{\frac{m}{k}}.
\end{eqnarray*}
\end{lemma}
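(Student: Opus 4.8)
The plan is to reduce to the one-dimensional statement --- a Landau--Kolmogorov type interpolation inequality on an interval, which is exactly Lemma~5.13 of \cite{GuSa} in the case $m=1$ --- by restricting $f$ to line segments inside $B$, and then to reassemble the resulting directional estimates into control of the full symmetric tensor $\nabla^{m}f$. \emph{Step 1: from the tensor to directional derivatives, along long chords}. Fix $z\in B$ at which $|\nabla^{m}f(z)|$ is (essentially) largest. Since mixed partials of $f$ commute ($f\in C^{k-1,1}$ with $k\ge m$), $\nabla^{m}f(z)$ is a symmetric $m$-tensor, and for \emph{any} fixed open set of directions $\mathcal{D}\subset\mathbb{S}^{n-1}$ its operator norm is comparable --- with constant depending only on $m$, $n$ and the angular width of $\mathcal{D}$ --- to $\sup\{\,|\partial_{\Theta}^{m}f(z)|:\Theta\in\mathcal{D}\,\}$: a homogeneous degree-$m$ polynomial vanishing on an open set of directions vanishes identically, so $T\mapsto(T[\Theta^{\otimes m}])_{\Theta\in\mathcal{D}}$ is injective on the finite-dimensional space of symmetric $m$-tensors and hence boundedly invertible, and by rotational symmetry the bound depends only on the width of $\mathcal{D}$. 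I take $\mathcal{D}=\mathcal{D}_{z}$ to be the set of directions making angle at most $60^{\circ}$ with $c_{B}-z$ (all of $\mathbb{S}^{n-1}$ when $z$ is the centre $c_{B}$). The reason for this choice is that for every $\Theta\in\mathcal{D}_{z}$ the chord $I_{z,\Theta}=\{\,t:z+t\Theta\in B\,\}$ has length at least $\tfrac{1}{2}\ell(B)$ --- the radial chord through $z$ is a full diameter, and an elementary computation shows the chord length only drops to $\tfrac{1}{2}\ell(B)$ at angular deviation $60^{\circ}$ --- while $\mathcal{D}_{z}$ always contains a cone of width bounded below, so the comparison constant above is uniform in $z$.

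\emph{Step 2: the one-dimensional interpolation inequality}. For $\Theta\in\mathcal{D}_{z}$ set $g(t)=f(z+t\Theta)$ on $I=I_{z,\Theta}$, so that $g^{(j)}=\partial_{\Theta}^{j}f$ along the chord, $|g^{(k)}|\le|\nabla^{k}f|$, and, with $t_{1}=z+s\Theta$ and $t_{2}=z+t\Theta$ (both in $B$), the order-$(m-1)$ Taylor remainder of $g$ based at $t$ and evaluated at $s$ is precisely the bracketed expression in the statement; writing $\rho$ for the supremum of its absolute value over $s,t\in I$, we have $\rho\le R$, where $R$ denotes the corresponding maximum over $t_{1},t_{2}\in B$. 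I claim that
\[
\max_{I}|g^{(m)}|\le C_{k,m}\Big(|I|^{-m}\rho+\rho^{\,1-m/k}\big(\max_{I}|g^{(k)}|\big)^{m/k}\Big).
\]
To see this, pick $t_{0}\in I$ with $|g^{(m)}(t_{0})|$ maximal and let $P$ be the order-$(m-1)$ Taylor remainder of $g$ based at $t_{0}$, so that $\|P\|_{L^{\infty}(I)}\le\rho$ while $P^{(m)}=g^{(m)}$ and $P^{(k)}=g^{(k)}$ on $I$. Apply the classical Landau--Kolmogorov inequality $\|h^{(m)}\|_{L^{\infty}(J)}\le C_{k,m}\big(|J|^{-m}\|h\|_{L^{\infty}(J)}+|J|^{k-m}\|h^{(k)}\|_{L^{\infty}(J)}\big)$ to $h=P$ on the subinterval $J\subseteq I$ containing $t_{0}$ of length $|J|=\min\{\,|I|,\,c_{k,m}(\rho/\max_{I}|g^{(k)}|)^{1/k}\,\}$, the value that balances the two terms: when the minimum is the second quantity both terms become comparable to $\rho^{\,1-m/k}(\max_{I}|g^{(k)}|)^{m/k}$, and when it is $|I|$ the resulting constraint $\rho\gtrsim|I|^{k}\max_{I}|g^{(k)}|$ makes the first term dominate. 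Since $|g^{(m)}(t_{0})|=\max_{I}|g^{(m)}|$, the claim follows. (When $\max_{I}|g^{(k)}|=0$ take $J=I$; the second term is then absent.)

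\emph{Step 3: assembly, and the main difficulty}. Combining the two steps, for each $z\in B$ and $\Theta\in\mathcal{D}_{z}$ we use $|I_{z,\Theta}|\ge\tfrac{1}{2}\ell(B)$, $\rho\le R$, $\max_{I}|g^{(k)}|\le\max_{B}|\nabla^{k}f|$ and $1-m/k\ge0$ to obtain $|\partial_{\Theta}^{m}f(z)|\le C_{k,m}\big(\ell(B)^{-m}R+R^{\,1-m/k}(\max_{B}|\nabla^{k}f|)^{m/k}\big)$; taking the supremum over $\Theta\in\mathcal{D}_{z}$ and invoking the tensor comparison of Step 1 gives the stated bound at $z$, hence on all of $B$. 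The step I expect to be the real obstacle is the reduction in Step 1 from $\nabla^{m}f$ to directional derivatives while being permitted to use \emph{only} directions whose chord through $z$ is comparable to $\ell(B)$: this is the genuinely new point beyond the one-dimensional Lemma~5.13 of \cite{GuSa}, because for $z$ near $\partial B$ most directions give very short chords and would replace $\ell(B)^{-m}$ by the useless factor $|I_{z,\Theta}|^{-m}$. Handling it needs both the geometric fact that every point of $B$ admits a fixed-angular-width cone of long-chord directions, and the elementary but essential statement that a symmetric tensor is controlled by its diagonal values on any such cone with a constant independent of the base point. By contrast, the exponent bookkeeping in Step 2 --- the balanced subinterval length and the split into the two regimes --- is routine Landau--Kolmogorov interpolation, and is precisely where the exponents $1-\tfrac{m}{k}$ and $\tfrac{m}{k}$ come from.
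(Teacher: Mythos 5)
Your proof is correct, and it takes a genuinely different route from the paper. The paper works directly in $n$ dimensions: it Taylor-expands $f$ about $z$ to order $k-1$, sets $P$ to be the homogeneous pieces of degrees $m$ through $k-1$ (so that $\nabla^m P(z)=\nabla^m f(z)$ and $P(t)$ differs from the bracketed remainder only by the Taylor error $r(t)$), and then applies a multivariate polynomial inverse (Markov-type) estimate $\max_D|\nabla^m P|\leq C\,\ell(D)^{-m}\max_D|P|$ on a subball $D\ni z$ of diameter $\min\{\ell(B),\delta\}$ with $\delta=(\max|\text{remainder}|/\max|\nabla^k f|)^{1/k}$, exactly your balancing length. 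The tensor $\nabla^m f$ is handled natively; no reduction to one dimension, no cone geometry, no tensor-comparison lemma. You instead push the problem down to one dimension along chords, invoke the one-variable Landau--Kolmogorov inequality (which, unpacked, is the 1D polynomial inverse estimate plus the Taylor remainder bound rolled together), and then recover the full symmetric tensor from $\partial_{\Theta}^{m}f$ over directions $\Theta$ making angle at most $60^{\circ}$ with $c_B-z$, correctly noting that the chord length $2\sqrt{(\ell(B)/2)^2-|z-c_B|^2\sin^2\theta}$ is bounded below by $\ell(B)/2$ on that cone, and that a symmetric $m$-tensor is controlled by its diagonal values on any fixed-width cone with a constant uniform in the cone's orientation. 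Your Step~1 is exactly the additional ingredient your route requires and the paper's does not, and you are right that it is where the real content lies once you commit to the 1D reduction; it is a clean way to see why the bound survives near $\partial B$. The tradeoff is that the paper's argument is shorter and self-contained given the ball-Markov inequality, while yours is more modular — it isolates the 1D Landau--Kolmogorov estimate as the single analytic input, which is appealing if one already trusts that inequality, at the cost of the cone geometry and the finite-dimensional tensor-injectivity argument. Minor technicalities you would want to make explicit in a polished version: the Landau--Kolmogorov inequality must be stated for $C^{k-1,1}$ functions with $\|h^{(k)}\|_{L^\infty}$ interpreted as the Lipschitz seminorm of $h^{(k-1)}$, and the comparison constant in Step~1 should be pinned down by rotational invariance plus compactness of the normalized symmetric-tensor sphere; both are routine.
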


\begin{proof}
Let $k\geq m\geq 1$ and $n\geq 1$, and fix $z\in B$. Taylor's formula gives%
\begin{equation*}
f\left( t\right) =f\left( z\right) +\left( t-z\right) \cdot \nabla f\left(
z\right) +\dots +\frac{\left[ \left( t-z\right) \cdot \nabla \right] ^{m}}{m!%
}f\left( z\right) +...+\frac{\left[ \left( t-z\right) \cdot \nabla \right]
^{k-1}}{\left( k-1\right) !}f\left( z\right) +r\left( t\right) .
\end{equation*}%
Define 
\begin{eqnarray*}
P\left( t\right) &=&\frac{\left[ \left( t-z\right) \cdot \nabla \right] ^{m}%
}{m!}f\left( z\right) +...+\frac{\left[ \left( t-z\right) \cdot \nabla %
\right] ^{k-1}}{\left( k-1\right) !}f\left( z\right) \\
&=&f\left( t\right) -f\left( z\right) -\left( t-z\right) \cdot \nabla
f\left( z\right) -\dots -\frac{\left[ \left( t-z\right) \cdot \nabla \right]
^{m-1}}{(m-1)!}f\left( z\right) -r\left( t\right) .
\end{eqnarray*}%
Let $D$ be a ball such that $z\in D\subset B$ and $\ell \left( D\right)
=\min \left\{ \ell \left( B\right) ,\delta \right\} $ where $\ell \left(
D\right) $ denotes the radius of the ball $D$ and where 
\begin{equation*}
\delta =\left( \frac{\max_{t_{1},t_{2}\in B}\left\vert f\left( t_{1}\right)
-f\left( t_{2}\right) -\left( t_{1}-t_{2}\right) \cdot \nabla f\left(
t_{2}\right) -\dots \frac{\left[ \left( t_{1}-t_{2}\right) \cdot \nabla %
\right] ^{m-1}}{(m-1)!}f\left( t_{2}\right) \right\vert }{\max_{t\in
B}\left\vert \nabla ^{k}f\left( t\right) \right\vert }\right) ^{\frac{1}{k}}.
\end{equation*}%
Since $P$ is a polynomial of degree $k-1$ there is a constant $C_{k,m}>0$,
independent of $D$ and $B$, such that%
\begin{equation*}
\max_{t\in D}\left\vert \nabla ^{m}P\left( t\right) \right\vert \leq C_{k,m}%
\frac{1}{\ell \left( D\right) ^{m}}\max_{t\in D}\left\vert P\left( t\right)
\right\vert .
\end{equation*}%
Then using a standard estimate for the remainder in Taylor's formula we get%
\begin{eqnarray}
\left\vert \nabla ^{m}f\left( z\right) \right\vert &=&\left\vert \nabla
^{m}P\left( z\right) \right\vert \leq \max_{t\in D}\left\vert \nabla
^{m}P\left( t\right) \right\vert \leq C_{k,m}\frac{1}{\ell \left( D\right)
^{m}}\max_{t\in D}\left\vert P\left( t\right) \right\vert  \notag
\label{dom high'} \\
&\leq &\frac{1}{\ell \left( D\right) ^{m}}\max_{t_{1},t_{2}\in D}\left\vert
f\left( t_{1}\right) -f\left( t_{2}\right) -\left( t_{1}-t_{2}\right) \cdot
\nabla f\left( t_{2}\right) -\dots -\frac{\left[ \left( t_{1}-t_{2}\right)
\cdot \nabla \right] ^{m-1}}{(m-1)!}f\left( t_{2}\right) \right\vert \\
&\quad &+\frac{1}{\ell \left( D\right) ^{m}}\max_{t\in D}\left\vert r\left(
t\right) \right\vert  \notag \\
&\leq &\frac{1}{\ell \left( D\right) ^{m}}\max_{t_{1},t_{2}\in D}\left\vert
f\left( t_{1}\right) -f\left( t_{2}\right) -\dots -\frac{\left[ \left(
t_{1}-t_{2}\right) \cdot \nabla \right] ^{m-1}}{(m-1)!}f\left( t_{2}\right)
\right\vert +\frac{1}{\ell \left( D\right) ^{m}}\max_{t\in D}\left\vert
\nabla ^{k}f\left( t\right) \right\vert \ell \left( D\right) ^{k}.  \notag
\end{eqnarray}%
Moreover, $\ell \left( D\right) ^{k-m}\leq \delta ^{k-m}$ gives 
\begin{eqnarray*}
&&\frac{1}{\ell \left( D\right) ^{m}}\max_{t\in D}\left\vert \nabla
^{k}f\left( t\right) \right\vert \ell \left( D\right) ^{k}=\max_{t\in
D}\left\vert \nabla ^{k}f\left( t\right) \right\vert \ell \left( D\right)
^{k-m}\leq \max_{t\in D}\left\vert \nabla ^{k}f\left( t\right) \right\vert
\delta ^{k-m} \\
&&\quad =\max_{t\in D}\left\vert \nabla ^{k}f\left( t\right) \right\vert
\left( \frac{\max_{t_{1},t_{2}\in B}\left\vert f\left( t_{1}\right) -f\left(
t_{2}\right) -\left( t_{1}-t_{2}\right) \cdot \nabla f\left( t_{2}\right)
-\dots \frac{\left[ \left( t_{1}-t_{2}\right) \cdot \nabla \right] ^{m-1}}{%
(m-1)!}f\left( t_{2}\right) \right\vert }{\max_{t\in B}\left\vert \nabla
^{k}f\left( t\right) \right\vert }\right) ^{\frac{k-m}{k}} \\
&&\quad \leq \left( \max_{t\in B}\left\vert \nabla ^{k}f\left( t\right)
\right\vert \right) ^{\frac{m}{k}}\left( \max_{t_{1}t_{2},\in B}\left\vert
f\left( t_{1}\right) -f\left( t_{2}\right) -\left( t_{1}-t_{2}\right) \cdot
\nabla f\left( t_{2}\right) -\dots \frac{\left[ \left( t_{1}-t_{2}\right)
\cdot \nabla \right] ^{m-1}}{(m-1)!}f\left( t_{2}\right) \right\vert \right)
^{1-\frac{m}{k}}.
\end{eqnarray*}%
Now in the case that $\delta \leq \ell \left( B\right) $, we are done since
then $\ell \left( D\right) =\delta $ and 
\begin{eqnarray*}
&&\frac{1}{\ell \left( D\right) ^{m}}\max_{t_{1},t_{2}\in D}\left\vert
f\left( t_{1}\right) -f\left( t_{2}\right) -\left( t_{1}-t_{2}\right) \cdot
\nabla f\left( t_{2}\right) -\dots \frac{\left[ \left( t_{1}-t_{2}\right)
\cdot \nabla \right] ^{m-1}}{(m-1)!}f\left( t_{2}\right) \right\vert \\
&=&\left( \frac{\max_{t_{1},t_{2}\in B}\left\vert f\left( t_{1}\right)
-f\left( t_{2}\right) -\left( t_{1}-t_{2}\right) \cdot \nabla f\left(
t_{2}\right) -\dots \frac{\left[ \left( t_{1}-t_{2}\right) \cdot \nabla %
\right] ^{m-1}}{(m-1)!}f\left( t_{2}\right) \right\vert }{\max_{t\in
B}\left\vert \nabla ^{k}f\left( t\right) \right\vert }\right) ^{-\frac{m}{k}}
\\
&&\quad \times \max_{t_{1},t_{2}\in D}\left\vert f\left( t_{1}\right)
-f\left( t_{2}\right) -\left( t_{1}-t_{2}\right) \cdot \nabla f\left(
t_{2}\right) -\dots \frac{\left[ \left( t_{1}-t_{2}\right) \cdot \nabla %
\right] ^{m-1}}{(m-1)!}f\left( t_{2}\right) \right\vert \\
&\leq &\left( \max_{t_{1},t_{2}\in B}\left\vert f\left( t_{1}\right)
-f\left( t_{2}\right) -\left( t_{1}-t_{2}\right) \cdot \nabla f\left(
t_{2}\right) -\dots \frac{\left[ \left( t_{1}-t_{2}\right) \cdot \nabla %
\right] ^{m-1}}{(m-1)!}f\left( t_{2}\right) \right\vert \right) ^{1-\frac{m}{%
k}}\left( \max_{t\in B}\left\vert \nabla ^{k}f\left( t\right) \right\vert
\right) ^{\frac{m}{k}}.
\end{eqnarray*}%
On the other hand, in the case $\delta >\ell (B)$, we have $D=$ $B$ and the
inequality 
\begin{equation*}
\max_{t\in B}\left\vert \nabla ^{k}f\left( t\right) \right\vert \ell
(B)^{k}<\max_{t_{1}t_{2},\in B}\left\vert f\left( t_{1}\right) -f\left(
t_{2}\right) -\left( t_{1}-t_{2}\right) \cdot \nabla f\left( t_{2}\right)
-\dots \frac{\left[ \left( t_{1}-t_{2}\right) \cdot \nabla \right] ^{m-1}}{%
(m-1)!}f\left( t_{2}\right) \right\vert .
\end{equation*}%
Thus from (\ref{dom high'}) with $D=B$ we conclude that 
\begin{eqnarray*}
\left\vert \nabla ^{m}f\left( z\right) \right\vert &\leq &\frac{1}{\ell
(B)^{m}}\max_{t_{1},t_{2}\in B}\left\vert f\left( t_{1}\right) -f\left(
t_{2}\right) -\left( t_{1}-t_{2}\right) \cdot \nabla f\left( t_{2}\right)
-\dots \frac{\left[ \left( t_{1}-t_{2}\right) \cdot \nabla \right] ^{m-1}}{%
(m-1)!}f\left( t_{2}\right) \right\vert \\
&&\quad +\frac{1}{\ell (B)^{m}}\max_{t\in B}\left\vert \nabla ^{k}f\left(
t\right) \right\vert \ell (B)^{k} \\
&\leq &2\frac{1}{\ell (B)^{m}}\max_{t_{1},t_{2}\in B}\left\vert f\left(
t_{1}\right) -f\left( t_{2}\right) -\left( t_{1}-t_{2}\right) \cdot \nabla
f\left( t_{2}\right) -\dots \frac{\left[ \left( t_{1}-t_{2}\right) \cdot
\nabla \right] ^{m-1}}{(m-1)!}f\left( t_{2}\right) \right\vert ,
\end{eqnarray*}%
which completes the proof.
\end{proof}

\subsection{Nearly monotone functions}

In the introduction, we have defined the notion of an $\omega _{s}$-monotone
function on intervals of the real line. The extension of this definition to
higher dimensions is for the most part straightforward, with the only
wrinkle being the region over which the supremum is to be taken. Here is the
generalization of $\omega $-monotone to higher dimensions.

\begin{definition}
\label{weakly high}Let $B\left( 0,a\right) $ be the ball of radius $a$
centered at the origin $\mathbb{R}^{n}$. Define a nonnegative function $%
f:B\left( 0,a\right) \rightarrow \left[ 0,\infty \right) $ to be $\omega $%
\emph{-monotone in }$B\left( 0,a\right) $\emph{\ }if there is a positive $C$
such that%
\begin{equation*}
f\left( t\right) \leq C\omega \left( f\left( x\right) \right) ,\ \ \ \ \ 
\text{for all }t\in B\left( \frac{x}{2},\frac{\left\vert x\right\vert }{2}%
\right) ,\ \text{and }x\in B\left( 0,a\right) .
\end{equation*}
\end{definition}

\begin{remark}
If $f\left( x\right) =g\left( \left\vert x\right\vert \right) $ is a radial
function, then $f$ is $\omega $\emph{-}monotone in $B\left( 0,a\right) $ if
and only if $g$ is $\omega $\emph{-}monotone in $\left( 0,a\right) $.
\end{remark}

\begin{remark}
\label{ball remark}We make some comments on the role played by the ball $%
B_{x}=B\left( \frac{x}{2},\frac{\left\vert x\right\vert }{2}\right) $ in the
higher dimensional definition of $\omega $-monotone. First we point out that
the family of balls $\left\{ B_{x}\right\} _{x\in \mathbb{R}^{n}}$ is \emph{%
dilation and rotation invariant} in the sense that $B_{\delta \Theta
x}=\delta \Theta B_{x}$ for all rotations $\Theta $ and dilations $\delta $.
We now claim that for the purposes of this paper, the family of balls can be
replaced by any \emph{dilation and rotation invariant} family of open convex
sets $\left\{ E_{x}\right\} _{x\in \mathbb{R}^{n}}$ satisfying (\textbf{i}) $%
0,x\in \partial E_{x}$, (\textbf{ii})\ the eccentricity of $E_{x}$ is
uniformly controlled in $x$, and (\textbf{iii}) the set $E_{x}$ is starlike
with respect to each of its boundary points. Indeed, such sets $E_{x}$ are
not zero sets for polynomials, and so the rescaling argument used in Lemma %
\ref{first high} remains in force. The starlike property is used in the
proof of Lemma \ref{first high} to show that given any point $z\in E_{x}$
and any number $0<\varepsilon \leq 1$, there is a set $D$ that is a
translate, dilate and rotation of $E_{x}$, and that satisfies $z\in D\subset
E_{x}$ and $\func{diam}D=\varepsilon \func{diam}E_{x}$. Finally, the
important property in Theorem \ref{efs eps} below that (\ref{eps delta})
implies (\ref{diff prov}) also remains in force. However, it appears that
the definitions of $\omega $-monotonicity using these more general families
of convex sets are essentially equivalent when restricted to elliptical flat
smooth functions, and so nothing significant appears to be gained by their
use.\newline
As an example of such a family in the plane, we mention the case when $E_{x}$
is the tilted square having opposite corners at $0$ and $x$.
\end{remark}

A nearly monotone function on the line is quite close to being monotone,
while a H\"{o}lder monotone function can be far removed from being monotone,
but not as far removed from monotone as is an $\omega _{0}$-monotone
function with logarithmic modulus of continuity $\omega _{0}\left( t\right) =%
\frac{1}{1+\ln \frac{1}{t}}$. Each of these notions in higher dimensions
will play a role in this paper.

We need two more results in preparation for the proof of our near
characterization of elliptical flat smooth functions having smooth positive
powers.

First, we recall a more general version of an elementary composition formula
from \cite{MaSaUrVu}. Let $\psi :\mathbb{R}\rightarrow \mathbb{R}$ and $h:%
\mathbb{R}^{n}\rightarrow \mathbb{R}$ be two smooth functions. With $\psi
^{\left( k\right) }$ understood to be $\psi ^{\left( k\right) }\circ h$ on
the right hand side, we have%
\begin{eqnarray*}
\left[ \partial _{i}\left( \psi \circ h\right) \right] _{1\leq i\leq n}
&=&\psi ^{\prime }\left[ \partial _{i}h\right] _{1\leq i\leq n}, \\
\left[ \partial _{j}\partial _{i}\left( \psi \circ h\right) \right] _{1\leq
i,j\leq n} &=&\psi ^{\prime \prime }\left[ \left( \partial _{j}h\right)
\left( \partial _{i}h\right) \right] _{1\leq i,j\leq n}+\psi ^{\prime }\left[
\partial _{j}\partial _{i}h\right] _{1\leq i,j\leq n}, \\
\left[ \partial _{k}\partial _{j}\partial _{i}\left( \psi \circ h\right) %
\right] _{1\leq i,j,k\leq n} &=&\psi ^{\prime \prime \prime }\left[ \left(
\partial _{k}h\right) \left( \partial _{j}h\right) \left( \partial
_{i}h\right) \right] _{1\leq i,j,k\leq n}+\psi ^{\prime \prime }\left[
\left( \partial _{k}\partial _{j}h\right) \left( \partial _{i}h\right)
+\left( \partial _{j}h\right) \left( \partial _{k}\partial _{i}h\right) %
\right] _{1\leq i,j\leq n} \\
&&+\psi ^{\prime \prime }\left[ \left( \partial _{k}h\right) \left( \partial
_{j}\partial _{i}h\right) \right] _{1\leq i,j,k\leq n}+\psi ^{\prime }\left[
\partial _{k}\partial _{j}\partial _{i}h\right] _{1\leq i,j,k\leq n}.
\end{eqnarray*}%
We can write this more compactly using the notation of (\ref{not}) and
symmetrizing products, to obtain 
\begin{eqnarray*}
\nabla \left( \psi \circ h\right) &=&\psi ^{\prime }\nabla h, \\
\nabla ^{2}\left( \psi \circ h\right) &=&\psi ^{\prime \prime }\left( \nabla
h\right) ^{2}+\psi ^{\prime }\nabla ^{2}h, \\
\nabla ^{3}\left( \psi \circ h\right) &=&\psi ^{\prime \prime \prime }\left(
\nabla h\right) ^{3}+3\psi ^{\prime \prime }\left( \nabla ^{2}h\right)
\otimes \left( \nabla h\right) +\psi ^{\prime }\left( \nabla ^{3}h\right) .
\end{eqnarray*}%
In general we have the formula 
\begin{equation}
\nabla ^{M}\left( \psi \circ h\right) =\sum_{m=1}^{M}\left( \psi ^{\left(
m\right) }\circ h\right) \left( \sum_{\substack{ \alpha =\left( \alpha
_{1},...,\alpha _{M}\right) \in \mathbb{Z}_{+}^{M}  \\ \alpha _{1}+\alpha
_{2}+...+\alpha _{M}=m  \\ \alpha _{1}+2\alpha _{2}+...+M\alpha _{M}=M}}%
\left[ 
\begin{array}{c}
M \\ 
\alpha%
\end{array}%
\right] \left( \nabla h\right) ^{\alpha _{1}}\otimes \left( \nabla
^{2}h\right) ^{\alpha _{2}}\otimes ...\otimes \left( \nabla ^{M}h\right)
^{\alpha _{M}}\right) ,  \label{comp fla}
\end{equation}%
where $\left[ 
\begin{array}{c}
M \\ 
\alpha%
\end{array}%
\right] $ is defined for $\alpha =\left( \alpha _{1},...,\alpha _{M}\right)
\in \mathbb{Z}_{+}^{M}$ satisfying $\alpha _{1}+2\alpha _{2}+...+M\alpha
_{M}=M$. We will not need to evaluate or even estimate $\left[ 
\begin{array}{c}
M \\ 
\alpha%
\end{array}%
\right] $ for our purposes in this paper, but a recursion formula for these
coefficients in a special case can be found in \cite{MaSaUrVu}.

Second, we give the explicit dependence on bounding an $m^{th}$ derivative
of an $\omega _{s}$-monotone function, a fact which will be used later in
our sum of squares theorem.

\begin{theorem}
\label{s'^m}Let $m,n\in \mathbb{N}$ and $0<s^{\prime }<s<1$ be given. Fix a
ball $B\left( 0,a\right) \subset \mathbb{R}^{n}$ with radius $a>0$. Then
there are positive constants $\Gamma _{m,n,s,s^{\prime },a}$ such that 
\begin{equation}
\left\vert \nabla ^{m}f\left( x\right) \right\vert \leq \Gamma
_{m,n,s,s^{\prime },a}f\left( x\right) ^{\left( s^{\prime }\right) ^{m}},\ \
\ \ \ \text{\ for all }x\in B\left( 0,a\right) ,  \label{first high'}
\end{equation}%
and for all $f$ that are elliptical, flat, smooth, $\omega _{s}$-monotone,
and satisfy $\left\vert f\left( x\right) \right\vert \leq 1$ on $B\left(
0,a\right) $.
\end{theorem}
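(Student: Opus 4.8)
The plan is to bootstrap the $\omega_s$-monotone hypothesis through the iterated inequality $f(y)\le C\,\omega_s(f(x))=C\,f(x)^s$ on the balls $B_x=B(\tfrac x2,\tfrac{|x|}{2})$, and then feed this into Lemma \ref{first high} to control $|\nabla^m f|$. The key observation is that if $f$ is $\omega_s$-monotone, then iterating $j$ times along a chain of nested balls $B_{x}\supset B_{x/2}\supset\cdots$ gives a bound of the form $f(y)\le C_j\,f(x)^{s^j}$ for $y$ in a small ball around $x/2^j$; more usefully, for a fixed ball $B$ contained in $B(0,a)$ and realizing a given scale $\ell(B)\sim|x|$, one gets $\max_{t_1,t_2\in B}|f(t_1)-f(t_2)-\cdots|\lesssim \max_B f\lesssim f(x)^{s'}$ after absorbing the loss from $s$ down to any $s'<s$ (the gap $s-s'$ is spent controlling the constants and the geometry of how $B$ sits relative to $x$). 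The exponent $(s')^m$ in the conclusion is exactly what one expects: each derivative costs one application of the monotonicity, so $m$ derivatives cost $m$ iterations, i.e. $s'$ raised to the $m$-th power.

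Concretely, here is the order of steps. First I would fix $x\in B(0,a)$ with $x\neq 0$ (the estimate being trivial where $f$ is bounded below, and $f$ flat at the origin handles a neighborhood of $0$ by Lemma \ref{smooth} together with continuity), and choose the ball $B=B_x=B(\tfrac x2,\tfrac{|x|}{2})$, so $\ell(B)\sim|x|$ and $x\in\partial B$. Second, I would apply Lemma \ref{first high} with this $B$, with the given $m$, and with $k=k(m,s,s')$ a large integer to be chosen; this reduces matters to estimating two quantities on $B$: the Taylor-type oscillation $\Omega(B):=\max_{t_1,t_2\in B}|f(t_1)-f(t_2)-\cdots-\frac{[(t_1-t_2)\cdot\nabla]^{m-1}}{(m-1)!}f(t_2)|$, and $\max_B|\nabla^k f|$. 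Third, I would bound $\Omega(B)$: crudely $\Omega(B)\lesssim \sum_{j=0}^{m-1}\ell(B)^j\max_B|\nabla^j f|$, but more efficiently one estimates $\Omega(B)\lesssim \max_B f\lesssim f(x)^{s'}$ using $\omega_s$-monotonicity on the slightly larger ball $B(0,2|x|)\cap B(0,a)$ — here the point is that $B_x$ is contained in the region where $f(t)\le C\,f(x')^s$ for a suitable nearby $x'$, and shrinking $s$ to $s'$ absorbs the multiplicative constant since $f\le 1$. Fourth, I would bound $\max_B|\nabla^k f|$: by a preliminary (weaker) derivative bound — say, the $k=1$, crude consequence of monotonicity applied inductively, or simply the fact that $f$ is smooth hence $|\nabla^k f|\le C_k$ on the compact $\overline{B(0,a)}$, which suffices — we get $\max_B|\nabla^k f|\le C_{k}$. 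Fifth, I would plug into Lemma \ref{first high}: the first term gives $\ell(B)^{-m}\Omega(B)\lesssim |x|^{-m}f(x)^{s'}$, and the second term gives $\Omega(B)^{1-m/k}(\max_B|\nabla^k f|)^{m/k}\lesssim f(x)^{s'(1-m/k)}$.

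The main obstacle, and the place requiring care, is reconciling the factor $|x|^{-m}$ appearing in the first term with the desired clean bound $\lesssim f(x)^{(s')^m}$. The resolution is to exploit flatness: since $f$ is flat at $0$, $f(x)\le C_N|x|^N$ for every $N$, equivalently $|x|^{-1}\le C_N f(x)^{-1/N}$; so $|x|^{-m}f(x)^{s'}\le C f(x)^{s'-m/N}$, which for $N$ large is $\le C f(x)^{s'/2}$, still far better than $f(x)^{(s')^m}$ since $(s')^m<s'/2$ for $s'<1$ once $m$ is not too small — and for small $m$ one simply picks $s''$ between $(s')^m$ and $s'$ and runs the argument with $s''$. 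Simultaneously, the second term $f(x)^{s'(1-m/k)}$ forces us to take $k$ large enough that $s'(1-m/k)\ge (s')^m$, i.e. $k\ge m/(1-(s')^{m-1})$, which is a finite choice depending only on $m,s'$; this is exactly why Lemma \ref{first high} is stated with a free high-order index $k$. One then checks that the constant produced depends only on $m,n,s,s',a$, as claimed, by tracking that all the absorptions used $f\le 1$ and the flatness constants $C_N$ (which depend on $f$ — so strictly one should note the estimate holds with constants uniform over the stated class because flatness is built into the hypotheses and can be made quantitative via Lemma \ref{smooth}, or else restrict to $|x|\le a/2$ and handle $a/2\le|x|\le a$ separately using that there $f$ is bounded below and $|\nabla^m f|$ bounded above by smoothness, absorbing into $\Gamma$). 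A cleaner route for the uniformity, which I would adopt in the write-up, is to prove the estimate first with $f$-dependent constants and then observe that the class of elliptical flat smooth $\omega_s$-monotone functions bounded by $1$ on $B(0,a)$ is, after the reductions above, parametrized by the single quantitative modulus $\|f\|_{\omega_s\text{-mon}}$ plus smoothness bounds, which is what enters.
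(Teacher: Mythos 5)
Your overall framework — applying Lemma \ref{first high} on the ball $B_x=B(\tfrac x2,\tfrac{|x|}{2})$, invoking $\omega_s$-monotonicity to control $f$ on $B_x$ by $f(x)^s$, and then absorbing the factor $\ell(B)^{-m}$ via flatness ($f(x)\le A_k|x|^k$, so $|x|^{-1}\le A_k^{1/k}f(x)^{-1/k}$) — is exactly the paper's skeleton, and your handling of the $|x|^{-m}$ factor and the choice of large $k$ to tame the second term of Lemma \ref{first high} are both correct.

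The gap is in your third step, the ``more efficient'' bound $\Omega(B)\lesssim \max_B f\lesssim f(x)^{s'}$. For $m=1$ this is fine ($\Omega(B)=\max|f(t_1)-f(t_2)|\le 2\max_B f$ by nonnegativity), but for $m\ge 2$ the Taylor oscillation $\Omega(B)=\max|f(t_1)-f(t_2)-(t_1-t_2)\cdot\nabla f(t_2)-\cdots|$ contains the terms $\frac{[(t_1-t_2)\cdot\nabla]^j}{j!}f(t_2)$ for $1\le j\le m-1$, and these are \emph{not} controlled by $\max_B f$ alone: one needs a priori bounds on $|\nabla^j f|$, which is precisely what the theorem is trying to establish. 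The bound $\Omega(B)\lesssim\max_B f$ is simply false for general nonnegative smooth functions, and for functions in the theorem's class it amounts to assuming the conclusion. Your own ``crude'' bound $\Omega(B)\lesssim\sum_{j=0}^{m-1}\ell(B)^j\max_B|\nabla^j f|$ is the honest one, and the paper uses it via induction on $m$: assuming $|\nabla^\ell f(t)|\le\Gamma f(t)^{(s'+\varepsilon)^\ell}$ for $\ell\le m$, one bounds $\max_B|\nabla^\ell f|\lesssim(\max_B f)^{(s'+\varepsilon)^\ell}\lesssim f(x)^{s(s'+\varepsilon)^\ell}$ by monotonicity. After dividing by $|x|^{m+1}$ and absorbing powers of $|x|^{-1}$, the worst term is the one with $\ell=m$, giving roughly $f(x)^{s(s'+\varepsilon)^m}$, which upon letting $\varepsilon\to 0$ yields $(s')^{m+1}$. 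This cascade through the induction is the \emph{source} of the exponent $(s')^m$; it does not arise merely as a generous lower bound on a stronger $f(x)^{s'}$ estimate, contrary to what your ``reconciling $|x|^{-m}$'' paragraph suggests. You cannot expect a uniform $f(x)^{s'}$ bound for all $m$ by this method, because each additional derivative forces one more pass through the monotonicity inequality $f(t_2)\le C f(x)^s$, which geometrically decays the exponent.

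So the fix is straightforward but essential: replace the ``more efficient'' shortcut by an induction on $m$, bounding $\Omega(B)$ via the inductive derivative estimates (with the $\varepsilon$-buffer $s'<s'+\varepsilon<s$ to make room for constants and the flatness absorption). With that change, the rest of your outline goes through and matches the paper.
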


\begin{proof}
First using Lemma \ref{first high} and the fact that $f$ is $\omega _{s}$%
-monotone, we have 
\begin{eqnarray*}
\max_{t\in B\left( 0,\left\vert x\right\vert \right) }\left\vert \nabla
f\left( t\right) \right\vert &\leq &C_{k,1,n}\left\{ \frac{1}{|x|}%
\max_{t_{1},t_{2}\in B\left( \frac{x}{2},\frac{\left\vert x\right\vert }{2}%
\right) }\left\vert f\left( t_{1}\right) -f\left( t_{2}\right) \right\vert
\right. \\
&&\ \ \ \ \ \ \ \ \ \ +\left. \left( \max_{t_{1}t_{2},\in B\left( \frac{x}{2}%
,\frac{\left\vert x\right\vert }{2}\right) }\left\vert f\left( t_{1}\right)
-f\left( t_{2}\right) \right\vert \right) ^{1-\frac{1}{k}}\left( \max_{t\in
B\left( \frac{x}{2},\frac{\left\vert x\right\vert }{2}\right) }\left\vert
\nabla ^{k}f\left( t\right) \right\vert \right) ^{\frac{1}{k}}\right\} \\
&\leq &C_{k,1,n}\left\{ \frac{1}{|x|}\Gamma _{s}f\left( x\right) ^{s}+\left(
\Gamma _{s}f\left( x\right) ^{s}\right) ^{1-\frac{1}{k}}\left( \max_{t\in
B\left( \frac{x}{2},\frac{\left\vert x\right\vert }{2}\right) }\left\vert
\nabla ^{k}f\left( t\right) \right\vert \right) ^{\frac{1}{k}}\right\} \\
&\leq &C_{k,1,n}\left\{ \frac{1}{|x|}\Gamma _{s}f\left( x\right) ^{s}+M_{k}^{%
\frac{1}{k}}\left( \Gamma _{s}f\left( x\right) ^{s}\right) ^{1-\frac{1}{k}%
}\right\} ,
\end{eqnarray*}%
where $\max_{t\in B\left( 0,\left\vert x\right\vert \right) }\left\vert
\nabla ^{k}f\left( t\right) \right\vert \leq M_{k}$ for $x\in \overline{B}%
(0,a)$ follows since $f$ is smooth. Moreover, since $f$ is flat, we have $%
f\left( x\right) \leq A_{k}\left\vert x\right\vert ^{k}$ , and thus $f\left(
x\right) ^{\frac{1}{k}}\leq A_{k}^{\frac{1}{k}}\left\vert x\right\vert $,
and so for $x\in B(0,a)$, $x\neq 0$, and $0<f\left( x\right) \leq 1$, we
have 
\begin{eqnarray*}
\left\vert \nabla f\left( x\right) \right\vert &\leq &C_{k,1,n}\left\{
\Gamma _{\varepsilon }f\left( x\right) ^{s-\frac{1}{k}}\frac{f\left(
x\right) ^{\frac{1}{k}}}{|x|}+M_{k}^{\frac{1}{k}}\Gamma _{s}^{1-\frac{1}{k}%
}f\left( x\right) ^{\left( s\right) \left( 1-\frac{1}{k}\right) }\right\} \\
&\leq &B_{k,n,s}f\left( x\right) ^{s-\frac{1}{k}}+D_{k,n,\varepsilon
}f\left( x\right) ^{s\left( 1-\frac{1}{k}\right) },
\end{eqnarray*}%
where $B_{k,n,s}=C_{k,1,n}\Gamma _{\varepsilon }A_{k}^{\frac{1}{k}}$ and $%
D_{k,n,\varepsilon }=C_{k,1,n}\Gamma _{s}^{1-\frac{1}{k}}M_{k}^{\frac{1}{k}}$%
. Thus by choosing $k$ sufficiently large, we see that for every $%
0<s^{\prime }<s<1$, there is a positive constant $\Gamma _{n,s,s^{\prime
},a} $ such that 
\begin{equation}
\left\vert \nabla f\left( x\right) \right\vert \leq \Gamma _{n,s,s^{\prime
},a}f\left( x\right) ^{s^{\prime }},\ \ \ \ \ x\neq 0,\ \ \ 0<s^{\prime
}<s<1,  \label{first est high}
\end{equation}%
which proves the case $m=1$ of (\ref{first high'}). We now prove the general
case by induction on $m$. Fix $m\geq 1$ and suppose that for all $1\leq \ell
\leq m$ and all $0<s^{\prime }<s<1$ there holds 
\begin{equation}
\left\vert \nabla ^{\ell }f\left( x\right) \right\vert \leq \Gamma _{\ell
,n,s,s^{\prime }}f\left( x\right) ^{\left( s^{\prime }\right) ^{l}},\ \ \ \
\ \text{for }x\in B\left( 0,a\right) .  \label{ind-assump}
\end{equation}%
Since $f$ is a flat smooth function we have from Lemma \ref{first high} that 
\begin{eqnarray*}
&&\max_{t\in B}\left\vert \nabla ^{m+1}f\left( t\right) \right\vert \leq
C_{k,m,n}\frac{1}{|x|^{m+1}}\max_{t_{1},t_{2}\in B}\left\vert f\left(
t_{1}\right) -f\left( t_{2}\right) -\left( t_{1}-t_{2}\right) \cdot \nabla
f\left( t_{2}\right) -...-\frac{\left[ \left( t_{1}-t_{2}\right) \cdot
\nabla \right] ^{m}}{m!}f\left( t_{2}\right) \right\vert \\
&&+C_{k,m,n}\left( \max_{t_{1},t_{2}\in B}\left\vert f\left( t_{1}\right)
-f\left( t_{2}\right) -\left( t_{1}-t_{2}\right) \cdot \nabla f\left(
t_{2}\right) -...-\frac{\left[ \left( t_{1}-t_{2}\right) \cdot \nabla \right]
^{m}}{m!}f\left( t_{2}\right) \right\vert \right) ^{1-\frac{m+1}{k}} \\
&&\ \ \ \ \ \ \ \ \ \ \ \ \ \ \ \ \ \ \ \ \ \ \ \ \ \ \ \ \ \ \ \ \ \ \ \ \
\ \ \ \ \ \ \times \left( \max_{t\in B}\left\vert \nabla ^{k}f\left(
t\right) \right\vert \right) ^{\frac{m+1}{k}}.
\end{eqnarray*}%
Now fix $s^{\prime }<s$ and let $\varepsilon =(s-s^{\prime })/2$, so that $%
s^{\prime }+\varepsilon <s$. Then using (\ref{ind-assump}) with $s^{\prime
}+\varepsilon $ in place of $s^{\prime }$ and the fact that $f$ is $\omega
_{s}$-monotone, we conclude that for $B=B\left( 0,\left\vert x\right\vert
\right) $ we have 
\begin{eqnarray*}
&&\left\vert f\left( t_{1}\right) -f\left( t_{2}\right) -\left(
t_{1}-t_{2}\right) \cdot \nabla f\left( t_{2}\right) -...-\frac{\left[
\left( t_{1}-t_{2}\right) \cdot \nabla \right] ^{m}}{m!}f\left( t_{2}\right)
\right\vert \\
&&\quad \leq \left\vert f\left( t_{1}\right) -f\left( t_{2}\right)
\right\vert +\left\vert \nabla f\left( t_{2}\right) \cdot \left(
t_{1}-t_{2}\right) \right\vert +\dots +\left\vert \frac{\left[ \left(
t_{1}-t_{2}\right) \cdot \nabla \right] ^{m}}{m!}f\left( t_{2}\right)
\right\vert \\
&&\quad \leq C_{n,s}f(x)^{s}+\sum_{\ell =1}^{m}\Gamma _{\ell ,n,s,s^{\prime
}}f(t_{2})^{\left( s^{\prime }+\varepsilon \right) ^{\ell }}|x|^{\ell } \\
&&\quad \leq C_{n,s}f(x)^{s}+\sum_{\ell =1}^{m}\Gamma _{\ell ,n,s,s^{\prime
}}\left( f(x)^{s}\right) ^{\left( s^{\prime }+\varepsilon \right) ^{\ell
}}|x|^{\ell }\leq \sum_{\ell =0}^{m}\Gamma _{\ell ,n,s,s^{\prime
}}f(x)^{s\left( s^{\prime }+\varepsilon \right) ^{\ell }}|x|^{\ell }.
\end{eqnarray*}%
Thus we have 
\begin{eqnarray*}
&&\frac{1}{|x|^{m+1}}\max_{t_{1},t_{2}\in B}\left\vert f\left( t_{1}\right)
-f\left( t_{2}\right) -\left( t_{1}-t_{2}\right) \cdot \nabla f\left(
t_{2}\right) -...-\frac{\left[ \left( t_{1}-t_{2}\right) \cdot \nabla \right]
^{m}}{m!}f\left( t_{2}\right) \right\vert \\
&&\quad \leq \sum_{\ell =0}^{m}\frac{\Gamma _{\ell ,n,s,s^{\prime
}}f(x)^{s\left( s^{\prime }+\varepsilon \right) ^{\ell }}}{|x|^{m+1-\ell }}%
=\sum_{\ell =1}^{m+1}\Gamma _{\ell ,n,s,s^{\prime }}f(x)^{\left( s^{\prime
}-\varepsilon \right) ^{\ell }}\frac{f(x)^{s-(s^{\prime }+\varepsilon )}}{%
|x|^{m+1-\ell }}\leq \sum_{\ell =1}^{m+1}\Gamma _{\ell ,n,s,s^{\prime
},a}f(x)^{\left( s^{\prime }+\varepsilon \right) ^{\ell }},
\end{eqnarray*}%
where in the last inequality we used the fact that $f$ is flat, and thus $%
f(x)\leq C_{M}|x|^{M}$ for all $M>0$. Therefore we obtain 
\begin{eqnarray*}
\left\vert \nabla ^{m+1}f\left( x\right) \right\vert &\leq &\sum_{\ell
=1}^{m+1}\Gamma _{\ell ,n,s,s^{\prime },a}f(x)^{\left( s^{\prime
}+\varepsilon \right) ^{\ell }}+\left( \sum_{l=1}^{m+1}\Gamma _{\ell
,n,s,s^{\prime },a}f(x)^{\left( s^{\prime }+\varepsilon \right) ^{\ell
}}\right) ^{1-\frac{m+1}{k}}M_{k}^{\frac{m+1}{k}} \\
&\leq &\Gamma _{m,n,s,s^{\prime },a}f(x)^{\left( s^{\prime }\right) ^{m+1}},
\end{eqnarray*}%
upon taking $k$ sufficiently large so that $\left( s^{\prime }+\varepsilon
\right) ^{m+1}\left( 1-\frac{m+1}{k}\right) \geq \left( s^{\prime }\right)
^{m+1}$.
\end{proof}

We are now ready to proceed with the proof of Theorem \ref{main intro}.

\begin{proof}[Proof of Theorem \protect\ref{main intro}]
First note that $\left( \mathbf{3}\right) \mathbf{\Longrightarrow }\left( 
\mathbf{1}\right) $ is trivial and $\left( \mathbf{4}\right) \mathbf{%
\Longrightarrow }\left( \mathbf{2}\right) $ follows from Theorem \ref{s'^m}.

$\left( \mathbf{1}\right) \mathbf{\Longrightarrow }\left( \mathbf{2}\right) $%
: Since $f\left( x\right) ^{\beta }$ is smooth and nonnegative for $0<\beta
<\delta $, we have the classical inequality of Malgrange, see e.g. \cite[%
Lemme I]{Gla},%
\begin{equation*}
\left\vert \nabla \left[ f\left( x\right) ^{\beta }\right] \right\vert \leq C%
\sqrt{f\left( x\right) ^{\beta }},
\end{equation*}%
which implies that%
\begin{eqnarray*}
\beta f\left( x\right) ^{\beta -1}\left\vert \nabla f\left( x\right)
\right\vert &\leq &Cf\left( x\right) ^{\frac{\beta }{2}}, \\
\text{hence }\left\vert \nabla f\left( x\right) \right\vert ^{2} &\leq
&Cf\left( x\right) ^{2-\beta }.
\end{eqnarray*}%
Next we compute%
\begin{equation*}
\nabla ^{2}\left[ f\left( x\right) ^{\beta }\right] =\nabla \left( \beta
f^{\beta -1}\nabla f(x)\right) =\beta \left( \beta -1\right) f\left(
x\right) ^{\beta -2}\left( \nabla f(x)\right) ^{2}+\beta f\left( x\right)
^{\beta -1}\nabla ^{2}f\left( x\right) ,
\end{equation*}%
which implies that%
\begin{eqnarray*}
\beta f\left( x\right) ^{\beta -1}\left\vert \nabla ^{2}f\left( x\right)
\right\vert &\leq &Cf\left( x\right) ^{\beta -2}\left\vert \nabla f\left(
x\right) \right\vert ^{2}+C\left\vert \nabla ^{2}\left[ f\left( x\right)
^{\beta }\right] \right\vert \\
\text{hence }\left\vert \nabla ^{2}f\left( x\right) \right\vert &\leq
&f\left( x\right) ^{1-\beta }\left\{ Cf\left( x\right) ^{\beta -2}f\left(
x\right) ^{2-\beta }+C_{\beta }\right\} \\
&\leq &Cf\left( x\right) ^{1-\beta }+C_{\beta }f\left( x\right) ^{1-\beta
}=C_{\beta }f\left( x\right) ^{1-\beta },
\end{eqnarray*}%
where we have used the fact that $f\left( x\right) ^{\beta }$ is smooth,
hence $\left\vert \nabla ^{2}\left[ f\left( x\right) ^{\beta }\right]
\right\vert $ is bounded on compact subsets of $B\left( 0,a\right) $. We now
prove by induction that%
\begin{equation*}
\left\vert \nabla ^{M}f\left( x\right) \right\vert \leq \Gamma _{M,\gamma
}f\left( x\right) ^{1-M\gamma },\ \ \ \ \ \text{where }\gamma =\frac{\beta }{%
2}.
\end{equation*}%
Define the nonnegative power functions $s_{\gamma }:\left[ 0,\infty \right)
\rightarrow \left[ 0,\infty \right) $ by $s_{\gamma }\left( t\right)
=t^{\gamma }$ for $t\in \left[ 0,\infty \right) $, and note that 
\begin{equation}
s_{\gamma }^{\left( k\right) }\left( t\right) =\left( 
\begin{array}{c}
\gamma \\ 
k%
\end{array}%
\right) t^{\gamma -k},\ \ \ \ \ \text{for }k\geq 0\text{ and }t\in \left(
0,\infty \right) .  \label{root der}
\end{equation}%
Indeed, with $g\left( x\right) =f\left( x\right) ^{\gamma }=s_{\gamma }\circ
f\left( x\right) $ we have using the composition formula (\ref{comp fla}) 
\begin{eqnarray*}
\nabla ^{M}g\left( x\right) &=&\nabla ^{M}\left( s_{\gamma }\circ f\right)
\left( x\right) \\
&=&\sum_{m=1}^{M}\left( s_{\gamma }^{\left( m\right) }\circ f\right) \left(
x\right) \left( \sum_{\substack{ \alpha =\left( \alpha _{1},...,\alpha
_{M}\right) \in \mathbb{Z}_{+}^{M}  \\ \alpha _{1}+\alpha _{2}+...+\alpha
_{M}=m  \\ \alpha _{1}+2\alpha _{2}+...+M\alpha _{M}=M}}\left[ 
\begin{array}{c}
M \\ 
\alpha%
\end{array}%
\right] \left( \nabla f\left( x\right) \right) ^{\alpha _{1}}...\left(
\nabla ^{M}f\left( x\right) \right) ^{\alpha _{M}}\right) ,
\end{eqnarray*}%
and since $\alpha _{M}>0$ \ implies $\alpha _{M}=m=1$, we obtain that%
\begin{eqnarray*}
&&\left[ 
\begin{array}{c}
M \\ 
\alpha%
\end{array}%
\right] \left( s_{\gamma }^{\left( 1\right) }\circ f\right) \left( x\right)
\nabla ^{M}f\left( x\right) \\
&=&\nabla ^{M}g\left( x\right) -\sum_{m=2}^{M}\left( s_{\gamma }^{\left(
m\right) }\circ f\right) \left( x\right) \left( \sum_{\substack{ \alpha
=\left( \alpha _{1},...,\alpha _{M}\right) \in \mathbb{Z}_{+}^{M}  \\ \alpha
_{1}+\alpha _{2}+...+\alpha _{M}=m  \\ \alpha _{1}+2\alpha _{2}+...+M\alpha
_{M}=M}}\left[ 
\begin{array}{c}
M \\ 
\alpha%
\end{array}%
\right] \left( \nabla f\left( x\right) \right) ^{\alpha _{1}}...\left(
\nabla ^{M-1}f\left( x\right) \right) ^{\alpha _{M-1}}\right) ,
\end{eqnarray*}%
hence using the inductive assumption and the fact that $g=f^{\gamma }$ is
smooth, 
\begin{eqnarray*}
&&\left\vert \left( s_{\gamma }^{\left( 1\right) }\circ f\right) \left(
x\right) \nabla ^{M}f\left( x\right) \right\vert \\
&\leq &C+C_{\gamma ,M}\sum_{m=2}^{M}\left( 
\begin{array}{c}
\gamma \\ 
m%
\end{array}%
\right) f\left( x\right) ^{\gamma -m}\left( \sum_{\substack{ \alpha =\left(
\alpha _{1},...,\alpha _{M-1}\right) \in \mathbb{Z}_{+}^{M-1}  \\ \alpha
_{1}+\alpha _{2}+...+\alpha _{M-1}=m  \\ \alpha _{1}+2\alpha _{2}+...+\left(
M-1\right) \alpha _{M-1}=M}}\left[ 
\begin{array}{c}
M \\ 
\alpha%
\end{array}%
\right] f\left( x\right) ^{\left( 1-\gamma \right) \alpha _{1}}...f\left(
x\right) ^{\left( 1-\left( M-1\right) \gamma \right) \alpha _{M-1}}\right) ,
\end{eqnarray*}%
where $C$ above is a bound for $\left\vert \nabla ^{M}g\left( x\right)
\right\vert $, and finally that%
\begin{equation*}
\left\vert \nabla ^{M}f\left( x\right) \right\vert \leq Cf\left( x\right)
^{1-\gamma }\left\{ C+C_{\gamma ,M}\sum_{m=2}^{M}f\left( x\right) ^{\gamma
-m}f\left( x\right) ^{m-M\gamma }\right\} \leq C_{\gamma ,M}^{\prime
}f\left( x\right) ^{1-\gamma M}.
\end{equation*}

$\left( \mathbf{2}\right) \mathbf{\Longrightarrow }\left( \mathbf{3}\right) $%
: Again set $g\left( x\right) =f\left( x\right) ^{\gamma }=\left( s_{\gamma
}\circ f\right) \left( x\right) $, and as before we have 
\begin{eqnarray*}
\nabla^{M}g\left( x\right) &=&\nabla^{M}\left( s_{\gamma }\circ f\right)
\left( x\right) \\
&=&\sum_{m=1}^{M}\left( s_{\gamma }^{\left( m\right) }\circ f\right) \left(
x\right) \left( \sum_{\substack{ \alpha =\left( \alpha _{1},...,\alpha
_{M}\right) \in \mathbb{Z}_{+}^{M}  \\ \alpha _{1}+\alpha _{2}+...+\alpha
_{M}=m  \\ \alpha _{1}+2\alpha _{2}+...+M\alpha _{M}=M}}\left[ 
\begin{array}{c}
M \\ 
\alpha%
\end{array}
\right] \left( \nabla f\left( x\right) \right) ^{\alpha _{1}}...\left(
\nabla^{M}f\left( x\right) \right) ^{\alpha _{M}}\right) ,
\end{eqnarray*}%
Now we use (\ref{root der}), i.e. 
\begin{equation*}
\left\vert \left( s_{\gamma }^{\left( m\right) }\circ f\right) \left(
x\right) \right\vert =\left\vert s_{\gamma }^{\left( m\right) }\left(
f\left( x\right) \right) \right\vert =\left\vert \left( 
\begin{array}{c}
\gamma \\ 
m%
\end{array}
\right) \right\vert f\left( x\right) ^{\gamma -\ m},
\end{equation*}%
and condition $\left( \mathbf{2}\right) $ with $s=1-\varepsilon$, i.e. $%
\left\vert\nabla^{k} f\left( x\right) \right\vert \leq \Gamma
_{k,\varepsilon }f\left( x\right) ^{1-\varepsilon }$, to obtain%
\begin{eqnarray*}
\left\vert \nabla^{M}g\left( x\right) \right\vert &\leq
&C\sum_{m=1}^{M}f\left( x\right) ^{\gamma -m}\left( \sum_{\substack{ \alpha
=\left( \alpha _{1},...,\alpha _{M}\right) \in \mathbb{Z}_{+}^{M}  \\ \alpha
_{1}+\alpha _{2}+...+\alpha _{M}=m  \\ \alpha _{1}+2\alpha _{2}+...+M\alpha
_{M}=M}}\left[ 
\begin{array}{c}
M \\ 
\alpha%
\end{array}
\right] \left( f\left( x\right) ^{1-\varepsilon }\right) ^{\alpha
_{1}}...\left( f\left( x\right) ^{1-\varepsilon }\right) ^{\alpha
_{M}}\right) \\
&\leq &C\sum_{m=1}^{M}f\left( x\right) ^{\gamma -m}\left( f\left( x\right)
^{m\left( 1-\varepsilon \right) }\right) =C\sum_{m=1}^{M}f\left( x\right)
^{\gamma -m\varepsilon }\leq CMf\left( x\right) ^{\gamma -M\varepsilon }.
\end{eqnarray*}%
If we choose $\varepsilon <\frac{\gamma }{M}$, then we see that $%
\nabla^{M}g\left( x\right)$ is a flat function for each $M\geq 0$, and it
follows that $g$ is a flat smooth function.

$\left( \mathbf{3}\right) \mathbf{\nRightarrow }\left( \mathbf{4}\right) $:
Let $g$ be any elliptical flat smooth function that fails to be nearly
monotone, or even just fails the inequality $g\left( t\right) \leq 4g\left(
x\right) $ for some $0\leq t<x<a$. Then if $f\left( x\right) =e^{-\frac{1}{%
g\left( x\right) }}$, the functions $f\left( x\right) ^{\alpha }$ are smooth
for all $\alpha >0$, but $f$ is clearly not nearly monotone since in
particular, $f$ fails the inequality $f\left( t\right) \leq C_{s}f\left(
x\right) ^{s},\ \ \ 0\leq t<x<a$ for every $\frac{1}{2}\leq s<1$. Indeed, if
this inequality holds for some $s\geq \frac{1}{2}$, then 
\begin{eqnarray*}
f\left( t\right) &\leq &C_{s}f\left( x\right) ^{s},\ \ \ \ \ 0\leq t<x<a, \\
&\Longrightarrow &\ln \frac{1}{f\left( t\right) }\geq \ln \frac{1}{C_{s}}%
+s\ln \frac{1}{f\left( x\right) },\ \ \ \ \ 0\leq t<x<a, \\
&\Longrightarrow &\frac{1}{g\left( t\right) }\geq -\ln C_{s}+\frac{s}{%
g\left( x\right) },\ \ \ \ \ 0\leq t<x<a, \\
&\Longrightarrow &g\left( t\right) \leq \frac{1}{\frac{s}{g\left( x\right) }%
-\ln C_{s}}=\frac{g\left( x\right) }{s-\left( \ln C_{s}\right) g\left(
x\right) },\ \ \ \ \ 0\leq t<x<a,
\end{eqnarray*}%
which shows that for $x$ small enough, namely $g\left( x\right) <\frac{s}{%
2\ln C_{s}}$, we have $g\left( t\right) \leq \frac{2}{s}g\left( x\right)
\leq 4g\left( x\right) $, contradicting our assumption on $g$.

Finally, we give a modification of Glaseser's example in \cite{Gla} that
shows that for any $0\leq s<1$, there is an $\omega _{s}$-monotone function $%
f$ such that $f^{\alpha }$ is not smooth if $0<\alpha \leq \frac{1}{s}-1$.
Suppose $\varphi $ is an elliptical flat smooth function on $\left(
-1,1\right) $ that is decreasing on $\left( -1,0\right] $ and increasing on $%
\left[ 0,1\right) $. Suppose further that $\varphi $ is constant in a
neighbourhood of $\frac{1}{n}$ for each $n\in \mathbb{N}$,\ say in $\left( 
\frac{1}{n}-\varepsilon _{n},\frac{1}{n}+\varepsilon _{n}\right) $. See \cite%
[page 206]{Gla} for a construction of such a function. Let $0<\gamma <1$ and
define%
\begin{equation*}
f_{\gamma }\left( x\right) \equiv \varphi \left( x\right) ^{\frac{1}{\gamma }%
-1}\left( \sin ^{2}\frac{\pi }{x}+\varphi \left( x\right) \right) ,\ \ \ \ \ 
\text{for }-1<x<1.
\end{equation*}%
Then $f_{\gamma }$ is a flat smooth function vanishing only at $0$. Indeed,
Theorem \ref{main intro} shows in particular that $\varphi \left( x\right) ^{%
\frac{1}{\gamma }-1}$ is a smooth flat function for $0<\gamma <1$, and then
the smoothness of $f_{\gamma }$ at\ the\ origin follows\ easily from the
inequalities 
\begin{equation*}
\left\vert \frac{d^{n}}{dx^{n}}\sin ^{2}\frac{\pi }{x}\right\vert \leq
C_{n}\left\vert x\right\vert ^{-2n}.
\end{equation*}

The assumption that $\varphi $ is positive away from the origin shows that $%
f_{\gamma }$ is as well. Following Glaeser's argument, we now show that $%
g_{\gamma }\left( x\right) =\left( f_{\gamma }\left( x\right) \right)
^{\gamma }$ doesn't have a bounded second derivative in any neighbourhood of
the origin. Indeed, if $x=\frac{1}{n}+y$ where $y\in \left( -\varepsilon
_{n},\varepsilon _{n}\right) $, then%
\begin{eqnarray*}
\sin ^{2}\frac{\pi }{x} &=&\sin ^{2}\frac{\pi }{\frac{1}{n}+y}=\sin
^{2}\left( n\pi -n^{2}\pi \frac{y}{1+ny}\right) \\
&=&\sin ^{2}\left( n^{2}\pi \frac{y}{1+ny}\right) =\left( n^{2}\pi y\right)
^{2}+o\left( y^{2}\right) ,
\end{eqnarray*}%
and so%
\begin{eqnarray*}
f_{\gamma }\left( \frac{1}{n}+y\right) &=&\varphi \left( \frac{1}{n}
+y\right) ^{\frac{1}{\gamma }-1}\left( \sin ^{2}\frac{\pi }{\frac{1}{n}+y}
+\varphi \left( \frac{1}{n}+y\right) \right) \\
&=&\varphi \left( \frac{1}{n}\right) ^{\frac{1}{\gamma }-1}\left( n^{4}\pi
^{2}y^{2}+o\left( y^{2}\right) +\varphi \left( \frac{1}{n}\right) \right) \\
&=&\varphi \left( \frac{1}{n}\right) ^{\frac{1}{\gamma }}\left( \frac{
n^{4}\pi ^{2}y^{2}}{\varphi \left( \frac{1}{n}\right) }+1+\frac{o\left(
y^{2}\right) }{\varphi \left( \frac{1}{n}\right) }\right) ,
\end{eqnarray*}%
implies that for $y$ sufficiently small depending on $n$, we have%
\begin{eqnarray*}
g_{\gamma }\left( \frac{1}{n}+y\right) &=&\left( f_{\gamma }\left( \frac{1}{%
n }+y\right) \right) ^{\gamma }=\varphi \left( \frac{1}{n}\right) \left( 1+ 
\frac{n^{4}\pi ^{2}y^{2}}{\varphi \left( \frac{1}{n}\right) }+\frac{o\left(
y^{2}\right) }{\varphi \left( \frac{1}{n}\right) }\right) ^{\gamma } \\
&=&\varphi \left( \frac{1}{n}\right) \left( 1+\gamma \frac{n^{4}\pi
^{2}y^{2} }{\varphi \left( \frac{1}{n}\right) }+\frac{o\left( y^{2}\right) }{%
\varphi \left( \frac{1}{n}\right) }\right) =\varphi \left( \frac{1}{n}%
\right) +\gamma n^{4}\pi ^{2}y^{2}+o\left( y^{2}\right) ,
\end{eqnarray*}%
which in turn shows that $g_{\gamma }^{\prime \prime }\left( \frac{1}{n}%
\right) =2\gamma n^{4}\pi ^{2}$.

On the other hand, if $\frac{\pi }{t}=n\pi +\frac{\pi }{2}$ and $\frac{\pi }{%
x}=n\pi $, then $t<x$, $\sin ^{2}\frac{\pi }{t}=1$, $\sin ^{2}\frac{\pi }{x}%
=0$ and 
\begin{eqnarray*}
\frac{f_{\gamma }\left( t\right) }{f_{\gamma }\left( x\right)
^{1-\varepsilon }} &=&\frac{\varphi \left( t\right) ^{\frac{1}{\gamma }%
}\left( \sin ^{2}\frac{\pi }{t}+\varphi \left( t\right) \right) }{\varphi
\left( x\right) ^{\frac{1}{\gamma }\left( 1-\varepsilon \right) }\left( \sin
^{2}\frac{\pi }{x}+\varphi \left( x\right) \right) ^{1-\varepsilon }} \\
&=&\frac{\varphi \left( t\right) ^{\frac{1}{\gamma }}\left( 1+\varphi \left(
t\right) \right) }{\varphi \left( x\right) ^{\frac{1}{\gamma }\left(
1-\varepsilon \right) +1-\varepsilon }}>\frac{\varphi \left( t\right) ^{%
\frac{1}{\gamma }}}{\varphi \left( x\right) ^{\left( \frac{1}{\gamma }%
+1\right) \left( 1-\varepsilon \right) }}=\varphi \left( t\right)
^{\varepsilon \left( \frac{1}{\gamma }+1\right) -1}
\end{eqnarray*}%
is bounded as $n\rightarrow \infty $ if $\varepsilon \left( \frac{1}{\gamma }%
+1\right) \geq 1$, i.e. $\varepsilon \geq \frac{\gamma }{1+\gamma }$. Since
these pairs $\left( t,x\right) $ are the worst choices, it follows easily
that%
\begin{equation*}
\sup_{0<t<x<1}\frac{f_{\gamma }\left( t\right) }{f_{\gamma }\left( x\right)
^{s}}<\infty \Longleftrightarrow s\leq \frac{1}{1+\gamma }.
\end{equation*}%
Thus for $0<s=\frac{1}{1+\gamma }<1$, this gives an example of an elliptical
flat smooth function $f_{\gamma }$ that satisfies $\omega _{s}$%
-monotonicity, but the power function $\left( f_{\gamma }\left( x\right)
\right) ^{\gamma }$ is not smooth.

This completes the proof of Theorem \ref{main intro}.
\end{proof}

\begin{remark}
\label{main remark}Let $M\geq 0$ and $0<s<1$. If $\left\vert \nabla
^{k}f\left( x\right) \right\vert \leq \Gamma _{k,s}f\left( x\right) ^{s}$
holds for $0\leq k\leq M$, then $f^{\gamma }\in C^{M-1,1}$ for all $\gamma
\geq M\left( 1-s\right) $. For this, see the end of the proof of $\left( 
\mathbf{2}\right) \mathbf{\Longrightarrow }\left( \mathbf{3}\right) $ above.
In particular $\sqrt{f}\in C^{1,1}$ if $s\geq \frac{3}{4}$ and $\sqrt{f}\in
C^{2,1}$ if $s\geq \frac{5}{6}$. When $s>\frac{3}{4}$, we show in Theorem %
\ref{1/4 thm} just below that $\sqrt{f}\in C^{2,\delta }$ for some $\delta
>0 $. Finally, we see that if we assume $f$ is $\omega _{s}$-monotone for
some $s>1-\frac{\gamma }{M}$, then we conclude that $f^{\gamma }\in
C^{M-1,1} $.
\end{remark}

But we can do better than the previous remark indicates, as the next and
last theorem in this section shows.

\begin{theorem}
\label{1/4 thm}Let $M\geq 2$. Suppose that $f$ is elliptical, flat, smooth
and $\omega _{s}$-monotone on $\mathbb{R}^{d}$ for some $1-\frac{1}{2M}%
<s\leq 1$. Then there is $\delta >0$ and $g\in C^{M,\delta }\left( \mathbb{R}%
^{d}\right) $ such that $f=g^{2}$.
\end{theorem}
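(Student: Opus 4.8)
The plan is to take $g:=\sqrt{f}$ and prove $g\in C^{M,\delta}$. Away from the origin there is nothing to do: since $f$ is elliptical and flat it vanishes only at $0$, so $f>0$ there and $g=\sqrt{f}$ is $C^{\infty}$ on $\mathbb{R}^{d}\setminus\{0\}$. All the work is on a small ball $B(0,a_{0})$ on which $f\le 1$ (restricting to $B(0,a_0)$ preserves all hypotheses). First I would fix $s'$ with $1-\tfrac{1}{2M}<s'<1$ chosen so that $f$ is still $\omega_{\sigma}$-monotone for some $\sigma\in(s',1)$: if $s<1$ take $\sigma=s$ and $s'\in(1-\tfrac1{2M},s)$, and if $s=1$ use that $\omega_{1}$-monotonicity implies $\omega_{\sigma}$-monotonicity for every $\sigma<1$ (since $\omega_{1}(t)\le C_{\sigma}\omega_{\sigma}(t)$ on $[0,1]$). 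Put $\delta_{0}:=\tfrac12-M(1-s')>0$ and $\delta_{1}:=\max\{0,(M+1)(1-s')-\tfrac12\}$, so that $0\le\delta_{1}<1-s'$ (indeed $(1-s')-\delta_{1}=\delta_{0}$).

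Next I would extract two pointwise bounds on $B(0,a_{0})\setminus\{0\}$ from the composition formula (\ref{comp fla}) applied to $\psi(t)=t^{1/2}$, for which $\psi^{(m)}(t)=\binom{1/2}{m}t^{1/2-m}$, together with the derivative estimates $|\nabla^{m}f|\le\Gamma_{m}f^{(s')^{m}}$ of Theorem \ref{s'^m}. A generic term of $\nabla^{M}g$ is (a constant)$\cdot f^{1/2-m}\prod_{j}(\nabla^{j}f)^{\alpha_{j}}$ with $\sum_{j}\alpha_{j}=m$ and $\sum_{j}j\alpha_{j}=M$, hence bounded by $C f^{1/2-m+\sum_{j}(s')^{j}\alpha_{j}}$; by convexity of $j\mapsto(s')^{j}$ (Jensen) and the elementary fact that $u\mapsto\frac{1-(s')^{u}}{u}$ is decreasing, this exponent is $\ge\tfrac12-M(1-s')=\delta_{0}$ for every term, so $|\nabla^{M}g(x)|\le C f(x)^{\delta_{0}}$. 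The identical computation for $\nabla^{M+1}g$ (now $\sum_{j}j\alpha_{j}=M+1$) gives every term an exponent $\ge\tfrac12-(M+1)(1-s')=-\delta_{1}$, so $|\nabla^{M+1}g(x)|\le C f(x)^{-\delta_{1}}$. Since $f$ is flat, the first bound forces $\nabla^{M}g(x)\to0$ as $x\to0$, so $\nabla^{M}g$ extends continuously by $0$; with the crude bound $|\nabla^{M}g|\le C$ it follows in a standard way that $g\in C^{M}(B(0,a_{0}))$. I will also use, from the $m=1$ case of Theorem \ref{s'^m}, that $|\nabla(f^{1-s'})|=(1-s')f^{-s'}|\nabla f|\le(1-s')\Gamma_{1}=:L$, i.e. $f^{1-s'}$ is $L$-Lipschitz on $B(0,a_{0})$.

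The heart of the proof is upgrading continuity of $\nabla^{M}g$ to Hölder continuity with exponent $\delta:=\min\{1,\frac{\delta_{0}}{1-s'}\}>0$, and this is the step I expect to be the main obstacle: $\nabla^{M+1}g$ may blow up like $f^{-\delta_{1}}$ near $0$, and on a segment $[x,y]$ the infimum of $f$ is a priori uncontrolled (think of Glaeser-type oscillation of $f$). I would handle it by a case split on $|x-y|$ relative to $\max(|x|,|y|)$ and to $f(x)^{1-s'}$. If $x$ or $y$ equals $0$, or $|x-y|\ge\tfrac12\max(|x|,|y|)$, then $\max(|x|,|y|)\le 2|x-y|$ and $|\nabla^{M}g(x)|\le Cf(x)^{\delta_{0}}$ together with flatness $f(x)\le C_{N}|x|^{N}$ gives $|\nabla^{M}g(x)-\nabla^{M}g(y)|\le C|x-y|^{N\delta_{0}}\le C|x-y|^{\delta}$ for $N$ large. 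Otherwise $|x-y|<\tfrac12\max(|x|,|y|)$; assuming $|x|\ge|y|$ one has $\tfrac12|x|<|y|\le|x|$ and $[x,y]\subset\{\tfrac12|x|<|z|<\tfrac32|x|\}\subset B(0,a_{0})\setminus\{0\}$ (shrinking the radius once more), so $g$ is smooth along $[x,y]$ and $|\nabla^{M}g(x)-\nabla^{M}g(y)|\le|x-y|\,\sup_{[x,y]}|\nabla^{M+1}g|$. Now the Lipschitz control of $f^{1-s'}$ finishes it: if $f(x)^{1-s'}\ge 4L|x-y|$ then $f\ge cf(x)$ on all of $[x,y]$, so $\sup_{[x,y]}|\nabla^{M+1}g|\le Cf(x)^{-\delta_{1}}\le C(4L|x-y|)^{-\delta_{1}/(1-s')}$ and hence $|\nabla^{M}g(x)-\nabla^{M}g(y)|\le C|x-y|^{1-\delta_{1}/(1-s')}=C|x-y|^{\delta_{0}/(1-s')}$; while if $f(x)^{1-s'}<4L|x-y|$ then both $f(x)$ and (by the Lipschitz bound) $f(y)$ are $<(5L|x-y|)^{1/(1-s')}$, so $|\nabla^{M}g(x)-\nabla^{M}g(y)|\le C(f(x)^{\delta_{0}}+f(y)^{\delta_{0}})\le C|x-y|^{\delta_{0}/(1-s')}$. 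In every case the exponent is $\ge\delta$.

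Assembling, $\nabla^{M}g$ is bounded and $\delta$-Hölder on a ball about the origin, so $g\in C^{M,\delta}$ near $0$; combined with $g\in C^{\infty}$ off $0$ this gives $g\in C^{M,\delta}$ with $f=g^{2}$ and $\delta=\min\{1,\frac{1}{2(1-s')}-M\}>0$. Only the segment estimate in the last case is genuinely delicate; everything else is bookkeeping with the composition formula, the estimates of Theorem \ref{s'^m}, and flatness. (When $(M+1)(1-s')\le\tfrac12$ the bound on $\nabla^{M+1}g$ is already $O(1)$, so one in fact gets $g\in C^{M,1}$; the Hölder loss is confined to the range $1-\tfrac1{2M}<s'\le 1-\tfrac1{2(M+1)}$.)
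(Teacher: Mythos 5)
Your proof is correct, and it takes a genuinely different route from the paper's, so a comparison is worth recording.

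The paper establishes the result by running the pointwise H\"{o}lder seminorm apparatus $\left[ \cdot \right]_{\alpha,\delta}$ (defined in (\ref{def mod D})) through a chain rule for compositions $\psi\circ h$, then specializes to $\psi=s_{1/2}$, $h=f$; combined with $\left\vert D^{\ell}f\right\vert\lesssim f^{(s')^{\ell}}$ and $\left[ f\right]_{\ell,\delta}\lesssim f^{(s')^{\ell+\delta}}$, and the inequality $(s')^{k}\ge 1-k(1-s')$, it obtains $\left[\sqrt f\right]_{\alpha,\delta}(x)\lesssim f(x)^{\frac12-M\varepsilon'-\delta\varepsilon'}$ with $\varepsilon'=1-s'$, which is bounded for $\delta$ small. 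You instead bypass the seminorm machinery entirely: from the composition formula (\ref{comp fla}) and Theorem \ref{s'^m} you extract the two \emph{pointwise} bounds $\left\vert\nabla^{M}g\right\vert\lesssim f^{\delta_{0}}$ and $\left\vert\nabla^{M+1}g\right\vert\lesssim f^{-\delta_{1}}$ (your convexity/decreasing-secant argument for the exponent is equivalent to the paper's $(s')^{k}\ge 1-k\varepsilon'$), and then upgrade to H\"{o}lder continuity of $\nabla^{M}g$ by a direct case split. The genuinely new ingredient in your route is the Lipschitz estimate on $f^{1-s'}$, which is just the $m=1$ case of Theorem \ref{s'^m} in disguise; this lets you control $\inf_{[x,y]}f$ in terms of $f(x)$ and $\left\vert x-y\right\vert$, neatly handling the scenario you correctly identify as the obstruction (Glaeser-type oscillation of $f$ along the segment). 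Both arguments arrive at the same exponent $\delta=\min\bigl\{1,\tfrac{1}{2(1-s')}-M\bigr\}$. What each buys: the paper's computation is shorter once the seminorm calculus is in place, but the calculus itself (especially the substitution $\left[\psi^{(m)}\circ h\right]_{0,\delta}=\psi^{(m+1)}(h)\left[h\right]_{0,\delta}$ and the passage from pointwise limsup bounds to a uniform H\"{o}lder modulus) carries technical baggage; your argument is longer and more hands-on but is fully self-contained given Theorem \ref{s'^m}, and it isolates exactly where the H\"{o}lder loss occurs (the range $1-\tfrac1{2M}<s'\le 1-\tfrac1{2(M+1)}$, where $\nabla^{M+1}g$ is genuinely unbounded), a feature that is not visible in the paper's presentation. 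The handling of $s=1$ via $\omega_{1}\le C_{\sigma}\omega_{\sigma}$ is a small but necessary point that the paper's statement tacitly requires as well.
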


For the proof, we follow Bony \cite[Subsection 5.1]{Bon}, and define for a
multiindex $\alpha $ and $0<\delta <1$,%
\begin{equation}
\left[ h\right] _{\alpha ,\delta }\left( x\right) \equiv
\limsup_{y,z\rightarrow x}\frac{\left\vert D^{\alpha }h\left( y\right)
-D^{\alpha }h\left( z\right) \right\vert }{\left\vert y-z\right\vert
^{\delta }}.  \label{def mod D}
\end{equation}%
%
%
%
%
%
%
%For a nonnegative integer $M$ we set%
%\begin{equation*}
%\left[ h\right] _{M,\delta }\left( x\right) \equiv \sum_{\left\vert \alpha
%\right\vert =M}\left[ h\right] _{\alpha ,\delta }\left( x\right) .
%\end{equation*}%
There is a subproduct rule,%
\begin{equation}
\left[ fg\right] _{\alpha ,\delta }\left( x\right) \lesssim \sum_{\beta \leq
\alpha }\left[ f\right] _{\alpha -\beta ,\delta }\left( x\right) \
\left\vert D^{\beta }g\left( x\right) \right\vert +\sum_{\beta \leq \alpha
}\left\vert D^{\alpha -\beta }f\left( x\right) \right\vert \ \left[ g\right]
_{\beta ,\delta }\left( x\right) ,  \label{subproduct}
\end{equation}%
which follows using\ the product rule $D^{\alpha }\left( fg\right)
=\sum_{\beta \leq \alpha }\left( 
\begin{array}{c}
\alpha \\ 
\beta%
\end{array}%
\right) \left( D^{\alpha -\beta }f\right) \left( D^{\beta }g\right) $ and
the decomposition%
\begin{eqnarray*}
&&\left( D^{\alpha -\beta }f\right) \left( y\right) \left( D^{\beta
}g\right) \left( y\right) -\left( D^{\alpha -\beta }f\right) \left( z\right)
\left( D^{\beta }g\right) \left( z\right) \\
&=&\left[ \left( D^{\alpha -\beta }f\right) \left( y\right) -\left(
D^{\alpha -\beta }f\right) \left( z\right) \right] \ \left( D^{\beta
}g\right) \left( y\right) +\left( D^{\alpha -\beta }f\right) \left( z\right)
\ \left[ \left( D^{\beta }g\right) \left( y\right) -\left( D^{\beta
}g\right) \left( z\right) \right] ,
\end{eqnarray*}%
after then dividing by $\left\vert y-z\right\vert ^{\delta }$ and taking $%
\limsup_{y,z\rightarrow x}$ inside the sum.

To derive a subchain rule we start by considering the case of $\alpha=%
\mathbf{e}_{1}=(1,0,\dots, 0)$ and $\alpha=\mathbf{e}_{1}+\mathbf{e}%
_{2}=(1,1,0,\dots,0)$. We have $\partial _{1}\left( \psi \left( h\left(
x\right) \right) \right) =\psi ^{\prime }\left( h\left( x\right) \right) \
\partial _{1}h\left( x\right) $, and therefore 
\begin{eqnarray*}
\left[ \psi \circ h\right] _{\mathbf{e}_{1},\delta }\left( x\right) &\equiv
&\limsup_{y,z\rightarrow x}\frac{\left\vert \partial _{1}\psi \circ h\left(
y\right) -\partial _{1}\psi \circ h\left( z\right) \right\vert }{\left\vert
y-z\right\vert ^{\delta }} \\
&=&\limsup_{y,z\rightarrow x}\frac{\left\vert \psi ^{\prime }\left( h\left(
y\right) \right) \ \partial _{1}h\left( y\right) -\psi ^{\prime }\left(
h\left( z\right) \right) \ \partial _{1}h\left( z\right) \right\vert }{
\left\vert y-z\right\vert ^{\delta }} \\
&\leq &\limsup_{y,z\rightarrow x}\frac{\left\vert \psi ^{\prime }\left(
h\left( y\right) \right) \ \left[ \partial _{1}h\left( y\right) -\ \partial
_{1}h\left( z\right) \right] \right\vert }{\left\vert y-z\right\vert
^{\delta }}+\limsup_{y,z\rightarrow x}\frac{\left\vert \left[ \psi ^{\prime
}\left( h\left( y\right) \right) \ -\psi ^{\prime }\left( h\left( z\right)
\right) \right] \ \partial _{1}h\left( z\right) \right\vert }{\left\vert
y-z\right\vert ^{\delta }} \\
&=&\left\vert \psi ^{\prime }\circ h\left( x\right) \right\vert \ \left[ h %
\right] _{\mathbf{e}_{1},\delta }\left( x\right) +\left[ \psi ^{\prime
}\circ h\right] _{\mathbf{0},\delta }\left( x\right) \left\vert \ \partial
_{1}h\left( x\right) \right\vert .
\end{eqnarray*}

Next we have%
\begin{equation*}
\partial _{1}\partial _{2}\left( \psi \circ h\right) =\psi ^{\prime \prime
}\left( \partial _{1}h\right) \left( \partial _{2}h\right) +\psi ^{\prime
}\partial _{2}\partial _{1}h
\end{equation*}%
and%
\begin{eqnarray*}
&&\left[ \psi \circ h\right] _{\mathbf{e}_{1}+\mathbf{e}_{2},\delta }\left(
x\right) \equiv \limsup_{y,z\rightarrow x}\frac{\left\vert \partial
_{1}\partial _{2}\left( \psi \circ h\right) \left( y\right) -\partial
_{1}\partial _{2}\left( \psi \circ h\right) \left( z\right) \right\vert }{
\left\vert y-z\right\vert ^{\delta }} \\
&\leq &\limsup_{y,z\rightarrow x}\frac{\left\vert \left\{ \left( \psi
^{\prime \prime }\circ h\right) \left( \partial _{1}h\right) \left( \partial
_{2}h\right) \right\} \left( y\right) -\left\{ \left( \psi ^{\prime \prime
}\circ h\right) \left( \partial _{1}h\right) \left( \partial _{2}h\right)
\right\} \left( z\right) \right\vert }{\left\vert y-z\right\vert ^{\delta }}
\\
&&+\limsup_{y,z\rightarrow x}\frac{\left\vert \left\{ \left( \psi ^{\prime
}\circ h\right) \partial _{2}\partial _{1}h\right\} \left( y\right) -\left\{
\left( \psi ^{\prime }\circ h\right) \partial _{2}\partial _{1}h\right\}
\left( z\right) \right\vert }{\left\vert y-z\right\vert ^{\delta }} \\
&\leq &\left[ \psi ^{\prime \prime }\circ h\right] _{\mathbf{0},\delta
}\left( x\right) \left\vert \ \partial _{1}h\left( x\right) \partial
_{2}h\left( x\right) \right\vert +\left\vert \psi ^{\prime \prime}\circ
h\left( x\right) \right\vert \ \left(\left[ h\right] _{\mathbf{e}_{1},\delta
}\left( x\right) \partial_{2}h\left( x\right) +\left[ h\right] _{\mathbf{e}
_{2},\delta }\left( x\right)\partial_{1}h\left( x\right) \right) \\
&&+\left[ \psi ^{\prime }\circ h\right] _{\mathbf{0},\delta }\left( x\right)
\left\vert \ \partial _{1}h\left( x\right) \partial _{2}h\left( x\right)
\right\vert+\left\vert \psi ^{\prime }\circ h\left( x\right) \right\vert %
\left[ h\right] _{\mathbf{e}_{1}+\mathbf{e}_{2},\delta }\left( x\right).
\end{eqnarray*}

Generalizing to $\left[ \psi \circ h\right] _{\alpha ,\delta }$ with $%
|\alpha |=M$ one obtains 
\begin{align*}
\left[ \psi \circ h\right] _{\alpha ,\delta }(x)& \lesssim
\sum_{m=1}^{M}\left( \left[ \psi ^{(m)}\circ h\right] _{0,\delta }(x)\sum 
_{\substack{ 0<\beta _{i}\leq \alpha  \\ |\beta _{1}|+\dots +|\beta _{m}|=M}}%
D^{\beta _{1}}h\cdot \dots \cdot D^{\beta _{m}}h\right) \\
& \quad +\sum_{m=1}^{M}\left( \left\vert \psi ^{(m)}\circ h(x)\right\vert
\sum_{\substack{ 0<\beta _{i}\leq \alpha  \\ |\beta _{1}|+\dots +|\beta
_{m}|=M}}\sum_{j=1}^{m}[h]_{\beta _{j},\delta }D^{\beta _{1}}h\cdot \dots
\cdot D^{\beta _{j-1}}hD^{\beta _{j+1}}h\cdot \dots \cdot D^{\beta
_{m}}h\right) .
\end{align*}

Indeed, $m$ indicates how many factors we will have in the product of
derivatives of $h$; each $\beta _{i}$ is a multiindex, which is nonzero and
does not exceed $\alpha $; the total number of derivatives we take is $%
|\beta _{1}|+\dots +|\beta _{m}|=M=|\alpha|$.

In the first line above we will replace $\left[ \psi ^{(m)}\circ h\right]
_{0,\delta }\left( x\right) $ with 
\begin{eqnarray*}
\left[ \psi ^{(m)}\circ h\right] _{0,\delta }\left( x\right)
&=&\limsup_{y,z\rightarrow x}\frac{\left\vert \psi ^{(m)}\circ h\left(
y\right) -\psi ^{(m)}\circ h\left( z\right) \right\vert }{\left\vert
y-z\right\vert ^{\delta }} \\
&=&\limsup_{y,z\rightarrow x}\frac{\left\vert \psi ^{(m)}\left( h\left(
y\right) \right) -\psi ^{(m)}\left( h\left( z\right) \right) \right\vert }{
\left\vert h\left( y\right) -h\left( z\right) \right\vert }\frac{\left\vert
h\left( y\right) -h\left( z\right) \right\vert }{\left\vert y-z\right\vert
^{\delta }} \\
&=&\psi ^{(m+1)}\left( h\left( x\right) \right) \ \left[ h\right] _{0,\delta
}\left( x\right) ,
\end{eqnarray*}%
to obtain 
\begin{align*}
&\left[ \psi \circ h\right] _{\alpha ,\delta }(x) \lesssim
\sum_{m=1}^{M}\left( \psi ^{(m+1)}\left( h\left( x\right) \right) \ \left[ h%
\right] _{0,\delta }\left( x\right) \sum_{\substack{ 0<\beta _{i}\leq \alpha 
\\ |\beta _{1}|+\dots +|\beta _{m}|=M}}D^{\beta _{1}}h\cdot \dots \cdot
D^{\beta _{m}}h\right) \\
& \quad +\sum_{m=1}^{M}\left( \left\vert \psi ^{(m)}\circ h(x)\right\vert
\sum_{\substack{ 0<\beta _{i}\leq \alpha  \\ |\beta _{1}|+\dots +|\beta
_{m}|=M}}\sum_{j=1}^{m}[h]_{\beta _{j},\delta }D^{\beta _{1}}h\cdot \dots
\cdot D^{\beta _{j-1}}hD^{\beta _{j+1}}h\cdot \dots \cdot D^{\beta
_{m}}h\right) .
\end{align*}

\begin{proof}[Proof of Theorem \protect\ref{1/4 thm}]
In the special case $\psi(t)=s_{1/2}(t)=t^{1/2}$ and $h=f$ we have 
\begin{equation*}
|D^{k}\psi \left( t\right)|\leq C_k t^{1/2-k}\text{ and }\left[ \psi \right]
_{k,\delta }\left( t\right) \leq C_k t^{1/2 -k-\delta },
\end{equation*}%
and therefore 
\begin{align*}
&\left[ \sqrt{f}\right] _{\alpha ,\delta }(x) \lesssim \sum_{m=1}^{M}\left(
f(x)^{1/2-m-1}\ f\left( x\right) ^{\left( s^{\prime }\right) ^{\delta }}
\sum _{\substack{ 0<\beta _{i}\leq \alpha  \\ |\beta _{1}|+\dots +|\beta
_{m}|=M}}f\left( x\right) ^{\left( s^{\prime}\right) ^{|\beta_1| }} \cdot
\dots \cdot f\left( x\right) ^{\left( s^{\prime }\right) ^{|\beta_m| }}
\right) \\
& \quad +\sum_{m=1}^{M}\left( f(x)^{1/2-m} \sum_{\substack{ 0<\beta _{i}\leq
\alpha  \\ |\beta _{1}|+\dots +|\beta _{m}|=M}}\sum_{j=1}^{m}f\left(
x\right) ^{\left( s^{\prime }\right) ^{|\beta_1| }} \cdot \dots\cdot f\left(
x\right) ^{\left( s^{\prime }\right) ^{|\beta_j|+\delta }}\cdot\dots \cdot
f\left( x\right) ^{\left( s^{\prime }\right) ^{|\beta_m| }} \right) .
\end{align*}

We now combine this inequality with the inequalities from (\ref{first high'}%
) and their analogues for $\left\vert D\right\vert $, namely%
\begin{equation*}
\left\vert D^{\ell }f\left( x\right) \right\vert \lesssim f\left( x\right)
^{\left( s^{\prime }\right) ^{\ell }}\text{ and }\left[ f\right] _{\ell
,\delta }\left( x\right) \lesssim f\left( x\right) ^{\left( s^{\prime
}\right) ^{\ell +\delta }},
\end{equation*}%
to see that $\left[ \sqrt{f}\right] _{\alpha ,\delta }(x) \lesssim 1$ for a
sufficiently small $\delta >0$ when $s>s^{\prime }>1-\frac{1}{2M}$. Let $%
\varepsilon^{\prime}=1-s^{\prime}$ so $\varepsilon^{\prime}=1-s^{\prime
}\in\left(0,\frac{1}{2M}\right)$. We use the estimate 
\begin{equation*}
\left(s^{\prime}\right)^{k}=\left(1-\varepsilon^{\prime}\right)^{k}\geq
1-k\varepsilon^{\prime}
\end{equation*}
to obtain 
\begin{equation*}
\left( s^{\prime}\right) ^{|\beta_1| }+\dots+\left( s^{\prime}\right)
^{|\beta_m| }\geq m-M\varepsilon^{\prime},\ \ \left( s^{\prime}\right)
^{|\beta_1| }+\dots+\left( s^{\prime}\right) ^{|\beta_j|+\delta
}+\dots+\left( s^{\prime}\right) ^{|\beta_m| }\geq
m-M\varepsilon^{\prime}-\delta\varepsilon^{\prime}
\end{equation*}
since $|\beta _{1}|+\dots +|\beta_{m}|=M$. This gives 
\begin{align*}
\left[ \sqrt{f}\right] _{\alpha ,\delta }(x) &\lesssim \sum_{m=1}^{M}\left(
f(x)^{1/2-m-1+1-\delta\varepsilon^{\prime}}\
f(x)^{m-M\varepsilon^{\prime}}\right) \quad +\sum_{m=1}^{M}\left(
f(x)^{1/2-m} \
f(x)^{m-M\varepsilon^{\prime}-\delta\varepsilon^{\prime}}\right) \\
&\lesssim f(x)^{1/2-M\varepsilon^{\prime}-\delta\varepsilon^{\prime}},
\end{align*}%
which is bounded if $\delta >0$ is chosen sufficiently small since $%
\varepsilon ^{\prime }<\frac{1}{2M}$. This completes the proof that $\sqrt{f}%
\in C^{M,\delta }$.
\end{proof}

\section{Sum of squares via Bony's H\"{o}lder adaptation of Fefferman-Phong}

Here we will follow Tataru's adaptation of the Fefferman-Phong argument,
incorporating Bony's H\"{o}lder modification, that uses the implicit
function theorem and Lemma \ref{first local} below on controlling odd
derivatives by even derivatives, plus a bit more. But we begin here by
stating and proving the implicit function theorem in the form we will use
it, and then giving the control of odd derivatives by even derivatives for
nonegative functions.

\begin{theorem}
\label{IFT}Let $H:\mathbb{R}^{n}=\mathbb{R}^{n-1}\times \mathbb{R}%
\rightarrow \mathbb{R}$ be $C^{1}$ and let $y=\left( y^{\prime
},y_{n}\right) \in \mathbb{R}^{n-1}\times \mathbb{R}$ satisfy%
\begin{equation*}
H\left( y^{\prime },y_{n}\right) =0\text{ and }\frac{\partial H}{\partial
x_{n}}\left( y^{\prime },y_{n}\right) \not=0.
\end{equation*}

\begin{enumerate}
\item Then there is a ball $U=B\left( y^{\prime },r\right) \subset \mathbb{R}%
^{n-1}$ and an interval $V=\left( y_{n}-r,y_{n}+r\right) $ such that there
is a unique function $h:U\rightarrow V$ so that $z=h\left( x^{\prime
}\right) $ satisfies 
\begin{equation*}
H\left( x^{\prime },h\left( x^{\prime }\right) \right) =0,\ \ \ \ \
x^{\prime }\in U.
\end{equation*}

\item Moreover $h$ is continuously differentiable and%
\begin{eqnarray*}
Dh\left( x^{\prime }\right) &=&-\frac{1}{\frac{\partial H}{\partial x_{n}}%
\left( x^{\prime },h\left( x^{\prime }\right) \right) }\left( D_{x^{\prime
}}H\right) \left( x^{\prime },h\left( x^{\prime }\right) \right) , \\
\text{i.e. }\frac{\partial h}{\partial x_{i}} &=&-\frac{\frac{\partial H}{%
\partial x_{i}}\left( x^{\prime },h\left( x^{\prime }\right) \right) }{\frac{%
\partial H}{\partial x_{n}}\left( x^{\prime },h\left( x^{\prime }\right)
\right) },\ \ \ \ \ \text{for }1\leq i\leq n-1.
\end{eqnarray*}

\item If in addition $H$ is $C^{2}$, then $h$ is also $C^{2}$ and%
\begin{equation*}
\frac{\partial ^{2}h}{\partial x_{i}\partial x_{j}}\left( x^{\prime }\right)
=-\frac{\frac{\partial ^{2}H}{\partial x_{i}\partial x_{j}}}{\frac{\partial H%
}{\partial x_{n}}}+\frac{\frac{\partial H}{\partial x_{j}}\frac{\partial
^{2}H}{\partial x_{i}\partial x_{n}}+\frac{\partial H}{\partial x_{i}}\frac{%
\partial ^{2}H}{\partial x_{j}\partial x_{n}}}{\left( \frac{\partial H}{%
\partial x_{n}}\right) ^{2}}-\frac{\frac{\partial H}{\partial x_{i}}\frac{%
\partial H}{\partial x_{j}}\frac{\partial ^{2}H}{\partial x_{n}^{2}}}{\left( 
\frac{\partial H}{\partial x_{n}}\right) ^{3}},
\end{equation*}%
where $H$ and its partial derivatives are evaluated at $\left( x^{\prime
},h\left( x^{\prime }\right) \right) $ for $x^{\prime }\in U$.

\item If in addition $H$ is $C^{m}$ for some $m\in \mathbb{N}$, then $h$ is
also $C^{m}$ and there is a formula for the $m^{th}$ order partial
derivatives of $h$ having the following form for $\alpha \in \mathbb{Z}%
_{+}^{n-1}$ with $\left\vert \alpha \right\vert =m$,%
\begin{equation*}
	\frac{\partial ^{m}h}{\partial x^{\alpha }}\left( x^{\prime }\right)
	=\sum_{\ell =0}^{m}\frac{(-1)^{\ell+1}\ell\,!}{\left( \frac{\partial H}{\partial x_{n}}\right)
		^{\ell +1}}\sum_{\beta \in \mathbb{Z}_{+}^{n-1}:\left\vert \beta \right\vert
		=\ell }\frac{\partial ^{m}H}{\partial ^{\alpha -\beta }x\ \partial
		x_{n}^{\ell }}\dprod\limits_{\substack{ \gamma _{j}\in \mathbb{Z}_{+}^{n-1} 
			\\ \gamma _{1}+...+\gamma _{k}=\beta }}\frac{\partial ^{\left\vert \gamma
			_{j}\right\vert }H}{\partial x^{\gamma _{j}}}.
\end{equation*}
\end{enumerate}
\end{theorem}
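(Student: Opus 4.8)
The plan is to establish the four parts in sequence, each regularity statement bootstrapping off the one before it. For part (1) I would first normalize so that $\partial H/\partial x_n(y',y_n)>0$, replacing $H$ by $-H$ if needed. Since $H$ is $C^1$, there is a box $B(y',r_0)\times(y_n-r_0,y_n+r_0)$ on which $\partial H/\partial x_n>0$, so on this box $x_n\mapsto H(x',x_n)$ is strictly increasing for each fixed $x'$. From $H(y',y_n)=0$ we get $H(y',y_n-r)<0<H(y',y_n+r)$ for all $0<r\le r_0$, and by continuity of $H$ in $x'$ these strict inequalities persist for $x'\in B(y',r)$ once $r$ is small enough; the intermediate value theorem then produces a unique $z=h(x')\in(y_n-r,y_n+r)$ with $H(x',z)=0$, uniqueness coming from strict monotonicity in $x_n$. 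Running the same monotonicity/IVT argument on an arbitrarily thin sub-interval around $h(x_0')$ gives continuity of $h$ at each $x_0'\in U$. This fixes $U=B(y',r)$ and $V=(y_n-r,y_n+r)$.

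For part (2), fix $x'\in U$ and $1\le i\le n-1$, and use the identity $H(x'+te_i,h(x'+te_i))-H(x',h(x'))=0$ together with a first-order Taylor expansion of $H$ about $(x',h(x'))$ (valid since $H\in C^1$ and $h$ is continuous): dividing by $t$ and letting $t\to 0$, the nonvanishing of $\partial H/\partial x_n$ forces $h$ to be differentiable at $x'$ with $\partial h/\partial x_i = -(\partial H/\partial x_i)/(\partial H/\partial x_n)$ evaluated at $(x',h(x'))$, and continuity of this expression shows $h\in C^1$. Part (3) is then immediate in principle: when $H\in C^2$, the first partials of $H$ are $C^1$ and $h\in C^1$, so the right-hand side $-(\partial H/\partial x_i)(x',h(x'))/(\partial H/\partial x_n)(x',h(x'))$ is a quotient and composition of $C^1$ functions with nonvanishing denominator, hence $C^1$; thus each $\partial h/\partial x_i\in C^1$ and $h\in C^2$. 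Differentiating that first-order formula once more by the quotient and chain rules and collecting terms yields the stated expression for $\partial^2 h/\partial x_i\partial x_j$, a routine computation I would simply carry out.

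Part (4) is an induction on $m$. Assuming $H\in C^m$ and, inductively, that $h\in C^{m-1}$, the identity $\partial h/\partial x_i=-\big((\partial H/\partial x_i)/(\partial H/\partial x_n)\big)\circ(\mathrm{id},h)$ exhibits each $\partial h/\partial x_i$ as a composition of $C^{m-1}$ maps, so $h\in C^m$. For the explicit form of $\partial^m h/\partial x^\alpha$ I would differentiate the first-order formula $m-1$ further times, noting that each differentiation of a term $N/(\partial H/\partial x_n)^{\ell+1}$ does one of two things: it drops a derivative onto the numerator $N$ (possibly via the chain rule through the inner $h$, which only reintroduces derivatives of $H$), keeping the denominator power $\ell+1$; or it hits the denominator, raising the power to $\ell+2$ and multiplying $N$ by a factor of $\partial H/\partial x_n$ and a numerical constant. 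Tracking these two moves shows every resulting term has exactly the advertised shape: a signed rational function with $(\partial H/\partial x_n)^{\ell+1}$ in the denominator for some $0\le\ell\le m$, and a numerator consisting of one top derivative $\partial^m H/\partial^{\alpha-\beta}x\,\partial x_n^{\ell}$ times a product of lower-order derivatives of $H$ whose multiindices sum to $\beta$ with $|\beta|=\ell$; I would not attempt to pin down the exact combinatorial coefficients, only this structure, which is all the statement claims.

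The main obstacle is the bookkeeping in part (4): organizing the chain-rule contributions, those arising when a derivative falls on the inner function $h$, so that after collecting the formula genuinely has the claimed homogeneous form rather than an unmanageable branching tree of terms. Parts (1)--(3) are essentially the classical proof of the implicit function theorem and present no genuine difficulty; the only care needed there is in the Taylor-expansion step of part (2), to be sure the remainder is controlled using only continuity of $h$ and the $C^1$ regularity of $H$.
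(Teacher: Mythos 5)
Your proof is correct and follows essentially the same route as the paper's: parts (1) and (2) are the classical implicit function theorem (which the paper simply cites, while you supply the standard monotonicity/IVT argument), and parts (3)–(4) are obtained by repeatedly differentiating the first-order formula $\partial h/\partial x_i=-(\partial H/\partial x_i)/(\partial H/\partial x_n)$ evaluated at $(x',h(x'))$, which the paper carries out explicitly for the second derivative and which you describe structurally for the higher orders. The only minor looseness in your writeup is the "first-order Taylor expansion" in part (2), which is more cleanly done via the mean value theorem or fundamental theorem of calculus along the segment joining $(x',h(x'))$ to $(x'+te_i,h(x'+te_i))$, but this is exactly the classical argument you are appealing to and does not affect correctness.
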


\begin{proof}
Parts (1) and (2) are the classical implicit function theorem. For part (3),
if $H$ is $C^{2}$ we have,%
\begin{eqnarray*}
\frac{\partial ^{2}h}{\partial x_{i}\partial x_{j}} &=&-\frac{\partial }{%
\partial x_{j}}\frac{\frac{\partial H}{\partial x_{i}}\left( x^{\prime
},h\left( x^{\prime }\right) \right) }{\frac{\partial H}{\partial x_{n}}%
\left( x^{\prime },h\left( x^{\prime }\right) \right) } \\
&=&-\frac{\frac{\partial H}{\partial x_{n}}\left( x^{\prime },h\left(
x^{\prime }\right) \right) \frac{\partial }{\partial x_{j}}\left\{ \frac{%
\partial H}{\partial x_{i}}\left( x^{\prime },h\left( x^{\prime }\right)
\right) \right\} -\frac{\partial H}{\partial x_{i}}\left( x^{\prime
},h\left( x^{\prime }\right) \right) \frac{\partial }{\partial x_{j}}\left\{ 
\frac{\partial H}{\partial x_{n}}\left( x^{\prime },h\left( x^{\prime
}\right) \right) \right\} }{\left( \frac{\partial H}{\partial x_{n}}\left(
x^{\prime },h\left( x^{\prime }\right) \right) \right) ^{2}} \\
&=&-\frac{\frac{\partial H}{\partial x_{n}}\left( x^{\prime },h\left(
x^{\prime }\right) \right) \left\{ \frac{\partial ^{2}H}{\partial
x_{i}\partial x_{j}}\left( x^{\prime },h\left( x^{\prime }\right) \right) +%
\frac{\partial ^{2}H}{\partial x_{i}\partial x_{n}}\left( x^{\prime
},h\left( x^{\prime }\right) \right) \left( -\frac{\frac{\partial H}{%
\partial x_{j}}\left( x^{\prime },h\left( x^{\prime }\right) \right) }{\frac{%
\partial H}{\partial x_{n}}\left( x^{\prime },h\left( x^{\prime }\right)
\right) }\right) \right\} }{\left( \frac{\partial H}{\partial x_{n}}\left(
x^{\prime },h\left( x^{\prime }\right) \right) \right) ^{2}} \\
&&+\frac{\frac{\partial H}{\partial x_{i}}\left( x^{\prime },h\left(
x^{\prime }\right) \right) \left\{ \frac{\partial ^{2}H}{\partial
x_{j}\partial x_{n}}\left( x^{\prime },h\left( x^{\prime }\right) \right) +%
\frac{\partial ^{2}H}{\partial x_{n}^{2}}\left( x^{\prime },h\left(
x^{\prime }\right) \right) \left( -\frac{\frac{\partial H}{\partial x_{j}}%
\left( x^{\prime },h\left( x^{\prime }\right) \right) }{\frac{\partial H}{%
\partial x_{n}}\left( x^{\prime },h\left( x^{\prime }\right) \right) }%
\right) \right\} }{\left( \frac{\partial H}{\partial x_{n}}\left( x^{\prime
},h\left( x^{\prime }\right) \right) \right) ^{2}},
\end{eqnarray*}%
which gives%
\begin{eqnarray*}
\frac{\partial ^{2}h}{\partial x_{i}\partial x_{j}} &=&-\frac{\frac{\partial
H}{\partial x_{n}}\frac{\partial ^{2}H}{\partial x_{i}\partial x_{j}}-\frac{%
\partial H}{\partial x_{n}}\frac{\partial ^{2}H}{\partial x_{i}\partial x_{n}%
}\left( \frac{\frac{\partial H}{\partial x_{j}}}{\frac{\partial H}{\partial
x_{n}}}\right) }{\left( \frac{\partial H}{\partial x_{n}}\right) ^{2}}+\frac{%
\frac{\partial H}{\partial x_{i}}\frac{\partial ^{2}H}{\partial
x_{j}\partial x_{n}}-\frac{\partial H}{\partial x_{i}}\frac{\partial ^{2}H}{%
\partial x_{n}^{2}}\left( \frac{\frac{\partial H}{\partial x_{j}}}{\frac{%
\partial H}{\partial x_{n}}}\right) }{\left( \frac{\partial H}{\partial x_{n}%
}\right) ^{2}} \\
&=&-\frac{\frac{\partial ^{2}H}{\partial x_{i}\partial x_{j}}}{\frac{%
\partial H}{\partial x_{n}}}+\frac{\frac{\partial H}{\partial x_{j}}\frac{%
\partial ^{2}H}{\partial x_{i}\partial x_{n}}}{\left( \frac{\partial H}{%
\partial x_{n}}\right) ^{2}}+\frac{\frac{\partial H}{\partial x_{i}}\frac{%
\partial ^{2}H}{\partial x_{j}\partial x_{n}}}{\left( \frac{\partial H}{%
\partial x_{n}}\right) ^{2}}-\frac{\frac{\partial H}{\partial x_{i}}\frac{%
\partial H}{\partial x_{j}}\frac{\partial ^{2}H}{\partial x_{n}^{2}}}{\left( 
\frac{\partial H}{\partial x_{n}}\right) ^{3}}.
\end{eqnarray*}%
Part (4) is established in a similar fashion.
\end{proof}

Now we recall from Fefferman-Phong \cite{FePh} and Tataru \cite[Lemma 5.1]%
{Tat}, the control of odd derivatives in terms of even derivatives for a 
\emph{nonnegative} $C^{3,1}$ function $f$. For the convenience of the
reader, we repeat the argument of Tataru \cite[Lemma 5.1]{Tat} in slightly
greater detail here.

\begin{lemma}
\label{first local}Suppose $f\left( x\right) \geq 0$ and $\left\vert
f^{\prime \prime \prime \prime }\left( x\right) \right\vert \leq 1$ for $%
x\in \mathbb{R}$. Then%
\begin{eqnarray}
\left\vert f^{\prime }\left( x\right) \right\vert &\leq &\frac{8}{3}f\left(
x\right) ^{\frac{3}{4}}+\frac{8}{3}f\left( x\right) ^{\frac{1}{2}}\left\vert
f^{\prime \prime }\left( x\right) \right\vert ^{\frac{1}{2}},  \label{nonneg}
\\
\left\vert f^{\prime \prime \prime }\left( x\right) \right\vert &\leq
&8f\left( x\right) ^{\frac{1}{4}}+8\left\vert f^{\prime \prime }\left(
x\right) \right\vert ^{\frac{1}{2}},  \notag \\
-f^{\prime \prime }\left( x\right) &\leq &\frac{5}{3}f\left( x\right) ^{%
\frac{1}{2}},  \notag
\end{eqnarray}
\end{lemma}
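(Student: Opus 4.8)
The plan is to derive all three inequalities from one pointwise polynomial constraint. Fix $x$ and abbreviate $a=f(x)\ge 0$, $b=f'(x)$, $c=f''(x)$, $d=f'''(x)$. Taylor's theorem with remainder, using $\left\vert f''''\right\vert \le 1$, gives $0\le f(x+t)\le a+bt+\frac{c}{2}t^{2}+\frac{d}{6}t^{3}+\frac{t^{4}}{24}$ for every $t\in\mathbb{R}$, so that
\[
Q(t):=a+bt+\frac{c}{2}t^{2}+\frac{d}{6}t^{3}+\frac{t^{4}}{24}\ \ge\ 0\qquad\text{for all }t\in\mathbb{R}.
\]
Every estimate will come from evaluating $Q$ at a few well-chosen points and then optimizing an auxiliary scale $h>0$. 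I may assume $a>0$ throughout: if $a=0$ then $x$ is a global minimum of $f$, so $b=0$ and $c\ge 0$, the first and third inequalities being then trivial, while dividing $Q(t)\ge 0$ by $t^{2}$ forces the quadratic $\frac{c}{2}+\frac{d}{6}t+\frac{t^{2}}{24}\ge 0$ to have nonpositive discriminant, whence $d^{2}\le 3c$ and $\left\vert d\right\vert\le\sqrt 3\,\left\vert c\right\vert^{1/2}\le 8\left\vert c\right\vert^{1/2}$, which is the second inequality when $a=0$. For the bound on $-f''$, add $Q(t)+Q(-t)\ge 0$ to kill the odd terms, obtaining $2a+ct^{2}+\frac{t^{4}}{12}\ge 0$, hence $-c\le\frac{2a}{t^{2}}+\frac{t^{2}}{12}$ for $t\neq 0$ (nothing to prove if $c\ge 0$); minimizing the right side over $t^{2}>0$ by AM--GM gives $-c\le\frac{\sqrt 6}{3}\sqrt a\le\frac{5}{3}\sqrt a$.

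For $f'$ and $f'''$, I would pass to the odd part $\psi(t):=bt+\frac{d}{6}t^{3}=\frac12\bigl(Q(t)-Q(-t)\bigr)$. Since $c\le\left\vert c\right\vert$, the inequalities $Q(t)\ge 0$ and $Q(-t)\ge 0$ rewrite as $\left\vert\psi(t)\right\vert\le R(t)$ for all $t$, where $R(t):=a+\frac{\left\vert c\right\vert}{2}t^{2}+\frac{t^{4}}{24}$ is an even majorant. The elementary identities for odd cubics $8\psi(h)-\psi(2h)=6bh$ and $\psi(2h)-2\psi(h)=dh^{3}$, together with $\left\vert\psi\right\vert\le R$, then give for every $h>0$
\[
6\left\vert b\right\vert h\le 8R(h)+R(2h)=9a+6\left\vert c\right\vert h^{2}+h^{4},\qquad \left\vert d\right\vert h^{3}\le R(2h)+2R(h)=3a+3\left\vert c\right\vert h^{2}+\frac{3}{4}h^{4},
\]
that is, $\left\vert b\right\vert\le\frac{3a}{2h}+\left\vert c\right\vert h+\frac{h^{3}}{6}$ and $\left\vert d\right\vert\le\frac{3a}{h^{3}}+\frac{3\left\vert c\right\vert}{h}+\frac{3h}{4}$.

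It remains to optimize $h$, splitting according to whether $\left\vert f''(x)\right\vert=\left\vert c\right\vert\le\sqrt a$ or $\left\vert c\right\vert>\sqrt a$. In the first regime take $h=a^{1/4}$: then $\left\vert c\right\vert h\le a^{3/4}$ and $\left\vert c\right\vert/a^{1/4}\le a^{1/4}$, and the two displayed bounds collapse to $\left\vert b\right\vert\le\bigl(\frac32+1+\frac16\bigr)a^{3/4}=\frac83 a^{3/4}$ and $\left\vert d\right\vert\le\bigl(3+3+\frac34\bigr)a^{1/4}=\frac{27}{4}a^{1/4}\le 8a^{1/4}$. In the second regime take $h=(a/\left\vert c\right\vert)^{1/2}$ for the $b$-estimate (so $h^{3}<a^{3/4}$), giving $\left\vert b\right\vert\le\frac32\sqrt{a\left\vert c\right\vert}+\sqrt{a\left\vert c\right\vert}+\frac16 a^{3/4}=\frac52\sqrt a\,\left\vert c\right\vert^{1/2}+\frac16 a^{3/4}$, and take $h=2\left\vert c\right\vert^{1/2}$ for the $d$-estimate (so $3a/h^{3}=\frac{3a}{8\left\vert c\right\vert^{3/2}}\le\frac38 a^{1/4}$), giving $\left\vert d\right\vert\le\frac38 a^{1/4}+\frac32\left\vert c\right\vert^{1/2}+\frac32\left\vert c\right\vert^{1/2}=\frac38 a^{1/4}+3\left\vert c\right\vert^{1/2}$. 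In either regime these are dominated by $\frac83\bigl(f(x)^{3/4}+f(x)^{1/2}\left\vert f''(x)\right\vert^{1/2}\bigr)$ and by $8f(x)^{1/4}+8\left\vert f''(x)\right\vert^{1/2}$, respectively, which are the first two inequalities.

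There is no deep obstacle here; the whole argument is a short chain of polynomial manipulations. The two places that require care are: choosing the two scales $h$ adapted to the size regimes of $\left\vert f''(x)\right\vert$ relative to $f(x)^{1/2}$, so that the numerical constants land below $\frac83$, $8$ and $\frac53$ (the single choice $h=f(x)^{1/4}$ fails when $\left\vert f''(x)\right\vert$ is large compared to $f(x)^{1/2}$); and the degenerate locus $\{f=0\}$, which must be handled separately as in the first paragraph since there the scale cannot be normalized by $f(x)$.
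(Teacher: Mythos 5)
Your proof is correct and follows the paper's strategy: Taylor's formula gives the polynomial constraint $Q(t)\ge 0$, the odd part $f'(x)t+\tfrac16 f'''(x)t^3$ is isolated via the same $(8,-1)$ linear combination at scales $h$ and $2h$, and the resulting two-parameter bounds are optimized in $h$. The only difference is cosmetic: you split the optimization into the regimes $|f''(x)|\le\sqrt{f(x)}$ and $|f''(x)|>\sqrt{f(x)}$, which lets you bypass the paper's preliminary normalization $f(x)\le 1$ and its single scale choice $y=f(x)^{1/2}/(f(x)^{1/4}+|f''(x)|^{1/2})$; both routes land on the stated constants.
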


Due to the control of the negative part of $f^{\prime \prime }$ in the third
line of (\ref{nonneg}), we can rewrite the first two lines in terms of the
positive part of $f^{\prime \prime }$.

\begin{corollary}
\label{combine}If $f\geq 0$ and $\left\vert f^{\prime \prime \prime \prime
}\left( x\right) \right\vert \leq 1$, then for $f\left( x\right) \leq 1$, 
\begin{eqnarray*}
\left\vert f^{\prime }\left( x\right) \right\vert &\leq &\max \left\{
8f\left( x\right) ^{\frac{3}{4}},\frac{8}{3}f\left( x\right) ^{\frac{3}{4}}+%
\frac{8}{3}f\left( x\right) ^{\frac{1}{2}}f^{\prime \prime }\left( x\right)
_{+}^{\frac{1}{2}}\right\} \\
&\leq &8f\left( x\right) ^{\frac{3}{4}}+\frac{8}{3}f\left( x\right) ^{\frac{1%
}{2}}f^{\prime \prime }\left( x\right) _{+}^{\frac{1}{2}}, \\
\left\vert f^{\prime \prime \prime }\left( x\right) \right\vert &\leq &\max
\left\{ 24f\left( x\right) ^{\frac{1}{4}},8f\left( x\right) ^{\frac{1}{4}%
}+8f^{\prime \prime }\left( x\right) _{+}^{\frac{1}{2}}\right\} \\
&\leq &24f\left( x\right) ^{\frac{1}{4}}+8f^{\prime \prime }\left( x\right)
_{+}^{\frac{1}{2}}.
\end{eqnarray*}
\end{corollary}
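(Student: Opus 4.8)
The plan is to derive Corollary \ref{combine} from Lemma \ref{first local} by a two‑case argument on the sign of $f''(x)$, the whole point being to trade control of $|f''|$ for control of its positive part $f''(x)_+$, at the cost of enlarging the constants.

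First I would dispose of the case $f''(x)\ge 0$. Here $|f''(x)|=f''(x)=f''(x)_+$, so the first two lines of (\ref{nonneg}) read exactly
$|f'(x)|\le \tfrac{8}{3}f(x)^{3/4}+\tfrac{8}{3}f(x)^{1/2}f''(x)_+^{1/2}$ and $|f'''(x)|\le 8f(x)^{1/4}+8f''(x)_+^{1/2}$, which are precisely the second (larger) member of each displayed maximum, hence in particular are dominated by the maxima. Next, in the case $f''(x)<0$ we have $f''(x)_+=0$, and I would invoke the third line of (\ref{nonneg}), namely $-f''(x)\le \tfrac{5}{3}f(x)^{1/2}$, to get $|f''(x)|^{1/2}\le \sqrt{5/3}\,f(x)^{1/4}$. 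Feeding this into the first two lines of (\ref{nonneg}) and using $f(x)^{1/2}f(x)^{1/4}=f(x)^{3/4}$ yields $|f'(x)|\le \tfrac{8}{3}\bigl(1+\sqrt{5/3}\bigr)f(x)^{3/4}$ and $|f'''(x)|\le 8\bigl(1+\sqrt{5/3}\bigr)f(x)^{1/4}$. It then remains only to check the elementary numerical inequalities $\tfrac{8}{3}\bigl(1+\sqrt{5/3}\bigr)\le 8$ and $8\bigl(1+\sqrt{5/3}\bigr)\le 24$ (both hold with room to spare, the constants being about $6.11$ and $18.3$ respectively), which identify these bounds with the first member of each displayed maximum.

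Combining the two cases gives the bound by the maximum on each line, and the final inequality on each line follows since the maximum of two nonnegative quantities is at most their sum. I would also note that the restriction $f(x)\le 1$ plays no role in the argument and is retained only because that is the form in which Corollary \ref{combine} is applied later. There is no genuine obstacle here: the only content is the sign split and the small constant check, with the numerical comparison $\tfrac{8}{3}(1+\sqrt{5/3})\le 8$ being the single spot where one must actually be slightly careful.
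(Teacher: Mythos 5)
Your proof is correct and reconstructs exactly the argument the paper intends: the sentence immediately preceding the corollary ("Due to the control of the negative part of $f''$ in the third line of (\ref{nonneg}), we can rewrite the first two lines in terms of the positive part of $f''$") is precisely the sign split you carry out, and the paper leaves the constant bookkeeping $\frac{8}{3}\bigl(1+\sqrt{5/3}\bigr)\le 8$ and $8\bigl(1+\sqrt{5/3}\bigr)\le 24$ implicit, which you supply. Your side observation that the hypothesis $f(x)\le 1$ is not actually used is also correct, since Lemma \ref{first local} is stated without that restriction (the restriction appears only inside its proof, after a scale reduction), and none of your estimates depend on it.
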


Finally we note that these inequalities extend to $x\in \mathbb{R}^{n}$ in
the form%
\begin{eqnarray}
\left\vert \nabla f\left( x\right) \right\vert &\lesssim &f\left( x\right) ^{%
\frac{3}{4}}+f\left( x\right) ^{\frac{1}{2}}\left\vert \nabla ^{2}f\left(
x\right) \right\vert ^{\frac{1}{2}},  \label{nonneg n} \\
\left\vert \nabla ^{3}f\left( x\right) \right\vert &\lesssim &f\left(
x\right) ^{\frac{1}{4}}+\left\vert \nabla ^{2}f\left( x\right) \right\vert ^{%
\frac{1}{2}},  \notag \\
\left\vert \nabla ^{2}f\left( x\right) \right\vert &\lesssim &\sup_{\Theta
\in \mathbb{S}^{n-1}}\left[ \partial _{\Theta }^{2}f\left( x\right) \right]
_{+}+f\left( x\right) ^{\frac{1}{2}},  \notag
\end{eqnarray}%
provided $\left\vert \nabla ^{4}f\left( x\right) \right\vert \leq 1$ on $%
\mathbb{R}^{n}$, upon using the equivalence of norms, 
\begin{equation*}
\sup_{\Theta \in \mathbb{S}^{n-1}}\left\vert \partial _{\Theta }^{k}f\left(
x\right) \right\vert \approx \left\vert \nabla ^{k}f\left( x\right)
\right\vert ,\ \ \ \ \ x\in \mathbb{R}^{n},1\leq k\leq 4,
\end{equation*}%
on the finite dimensional vector space of homogeneous polynomials on $%
\mathbb{R}^{n}$ of degree $k$. Here $\partial _{\Theta }$ denotes the
directional derivative in the direction of the unit vector $\Theta $ in the
sphere $\mathbb{S}^{n-1}$. For example, when $n=2$, we can identify $\Theta
=\theta $ with $\left( \cos \theta ,\sin \theta \right) $ and we have 
\begin{equation*}
\partial _{\theta }f=\left( \cos \theta ,\sin \theta \right) \cdot \nabla
f=\cos \theta \frac{\partial f}{\partial x_{1}}+\sin \theta \frac{\partial f%
}{\partial x_{2}}.
\end{equation*}

\begin{proof}
To see the inequalities in (\ref{nonneg}) we may suppose that $x=0$. Since
the inequalities are invariant under the rescalings $f\left( x\right)
\rightarrow \lambda ^{-4}f\left( \lambda x\right) $ for $\lambda >0$, we may
also assume $f\left( 0\right) \leq 1$. We write%
\begin{equation*}
0\leq f\left( y\right) \leq f\left( 0\right) +f^{\prime }\left( 0\right) y+%
\frac{1}{2}f^{\prime \prime }\left( 0\right) y^{2}+\frac{1}{6}f^{\prime
\prime \prime }\left( 0\right) y^{3}+\frac{1}{24}y^{4},
\end{equation*}%
to obtain%
\begin{equation*}
\left\vert f^{\prime }\left( 0\right) y+\frac{1}{6}f^{\prime \prime \prime
}\left( 0\right) y^{3}\right\vert \leq f\left( 0\right) +\frac{1}{2}%
f^{\prime \prime }\left( 0\right) y^{2}+\frac{1}{24}y^{4}.
\end{equation*}%
The same bound for $2y$ is%
\begin{equation*}
\left\vert f^{\prime }\left( 0\right) 2y+\frac{4}{3}f^{\prime \prime \prime
}\left( 0\right) y^{3}\right\vert \leq f\left( 0\right) +2f^{\prime \prime
}\left( 0\right) y^{2}+\frac{2}{3}y^{4},
\end{equation*}%
and we claim that combining the bounds yields%
\begin{eqnarray}
\left\vert f^{\prime }\left( 0\right) y\right\vert &\leq &\frac{3}{2}f\left(
0\right) +f^{\prime \prime }\left( 0\right) y^{2}+\frac{1}{6}y^{4},
\label{combining} \\
\left\vert f^{\prime \prime \prime }\left( 0\right) y^{3}\right\vert &\leq
&3f\left( 0\right) +3f^{\prime \prime }\left( 0\right) y^{2}+y^{4}.  \notag
\end{eqnarray}%
Indeed, we have both%
\begin{eqnarray*}
-f^{\prime }\left( 0\right) y-\frac{1}{6}f^{\prime \prime \prime }\left(
0\right) y^{3} &\leq &f\left( 0\right) +\frac{1}{2}f^{\prime \prime }\left(
0\right) y^{2}+\frac{1}{24}y^{4}, \\
f^{\prime }\left( 0\right) 2y+\frac{4}{3}f^{\prime \prime \prime }\left(
0\right) y^{3} &\leq &f\left( 0\right) +2f^{\prime \prime }\left( 0\right)
y^{2}+\frac{2}{3}y^{4},
\end{eqnarray*}%
and adding $8$ times the first inequality to the second gives%
\begin{eqnarray*}
-6f^{\prime }\left( 0\right) y &\leq &9f\left( 0\right) +6f^{\prime \prime
}\left( 0\right) y^{2}+y^{4}; \\
-f^{\prime }\left( 0\right) y &\leq &\frac{3}{2}f\left( 0\right) +f^{\prime
\prime }\left( 0\right) y^{2}+\frac{1}{6}y^{4}.
\end{eqnarray*}%
On the other hand, we also have both%
\begin{eqnarray*}
f^{\prime }\left( 0\right) y+\frac{1}{6}f^{\prime \prime \prime }\left(
0\right) y^{3} &\leq &f\left( 0\right) +\frac{1}{2}f^{\prime \prime }\left(
0\right) y^{2}+\frac{1}{24}y^{4}, \\
-f^{\prime }\left( 0\right) 2y-\frac{4}{3}f^{\prime \prime \prime }\left(
0\right) y^{3} &\leq &f\left( 0\right) +2f^{\prime \prime }\left( 0\right)
y^{2}+\frac{2}{3}y^{4},
\end{eqnarray*}%
and adding $8$ times the first inequality to the second gives%
\begin{eqnarray*}
6f^{\prime }\left( 0\right) y &\leq &9f\left( 0\right) +6f^{\prime \prime
}\left( 0\right) y^{2}+y^{4}; \\
f^{\prime }\left( 0\right) y &\leq &\frac{3}{2}f\left( 0\right) +f^{\prime
\prime }\left( 0\right) y^{2}+\frac{1}{6}y^{4}.
\end{eqnarray*}%
Altogether this gives the first inequality in (\ref{combining}), and the
second inequality is proved similarly.

Now set $y=\frac{f\left( 0\right) ^{\frac{1}{2}}}{f\left( 0\right) ^{\frac{1%
}{4}}+\left\vert f^{\prime \prime }\left( 0\right) \right\vert ^{\frac{1}{2}}%
}$ in the first inequality in (\ref{combining}) to obtain%
\begin{eqnarray*}
\left\vert f^{\prime }\left( 0\right) \right\vert \frac{f\left( 0\right) ^{%
\frac{1}{2}}}{f\left( 0\right) ^{\frac{1}{4}}+\left\vert f^{\prime \prime
}\left( 0\right) \right\vert ^{\frac{1}{2}}} &\leq &\frac{3}{2}f\left(
0\right) \frac{f\left( 0\right) ^{\frac{1}{2}}}{f\left( 0\right) ^{\frac{1}{4%
}}}+\left\vert f^{\prime \prime }\left( 0\right) \right\vert \left( \frac{%
f\left( 0\right) ^{\frac{1}{2}}}{\left\vert f^{\prime \prime }\left(
0\right) \right\vert ^{\frac{1}{2}}}\right) ^{2}+\frac{1}{6}\left( \frac{%
f\left( 0\right) ^{\frac{1}{2}}}{f\left( 0\right) ^{\frac{1}{4}}}\right) ^{4}
\\
&\leq &\frac{3}{2}f\left( 0\right) ^{\frac{5}{4}}+f\left( 0\right) +\frac{1}{%
6}f\left( 0\right) ,
\end{eqnarray*}%
which gives%
\begin{eqnarray*}
\left\vert f^{\prime }\left( 0\right) \right\vert &\leq &\left( f\left(
0\right) ^{\frac{1}{4}}+\left\vert f^{\prime \prime }\left( 0\right)
\right\vert ^{\frac{1}{2}}\right) \left( \frac{3}{2}f\left( 0\right) ^{\frac{%
3}{4}}+\frac{7}{6}f\left( 0\right) ^{\frac{1}{2}}\right) \\
&=&\left( f\left( 0\right) ^{\frac{3}{4}}+f\left( 0\right) ^{\frac{1}{2}%
}\left\vert f^{\prime \prime }\left( 0\right) \right\vert ^{\frac{1}{2}%
}\right) \left( \frac{3}{2}f\left( 0\right) ^{\frac{1}{4}}+\frac{7}{6}%
\right) .
\end{eqnarray*}%
Using $f\left( 0\right) \leq 1$ we thus obtain 
\begin{equation*}
\left\vert f^{\prime }\left( 0\right) \right\vert \leq \frac{8}{3}\left(
f\left( 0\right) ^{\frac{3}{4}}+f\left( 0\right) ^{\frac{1}{2}}\left\vert
f^{\prime \prime }\left( 0\right) \right\vert ^{\frac{1}{2}}\right) ,
\end{equation*}%
which is the first line in (\ref{nonneg}).

The second line in (\ref{nonneg}) is proved by setting%
\begin{equation*}
y=\max \left\{ f\left( 0\right) ^{\frac{1}{4}},\left\vert f^{\prime \prime
}\left( 0\right) \right\vert ^{\frac{1}{2}}\right\} ,
\end{equation*}%
and is left for the reader.

Finally, the third line in (\ref{nonneg}) is obtained by setting $y=f\left(
0\right) ^{\frac{1}{4}}$ in the first line of (\ref{combining}), which gives%
\begin{equation*}
0\leq \frac{3}{2}f\left( 0\right) +f^{\prime \prime }\left( 0\right) f\left(
0\right) ^{\frac{1}{2}}+\frac{1}{6}f\left( 0\right) .
\end{equation*}
\end{proof}

For $\delta >0$ define%
\begin{equation*}
r_{\delta }\left( x\right) \equiv \max \left\{ f\left( x\right) ^{\frac{1}{%
4+2\delta }},\left( \sup_{\Theta \in \mathbb{S}^{n-1}}\left[ \partial
_{\Theta }^{2}f\left( x\right) \right] _{+}\right) ^{\frac{1}{2+2\delta }%
}\right\} ,\ \ \ \ \ x\in \mathbb{R}^{n}.
\end{equation*}%
Following Tataru \cite{Tat} we now show that $r_{\delta }$ is \emph{slowly
varying}, i.e. there are $0<c,\gamma <1$ such that%
\begin{equation*}
\left\vert r_{\delta }\left( x\right) -r_{\delta }\left( y\right)
\right\vert \leq \gamma r_{\delta }\left( x\right) ,\ \ \ \ \ \text{for }%
\left\vert x-y\right\vert \leq cr\left( x\right) .
\end{equation*}%
We prove this only in $\mathbb{R}$, and leave the straightforward extension
to higher dimensions for the reader.

\begin{lemma}
\label{second}Let $\delta >0$. If $f\left( x\right) \geq 0$ and $\left\vert
\nabla ^{4}f\left( x\right) \right\vert \leq 1$ for $x\in \mathbb{R}$, then%
\begin{equation*}
\left\vert r_{\delta }\left( x\right) -r_{\delta }\left( y\right)
\right\vert \leq \left( \frac{1}{2}\right) ^{\frac{1}{4+2\delta }}r_{\delta
}\left( x\right) ,\ \ \ \ \ \text{for }\left\vert x-y\right\vert \leq \frac{1%
}{200}r\left( x\right) .
\end{equation*}
\end{lemma}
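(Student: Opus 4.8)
The plan is to follow Tataru's argument: estimate the oscillation of $r_{\delta }$ by Taylor expanding $f$ and $f''$ about the centre of the ball and feeding in the inequalities of Lemma \ref{first local} and Corollary \ref{combine}, which bound the odd derivatives (and the negative part of $f''$) of a nonnegative function by its value together with $\left[ \partial _{\Theta }^{2}f\right] _{+}$. Throughout I will use that $r_{\delta }$ is built precisely so that, over a ball of radius $\sim r_{\delta }(x)$, the value of $f$ changes by $O(r_{\delta }(x)^{4+2\delta })$ and the value of $\left[ \partial _{\Theta }^{2}f\right] _{+}$ by $O(r_{\delta }(x)^{2+2\delta })$.

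First I would translate so the centre is $x=0$, set $\rho =r_{\delta }(0)$, and take $|y|\le \frac{1}{200}\rho $. Taylor's formula to fourth order for $f$ and to second order for $f''$ gives
\[
f(y)-f(0)=f'(0)y+\tfrac12 f''(0)y^{2}+\tfrac16 f'''(0)y^{3}+\tfrac1{24}f''''(\theta y)y^{4},\qquad f''(y)-f''(0)=f'''(0)y+\tfrac12 f''''(\theta' y)y^{2},
\]
for some $\theta ,\theta' \in (0,1)$, the $f''''$ terms being handled by the hypothesis on $\nabla ^{4}f$. By the definition of $r_{\delta }$ we have $f(0)\le \rho ^{4+2\delta }$ and $f''(0)_{+}\le \rho ^{2+2\delta }$; combining these with Corollary \ref{combine} and the third line of (\ref{nonneg}) (to control $-f''(0)$) turns the bounds $|f'(0)|\lesssim f(0)^{3/4}+f(0)^{1/2}f''(0)_{+}^{1/2}$, $|f''(0)|\lesssim f(0)^{1/2}+f''(0)_{+}$, $|f'''(0)|\lesssim f(0)^{1/4}+f''(0)_{+}^{1/2}$ into bounds by fixed multiples of powers of $\rho $. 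Substituting $|y|\le \frac1{200}\rho $ and summing should yield
\[
|f(y)-f(0)|\le \varepsilon _{1}\,\rho ^{4+2\delta },\qquad |f''(y)_{+}-f''(0)_{+}|\le \varepsilon _{2}\,\rho ^{2+2\delta },
\]
with $\varepsilon _{1},\varepsilon _{2}$ small explicit constants produced by the choice $c=\tfrac1{200}$.

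To finish I would split into two cases according to whether the maximum defining $r_{\delta }(0)=\max \{f(0)^{1/(4+2\delta )},\,f''(0)_{+}^{1/(2+2\delta )}\}$ is realised by the first or the second entry. If it is the first, the increment bound for $f$ forces $f(y)^{1/(4+2\delta )}\in \lbrack (1-\varepsilon _{1})^{1/(4+2\delta )},(1+\varepsilon _{1})^{1/(4+2\delta )}]\,\rho $, while the increment bound for $f''_{+}$ keeps $f''(y)_{+}^{1/(2+2\delta )}\le (1+\varepsilon _{2})^{1/(2+2\delta )}\rho $; taking the maximum gives $|r_{\delta }(y)-\rho |\le (1/2)^{1/(4+2\delta )}\rho $ once $\tfrac1{200}$ is checked to make $\varepsilon _{1},\varepsilon _{2}$ small enough. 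The second case is symmetric, with the roles of $f$ and $f''_{+}$ interchanged. The extension to $\mathbb{R}^{n}$ replaces $f',\dots ,f''''$ by the directional derivatives $\partial _{\Theta }^{k}f$ along the segment joining $x$ to $y$ and invokes the norm equivalence recorded after (\ref{nonneg n}).

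The step I expect to be the main obstacle is the exponent bookkeeping in the middle paragraph: after the substitutions each Taylor term is a power $\rho ^{p}$ with $p\ge 4$ for $f$ (respectively $p\ge 2$ for $f''$), and to conclude with the clean right-hand sides $\rho ^{4+2\delta }$ and $\rho ^{2+2\delta }$ one must use both that $\rho \le 1$ and the bound on $\nabla ^{4}f$ exactly where they are needed, and then track the single numerical constant $\tfrac1{200}$ through the sums so that $\varepsilon _{1},\varepsilon _{2}$ are small enough to produce the explicit constant $(1/2)^{1/(4+2\delta )}$ rather than merely some $\gamma <1$.
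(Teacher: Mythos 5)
Your overall route---Taylor expansion about the base point, control of odd derivatives and of $\bigl[-f''\bigr]_+$ from Lemma \ref{first local} and Corollary \ref{combine}, then subadditivity of the maximum---is the same as the paper's. But the single step you omit is also the one that makes the proof work: the paper begins by \emph{translating and rescaling} (using the invariance of the hypotheses under $f\mapsto \lambda^{-4}f(\lambda\cdot)$) so that $r_{\delta}(0)=1$. Once $\rho=1$ one only needs the absolute bounds from Corollary \ref{combine}, namely $|f(0)|\le 1$, $|f'(0)|\le 11$, $|f''(0)|\le \tfrac{5}{3}$, $|f'''(0)|\le 32$, and Taylor's formula on $|y|\le \tfrac{1}{200}$ then gives $|f(y)-f(0)|<\tfrac12$ and $|f''(y)_+-f''(0)_+|<\tfrac12$ with no powers of $\rho$ to track at all.

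Without that normalization the intermediate estimates you propose,
\begin{equation*}
|f(y)-f(0)|\le \varepsilon_{1}\rho^{4+2\delta},\qquad |f''(y)_+-f''(0)_+|\le \varepsilon_{2}\rho^{2+2\delta},
\end{equation*}
are simply not available for small $\rho$. Already the fourth-order remainder contributes up to $\frac{|y|^{4}}{24}\le \frac{\rho^{4}}{24\cdot 200^{4}}$, and for $\rho<1$ this exceeds any fixed small multiple of $\rho^{4+2\delta}$; similarly, Corollary \ref{combine} gives $|f'(0)||y|\lesssim \rho^{4+\frac{3}{2}\delta}+\rho^{4+2\delta}$, and $\rho^{4+\frac{3}{2}\delta}$ also dominates $\rho^{4+2\delta}$ as $\rho\searrow 0$. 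So the ``exponent bookkeeping'' you correctly flag as the main obstacle is a genuine gap, and it is not resolvable by tracking the constant $\tfrac{1}{200}$ more carefully --- the fix is the rescaling, not sharper arithmetic.

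The endgame is also simpler than your two-case argument. After normalization the paper only uses $\left|\max\{a,b\}-\max\{c,d\}\right|\le \max\{|a-c|,|b-d|\}$ together with the concavity estimate $|a^{p}-b^{p}|\le |a-b|^{p}$ for $0<p<1$, applied with $p=\tfrac{1}{4+2\delta}$ and $p=\tfrac{1}{2+2\delta}$; since the increments of $f$ and $f''_{+}$ are both $<\tfrac12$ and the larger of $\left(\tfrac12\right)^{1/(4+2\delta)}$, $\left(\tfrac12\right)^{1/(2+2\delta)}$ is the first, the explicit constant $\left(\tfrac12\right)^{1/(4+2\delta)}$ drops out directly, with no need to split on which entry of the maximum defines $r_{\delta}(0)$.
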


\begin{proof}
By translation and rescaling we can assume that $x=0$ and $r_{\delta }\left(
0\right) =1$. Then $f\left( 0\right) ^{\frac{1}{4+2\delta }},f^{\prime
\prime }\left( 0\right) _{+}^{\frac{1}{2+2\delta }}\leq 1$ and by Corollary %
\ref{combine} we have%
\begin{equation*}
\left\vert f\left( 0\right) \right\vert \leq 1,\ \ \ \left\vert f^{\prime
}\left( 0\right) \right\vert \leq 11,\ \ \ \left\vert f^{\prime \prime
}\left( 0\right) \right\vert \leq \frac{5}{3},\ \ \ \left\vert f^{\prime
\prime \prime }\left( 0\right) \right\vert \leq 32,
\end{equation*}%
and so with $\left\vert y\right\vert =\left\vert y-x\right\vert \leq \frac{1%
}{200}$, Taylor's formula shows that both $f$ and $f_{+}^{\prime \prime }$
are slowly varying, i.e.%
\begin{equation*}
\left\vert f\left( y\right) -f\left( 0\right) \right\vert \leq \frac{11}{200}%
+\frac{\frac{5}{3}}{2\left( 200\right) ^{2}}+\frac{32}{6\left( 200\right)
^{3}}+\frac{1}{24\left( 200\right) ^{4}}<\frac{1}{2},
\end{equation*}%
and%
\begin{equation*}
\left\vert f^{\prime \prime }\left( y\right) _{+}-f^{\prime \prime }\left(
0\right) _{+}\right\vert \leq \frac{32}{200}+\frac{1}{2\left( 200\right) ^{2}%
}<\frac{1}{2},
\end{equation*}%
which yields%
\begin{eqnarray*}
\left\vert r_{\delta }\left( x\right) -r_{\delta }\left( y\right)
\right\vert &=&\left\vert \max \left\{ f\left( 0\right) ^{\frac{1}{4+2\delta 
}},\left\vert f^{\prime \prime }\left( 0\right) \right\vert ^{\frac{1}{%
2+2\delta }}\right\} -\max \left\{ f\left( y\right) ^{\frac{1}{4+2\delta }%
},\left\vert f^{\prime \prime }\left( y\right) \right\vert ^{\frac{1}{%
2+2\delta }}\right\} \right\vert \\
&\leq &\max \left\{ \left\vert f\left( 0\right) ^{\frac{1}{4+2\delta }%
}-f\left( y\right) ^{\frac{1}{4+2\delta }}\right\vert ,\left\vert \left\vert
f^{\prime \prime }\left( 0\right) \right\vert ^{\frac{1}{2+2\delta }%
}-\left\vert f^{\prime \prime }\left( y\right) \right\vert ^{\frac{1}{%
2+2\delta }}\right\vert \right\} \leq \gamma <1.
\end{eqnarray*}%
Indeed, if $\left\vert f^{\prime \prime }\left( 0\right) \right\vert =0$ and 
$\left\vert f^{\prime \prime }\left( y\right) \right\vert =\frac{1}{2}$,
then $\left\vert \left\vert f^{\prime \prime }\left( 0\right) \right\vert ^{%
\frac{1}{2+2\delta }}-\left\vert f^{\prime \prime }\left( y\right)
\right\vert ^{\frac{1}{2+2\delta }}\right\vert =\left( \frac{1}{2}\right) ^{%
\frac{1}{2+2\delta }}$ , while if $f\left( 0\right) =0$ and $f\left(
y\right) =\frac{1}{2}$, then $\left\vert f\left( 0\right) ^{\frac{1}{%
4+2\delta }}-f\left( y\right) ^{\frac{1}{4+2\delta }}\right\vert =\left( 
\frac{1}{2}\right) ^{\frac{1}{4+2\delta }}$, and since these cases are
optimal, we have the above inequality with $\gamma =\left( \frac{1}{2}%
\right) ^{\frac{1}{4+2\delta }}<1$.
\end{proof}

\subsection{A provisional SOS theorem}

Here we begin with the following provisional sum of squares theorem, an
analogue of Lemmas 1 and 2 in \cite{FePh}, which will be used to prove our
main Theorems \ref{main 2D}\ and \ref{efs eps} below. For any $0\leq \beta
\leq 1$, and any continuous function $h$ defined on a ball $B$ in $\mathbb{R}%
^{n}$, we define%
\begin{equation*}
\left\Vert h\right\Vert _{\func{Lip}_{\beta }\left( B\right) }\equiv
\sup_{x,y\in B}\frac{\left\vert h\left( x\right) -h\left( y\right)
\right\vert }{\left\vert x-y\right\vert ^{\beta }},
\end{equation*}%
and for $k\in \mathbb{Z}_{+}$ we denote by $C^{k,\beta }\left( B\right) $
the space of functions $f$ on $B$ normed by%
\begin{equation*}
\left\Vert f\right\Vert _{C^{k,\beta }\left( B\right) }\equiv \sum_{\ell
=0}^{k}\left\Vert \nabla ^{\ell }f\right\Vert _{L^{\infty }\left( B\right)
}+\left\Vert \nabla ^{k}f\right\Vert _{\func{Lip}_{\beta }\left( B\right) }\
.
\end{equation*}

We will use the following `distance function' related to derivatives of $f$
that was used in Tataru \cite{Tat} and Bony \cite{Bon}:%
\begin{equation}
\rho _{f;\delta }\left( x\right) \equiv \max \left\{ f\left( x\right) ^{%
\frac{1}{4+2\delta }},\left( \sup_{\Theta \in \mathbb{S}^{n-1}}\left[
\partial _{\Theta }^{2}f\left( x\right) \right] _{+}\right) ^{\frac{1}{%
2+2\delta }},\left\vert \nabla ^{4}f\left( x\right) \right\vert ^{\frac{1}{%
2\delta }}\right\} ,\ \ \ \ \ x\in \mathbb{R}^{n}.  \label{def rho}
\end{equation}

\begin{acknowledgement}
We thank Sullivan Francis MacDonald for pointing out an error in the
original formulation and proof of the next theorem, and which is now
weakened from its previous form. However, this has no significant effect on
the remaining results in this paper, nor on the results in the next two
papers in this series that reference the current paper.
\end{acknowledgement}

\begin{theorem}
\label{provisional}Suppose $0<\delta ,\eta <\frac{1}{2}$ and $n\geq 1$. Then
there exists a constant $N=N\left( \delta ,\eta ,n\right) $ depending on $%
\delta $, $\eta $ and $n$ with the following property. For every nonnegative 
$f\in C^{4,2\delta }\left( \mathbb{R}^{n}\right) $ satisfying%
\begin{equation}
\left\vert \nabla ^{4}f\left( x\right) \right\vert \leq Cf\left( x\right) ^{%
\frac{\delta }{2+\delta }}\text{ and }\sup_{\Theta \in \mathbb{S}^{n-1}}%
\left[ \partial _{\Theta }^{2}f\left( x\right) \right] _{+}\leq Cf\left(
x\right) ^{\eta },  \label{diff prov}
\end{equation}%
and with $\rho _{f;\delta }$ as in (\ref{def rho}), there are functions $%
g_{\ell }\in C^{2,\delta }\left( \mathbb{R}^{n}\right) $ satisfying%
\begin{eqnarray}
\left\vert D^{\alpha }g_{\ell }\left( x\right) \right\vert  &\leq &C\rho
_{f;\delta }\left( x\right) ^{2+\delta -\left\vert \alpha \right\vert },\ \
\ \ \ 0\leq \left\vert \alpha \right\vert \leq 2,  \label{Dg delta} \\
\left[ g_{\ell }\right] _{\alpha ,\delta }\left( x\right)  &\leq &C,\ \ \ \
\ \left\vert \alpha \right\vert =2,  \notag
\end{eqnarray}%
and%
\begin{eqnarray}
\left\vert D^{\alpha }g_{\ell }^{2}\left( x\right) \right\vert  &\leq &C\rho
_{f;\delta }\left( x\right) ^{4+2\delta -\left\vert \alpha \right\vert },\ \
\ \ \ 0\leq \left\vert \alpha \right\vert \leq 2,  \label{Dg^2 delta} \\
\left[ g_{\ell }^{2}\right] _{\alpha ,2\delta }\left( x\right)  &\leq &C,\ \
\ \ \ \left\vert \alpha \right\vert =2,  \notag
\end{eqnarray}%
and nonnegative functions $h_{\ell }\in C^{4,2\delta }\left( \mathbb{R}%
^{n}\right) $, for $1\leq \ell \leq N$, such that 
\begin{equation*}
f\left( x\right) =\sum_{\ell =1}^{N}g_{\ell }\left( x\right) ^{2}+\sum_{\ell
=1}^{N}h_{\ell }\left( x\right) ,\ \ \ \ \ x\in \mathbb{R}^{n},
\end{equation*}%
and where each function $h_{\ell }$ can be further decomposed into a
countable sum of functions from a bounded set in $C^{4,2\delta }\left( 
\mathbb{R}^{n}\right) $ with pairwise disjoint supports, i.e. 
\begin{eqnarray}
&&h_{\ell }=\sum_{m=1}^{\infty }k_{\ell ,m}\ ,  \label{special} \\
&&\func{Supp}k_{\ell ,m}\cap \func{Supp}k_{\ell ,m^{\prime }}=\emptyset 
\text{ for }m\neq m^{\prime },  \notag \\
&&\left\{ k_{\ell ,m}\right\} _{\ell ,m}\text{ is a bounded set of functions
in }C^{4,2\delta }\left( \mathbb{R}^{n}\right) .  \notag
\end{eqnarray}%
Moreover, each $k_{\ell ,m}$ has the two critical properties that\newline
(\textbf{i}) The functions $k_{\ell ,m}$ can be factored as%
\begin{equation*}
k_{\ell ,m}\left( x\right) =\Phi _{\ell ,m}\left( x\right) ^{2}\kappa _{\ell
,m}\left( x\right) ,\ \ \ \ \ \Phi _{\ell ,m}\in C^{2,\delta },\kappa _{\ell
,m}\in C^{4,2\delta },
\end{equation*}%
where $\kappa _{\ell ,m}\left( x\right) $ is a function of just $n-1$
variables, i.e. there is a rotation $R=R_{\ell ,m}$ depending on $\ell ,m$
such that in the rotated variables $y=Rx$, the function $\kappa _{\ell ,m}$
is \emph{independent} of $y_{n}$, and\newline
(\textbf{ii}) If we define the constant $0<\delta _{1}<\delta $ by the
equation%
\begin{equation*}
\frac{\delta _{1}}{2+\delta _{1}}=\eta \frac{\delta }{1+\delta },
\end{equation*}%
then the function $\kappa _{\ell ,m}$ of $n-1$ variables satisfies the
following analogue of (\ref{diff prov}),%
\begin{eqnarray}
\left\vert \nabla ^{4}\kappa _{\ell ,m}\left( x\right) \right\vert  &\leq
&C\kappa _{\ell ,m}\left( x\right) ^{\frac{\delta _{1}}{2+\delta _{1}}}\text{
}  \label{diff prov next} \\
\sup_{\Theta \in \mathbb{S}^{n-1}}\left[ \partial _{\Theta }^{2}\kappa
_{\ell ,m}\left( x\right) \right] _{+} &\leq &C\kappa _{\ell ,m}\left(
x\right) ^{\eta }.  \notag
\end{eqnarray}%
\newline
Here the families $\left\{ \Phi _{\ell ,m}\right\} _{\ell ,m}$ and $\left\{
\kappa _{\ell ,m}\right\} _{\ell ,m}$ lie in bounded sets in $C^{2,\delta }$
and $C^{4,2\delta }$ respectively, with bounds depending only on $\delta $, $%
\eta $ and $n$.
\end{theorem}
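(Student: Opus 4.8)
The plan is to carry out one step of the Fefferman--Phong algorithm in the form used by Tataru and Bony, built around the distance function $\rho=\rho_{f;\delta}$ of (\ref{def rho}): produce a Whitney decomposition of $\{\rho>0\}$ into cells on which either $\sqrt f$ is already a good root (``elliptic'' cells) or the Hessian of $f$ has a large positive eigenvalue, in which case the implicit function theorem peels off a square times a function of $n-1$ variables. (The ``bounded families'' language will be understood relative to the canonical rescaling of each cell to unit size, i.e.\ modulo a harmless normalisation such as $\|f\|_{\infty}\lesssim1$.) First I would verify that $\rho$ is slowly varying: the first inequality in (\ref{diff prov}) gives $|\nabla^{4}f|^{1/2\delta}\le C^{1/2\delta}f^{1/(4+2\delta)}$, so the third entry of $\rho$ is absorbed by the first, and the argument of Lemma~\ref{second}, using the $\mathbb{R}^{n}$ inequalities (\ref{nonneg n}) of Lemma~\ref{first local} in place of (\ref{nonneg}), carries over to $\mathbb{R}^{n}$ after the rescaling $f(x)\mapsto\lambda^{-4}f(x_{0}+\lambda x)$ that reduces matters to $|\nabla^{4}f|\le1$ on the unit ball. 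A slowly varying $\rho$ then gives a countable cover of $\{\rho>0\}$ by balls $B_{j}=B(x_{j},c\rho(x_{j}))$ of bounded overlap, a subordinate partition of unity $\{\varphi_{j}\}$ with $|\nabla^{k}\varphi_{j}|\lesssim\rho(x_{j})^{-k}$ normalised so that $\sum_{j}\varphi_{j}^{2}\equiv1$ on $\{\rho>0\}$, and — from the bounded overlap and the Whitney geometry — a colouring of the index set into $N=N(\delta,\eta,n)$ classes inside each of which the $\operatorname{supp}\varphi_{j}$ are pairwise disjoint; this colour count is the $N$ of the statement. Since (\ref{diff prov}) and (\ref{nonneg n}) force $f$ and $\nabla^{k}f$, $k\le4$, to vanish on the closed set $\{\rho=0\}$, the construction extends across it, and on each $B_{j}$ one rescales to unit size so that the rescaled $f$ lies in a bounded subset of $C^{4,2\delta}(B_{1})$ with $\rho\approx1$; the pointwise bounds (\ref{Dg delta}), (\ref{Dg^2 delta}) and the bounded-family/disjoint-support assertions (\ref{special}) then follow by unrescaling and grouping terms by colour.

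On $B_{j}$ there are two cases. If $f(x_{j})^{1/(4+2\delta)}\ge\tfrac12\rho(x_{j})$ (\emph{elliptic cell}), slow variation of $f$ gives $f\gtrsim\rho(x_{j})^{4+2\delta}$ on $B_{j}$, so $\sqrt f$ is as smooth as $f$ there with $|D^{\alpha}\sqrt f|\lesssim\rho(x_{j})^{2+\delta-|\alpha|}$, and $f\varphi_{j}^{2}=(\sqrt f\,\varphi_{j})^{2}$ is a perfect square, adding a term to the $g_{\ell}$ after colouring. Otherwise (\emph{degenerate cell}) the Hessian entry of $\rho$ dominates; after a rotation $R_{j}$ take the direction of the largest positive eigenvalue to be $e_{n}$, so that (by (\ref{nonneg n}) and slow variation) $\partial_{n}^{2}f\gtrsim\rho(x_{j})^{2+2\delta}$ on an enlarged ball $\widetilde B_{j}=B(x_{j},\widetilde C\rho(x_{j}))$ with $\widetilde C=\widetilde C(\delta,\eta,n)$ fixed. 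In particular $f>0$ on $\widetilde B_{j}$: a zero of $f$ would be a global minimum at which (\ref{diff prov}) forces $\nabla^{2}f=0$, contradicting $\partial_{n}^{2}f>0$.

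In the degenerate cell, $x_{n}\mapsto f(x',x_{n})$ is strictly convex on the $x_{n}$-section of $\widetilde B_{j}$; since $f\lesssim\rho(x_{j})^{4+2\delta}$ there while $\partial_{n}^{2}f\gtrsim\rho(x_{j})^{2+2\delta}$ over a section of length $\approx\rho(x_{j})$, a sufficiently large $\widetilde C$ forces an interior minimum, i.e.\ $\partial_{n}f(x',h(x'))=0$ for a unique $h(x')$, for every $x'$ over $\operatorname{supp}\varphi_{j}$. Then Theorem~\ref{IFT} applied to $H=\partial_{n}f$ gives $h\in C^{3,2\delta}$ with the stated derivative formulas, and Taylor's formula in $x_{n}$ about $h(x')$, using $\partial_{n}f(x',h(x'))=0$, yields
\begin{equation*}
f(x)=\kappa(x')+A(x)\bigl(x_{n}-h(x')\bigr)^{2},\qquad A(x)=\int_{0}^{1}(1-t)\,\partial_{n}^{2}f\bigl(x',h(x')+t(x_{n}-h(x'))\bigr)\,dt,
\end{equation*}
where $\kappa(x')=f(x',h(x'))=\min_{x_{n}}f\ge0$ depends only on $x'$, lies in $C^{4,2\delta}$ (because $\partial_{n}f$ vanishes along the graph of $h$, so $\partial_{i}\kappa=(\partial_{i}f)\circ\iota$ with $\iota(x')=(x',h(x'))$), and $A\gtrsim\rho(x_{j})^{2+2\delta}$ lies in $C^{2,2\delta}$, so $\sqrt A\in C^{2,\delta}$. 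With $g=\varphi_{j}\sqrt A\,(x_{n}-h)$ one has $f\varphi_{j}^{2}=\varphi_{j}^{2}\kappa+g^{2}$ on $\operatorname{supp}\varphi_{j}$: the square $g^{2}$ joins the $g_{\ell}$, and $\varphi_{j}^{2}\kappa$, after distributing a power of $\rho(x_{j})$ between the factors so that one is a bump in $C^{2,\delta}$ and the other is $\kappa$ (extended off the slab by a cutoff, harmless as the bump is supported well inside), becomes one of the $k_{\ell,m}=\Phi_{\ell,m}^{2}\kappa_{\ell,m}$ with $\kappa_{\ell,m}$ a function of the $n-1$ variables $x'$; summing over a colour class gives an $h_{\ell}$ of the form (\ref{special}). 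It then remains to verify (\ref{diff prov next}) for $\kappa$. The second inequality is automatic: for a unit $x'$-direction $\Theta$, $\partial_{i}\partial_{j}\kappa=(\partial_{i}\partial_{j}f)\circ\iota-\bigl((\partial_{i}\partial_{n}f)(\partial_{j}\partial_{n}f)/\partial_{n}^{2}f\bigr)\circ\iota$ gives $\partial_{\Theta}^{2}\kappa(x')\le\partial_{(\Theta,0)}^{2}f(\iota(x'))$, whence $\sup_{\Theta}[\partial_{\Theta}^{2}\kappa]_{+}\le\sup_{\Theta'}[\partial_{\Theta'}^{2}f(\cdot,h(\cdot))]_{+}\le C\kappa^{\eta}$ by (\ref{diff prov}). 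For the first inequality, slow variation and (\ref{diff prov}) give $\sup_{\Theta'}[\partial_{\Theta'}^{2}f]_{+}\gtrsim\rho(x_{j})^{2+2\delta}$ on $\widetilde B_{j}$, hence $f\gtrsim\rho(x_{j})^{(2+2\delta)/\eta}$ and so $\kappa\gtrsim\rho(x_{j})^{(2+2\delta)/\eta}$; on the other hand, rescaling $\widetilde B_{j}$ to unit size and running the composition formula (\ref{comp fla}) for $\kappa=f\circ\iota$, with $\nabla^{k}h$ controlled through (\ref{nonneg n}) and $\partial_{n}^{2}f\gtrsim\rho(x_{j})^{2+2\delta}$, gives $|\nabla^{4}\kappa|\lesssim\rho(x_{j})^{2\delta}$, the leading contribution being $|\nabla^{4}f|\le Cf^{\delta/(2+\delta)}\lesssim\rho(x_{j})^{2\delta}$. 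Combining, $|\nabla^{4}\kappa|\lesssim\bigl(\rho(x_{j})^{(2+2\delta)/\eta}\bigr)^{\eta\delta/(1+\delta)}\lesssim\kappa^{\delta_{1}/(2+\delta_{1})}$ with $\delta_{1}/(2+\delta_{1})=\eta\,\delta/(1+\delta)$, exactly the $\delta_{1}$ of the statement.

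The step I expect to be the main obstacle is Step 3: securing, \emph{uniformly over} $\operatorname{supp}\varphi_{j}$, that the convex section has an \emph{interior} minimum so that $h$ is defined and inherits the full regularity of the implicit function theorem; controlling $\kappa$ and the bump near the lateral boundary of the slab where $h$ degenerates; and balancing $\widetilde C$ against the cell size and against the countable splitting (\ref{special}). This is precisely the bookkeeping that required the acknowledged correction, whereas the size estimates (\ref{Dg delta}), (\ref{Dg^2 delta}) and the fact that only $N(\delta,\eta,n)$ pieces arise are routine once the rescaling and the colouring are in place.
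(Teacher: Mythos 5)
Your proposal is correct and follows essentially the same route as the paper's proof: Whitney decomposition of $\{\rho_{f;\delta}>0\}$ with a partition of unity squaring to one, a dichotomy into elliptic cells (take $\varphi_\nu\sqrt f$) and degenerate cells (apply the implicit function theorem to $\partial_n f$ to peel off a square times a function of $n-1$ variables), bounded-overlap colouring giving the finite count $N$, and the identification $\rho_{f;\delta}(X)^{2+2\delta}\approx\sup_\Theta[\partial_\Theta^2 f]_+\lesssim f^\eta$ in the degenerate case, which is precisely how the paper derives $|\nabla^4\kappa|\lesssim\rho^{2\delta}\lesssim\kappa^{\delta_1/(2+\delta_1)}$. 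Your Schur-complement observation $\nabla^2\kappa=\nabla_{x'}^2f-\bigl(\nabla_{x'}\partial_nf\bigr)\bigl(\nabla_{x'}\partial_nf\bigr)^T/\partial_n^2f\preceq\nabla_{x'}^2f$ is the same crucial inequality (\ref{crucial}), and your identification of the interior-minimum argument as the delicate step matches the paper's own acknowledgment.
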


\begin{remark}
The purpose of the first inequality in (\ref{diff prov}) is to limit our
analysis to only the cases when $\left\vert \nabla ^{4}f\right\vert ^{\frac{1%
}{2\delta }}$ is \emph{not} the dominant term in the definition of $\rho
_{f;\delta }$ (the implicit function theorem is \emph{not} decisive if $%
\left\vert \nabla ^{4}f\right\vert ^{\frac{1}{2\delta }}$ dominates). The
purpose of the second inequality in (\ref{diff prov}) is to show that the
first inequality is inherited by the functions $\kappa _{\ell ,m}$ that
arise in the induction step in applications of Theorem \ref{provisional},
but with the smaller index $\delta _{1}$ in place of $\delta $. In dimension 
$n=1$ this differential inequality can be dropped by results of Bony in \cite%
{Bon}, while in dimension $n\geq 5$, we will see in Remark \ref{necc diff
ineq} below that some inequality of this type is in general required.
\end{remark}

\begin{proof}
We begin the proof of Theorem \ref{provisional} by further adapting the
version of the Fefferman-Phong argument due to Bony in \cite[Th\'{e}or\`{e}%
me 2]{Bon} using the `distance function' $\rho \left( x\right) $. Define $%
\Gamma \equiv \left\{ x\in \mathbb{R}^{n}:f\left( x\right) =0\right\} $ and $%
d\left( x\right) \equiv \func{dist}\left( x,\Gamma \right) $. Recall the
sublinear operators $\left[ \cdot \right] _{\alpha ,\delta }$ defined in (%
\ref{def mod D}) above,%
\begin{equation*}
\left[ \varphi \right] _{\alpha ,\delta }\left( x\right) \equiv
\limsup_{y,z\rightarrow x}\frac{\left\vert D^{\alpha }\varphi \left(
y\right) -D^{\alpha }\varphi \left( z\right) \right\vert }{\left\vert
y-z\right\vert ^{\delta }}.
\end{equation*}

One now writes $U\equiv \Gamma ^{c}$ as a countable union of cubes $Q_{\nu }$
with center $x^{\nu }$ and diameter comparable to $\rho \left( x^{\nu
}\right) $. Now for all $s>0$ sufficiently small, there is a collection of
balls $\left\{ B_{\nu }\right\} _{\nu =1}^{\infty }$ covering $U$ with
centers $x^{\nu }$ and radii 
\begin{equation*}
r_{\nu }\equiv s\rho \left( x^{\nu }\right) ,
\end{equation*}%
having bounded overlap $\sum_{\nu =1}^{\infty }\mathbf{1}_{B_{\nu }}\leq C%
\mathbf{1}_{U}$, as well as a partition of unity $\left\{ \Phi _{\nu
}\right\} _{\nu =1}^{\infty }$ subordinate to this collection satisfying%
\begin{eqnarray*}
\sum_{\nu =1}^{\infty }\Phi _{\nu }\left( x\right) ^{2} &=&1,\ \ \ \ \ \func{%
Supp}\Phi _{\nu }\subset B_{\nu }, \\
\sup_{x\in U}\left\vert D^{\alpha }\Phi _{\nu }\left( x\right) \right\vert
&\leq &C_{\alpha ,s}\frac{1}{r_{\nu }^{\left\vert \alpha \right\vert }},\ \
\ \ \ \alpha \in \mathbb{Z}_{+}^{n}.
\end{eqnarray*}

We now wish to show that each function $\Phi _{\nu }^{2}f$ can be decomposed
as a sum of a square with control, and a nonnegative function $h_{\nu }\in
C^{4,2\delta }$ with the special decomposition property as in (\ref{special}%
). For this we will use the following inequalities for $x\in B_{\nu }$, 
\begin{eqnarray}
0 &<&\rho \left( x\right) \leq Cd\left( x\right) ,  \label{bit of work} \\
\left\vert D^{\alpha }f\left( x\right) \right\vert &\leq &C\rho \left(
x\right) ^{4+2\delta -\left\vert \alpha \right\vert },\ \ \ \ \ 0\leq
\left\vert \alpha \right\vert \leq 4,  \notag \\
\left[ f\right] _{\alpha ,2\delta }\left( x\right) &\leq &C,\ \ \ \ \
\left\vert \alpha \right\vert =4,  \notag
\end{eqnarray}%
We now prove (\ref{bit of work}). From (\ref{diff prov}) we see that $%
\left\vert \nabla ^{2}f\left( x\right) \right\vert =\left\vert \nabla
^{4}f\left( x\right) \right\vert =0$ for all $x\in \Gamma $, and then the
first two lines in (\ref{nonneg n}), together with $f\in C^{4,2\delta
}\left( \mathbb{R}^{n}\right) $, show that $f\left( x\right) \leq
C\left\vert x-x_{0}\right\vert ^{4+2\delta }$ and $\left\vert \nabla
^{2}f\left( x\right) \right\vert \leq C\left\vert x-x_{0}\right\vert
^{2+2\delta }$ for all $x_{0}\in \Gamma $. Thus we have $f\left( x\right) ^{%
\frac{1}{4+2\delta }}\leq C\inf_{x_{0}\in \Gamma }\left\vert
x-x_{0}\right\vert =Cd\left( x\right) $ and $\left( \sup_{\Theta \in \mathbb{%
S}^{n-1}}\left[ \partial _{\Theta }^{2}f\left( x\right) \right] _{+}\right)
^{\frac{1}{2+2\delta }}\leq C\inf_{x_{0}\in \Gamma }\left\vert
x-x_{0}\right\vert =Cd\left( x\right) $, and so 
\begin{eqnarray*}
f\left( x\right) ^{\frac{1}{4+2\delta }} &\leq &Cd\left( x\right) \text{ and 
}\left( \sup_{\Theta \in \mathbb{S}^{n-1}}\left[ \partial _{\Theta
}^{2}f\left( x\right) \right] _{+}\right) ^{\frac{1}{2+2\delta }}\leq
Cd\left( x\right) , \\
\text{and }\rho \left( x\right) &=&\max \left\{ f\left( x\right) ^{\frac{1}{%
4+2\delta }},\left( \sup_{\Theta \in \mathbb{S}^{n-1}}\left[ \partial
_{\Theta }^{2}f\left( x\right) \right] _{+}\right) ^{\frac{1}{2+2\delta }%
}\right\} \leq Cd\left( x\right) ,
\end{eqnarray*}%
which is the first line in (\ref{bit of work}).

On the other hand,%
\begin{eqnarray*}
\left\vert D^{0}f\left( x\right) \right\vert &=&f\left( x\right) \leq \rho
\left( x\right) ^{4+2\delta }, \\
\text{and }\left\vert D^{2}f\left( x\right) \right\vert &\leq &C\sup_{\Theta
\in \mathbb{S}^{n-1}}\left[ \partial _{\Theta }^{2}f\left( x\right) \right]
_{+}+Cf\left( x\right) ^{\frac{1}{2}}\rho \left( x\right) ^{\delta }\leq
C\rho \left( x\right) ^{2+2\delta },
\end{eqnarray*}%
where the first inequality in the second line above follows using the third
line in (\ref{nonneg n}) applied to $\eta \left( x\right) \equiv \rho \left(
x^{\nu }\right) ^{-2\delta }f\left( x\right) $,%
\begin{equation*}
\left\vert \rho \left( x^{\nu }\right) ^{-2\delta }\nabla ^{2}f\left(
x\right) \right\vert \lesssim \rho \left( x^{\nu }\right) ^{-2\delta
}\sup_{\Theta \in \mathbb{S}^{n-1}}\left[ \partial _{\Theta }^{2}f\left(
x\right) \right] _{+}+\left\vert \rho \left( x^{\nu }\right) ^{-2\delta
}f\left( x\right) \right\vert ^{\frac{1}{2}}\lesssim \rho \left( x^{\nu
}\right) ^{-2\delta },
\end{equation*}%
since $\rho =r_{\delta }$ is slowly varying. From the control of odd order
derivatives by those of even order in the first two lines of Lemma \ref%
{first local} applied to $\eta \left( x\right) \equiv \rho \left( x^{\nu
}\right) ^{-2\delta }f\left( x\right) $, we then obtain%
\begin{eqnarray*}
\left\vert \rho \left( x^{\nu }\right) ^{-2\delta }\nabla f\left( x\right)
\right\vert &\lesssim &\left( \rho \left( x^{\nu }\right) ^{-2\delta
}f\left( x\right) \right) ^{\frac{3}{4}}+\left( \rho \left( x^{\nu }\right)
^{-2\delta }f\left( x\right) \right) ^{\frac{1}{2}}\left( \rho \left( x^{\nu
}\right) ^{-2\delta }\nabla ^{2}f\left( x\right) \right) ^{\frac{1}{2}} \\
&\lesssim &\left( \rho \left( x^{\nu }\right) ^{-2\delta }\rho \left(
x\right) ^{4+2\delta }\right) ^{\frac{3}{4}}+\left( \rho \left( x^{\nu
}\right) ^{-2\delta }\rho \left( x\right) ^{4+2\delta }\right) ^{\frac{1}{2}%
}\left( \rho \left( x^{\nu }\right) ^{-2\delta }\rho \left( x\right)
^{2+2\delta }\right) ^{\frac{1}{2}} \\
&\lesssim &\rho \left( x^{\nu }\right) ^{3},\ \ \ \ \ x\in B_{\nu }\ ,
\end{eqnarray*}%
since $\rho $ is slowly varying on $B_{\nu }$, and similarly%
\begin{eqnarray*}
&&\left\vert \rho \left( x^{\nu }\right) ^{-2\delta }\nabla ^{3}f\left(
x\right) \right\vert \lesssim \left( \rho \left( x^{\nu }\right) ^{-2\delta
}f\left( x\right) \right) ^{\frac{1}{4}}+\left( \rho \left( x^{\nu }\right)
^{-2\delta }f\left( x\right) \right) ^{\frac{1}{2}}\left\vert \rho \left(
x^{\nu }\right) ^{-2\delta }\nabla ^{2}f\left( x\right) \right\vert ^{\frac{1%
}{2}} \\
&\lesssim &\left( \rho \left( x^{\nu }\right) ^{-2\delta }\rho \left(
x\right) ^{4+2\delta }\right) ^{\frac{1}{4}}+\left( \rho \left( x^{\nu
}\right) ^{-2\delta }\rho \left( x\right) ^{4+2\delta }\right) ^{\frac{1}{2}%
}\left\vert \rho \left( x^{\nu }\right) ^{-2\delta }\rho \left( x\right)
^{2+2\delta }\right\vert ^{\frac{1}{2}} \\
&\lesssim &\rho \left( x\right) +\rho \left( x\right) ^{3}\lesssim \rho
\left( x\right) ,\ \ \ \ \ x\in B_{\nu }\ .
\end{eqnarray*}%
Combined with $\left\vert \nabla ^{4}f\left( x\right) \right\vert \leq C\rho
\left( x\right) ^{2\delta }$, this gives the second line in (\ref{bit of
work}), and the subproduct rule (\ref{subproduct}) yields the third line.
This completes the proof of (\ref{bit of work}).

We claim 
\begin{eqnarray*}
\Phi _{\nu }f &=&g_{\nu }^{2}+h_{\nu }\ , \\
\left\Vert D^{\alpha }g_{\nu }\right\Vert _{\func{Lip}_{\delta }\left(
B_{\nu }\right) } &\leq &C_{\alpha ,s}r_{\nu }^{2-\left\vert \alpha
\right\vert },\ \ \ \ \ 0\leq \left\vert \alpha \right\vert \leq 2, \\
\left\Vert \left[ g_{\nu }\right] _{\alpha ,\delta }\right\Vert _{\infty }
&\leq &C_{\alpha ,s},\ \ \ \ \ \left\vert \alpha \right\vert =2, \\
\left\Vert D^{\alpha }h_{\nu }\right\Vert _{\func{Lip}_{2\delta }\left(
B_{\nu }\right) } &\leq &C_{\alpha ,s}r_{\nu }^{4-\left\vert \alpha
\right\vert },\ \ \ \ \ 0\leq \left\vert \alpha \right\vert \leq 4, \\
\left\Vert \left[ h_{\nu }\right] _{\alpha ,2\delta }\right\Vert _{\infty }
&\leq &C_{\alpha ,s},\ \ \ \ \ \left\vert \alpha \right\vert =4,
\end{eqnarray*}%
where the constant $C_{\alpha ,s}$ is \emph{independent} of $\nu $.
Moreover, we also have analogous inequalities for $g_{\nu }^{2}$ that mirror
those of $f$:%
\begin{eqnarray*}
\left\Vert D^{\alpha }g_{\nu }^{2}\right\Vert _{\func{Lip}_{2\delta }\left(
B_{\nu }\right) } &\leq &C_{\alpha ,s}r_{\nu }^{4-\left\vert \alpha
\right\vert },\ \ \ \ \ 0\leq \left\vert \alpha \right\vert \leq 4, \\
\left\Vert \left[ g_{\nu }^{2}\right] _{\alpha ,2\delta }\right\Vert
_{\infty } &\leq &C_{\alpha ,s},\ \ \ \ \ \left\vert \alpha \right\vert =4.
\end{eqnarray*}

\textbf{Case I}: $f\left( x^{\nu }\right) \geq c\rho \left( x^{\nu }\right)
^{4+2\delta }$.

\medskip

In this case we define 
\begin{equation*}
f_{1}\left( x\right) =\Phi _{\nu }\left( x\right) \sqrt{f\left( x\right) },
\end{equation*}%
and use the inequalities in (\ref{bit of work}). A first order partial
derivative $D^{\mu }f_{1}$ of $f_{1}$\ is%
\begin{equation*}
D^{\mu }f_{1}\left( x\right) =D^{\mu }\Phi _{\nu }\left( x\right) \sqrt{%
f\left( x\right) }+\Phi _{\nu }\left( x\right) \frac{D^{\mu }f\left(
x\right) }{2\sqrt{f\left( x\right) }},
\end{equation*}%
and its modulus is bounded by%
\begin{equation*}
\left\vert D^{\mu }\Phi _{\nu }\left( x\right) \sqrt{f\left( x\right) }%
\right\vert +\left\vert \Phi _{\nu }\left( x\right) \frac{D^{\mu }f\left(
x\right) }{2\sqrt{f\left( x\right) }}\right\vert \lesssim \frac{1}{r\left(
x^{\nu }\right) }\rho \left( x^{\nu }\right) ^{2+\delta }+\frac{\rho \left(
x\right) ^{3+2\delta }}{2\rho \left( x\right) ^{2+\delta }}\lesssim \rho
\left( x\right) ^{1+\delta },
\end{equation*}%
by the assumption of Case I, together with the slowly varying property of $%
\rho \approx r$. A second order partial derivative $D^{\mu }f_{1}$ is%
\begin{equation*}
D^{\mu }f_{1}\left( x\right) =D^{\mu }\Phi _{\nu }\left( x\right) \sqrt{%
f\left( x\right) }+D^{\alpha }\Phi _{\nu }\left( x\right) \frac{D^{\beta
}f\left( x\right) }{\sqrt{f\left( x\right) }}+\Phi _{\nu }\left( x\right) 
\frac{D^{\mu }f\left( x\right) }{2\sqrt{f\left( x\right) }}-\Phi _{\nu
}\left( x\right) \frac{D^{\alpha }f\left( x\right) D^{\beta }f\left(
x\right) }{4f\left( x\right) ^{\frac{3}{2}}},
\end{equation*}%
where $\left\vert \alpha \right\vert =\left\vert \beta \right\vert =1$. Now
for $x\in B_{\nu }$ we have%
\begin{eqnarray*}
\left\vert D^{\mu }\Phi _{\nu }\left( x\right) \sqrt{f\left( x\right) }%
\right\vert &\lesssim &\frac{1}{r\left( x^{\nu }\right) ^{2}}\rho \left(
x\right) ^{2+\delta }\approx \rho \left( x\right) ^{\delta }\ , \\
\left\vert D^{\alpha }\Phi _{\nu }\left( x\right) \frac{D^{\beta }f\left(
x\right) }{\sqrt{f\left( x\right) }}\right\vert &\lesssim &\frac{1}{r\left(
x^{\nu }\right) }\frac{\rho \left( x\right) ^{3+2\delta }}{\rho \left(
x\right) ^{2+\delta }}\lesssim \rho \left( x\right) ^{\delta }\ , \\
\Phi _{\nu }\left( x\right) \frac{D^{\mu }f\left( x\right) }{2\sqrt{f\left(
x\right) }} &\lesssim &\frac{\rho \left( x\right) ^{2+2\delta }}{\rho \left(
x\right) ^{2+\delta }}=\rho \left( x\right) ^{\delta }\ , \\
\left\vert \Phi _{\nu }\left( x\right) \frac{D^{\alpha }f\left( x\right)
D^{\beta }f\left( x\right) }{4f\left( x\right) ^{\frac{3}{2}}}\right\vert
&\lesssim &\frac{\left( \rho \left( x\right) ^{3+2\delta }\right) ^{2}}{%
\left( \rho \left( x\right) ^{4+2\delta }\right) ^{\frac{3}{2}}}=\rho \left(
x\right) ^{\delta }\ ,
\end{eqnarray*}%
and we conclude that $\left\vert \nabla ^{2}f_{1}\left( x\right) \right\vert 
$\ is bounded by a multiple of $\rho \left( x\right) ^{\delta }$ for $x\in
B_{\nu }$. Finally, using the subproduct rule (\ref{subproduct}) for $\left[
\cdot \right] _{\alpha ,\delta }$, we obtain that $f_{1}\in C^{2,\delta }$\
uniformly in $\nu $.

Similarly, we have for a first order partial derivative $D^{\mu }\left(
f_{1}\left( x\right) ^{2}\right) $ and $x\in B_{\nu }$,%
\begin{eqnarray*}
\left\vert D^{\mu }\left( f_{1}\left( x\right) ^{2}\right) \right\vert
&=&\left\vert D^{\mu }\left( \Phi _{\nu }\left( x\right) ^{2}\right) f\left(
x\right) +\Phi _{\nu }\left( x\right) ^{2}D^{\mu }f\left( x\right)
\right\vert \\
&\lesssim &r\left( x^{\nu }\right) ^{-1}\rho \left( x\right) ^{4+2\delta
}+\rho \left( x\right) ^{3+2\delta }\lesssim \rho \left( x\right)
^{3+2\delta },
\end{eqnarray*}%
which is the case $\left\vert \mu \right\vert =1$ of%
\begin{equation*}
\left\vert D^{\mu }\left( f_{1}\left( x\right) ^{2}\right) \right\vert
\lesssim r\left( x^{\nu }\right) ^{4-\left\vert \mu \right\vert +2\delta },\
\ \ \ \ 0\leq \left\vert \mu \right\vert \leq 4,
\end{equation*}%
and the remaining cases $\left\vert \mu \right\vert =2,3,4$ are proved in
the same way. Finally, using the subproduct rule (\ref{subproduct}) for $%
\left[ \cdot \right] _{\alpha ,2\delta }$, together with the third line in (%
\ref{bit of work}), we obtain that $f_{1}^{2}\in C^{4,2\delta }$\ uniformly
in $\nu $.

\medskip

\textbf{Case II}: $f\left( x^{\nu }\right) <c\rho \left( x^{\nu }\right)
^{4+2\delta }$.

\medskip

In this case we have without loss of generality that $\partial
_{x_{n}}^{2}f\left( x^{\nu }\right) =\rho \left( x^{\nu }\right) ^{2+2\delta
}$, and hence that $\partial _{x_{n}}^{2}f\left( x\right) \geq \frac{1}{2}%
\rho \left( x^{\nu }\right) ^{2+2\delta }$ for $x\in B_{\nu }$, provided $c$
is chosen sufficiently small independent of $\nu $. Let us write $x=\left(
\xi ,x_{n}\right) $ and $x^{\nu }=\left( \xi ^{\nu },x_{n}^{\nu }\right) $.
Then for $\left\vert \xi -\xi ^{\nu }\right\vert <\frac{1}{\sqrt{2}}r_{\nu }=%
\frac{s}{\sqrt{2}}\rho \left( x^{\nu }\right) $, the function $%
x_{n}\rightarrow f\left( \xi ,x_{n}\right) $ has its second derivative
bounded below by $\frac{1}{2}\rho \left( x^{\nu }\right) ^{2+2\delta }$ on
the closed interval $\left[ a_{n}^{\nu },b_{n}^{\nu }\right] \equiv \left[
x_{n}^{\nu }-\frac{1}{\sqrt{2}}r_{\nu },x_{n}^{\nu }+\frac{1}{\sqrt{2}}%
r_{\nu }\right] $, and hence has a unique minimum point in $\left[
a_{n}^{\nu },b_{n}^{\nu }\right] $, say at $x_{n}=X\left( \xi \right) $. If
moreover, $c$ is chosen to be at most $\frac{s^{2}}{8}$, then the minimum is
actually attained at $x_{n}=X\left( \xi \right) $ in the \emph{open}
interval $\left( a_{n}^{\nu },b_{n}^{\nu }\right) $. Indeed, if not, say $%
f\left( \xi ,a_{n}^{\nu }\right) $ is the minimum of $f$ on the closed
interval $\left[ a_{n}^{\nu },b_{n}^{\nu }\right] $, then $\partial
_{x_{n}}f\left( \xi ,a_{n}^{\nu }\right) \geq 0$ as well as $f\left( \xi
,a_{n}^{\nu }\right) \geq 0$, and so Taylor's formula gives for an
intermediate point $c_{n}^{\nu }$ between $a_{n}^{\nu }$ and $x_{n}^{\nu }$,%
\begin{eqnarray*}
f\left( \xi ,x_{n}^{\nu }\right) &=&f\left( \xi ,a_{n}^{\nu }\right)
+\partial _{x_{n}}f\left( \xi ,a_{n}^{\nu }\right) \left( x_{n}^{\nu
}-a_{n}^{\nu }\right) +\partial _{x_{n}}^{2}f\left( \xi ,c_{n}^{\nu }\right) 
\frac{\left( x_{n}^{\nu }-a_{n}^{\nu }\right) ^{2}}{2} \\
&\geq &\frac{1}{2}\rho \left( x^{\nu }\right) ^{2+2\delta }\frac{\left( 
\frac{1}{\sqrt{2}}r_{\nu }\right) ^{2}}{2}=\frac{s^{2}}{8}\rho \left( x^{\nu
}\right) ^{4+2\delta }\geq c\rho \left( x^{\nu }\right) ^{4+2\delta },
\end{eqnarray*}%
contradicting the Case II assumption.

Set $F\left( \xi \right) \equiv f\left( \xi ,X\left( \xi \right) \right) $.
Then%
\begin{eqnarray*}
f\left( \xi ,x_{n}\right) &=&f\left( \xi ,X\left( \xi \right) \right)
+\partial _{x_{n}}f\left( \xi ,X\left( \xi \right) \right) \left[
x_{n}-X\left( \xi \right) \right] \\
&&+\int_{0}^{1}\left( 1-t\right) \partial _{x_{n}}^{2}f\left( \xi ,\left(
1-t\right) X\left( \xi \right) +tx_{n}\right) dt\ \frac{\left( x_{n}-X\left(
\xi \right) \right) ^{2}}{2} \\
&=&F\left( \xi \right) +H\left( \xi ,x_{n}\right) \ \left( x_{n}-X\left( \xi
\right) \right) ^{2}; \\
\text{where }F\left( \xi \right) &=&f\left( \xi ,X\left( \xi \right) \right)
, \\
\text{ and }H\left( \xi ,x_{n}\right) &\equiv &\frac{1}{2}\int_{0}^{1}\left(
1-t\right) \partial _{x_{n}}^{2}f\left( \xi ,\left( 1-t\right) X\left( \xi
\right) +tx_{n}\right) dt,
\end{eqnarray*}%
where $H\left( \xi ,x_{n}\right) $ satisfies%
\begin{eqnarray*}
H\left( \xi ,x_{n}\right) &\geq &\frac{1}{2}\int_{0}^{1}\left( 1-t\right) 
\frac{1}{2}\rho \left( x^{\nu }\right) ^{2+2\delta }dt=\frac{1}{8}\rho
\left( x^{\nu }\right) ^{2+2\delta }. \\
\left\vert \partial _{x_{n}}^{2}H\left( \xi ,x_{n}\right) \right\vert &\leq &%
\frac{1}{2}\int_{0}^{1}\left( 1-t\right) \left\vert \partial
_{x_{n}}^{4}f\left( \xi ,\left( 1-t\right) X\left( \xi \right)
+tx_{n}\right) \right\vert t^{2}dt \\
&\lesssim &\int_{0}^{1}\left( 1-t\right) t^{2}\rho \left( x^{\nu }\right)
^{2\delta }dt\lesssim \rho \left( x^{\nu }\right) ^{2\delta },
\end{eqnarray*}

Now we wish to bound $\left\vert \nabla ^{2}H\left( \xi ,x_{n}\right)
\right\vert $ by $C\rho \left( x^{\nu }\right) ^{2\delta }$. For this, we
first note that by parts (2) and (3) of Theorem \ref{IFT} applied to the
function $\partial _{x_{n}}f$, we have%
\begin{eqnarray*}
\left\vert \partial _{\xi _{i}}X\left( \xi \right) \right\vert &=&\left\vert 
\frac{\partial _{\xi _{i}}\partial _{x_{n}}f}{\partial _{x_{n}}^{2}f}%
\right\vert \lesssim \frac{\rho \left( x^{\nu }\right) ^{2+2\delta }}{\rho
\left( x^{\nu }\right) ^{2+2\delta }}=1, \\
\left\vert \partial _{\xi _{i}}\partial _{\xi _{j}}X\left( \xi \right)
\right\vert &\leq &\left\vert \frac{\partial _{\xi _{i}}\partial _{\xi
_{j}}\partial _{x_{n}}f}{\partial _{x_{n}}^{2}f}\right\vert +\left\vert 
\frac{\left( \partial _{\xi _{j}}\partial _{x_{n}}f\right) \left( \partial
_{\xi _{i}}\partial _{x_{n}}^{2}f\right) +\left( \partial _{\xi
_{i}}\partial _{x_{n}}f\right) \left( \partial _{\xi _{j}}\partial
_{x_{n}}^{2}f\right) }{\left( \partial _{x_{n}}^{2}f\right) ^{2}}\right\vert
+\left\vert \frac{\left( \partial _{\xi _{j}}\partial _{x_{n}}f\right)
\left( \partial _{\xi _{j}}\partial _{x_{n}}f\right) \left( \partial
_{x_{n}}^{3}f\right) }{\left( \partial _{x_{n}}^{2}f\right) ^{3}}\right\vert
\\
&\lesssim &\frac{\rho \left( x^{\nu }\right) ^{1+2\delta }}{\rho \left(
x^{\nu }\right) ^{2+2\delta }}+\frac{\rho \left( x^{\nu }\right) ^{2+2\delta
}\rho \left( x^{\nu }\right) ^{1+2\delta }}{\left( \rho \left( x^{\nu
}\right) ^{2+2\delta }\right) ^{2}}+\frac{\left( \rho \left( x^{\nu }\right)
^{2+2\delta }\right) ^{2}\rho \left( x^{\nu }\right) ^{1+2\delta }}{\left(
\rho \left( x^{\nu }\right) ^{2+2\delta }\right) ^{3}} \\
&\lesssim &\rho \left( x^{\nu }\right) ^{-1}.
\end{eqnarray*}%
Then we compute%
\begin{eqnarray*}
\partial _{x_{n}}\left[ \partial _{x_{n}}^{2}f\left( \xi ,\left( 1-t\right)
X\left( \xi \right) +tx_{n}\right) \right] &=&\partial _{x_{n}}^{3}f\left(
\xi ,\left( 1-t\right) X\left( \xi \right) +tx_{n}\right) t, \\
\nabla _{\xi }\left[ \partial _{x_{n}}^{2}f\left( \xi ,\left( 1-t\right)
X\left( \xi \right) +tx_{n}\right) \right] &=&\nabla _{\xi }\partial
_{x_{n}}^{2}f\left( \xi ,\left( 1-t\right) X\left( \xi \right) +tx_{n}\right)
\\
&&+\partial _{x_{n}}^{3}f\left( \xi ,\left( 1-t\right) X\left( \xi \right)
+tx_{n}\right) \left( 1-t\right) \nabla X\left( \xi \right) ,
\end{eqnarray*}%
and obtain that their moduli are bounded by%
\begin{eqnarray*}
\left\vert \partial _{x_{n}}\left[ \partial _{x_{n}}^{2}f\left( \xi ,\left(
1-t\right) X\left( \xi \right) +tx_{n}\right) \right] \right\vert &\lesssim
&\rho \left( x^{\nu }\right) ^{1+2\delta }, \\
\left\vert \nabla _{\xi }\left[ \partial _{x_{n}}^{2}f\left( \xi ,\left(
1-t\right) X\left( \xi \right) +tx_{n}\right) \right] \right\vert &\lesssim
&\rho \left( x^{\nu }\right) ^{1+2\delta }.
\end{eqnarray*}%
Similarly,%
\begin{eqnarray*}
\left\vert \nabla _{\xi }^{2}\left[ \partial _{x_{n}}^{2}f\left( \xi ,\left(
1-t\right) X\left( \xi \right) +tx_{n}\right) \right] \right\vert &\lesssim
&\rho \left( x^{\nu }\right) ^{2\delta }, \\
\left\vert \partial _{x_{n}}\nabla _{\xi }\left[ \partial
_{x_{n}}^{2}f\left( \xi ,\left( 1-t\right) X\left( \xi \right)
+tx_{n}\right) \right] \right\vert &\lesssim &\rho \left( x^{\nu }\right)
^{2\delta }, \\
\left\vert \partial _{x_{n}}^{2}\left[ \partial _{x_{n}}^{2}f\left( \xi
,\left( 1-t\right) X\left( \xi \right) +tx_{n}\right) \right] \right\vert
&\lesssim &\rho \left( x^{\nu }\right) ^{2\delta }.
\end{eqnarray*}%
Thus for $1\leq \left\vert \mu \right\vert \leq 2$, we have that%
\begin{equation*}
D^{\mu }H\left( \xi ,x_{n}\right) =\frac{1}{2}\int_{0}^{1}\left( 1-t\right)
D^{\mu }\left[ \partial _{x_{n}}^{2}f\left( \xi ,\left( 1-t\right) X\left(
\xi \right) +tx_{n}\right) \right] dt
\end{equation*}%
satisfies 
\begin{equation*}
\left\vert D^{\mu }H\left( \xi ,x_{n}\right) \right\vert \lesssim \rho
\left( x^{\nu }\right) ^{2-\left\vert \mu \right\vert +2\delta }.
\end{equation*}%
Thus $K\left( \xi ,x_{n}\right) \equiv H\left( \xi ,x_{n}\right) \left(
x_{n}-X\left( \xi \right) \right) ^{2}$ has a $C^{2,\delta }$ square root $%
G\left( \xi ,x_{n}\right) \left( x_{n}-X\left( \xi \right) \right) $ where $%
G\left( \xi ,x_{n}\right) \equiv \sqrt{H\left( \xi ,x_{n}\right) }$. Indeed, 
\begin{eqnarray*}
\nabla \left[ G\left( \xi ,x_{n}\right) \left( x_{n}-X\left( \xi \right)
\right) \right] &=&\nabla G\left( \xi ,x_{n}\right) \left( x_{n}-X\left( \xi
\right) \right) +G\left( \xi ,x_{n}\right) \nabla \left( x_{n}-X\left( \xi
\right) \right) \\
&=&\frac{1}{2}\frac{\nabla H\left( \xi ,x_{n}\right) }{H\left( \xi
,x_{n}\right) ^{\frac{1}{2}}}\left( x_{n}-X\left( \xi \right) \right)
+H\left( \xi ,x_{n}\right) ^{\frac{1}{2}}\nabla \left( x_{n}-X\left( \xi
\right) \right)
\end{eqnarray*}%
satisfies%
\begin{equation*}
\left\vert \nabla \left[ G\left( \xi ,x_{n}\right) \left( x_{n}-X\left( \xi
\right) \right) \right] \right\vert \lesssim \frac{\rho \left( x^{\nu
}\right) ^{1+2\delta }}{\rho \left( x^{\nu }\right) ^{1+\delta }}\rho \left(
x^{\nu }\right) +\rho \left( x^{\nu }\right) ^{1+\delta }\approx \rho \left(
x^{\nu }\right) ^{1+\delta },
\end{equation*}%
and for $\mu =\alpha +\beta $ with $\left\vert \alpha \right\vert
=\left\vert \beta \right\vert =1$,%
\begin{eqnarray*}
D^{\mu }\left[ G\left( \xi ,x_{n}\right) \left( x_{n}-X\left( \xi \right)
\right) \right] &=&D^{\mu }G\left( \xi ,x_{n}\right) \left( x_{n}-X\left(
\xi \right) \right) +2D^{\alpha }G\left( \xi ,x_{n}\right) D^{\beta }\left[
\left( x_{n}-X\left( \xi \right) \right) \right] \\
&&+G\left( \xi ,x_{n}\right) D^{\mu }\left[ \left( x_{n}-X\left( \xi \right)
\right) \right] \\
&=&\frac{1}{2}\left( \frac{D^{\mu }H\left( \xi ,x_{n}\right) }{H\left( \xi
,x_{n}\right) ^{\frac{1}{2}}}-\frac{\left\vert D^{\alpha }H\left( \xi
,x_{n}\right) \right\vert \left\vert D^{\beta }H\left( \xi ,x_{n}\right)
\right\vert }{H\left( \xi ,x_{n}\right) ^{\frac{3}{2}}}\right) \left(
x_{n}-X\left( \xi \right) \right) \\
&&+\frac{D^{\alpha }H\left( \xi ,x_{n}\right) }{H\left( \xi ,x_{n}\right) ^{%
\frac{1}{2}}}D^{\beta }\left( x_{n}-X\left( \xi \right) \right) \\
&&+H\left( \xi ,x_{n}\right) ^{\frac{1}{2}}D^{\mu }\left( x_{n}-X\left( \xi
\right) \right) ,
\end{eqnarray*}%
and so 
\begin{eqnarray*}
\left\vert D^{\mu }\left[ G\left( \xi ,x_{n}\right) \left( x_{n}-X\left( \xi
\right) \right) \right] \right\vert &\lesssim &\left( \frac{\rho \left(
x^{\nu }\right) ^{2\delta }}{\rho \left( x^{\nu }\right) ^{1+\delta }}+\frac{%
\left( \rho \left( x^{\nu }\right) ^{1+2\delta }\right) ^{2}}{\rho \left(
x^{\nu }\right) ^{3+3\delta }}\right) \rho \left( x^{\nu }\right) \\
&&+\frac{\rho \left( x^{\nu }\right) ^{1+2\delta }}{\rho \left( x^{\nu
}\right) ^{1+\delta }}+\rho \left( x^{\nu }\right) ^{\delta } \\
&\lesssim &\rho \left( x^{\nu }\right) ^{\delta },
\end{eqnarray*}%
and finally also 
\begin{equation*}
\left[ G\left( \xi ,x_{n}\right) \left( x_{n}-X\left( \xi \right) \right)
^{2}\right] _{2,\delta }\leq C.
\end{equation*}%
It thus follows that%
\begin{equation*}
\Phi _{\nu }\left( \xi ,x_{n}\right) ^{2}\left[ f\left( \xi ,x_{n}\right)
-F\left( \xi \right) \right] =\left\{ \Phi _{\nu }\left( \xi ,x_{n}\right)
\left( x_{n}-X\left( \xi \right) \right) G\left( \xi ,x_{n}\right) \right\}
^{2}
\end{equation*}%
has a $C^{2,\delta }$ square root $f_{2}\left( x\right) \equiv \Phi _{\nu
}\left( \xi ,x_{n}\right) \left( x_{n}-X\left( \xi \right) \right) G\left(
\xi ,x_{n}\right) $.

Similarly, the function 
\begin{equation}
f_{2}\left( x\right) ^{2}=\Phi _{\nu }\left( x\right) ^{2}\left\{ f\left(
\xi ,x_{n}\right) -F\left( \xi \right) \right\}   \label{f_2}
\end{equation}%
satisfies the estimates%
\begin{equation*}
\left\vert D^{\mu }\left( f_{2}\left( x\right) ^{2}\right) \right\vert
\lesssim r\left( x^{\nu }\right) ^{4-\left\vert \mu \right\vert +2\delta },\
\ \ \ \ 0\leq \left\vert \mu \right\vert \leq 4,
\end{equation*}%
upon using the estimates obtained below for $D^{\mu }F\left( \xi \right) $, $%
0\leq m\leq 4$, and this then leads to the conclusion that $f_{2}^{2}\in
C^{4,2\delta }$\ uniformly in $\nu $.

Now we note that all cases have been exhausted by the first line in (\ref%
{diff prov}).

\medskip

\textbf{Note}: \emph{This is a key juncture in the proof since we have thus
eliminated consideration of the difficult case in which }$\left\vert \nabla
^{4}f\left( x\right) \right\vert ^{\frac{1}{2\delta }}$\emph{\ is the
dominant term in the definition of }$\rho \left( x\right) $\emph{, and where
the implicit function is no longer decisive. However, see Bony \cite{Bon}
for how to proceed when }$x\in \mathbb{R}$\emph{\ is one-dimensional. Now we
will use the second line in (\ref{diff prov}) to show that the first
inequality is `inherited' by the function }$\kappa _{\ell ,m}$\emph{, but
with a smaller index }$\delta _{1}$.

\medskip

Thus altogether we have shown so far that 
\begin{equation*}
f\left( x\right) =\sum_{\nu =1}^{\infty }\Phi _{\nu }\left( x\right)
^{2}f\left( x\right) ,
\end{equation*}%
where for each $\nu $, after a rotation of coordinates depending on $\nu $,
either%
\begin{eqnarray*}
\Phi _{\nu }\left( x\right) ^{2}f\left( x\right) &=&g_{\nu }\left( x\right)
^{2}, \\
\text{where }g_{\nu } &\in &C^{2+\delta }\left( B_{\nu }\right) ,
\end{eqnarray*}%
or%
\begin{eqnarray*}
\Phi _{\nu }\left( x\right) ^{2}f\left( x\right) &=&\Phi _{\nu }\left(
x\right) ^{2}F\left( \xi \right) +\Phi _{\nu }\left( x\right) ^{2}H\left(
\xi ,x_{n}\right) \ \left( x_{n}-X\left( \xi \right) \right) ^{2} \\
&=&\Phi _{\nu }\left( x\right) ^{2}\kappa _{\nu }\left( x\right) +g_{\nu
}\left( x\right) ^{2}, \\
\text{where }\kappa _{\nu } &\in &C^{4+2\delta }\left( B_{\nu }\right) \text{
and }\Phi _{\nu },g_{\nu }\in C^{2+\delta }\left( B_{\nu }\right) .
\end{eqnarray*}

Finally we use the bounded overlap of the balls $B_{\nu }$ to write $\mathbb{%
N}=\dbigcup\limits_{\ell =1}^{N}A_{\ell }$ as a finite pairwise disjoint
union of index sets $A_{\ell }$ such that for each $\ell $\ the balls $%
\left\{ B_{\nu }\right\} _{\nu \in A_{\ell }}$ have pairwise disjoint
triples. Then we group the sum of all the functions into finitely many
functions $h_{\ell }\left( x\right) =\sum_{\nu \in A_{\ell }}\Phi _{\nu
}\left( x\right) ^{2}\kappa _{\nu }\left( x\right) $ and $g_{\ell }\left(
x\right) =\sum_{\nu \in B_{\ell }}g_{\nu }\left( x\right) ^{2}$ that satisfy
the conclusions of the theorem, save for the assertion that $\kappa _{\nu }$
satisfies (\ref{diff prov next}), to which we now turn.

In order to prove assertion (\textbf{ii}) of Theorem \ref{provisional}, we
suppose for the moment, and only for the sake of simplicity of calculation,
that the\ dimension is $n=2$ and the variable is $\left( x,y\right) \in 
\mathbb{R}^{2}$. For convenience in notation we will use the partial
derivative convention $f_{ijk}=\frac{\partial ^{2}}{\partial x_{i}\partial
x_{j}\partial x_{k}}$, etc., not to be confused with the function $f_{2}$ in
(\ref{f_2}).

Then for a function arising from Case II, which is the only case that is
nontrivial, we have that 
\begin{equation*}
\left( f_{22}\left( x_{\nu }\right) \right) ^{\frac{1}{2+2\delta }}\approx
\rho _{f;\delta}=\max \left\{ f^{\frac{1}{4+2\delta }},\left( \sup_{\theta \in 
\mathbb{S}^{n-1}}\left[ \partial _{\theta }^{2}f\right] _{+}\right) ^{\frac{1%
}{2+2\delta }},\left\vert \nabla ^{4}f\right\vert ^{\frac{1}{2\delta }%
}\right\} ,
\end{equation*}%
and with $X=\left( x,h\left( x\right) \right) $ and $F\left( x\right) \equiv
f\left( X\right) $,%
\begin{eqnarray}
f_{2}\left( X\right)  &=&0,  \label{F''} \\
h^{\prime }\left( x\right)  &=&-\frac{f_{12}\left( X\right) }{f_{22}\left(
X\right) },  \notag \\
F^{\prime }\left( x\right)  &=&f_{1}\left( X\right) +f_{2}\left( X\right)
h^{\prime }\left( x\right) =f_{1}\left( X\right) ,  \notag \\
F^{\prime \prime }\left( x\right)  &=&f_{11}\left( X\right) +f_{12}\left(
X\right) h^{\prime }\left( x\right) =f_{11}\left( X\right) -\frac{%
f_{12}\left( X\right) f_{12}\left( X\right) }{f_{22}\left( X\right) }. 
\notag
\end{eqnarray}%
If we use 
\begin{equation*}
\left( f_{22}\right) ^{\frac{1}{2+2\delta }}\approx \rho _{f;\delta }=\max
\left\{ f^{\frac{1}{4+2\delta }},\left( \sup_{\theta \in \mathbb{S}^{n-1}}%
\left[ \partial _{\theta }^{2}f\right] _{+}\right) ^{\frac{1}{2+2\delta }%
},\left\vert \nabla ^{4}f\right\vert ^{\frac{1}{2\delta }}\right\} ,
\end{equation*}%
together with the estimates 
\begin{equation*}
\left\vert \nabla ^{\ell }f\left( x\right) \right\vert \lesssim \rho
_{f;\delta }^{4+2\delta -\ell },\ \ \ \ \ \text{for }\ell \leq 4,
\end{equation*}%
we obtain%
\begin{equation*}
\left\vert F^{\prime \prime }\left( x\right) \right\vert \lesssim \rho
_{f;\delta }^{2+2\delta }+\frac{\left( \rho _{f;\delta }^{2+2\delta }\right)
^{2}}{\rho _{f;\delta }^{2+2\delta }}=2\rho _{f;\delta }^{2+2\delta }\approx
f_{22}\left( X\right) ,
\end{equation*}%
and hence the crucial inequality%
\begin{equation}
\sup_{\theta \in \mathbb{S}^{n-2}}\left[ \partial _{\theta }^{2}F\left(
x\right) \right] _{+}=\left[ F^{\prime \prime }\left( x\right) \right] _{+}=%
\left[ f_{11}\left( X\right) -\frac{f_{12}\left( X\right) ^{2}}{f_{22}\left(
X\right) }\right] _{+}\lesssim \sup_{\Theta \in \mathbb{S}^{n-1}}\left[
\partial _{\Theta }^{2}f\left( X\right) \right] _{+}\ ,  \label{crucial}
\end{equation}%
Thus we have both 
\begin{equation}
F\left( x\right) ^{\frac{1}{4+2\delta }}=f\left( X\right) ^{\frac{1}{%
4+2\delta }}\leq \rho _{f;\delta }\text{ and }\left\vert F^{\prime \prime
}\left( x\right) \right\vert ^{\frac{1}{2+2\delta }}\lesssim \rho _{f;\delta
}.  \label{have both}
\end{equation}

Suppose for the moment that we could show 
\begin{equation}
\left\vert F^{\prime \prime \prime \prime }\left( x\right) \right\vert ^{%
\frac{1}{2\delta }}\lesssim \rho _{f;\delta }\left( X\right) ,
\label{can show}
\end{equation}%
as well. Then since we are in Case II, and since (\ref{crucial}) holds, we
have%
\begin{equation*}
\rho _{F;\delta }\left( x\right) =\max \left\{ F\left( x\right) ^{\frac{1}{%
4+2\delta }},\left[ F^{\prime \prime }\left( x\right) \right] _{+}^{\frac{1}{%
2+2\delta }},\left\vert F^{\prime \prime \prime \prime }\left( x\right)
\right\vert ^{\frac{1}{2\delta }}\right\} \lesssim \rho _{f;\delta }\left(
X\right) \approx f_{22}\left( X\right) ^{\frac{1}{2+2\delta }}\ ,
\end{equation*}%
since $\sup_{\theta \in \mathbb{S}^{0}}\partial _{\theta }^{2}F\left(
x\right) =F^{\prime \prime }\left( x\right) $, and thus it would remain only
to obtain the estimates for $\left[ F^{\prime \prime }\left( x\right) \right]
_{+}$ and $\left\vert F^{\prime \prime \prime \prime }\left( x\right)
\right\vert $ in (\ref{diff prov}), i.e.%
\begin{equation*}
\left\vert F^{\prime \prime \prime \prime }\left( x\right) \right\vert \leq
Cf\left( x\right) ^{\frac{\delta _{1}}{2+\delta _{1}}}\text{ and }\left[
F^{\prime \prime }\left( x\right) \right] _{+}\leq Cf\left( x\right) ^{\eta
}.
\end{equation*}

We begin with the easy estimate using (\ref{crucial}) to obtain,%
\begin{equation*}
\left[ F^{\prime \prime }\left( x\right) \right] _{+}\lesssim\sup_{\Theta \in 
\mathbb{S}^{1}}\left[ \partial _{\Theta }^{2}f\left( X\right) \right]
_{+}\lesssim f\left( X\right) ^{\eta }=F\left( x\right) ^{\eta },
\end{equation*}%
upon using the assumption $f_{22}\left( X\right) \approx \sup_{\Theta \in 
\mathbb{S}^{n-1}}\left[ \partial _{\Theta }^{2}f\left( x\right) \right] _{+}$
together with the second inequality in (\ref{diff prov}). For $F^{\prime
\prime \prime \prime }\left( x\right) $ we use (\ref{F''}) to compute 
\begin{eqnarray*}
F^{\prime \prime \prime \prime }\left( x\right)  &=&\frac{d^{2}}{dx^{2}}%
\left( f_{11}\left( X\right) -\frac{f_{12}\left( X\right) ^{2}}{f_{22}\left(
X\right) }\right)  \\
&&-\left[ \frac{d^{2}}{dx^{2}}\frac{f_{2}\left( X\right) }{f_{22}\left(
X\right) }\right] \left( f_{112}\left( X\right) +2\frac{f_{12}\left(
X\right) f_{122}\left( X\right) }{f_{22}\left( X\right) }-\frac{f_{12}\left(
X\right) ^{2}f_{222}\left( X\right) }{f_{22}\left( X\right) ^{2}}\right)  \\
&&-\left[ \frac{d}{dx}\frac{f_{2}\left( X\right) }{f_{22}\left( X\right) }%
\right] \frac{d}{dx}\left( f_{112}\left( X\right) +2\frac{f_{12}\left(
X\right) f_{122}\left( X\right) }{f_{22}\left( X\right) }-\frac{f_{12}\left(
X\right) ^{2}f_{222}\left( X\right) }{f_{22}\left( X\right) ^{2}}\right)  \\
&&-\frac{f_{2}\left( X\right) }{f_{22}\left( X\right) }\frac{d^{2}}{dx^{2}}%
\left( f_{112}\left( X\right) +2\frac{f_{12}\left( X\right) f_{122}\left(
X\right) }{f_{22}\left( X\right) }-\frac{f_{12}\left( X\right)
^{2}f_{222}\left( X\right) }{f_{22}\left( X\right) ^{2}}\right)  \\
&=&\frac{d^{2}}{dx^{2}}\left( f_{11}\left( X\right) -\frac{f_{12}\left(
X\right) ^{2}}{f_{22}\left( X\right) }\right) ,
\end{eqnarray*}%
and a lengthy calculation, using only the chain rule, the product rule, the
estimates (\ref{bit of work}), and the equivalence $\rho _{f}\left( X\right)
\approx f_{22}\left( X\right) $ in force in Case II, shows that the final
line is dominated in modulus by $\rho _{f}\left( X\right) ^{2\delta }$.
Indeed,%
\begin{eqnarray*}
&&\frac{d^{2}}{dx^{2}}\left( f_{11}\left( X\right) -\frac{f_{12}\left(
X\right) ^{2}}{f_{22}\left( X\right) }\right)  \\
&=&\frac{d}{dx}\left( f_{111}\left( X\right) +f_{112}\left( X\right)
h^{\prime }\left( x\right) -\frac{f_{22}\left( X\right) 2f_{12}\left(
X\right) \left[ f_{112}\left( X\right) +f_{122}\left( X\right) h^{\prime
}\left( x\right) \right] -f_{12}\left( X\right) ^{2}\left[ f_{122}\left(
X\right) +f_{222}\left( X\right) h^{\prime }\left( x\right) \right] }{%
f_{22}\left( X\right) ^{2}}\right)  \\
&=&f_{1111}\left( X\right) -f_{1112}\left( X\right) \frac{f_{12}\left(
X\right) }{f_{22}\left( X\right) }-\frac{d}{dx}\left[ \frac{f_{112}\left(
X\right) f_{12}\left( X\right) }{f_{22}\left( X\right) }\right]  \\
&&-\frac{d}{dx}\frac{f_{22}\left( X\right) 2f_{12}\left( X\right) \left[
f_{112}\left( X\right) -\frac{f_{122}\left( X\right) f_{12}\left( X\right) }{%
f_{22}\left( X\right) }\right] -f_{12}\left( X\right) ^{2}\left[
f_{122}\left( X\right) -\frac{f_{222}\left( X\right) f_{12}\left( X\right) }{%
f_{22}\left( X\right) }\right] }{f_{22}\left( X\right) ^{2}},
\end{eqnarray*}%
which we claim is dominated by $C\rho _{f}\left( X\right) ^{2\delta }$ upon
using the estimates $\left\vert \nabla ^{\ell }f\left( x\right) \right\vert
\lesssim \rho \left( x\right) ^{4+2\delta -\ell }$. For example, we compute
that the third term on the right hand side above equals%
\begin{eqnarray*}
\frac{d}{dx}\left[ \frac{f_{112}\left( X\right) f_{12}\left( X\right) }{%
f_{22}\left( X\right) }\right]  &=&\frac{\left[ f_{1112}\left( X\right)
+f_{1122}\left( X\right) h^{\prime }\left( x\right) \right] f_{12}\left(
X\right) }{f_{22}\left( X\right) } \\
&&+\frac{f_{112}\left( X\right) \left[ f_{112}\left( X\right) +f_{122}\left(
X\right) h^{\prime }\left( x\right) \right] }{f_{22}\left( X\right) } \\
&&-\frac{\left[ f_{1112}\left( X\right) +f_{1122}\left( X\right) h^{\prime
}\left( x\right) \right] f_{12}\left( X\right) }{f_{22}\left( X\right) ^{2}}%
\left[ f_{122}\left( X\right) +f_{222}\left( X\right) h^{\prime }\left(
x\right) \right] ,
\end{eqnarray*}%
and the estimates (\ref{bit of work}) then easily show both 
\begin{eqnarray*}
\left\vert \frac{d}{dx}\left[ \frac{f_{112}\left( X\right) f_{12}\left(
X\right) }{f_{22}\left( X\right) }\right] \right\vert  &\leq &C\rho \left(
X\right) ^{2\delta }, \\
\text{and }\left[ \frac{f_{112}f_{12}}{f_{22}}\right] _{1,2\delta }\left(
X\right)  &\leq &C.
\end{eqnarray*}

The remaining estimates for $\left\vert F^{\prime \prime \prime \prime
}\left( x\right) \right\vert $ are similar and left for the reader.

Thus we have completed the proof of (\ref{can show}), and now we can use the
second inequality in (\ref{diff prov}) to obtain%
\begin{equation*}
\left\vert \nabla ^{4}F\left( x\right) \right\vert ^{\frac{1}{2\delta _{1}}%
}\lesssim \rho _{f;\delta }\left( X\right) ^{\frac{2\delta }{2\delta _{1}}%
}\lesssim \left( \sup_{\Theta \in \mathbb{S}^{n-1}}\left[ \partial _{\Theta
}^{2}f\left( x\right) \right] _{+}\right) ^{\frac{1}{2+2\delta }\frac{%
2\delta }{2\delta _{1}}}\lesssim f\left( X\right) ^{\eta \frac{1}{2+2\delta }%
\frac{2\delta }{2\delta _{1}}}=F\left( x\right) ^{\frac{1}{4+2\delta _{1}}},
\end{equation*}%
where the final equality follows from the definition of $\delta _{1}$, i.e. $%
\frac{\delta _{1}}{2+\delta _{1}}=\eta \frac{\delta }{1+\delta }$.

The analogous derivative calculations in higher dimensions $n>2$ are mostly
a straightforward exercise in extending notation. For example, if we write $%
x=\left( x^{\prime },x_{n}\right) \in \mathbb{R}^{n-1}\times \mathbb{R}$ and
suppose $f_{nn}\left( x,x_{n}\right) \approx \rho _{f;\delta }\left(
x\right) ^{2+2\delta }$, then we can use the Implicit Function Theorem to
locally define $h\left( x^{\prime }\right) $ by $f_{n}\left( X\right)
=f_{n}\left( x^{\prime },h\left( x^{\prime }\right) \right) =0$. Then with $%
F\left( x^{\prime }\right) \equiv f\left( x^{\prime },h\left( x^{\prime
}\right) \right) $, we have%
\begin{eqnarray*}
f_{n}\left( X\right) &=&f_{n}\left( x^{\prime },h\left( x^{\prime }\right)
\right) =0, \\
\frac{\partial h}{\partial x_{i}} &=&-\frac{\frac{\partial H}{\partial x_{i}}%
\left( x^{\prime },h\left( x^{\prime }\right) \right) }{\frac{\partial H}{%
\partial x_{n}}\left( x^{\prime },h\left( x^{\prime }\right) \right) },\ \ \
\ \ \text{for }1\leq i\leq n-1, \\
\frac{\partial F}{\partial x_{i}}\left( x^{\prime }\right) &=&f_{i}\left(
X\right) +f_{n}\left( X\right) \frac{\partial h}{\partial x_{i}}\left(
x^{\prime }\right) =f_{i}\left( X\right) , \\
\frac{\partial ^{2}F}{\partial x_{i}^{2}}\left( x^{\prime }\right)
&=&f_{ii}\left( X\right) +f_{in}\left( X\right) h^{\prime }\left( x^{\prime
}\right) =f_{ii}\left( X\right) -\frac{f_{in}\left( X\right) f_{in}\left(
X\right) }{f_{nn}\left( X\right) },
\end{eqnarray*}%
and hence, after a rotation in $x^{\prime }$, the crucial inequality%
\begin{equation*}
\sup_{\Theta \in \mathbb{S}^{n-1}}\left[ \partial _{\Theta }^{2}F\left(
x^{\prime }\right) \right] _{+}=\left[ \frac{\partial ^{2}F}{\partial
x_{i}^{2}}\left( x^{\prime }\right) \right] _{+}=\left[ f_{ii}\left(
X\right) -\frac{f_{in}\left( X\right) ^{2}}{f_{nn}\left( X\right) }\right]
_{+}\leq \left[ f_{ii}\left( X\right) \right] _{+}\leq \sup_{\Theta \in 
\mathbb{S}^{n-1}}\left[ \partial _{\Theta }^{2}f\left( x\right) \right]
_{+}\ .
\end{equation*}%
The $n$-dimensional proof now proceeds as in the two-dimensional case.
\end{proof}

\subsection{A two dimensional SOS decomposition}

Here we sketch the proof of a decomposition into a sum of squares of $%
C^{2,\delta }\left( \mathbb{R}^{2}\right) $ functions in the plane, in which
the second differential inequality in (\ref{diff prov}) can be dropped. In
dimension $n\geq 5$, this second inequality cannot be dropped as shown in
Remark \ref{necc diff ineq} below.

\begin{theorem}
\label{main 2D}Suppose $0<\delta <\frac{1}{2}$ and that $f\in C^{4,2\delta
}\left( \mathbb{R}^{2}\right) $ satisfies the pointwise inequality%
\begin{equation}
\left\vert \nabla ^{4}f\left( x\right) \right\vert \leq f\left( x\right) ^{%
\frac{\delta }{2+\delta }}.  \label{crucial point}
\end{equation}%
Then $f=\sum_{\ell =1}^{N}g_{\ell }^{2}$ can be decomposed as a finite sum
of squares of functions $g_{\ell }\in C^{2+\delta }\left( \mathbb{R}%
^{2}\right) $ where 
\begin{eqnarray*}
\left\vert D^{\alpha }g_{\ell }\left( x\right) \right\vert &\leq &C\rho
\left( x\right) ^{2+\delta -\left\vert \alpha \right\vert },\ \ \ \ \ 0\leq
\left\vert \alpha \right\vert \leq 2, \\
\left[ g_{\ell }\right] _{\alpha ,\delta }\left( x\right) &\leq &C,\ \ \ \ \
\left\vert \alpha \right\vert =2.
\end{eqnarray*}%
and%
\begin{eqnarray*}
\left\vert D^{\alpha }g_{\ell }^{2}\left( x\right) \right\vert &\leq &C\rho
\left( x\right) ^{4+2\delta -\left\vert \alpha \right\vert },\ \ \ \ \ 0\leq
\left\vert \alpha \right\vert \leq 4, \\
\left[ g_{\ell }^{2}\right] _{\alpha ,2\delta }\left( x\right) &\leq &C,\ \
\ \ \ \left\vert \alpha \right\vert =4.
\end{eqnarray*}
\end{theorem}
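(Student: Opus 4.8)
The plan is to run the first --- and, in dimension $n=2$, the only --- step of the Fefferman--Phong--Bony algorithm from the proof of Theorem~\ref{provisional}, and then to dispose of the leftover ``remainder'' pieces by invoking Bony's one-dimensional sum of squares theorem \cite{Bon}, which needs no differential inequality at all. The point is that the single hypothesis (\ref{crucial point}) already plays the role of the first inequality in (\ref{diff prov}): since
\begin{equation*}
\left\vert \nabla^{4}f\left(x\right)\right\vert^{\frac{1}{2\delta}}\leq\left(f\left(x\right)^{\frac{\delta}{2+\delta}}\right)^{\frac{1}{2\delta}}=f\left(x\right)^{\frac{1}{4+2\delta}}\leq\rho_{f;\delta}\left(x\right),
\end{equation*}
the term $\left\vert\nabla^{4}f\right\vert^{1/2\delta}$ is never dominant in $\rho_{f;\delta}$, so the proof of Theorem~\ref{provisional} applies verbatim through the covering of $\Gamma^{c}$ by balls $\{B_{\nu}\}$, the partition of unity $\{\Phi_{\nu}\}$, the inequalities (\ref{bit of work}), and the dichotomy into Case~I and Case~II, \emph{none} of which uses the second inequality in (\ref{diff prov}). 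This produces $f(x)=\sum_{\nu}\Phi_{\nu}(x)^{2}f(x)$ where, after a $\nu$-dependent rotation, each summand equals either $g_{\nu}^{2}$ (Case~I) or $g_{\nu}^{2}+\Phi_{\nu}^{2}\kappa_{\nu}$ (Case~II), with $g_{\nu}\in C^{2,\delta}(B_{\nu})$ obeying (\ref{Dg delta})--(\ref{Dg^2 delta}) uniformly in $\nu$, and with $\kappa_{\nu}(x)=F(\xi)$ a nonnegative function of the \emph{single} variable $\xi\in\mathbb{R}$ lying in a bounded subset of $C^{4,2\delta}$.

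The second ingredient is that the ``distance function'' does not grow under this step: combining (\ref{have both}) with (\ref{can show}) --- whose proof, as noted there, uses only the chain and product rules, the estimates (\ref{bit of work}), and the Case~II equivalence $\rho_{f;\delta}(X)\approx f_{22}(X)^{1/(2+2\delta)}$, but \emph{not} the second inequality of (\ref{diff prov}) --- one gets
\begin{equation*}
\rho_{F;\delta}\left(\xi\right)=\max\left\{F\left(\xi\right)^{\frac{1}{4+2\delta}},\left[F''\left(\xi\right)\right]_{+}^{\frac{1}{2+2\delta}},\left\vert F''''\left(\xi\right)\right\vert^{\frac{1}{2\delta}}\right\}\lesssim\rho_{f;\delta}(X)\approx\rho_{f;\delta}(x^{\nu})\approx s^{-1}r_{\nu},
\end{equation*}
the last equivalences by the slowly varying property of $\rho_{f;\delta}=r_{\delta}$ on $B_{\nu}$. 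Now apply Bony's one-dimensional theorem \cite{Bon} to (a suitable globalization of) each nonnegative $C^{4,2\delta}$ function $\kappa_{\nu}=F$ of one variable --- no differential inequality being required since in one dimension Bony handles the case where the fourth derivative dominates. This yields $\kappa_{\nu}=\sum_{j}\phi_{\nu,j}^{2}$ with $\phi_{\nu,j}\in C^{2,\delta}$ and $\vert D^{\alpha}\phi_{\nu,j}\vert\lesssim\rho_{F;\delta}^{\,2+\delta-\vert\alpha\vert}$, $[\phi_{\nu,j}]_{\alpha,\delta}\lesssim1$ for $\vert\alpha\vert=2$, together with the corresponding $4+2\delta$ estimates for $\phi_{\nu,j}^{2}$; by the previous display all of these are bounds in terms of $r_{\nu}$, uniformly in $\nu$. (If Bony's one-dimensional decomposition is only a countable sum, it is, as in Theorem~\ref{provisional}, a sum over a bounded family with pairwise disjoint supports, which is all that is needed.)

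It remains to reassemble. Using the subproduct rule (\ref{subproduct}) exactly as in Case~I of Theorem~\ref{provisional} where the product $\Phi_{\nu}\sqrt{f}$ was handled, and using $\rho_{F;\delta}\lesssim r_{\nu}$ to absorb the factors $r_{\nu}^{-\vert\beta\vert}$ coming from the derivatives of $\Phi_{\nu}$, each $\Phi_{\nu}\phi_{\nu,j}$ is a $C^{2,\delta}(\mathbb{R}^{2})$ function supported in $B_{\nu}$ with $\vert D^{\alpha}(\Phi_{\nu}\phi_{\nu,j})\vert\lesssim\rho(x)^{2+\delta-\vert\alpha\vert}$ for $\vert\alpha\vert\leq2$, $[\Phi_{\nu}\phi_{\nu,j}]_{\alpha,\delta}\lesssim1$ for $\vert\alpha\vert=2$, and likewise for its square. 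Splitting $\mathbb{N}=\bigcup_{\ell=1}^{N}A_{\ell}$ so that the triples of the balls $\{B_{\nu}\}_{\nu\in A_{\ell}}$ are pairwise disjoint --- so that the functions $\Phi_{\nu}$, $\nu\in A_{\ell}$, have pairwise disjoint supports --- the cross terms vanish and we may collect the $g_{\nu}$'s into $g_{\ell}=\sum_{\nu\in A_{\ell}}g_{\nu}$ and the $\Phi_{\nu}\phi_{\nu,j}$'s into $G_{\ell,j}=\sum_{\nu\in A_{\ell}}\Phi_{\nu}\phi_{\nu,j}$, obtaining $f=\sum_{\ell}g_{\ell}^{2}+\sum_{\ell,j}G_{\ell,j}^{2}$, a finite sum of squares of $C^{2,\delta}(\mathbb{R}^{2})$ functions satisfying the asserted estimates. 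I expect the only real work to be the careful check that the proof of Theorem~\ref{provisional} does go through under (\ref{crucial point}) alone --- i.e. that the second inequality in (\ref{diff prov}) was used there \emph{solely} to pass the differential inequality (\ref{diff prov next}) down to the $\kappa_{\nu}$, a step we simply skip here because those functions are one-dimensional --- together with extracting from \cite{Bon} a one-dimensional decomposition of $\kappa_{\nu}$ whose estimates are correctly normalized by $\rho_{\kappa_{\nu};\delta}$; the remaining bookkeeping is routine.
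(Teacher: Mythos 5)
Your proposal is correct and follows essentially the same approach as the paper: the hypothesis (\ref{crucial point}) eliminates the term $\left\vert\nabla^{4}f\right\vert^{1/2\delta}$ from $\rho_{f;\delta}$, the Fefferman--Phong--Bony covering and Case I/Case II dichotomy from Theorem~\ref{provisional} go through unchanged, and the one-dimensional remainders $\kappa_{\nu}=F$ are disposed of by Bony's one-dimensional sum-of-squares theorem, which requires no differential inequality. Your expanded justification that the second inequality in (\ref{diff prov}) was used in Theorem~\ref{provisional} only to propagate the differential inequality to $\kappa_{\nu}$ --- a step that is vacuous in dimension two --- accurately fills in what the paper leaves implicit.
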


\begin{proof}
The pointwise inequality on $\left\vert \nabla ^{4}f\right\vert $ shows that 
$\left\vert \nabla ^{4}f\left( x\right) \right\vert ^{\frac{1}{2\delta }%
}\leq f\left( x\right) ^{\frac{1}{4+2\delta }}$, and hence 
\begin{eqnarray*}
\rho \left( x\right)  &\equiv &\max \left\{ f\left( x\right) ^{\frac{1}{%
4+2\delta }},\left( \sup_{\theta \in \mathbb{S}^{n-1}}\left[ \partial
_{\theta }^{2}f\left( x\right) \right] _{+}\right) ^{\frac{1}{2+2\delta }%
},\left\vert \nabla ^{4}f\left( x\right) \right\vert ^{\frac{1}{2\delta }%
}\right\}  \\
&=&\max \left\{ f\left( x\right) ^{\frac{1}{4+2\delta }},\left( \sup_{\theta
\in \mathbb{S}^{n-1}}\left[ \partial _{\theta }^{2}f\left( x\right) \right]
_{+}\right) ^{\frac{1}{2+2\delta }}\right\} ,
\end{eqnarray*}%
by (\ref{crucial point}). Now the H\"{o}lder argument of Bony \cite[%
Subsection 5.1]{Bon} proves the result since the function $F\left( x^{\prime
}\right) =f\left( x^{\prime },X\left( x^{\prime }\right) \right) $ that
arises in Case II of the argument is in $C^{4,2\delta }\left( \mathbb{R}%
\right) $, and so Bony's one-dimensional result shows that $F$ can be
written as a sum of two squares of $C^{2,\delta }\left( \mathbb{R}\right) $
functions. Now we proceed with the Fefferman-Phong argument as modified by
Bony, and along the lines of the argument used in the proof of the
provisional Theorem \ref{provisional} above.
\end{proof}

\subsection{A higher dimensional SOS decomposition}

Here we prove our main decomposition of a smooth nonnegative function into a
sum of squares of $C^{2,\delta }\left( \mathbb{R}^{n}\right) $ functions in
arbitrary dimension, but restricted to elliptical flat smooth functions that
satisfy certain differential inequalities, that are in turn implied by
assuming $f$ is $\omega _{s}$-monotone for appropriate $0<s<1$.

\begin{theorem}
\label{efs eps}Suppose $0<\delta ,\eta <\frac{1}{2}$, that $f$ is a $%
C^{4,2\delta }$ function on $\mathbb{R}^{n}$, and that $\rho \left( x\right) 
$ is as defined in the formula (\ref{def rho}) above. Define $\delta _{n-1}$
recursively by $\delta _{0}=\delta $ and%
\begin{equation}
\frac{\delta _{k+1}}{2+\delta _{k+1}}=\eta \frac{\delta _{k}}{1+\delta _{k}}%
,\ \ \ \ \ 0\leq k\leq n-2.  \label{recurse}
\end{equation}

\begin{enumerate}
\item If $f$ satisfies both of the differential inequalities in (\ref{diff
prov}), i.e.%
\begin{equation*}
\left\vert \nabla ^{4}f\left( x\right) \right\vert \leq Cf\left( x\right) ^{%
\frac{\delta }{2+\delta }}\text{ and }\sup_{\theta \in \mathbb{S}^{n-1}}%
\left[ \partial _{\theta }^{2}f\left( x\right) \right] _{+}\leq Cf\left(
x\right) ^{\eta },
\end{equation*}%
then $f=\sum_{\ell =1}^{N}g_{\ell }^{2}$ can be decomposed as a finite sum
of squares of functions $g_{\ell }\in C^{2+\delta _{n-1}}\left( \mathbb{R}%
^{2}\right) $ where 
\begin{eqnarray*}
\left\vert D^{\alpha }g_{\ell }\left( x\right) \right\vert &\leq &C\rho
_{f;\delta }\left( x\right) ^{2+\delta _{n-1}-\left\vert \alpha \right\vert
},\ \ \ \ \ 0\leq \left\vert \alpha \right\vert \leq 2, \\
\left[ g_{\ell }\right] _{\alpha ,\delta _{n-1}}\left( x\right) &\leq &C,\ \
\ \ \ \left\vert \alpha \right\vert =2.
\end{eqnarray*}%
and%
\begin{eqnarray*}
\left\vert D^{\alpha }g_{\ell }^{2}\left( x\right) \right\vert &\leq &C\rho
_{f;\delta }\left( x\right) ^{4+2\delta _{n-1}-\left\vert \alpha \right\vert
},\ \ \ \ \ 0\leq \left\vert \alpha \right\vert \leq 4, \\
\left[ g_{\ell }^{2}\right] _{\alpha ,2\delta _{n-1}}\left( x\right) &\leq
&C,\ \ \ \ \ \left\vert \alpha \right\vert =4.
\end{eqnarray*}%
The inequality $\rho _{f;\delta }\left( x\right) \leq Cf\left( x\right)
^{\min \left\{ \frac{1}{4+2\delta },\frac{\eta }{2+2\delta }\right\} }$ can
be used to further dominate these derivatives by positive powers of $f\left(
x\right) $.

\item In particular, the inequalities (\ref{diff prov}) hold provided $f$ is
also flat, smooth and $\omega _{s}$-monotone for some $0<s<1$ satisfying%
\begin{equation}
s>\max \left\{ \sqrt[4]{\frac{\delta }{2+\delta }},\sqrt{\eta }\right\} .
\label{eps delta}
\end{equation}
\end{enumerate}
\end{theorem}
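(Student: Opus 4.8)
The plan is to obtain part (1) by iterating Theorem \ref{provisional} exactly $n$ times, lowering the dimension by one at each stage, and to obtain part (2) from the derivative bounds of Theorem \ref{s'^m}. For part (1), we apply Theorem \ref{provisional} to $f$ in dimension $n$ with the pair $(\delta_0,\eta)=(\delta,\eta)$. It produces finitely many $g_\ell^{(0)}\in C^{2,\delta_0}(\mathbb{R}^n)$ obeying the asserted estimates in terms of $\rho_{f;\delta_0}$, together with nonnegative $h_\ell^{(0)}=\sum_m(\Phi_{\ell,m}^{(0)})^2\kappa_{\ell,m}^{(0)}$ where, in suitable rotated coordinates (and extended to all of $\mathbb{R}^{n-1}$ keeping the bounds as in the statement of Theorem \ref{provisional}), $\kappa_{\ell,m}^{(0)}$ is a nonnegative $C^{4,2\delta_0}$ function of $n-1$ variables that, by assertion (ii) of Theorem \ref{provisional}, satisfies the pair (\ref{diff prov}) with the \emph{same} exponent $\eta$ and with $\delta_1$ in place of $\delta$, $\delta_1$ being the quantity defined by the recursion (\ref{recurse}). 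This is again exactly the hypothesis of Theorem \ref{provisional}, now in dimension $n-1$ with pair $(\delta_1,\eta)$, so we apply it to each $\kappa_{\ell,m}^{(0)}$; multiplying the resulting $g$'s by $\Phi_{\ell,m}^{(0)}$ and squaring puts new squares into the decomposition, while the new $\kappa$'s are functions of $n-2$ variables satisfying (\ref{diff prov}) with $(\delta_2,\eta)$. Iterating, after the application in dimension $2$ (pair $(\delta_{n-2},\eta)$) we are left with $\kappa$'s of one variable satisfying (\ref{diff prov}) with $(\delta_{n-1},\eta)$; the final application of Theorem \ref{provisional} in dimension $1$ with pair $(\delta_{n-1},\eta)$ produces only squares, since in one variable the ``$\kappa$'' reduces to a constant $c\ge 0$ and $\Phi^2 c=(\Phi\sqrt c)^2$. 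Using the disjoint-triples grouping of Theorem \ref{provisional} at each level to keep the count finite, and collecting all squares produced along the way (each carrying the chain of smooth cutoffs from the earlier levels, which preserves $C^{2,\delta_{n-1}}$ with bounded norm), we arrive at $f=\sum_{\ell=1}^N g_\ell^2$ with $g_\ell\in C^{2,\delta_{n-1}}$.

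Now the bookkeeping. Since $0<\eta<\tfrac12$, the recursion gives $\tfrac{\delta_{k+1}}{2+\delta_{k+1}}=\eta\tfrac{\delta_k}{1+\delta_k}<\tfrac12\cdot\tfrac{\delta_k}{1+\delta_k}<\tfrac{\delta_k}{2+\delta_k}$, so $\delta_{k+1}<\delta_k$ and all $\delta_k$ lie in $(0,\delta)\subset(0,\tfrac12)$; hence the hypothesis $0<\delta_k,\eta<\tfrac12$ of Theorem \ref{provisional} holds at every stage, and $\eta$ is never altered (so also $C^{4,2\delta_0}\subset C^{4,2\delta_k}$ uniformly on the bounded families involved). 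The regularity of every collected square is at worst $C^{2,\delta_{n-1}}$ because $\delta_{n-1}\le\delta_k$ for all $k$. For the quantitative estimates one tracks, exactly as in the Case II computation inside the proof of Theorem \ref{provisional}, that the ``distance function'' $\rho$ of each reduced function is $\lesssim\rho_{f;\delta}$ evaluated at the lifted point, using the slowly varying property of $\rho_{f;\delta}$; restricting to a ball about the origin small enough that $\rho_{f;\delta}\le1$, the inequality $2+\delta_k-|\alpha|\ge 2+\delta_{n-1}-|\alpha|$ turns $\rho_{f;\delta}^{\,2+\delta_k-|\alpha|}$ into the smaller quantity $\rho_{f;\delta}^{\,2+\delta_{n-1}-|\alpha|}$, which yields the stated bounds on $g_\ell$ and $g_\ell^2$. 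Finally, the inequality $\rho_{f;\delta}(x)\le Cf(x)^{\min\{1/(4+2\delta),\,\eta/(2+2\delta)\}}$ is immediate from (\ref{diff prov}): each of the three terms defining $\rho_{f;\delta}$, namely $f^{1/(4+2\delta)}$, $(\sup_\Theta[\partial_\Theta^2 f]_+)^{1/(2+2\delta)}\le(Cf^\eta)^{1/(2+2\delta)}$, and $|\nabla^4 f|^{1/(2\delta)}\le(Cf^{\delta/(2+\delta)})^{1/(2\delta)}=Cf^{1/(4+2\delta)}$, is dominated by the claimed power of $f$.

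For part (2), suppose $f$ is elliptical, flat, smooth and $\omega_s$-monotone with $s>\max\{\sqrt[4]{\delta/(2+\delta)},\sqrt\eta\}$, and choose $s'$ with $\max\{\sqrt[4]{\delta/(2+\delta)},\sqrt\eta\}<s'<s$, so that $(s')^4>\delta/(2+\delta)$ and $(s')^2>\eta$. Work on a ball $B(0,a)$ small enough that $f\le1$ there. By Theorem \ref{s'^m} applied with this $s'$, $|\nabla^4 f(x)|\le\Gamma_4 f(x)^{(s')^4}$ and $\sup_\Theta[\partial_\Theta^2 f(x)]_+\le|\nabla^2 f(x)|\le\Gamma_2 f(x)^{(s')^2}$; since $f\le1$ on $B(0,a)$ and lowering the exponent only increases the value, $f^{(s')^4}\le f^{\delta/(2+\delta)}$ and $f^{(s')^2}\le f^\eta$, which is precisely (\ref{diff prov}) on $B(0,a)$. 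Outside a neighborhood of the origin $f$ is bounded below by a positive constant while $|\nabla^4 f|,|\nabla^2 f|$ are bounded because $f\in C^{4,2\delta}$, so (\ref{diff prov}) persists there after enlarging $C$ (after the usual reduction to a neighborhood of the origin, resp.\ multiplying $f$ by a cutoff); thus (\ref{diff prov}) holds and part (1) applies.

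The heart of the argument — the single-step Fefferman--Phong--Bony reduction — is already contained in Theorem \ref{provisional}, so I expect no new deep ingredient to be needed here. The one point requiring genuine care is that Theorem \ref{provisional} must reproduce its own hypothesis verbatim at the next dimension, with the second exponent $\eta$ \emph{unchanged} and the first exponent replaced by the recursively defined $\delta_{k+1}$: this is exactly assertion (ii) of Theorem \ref{provisional}, and it is what guarantees the iteration closes after $n$ steps with terminal regularity $C^{2,\delta_{n-1}}$. The remaining work is the routine bookkeeping of keeping $\delta_k\in(0,\tfrac12)$ and of propagating the $\rho_{f;\delta}$-estimates through the $n-1$ coordinate reductions, for which the slowly varying property of $\rho_{f;\delta}$ is the essential tool.
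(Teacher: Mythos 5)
Your proposal matches the paper's own (very terse) proof in both parts: part (1) is obtained by induction on dimension via Theorem \ref{provisional}, and part (2) follows directly from the derivative bounds of Theorem \ref{s'^m}. The additional bookkeeping you spell out — the monotonicity $\delta_{k+1}<\delta_k$ from the recursion so the hypothesis $0<\delta_k<\tfrac12$ persists, the role of assertion (ii) of Theorem \ref{provisional} in closing the induction with $\eta$ unchanged, the termination in dimension $1$ where $\kappa$ is a constant, and the elementary domination $\rho_{f;\delta}\le Cf^{\min\{1/(4+2\delta),\,\eta/(2+2\delta)\}}$ — is correct and fills in exactly what the paper leaves implicit.
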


\begin{proof}
For (1) use induction on dimension together with Theorem \ref{provisional}.
For (2) use inequality (\ref{first high'}) in Theorem \ref{s'^m}, i.e.%
\begin{equation*}
\left\vert \nabla ^{m}f\left( x\right) \right\vert \leq C_{s^{\prime
},s}f\left( x\right) ^{\left( s^{\prime }\right) ^{m}},\ \ \ \ \ \text{for }%
0<s^{\prime }<s.
\end{equation*}%
Thus we obtain both $\left\vert \nabla ^{4}f\left( x\right) \right\vert \leq
Cf\left( x\right) ^{\frac{\delta }{2+\delta }}$ and $\sup_{\theta \in 
\mathbb{S}^{n-1}}\left[ \partial _{\theta }^{2}f\left( x\right) \right]
_{+}\leq Cf\left( x\right) ^{\eta }$ if we take $s>s^{\prime }\geq \max
\left\{ \sqrt[4]{\frac{\delta }{2+\delta }},\sqrt{\eta }\right\} $.
\end{proof}

\begin{remark}
With $s_{k}\equiv \frac{\delta _{k}}{2+\delta _{k}}$ and $\delta _{0}=\delta 
$, we have from (\ref{recurse}) that%
\begin{eqnarray*}
s_{k+1} &=&\frac{\delta _{k+1}}{2+\delta _{k+1}}=\eta \frac{\delta _{k}}{%
1+\delta _{k}}=\eta \frac{2+\delta _{k}}{1+\delta _{k}}\frac{\delta _{k}}{%
2+\delta _{k}}=\eta \frac{2+\delta _{k}}{1+\delta _{k}}s_{k}, \\
&&\text{i.e. }\frac{s_{k+1}}{s_{k}}=\eta \frac{2+\delta _{k}}{1+\delta _{k}}%
=\eta \left( 1+\frac{1}{1+\delta _{k}}\right) \text{,}
\end{eqnarray*}%
and since $0<\delta _{k+1}\leq \delta _{k}\leq \delta \leq \frac{1}{2}$, we
have the crude estimate%
\begin{equation*}
\left( \frac{5}{3}\eta \right) ^{n-1}\leq \frac{s_{n-1}}{s_{0}}\leq \left(
2\eta \right) ^{n-1}.
\end{equation*}%
Using $s_{k}\equiv \frac{\delta _{k}}{2+\delta _{k}}$, this becomes%
\begin{equation*}
\frac{4}{5}\left( \frac{5}{3}\eta \right) ^{n-1}\leq \frac{2+\delta _{n-1}}{%
2+\delta }\left( \frac{5}{3}\eta \right) ^{n-1}\leq \frac{\delta _{n-1}}{%
\delta }\leq \frac{2+\delta _{n-1}}{2+\delta }\left( 2\eta \right)
^{n-1}\leq \frac{5}{4}\left( 2\eta \right) ^{n-1},
\end{equation*}%
which shows that $g_{\ell }\in C^{2,\delta _{n-1}}$ where 
\begin{equation*}
\frac{4}{5}\left( \frac{5}{3}\eta \right) ^{n-1}\delta \leq \delta
_{n-1}\leq \frac{5}{4}\left( 2\eta \right) ^{n-1}\delta .
\end{equation*}%
In particular we see that $\delta _{n-1}$ is much smaller than $\delta $
when $\eta $ is much smaller than $\frac{1}{2}$.
\end{remark}

Given $f$ flat, smooth and $\omega _{s}$-monotone for some $s<1$, and $%
0<\delta ,\eta <1$, we will now see that the choice $\eta =\sqrt{\frac{%
\delta }{2+\delta }}$, i.e. $\delta =\frac{2\eta ^{2}}{1-\eta ^{2}}$, in
Theorem \ref{efs eps}\ gives the following corollary.

\begin{corollary}
\label{efs eps cor}For $0<s<\frac{1}{\sqrt[4]{5}}$, set $\delta \left(
s\right) =\frac{2s^{4}}{1-s^{4}}\in \left( 0,\frac{1}{2}\right) $,
equivalently $s^{4}=\frac{\delta \left( s\right) }{2+\delta \left( s\right) }
$. Suppose $f\in C^{4,2\delta \left( s\right) }\left( \mathbb{R}^{n}\right) $
is nonnegative, flat, smooth and $\omega _{s}$-monotone. Then for any $0<t<s$%
, $f$ can be decomposed as a finite sum of squares of $C^{2,\delta \left(
t\right) _{n-1}}\left( \mathbb{R}^{n}\right) $ functions where $\delta
\left( t\right) _{n-1}$ is defined recursively by (\ref{recurse}) with $%
\delta _{0}=\delta \left( t\right) $ and $\eta =t^{2}$.
\end{corollary}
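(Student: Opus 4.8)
The plan is to derive this corollary as an immediate specialization of Theorem~\ref{efs eps}, using the freedom in the two auxiliary parameters $\delta$ and $\eta$ there. Given the data of the corollary, fix $0<t<s$ and make the symmetric choice
\begin{equation*}
\delta \equiv \delta(t)=\frac{2t^{4}}{1-t^{4}},\qquad \eta\equiv t^{2},
\end{equation*}
so that $\frac{\delta}{2+\delta}=t^{4}$ and hence $\sqrt[4]{\delta/(2+\delta)}=t=\sqrt{\eta}$. Consequently the two requirements in hypothesis (\ref{eps delta}) of Theorem~\ref{efs eps}(2) both reduce to the single inequality $s>t$, which holds by assumption. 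This is the only computational point in the argument, and it is exactly the reason the symmetric choice $\eta=\sqrt{\delta/(2+\delta)}$ was singled out just before the statement.

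Next I would check that the remaining hypotheses of Theorem~\ref{efs eps} are met. Since $0<t<s<1/\sqrt[4]{5}$ we have $t^{4}<\tfrac15$, whence $\delta=\frac{2t^{4}}{1-t^{4}}<\frac{2/5}{4/5}=\tfrac12$ and $\eta=t^{2}<1/\sqrt{5}<\tfrac12$, so that $0<\delta,\eta<\tfrac12$ as required. Because $u\mapsto 2u/(1-u)$ is increasing on $(0,1)$ and $t^{4}<s^{4}$, we get $\delta(t)<\delta(s)$, so $2\delta(t)\le 2\delta(s)$ and the assumption $f\in C^{4,2\delta(s)}(\mathbb{R}^{n})$ yields $f\in C^{4,2\delta(t)}(\mathbb{R}^{n})=C^{4,2\delta}(\mathbb{R}^{n})$. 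Finally $f$ is flat, smooth and $\omega_{s}$-monotone by hypothesis.

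With these verifications in place, part (2) of Theorem~\ref{efs eps} gives the pointwise differential inequalities (\ref{diff prov}) for $f$, and then part (1) produces a finite decomposition $f=\sum_{\ell=1}^{N}g_{\ell}^{2}$ with $g_{\ell}\in C^{2,\delta_{n-1}}(\mathbb{R}^{n})$, where $\delta_{n-1}$ is obtained from the recursion (\ref{recurse}) started at $\delta_{0}=\delta=\delta(t)$ with the chosen value $\eta=t^{2}$; by definition this is precisely the exponent written $\delta(t)_{n-1}$ in the corollary. The accompanying size and Hölder bounds on $g_{\ell}$ and $g_{\ell}^{2}$ in terms of $\rho_{f;\delta}$ carry over verbatim from Theorem~\ref{efs eps}, completing the proof.

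There is essentially no obstacle here: all the analytic work is contained in Theorems~\ref{s'^m}, \ref{provisional} and~\ref{efs eps}. The only thing that needs attention is the bookkeeping that makes the two conditions in (\ref{eps delta}) collapse into one, together with the elementary monotonicity of $\delta(\cdot)$ that lets us pass from the hypothesis $f\in C^{4,2\delta(s)}$ to $f\in C^{4,2\delta(t)}$; note in particular that one cannot take $t=s$ because the inequality in (\ref{eps delta}) is strict, which is why the admissible range of output exponents is the open interval $0<t<s$.
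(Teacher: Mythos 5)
Your proposal is correct and takes essentially the same route as the paper: both arguments make the symmetric choice $\eta=t^{2}=\sqrt{\delta(t)/(2+\delta(t))}$ so that the two inequalities in (\ref{eps delta}) collapse to $s>t$, and then invoke Theorem~\ref{efs eps} (the paper passes through Theorem~\ref{s'^m} directly, which is exactly the content of part (2) of Theorem~\ref{efs eps} that you cite). Your extra bookkeeping — the checks $0<\delta,\eta<\tfrac12$, the monotonicity of $\delta(\cdot)$, and the resulting containment $C^{4,2\delta(s)}\subset C^{4,2\delta(t)}$ — is left implicit in the paper's one-paragraph proof but is exactly the right set of things to verify.
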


\begin{proof}
Theorem \ref{s'^m} shows that $f$ satisfies the differential inequalities%
\begin{equation*}
\left\vert \nabla ^{4}f\left( x\right) \right\vert \leq Cf\left( x\right)
^{t^{4}}\text{ and }\sup_{\theta \in \mathbb{S}^{n-1}}\left[ \partial
_{\theta }^{2}f\left( x\right) \right] _{+}\leq Cf\left( x\right) ^{t^{2}},\
\ \ \ \ \text{for }0<t<s,
\end{equation*}%
which imply that 
\begin{equation*}
\left\vert \nabla ^{4}f\left( x\right) \right\vert \leq Cf\left( x\right) ^{%
\frac{\delta \left( t\right) }{2+\delta \left( t\right) }}\text{ and }%
\sup_{\theta \in \mathbb{S}^{n-1}}\left[ \partial _{\theta }^{2}f\left(
x\right) \right] _{+}\leq Cf\left( x\right) ^{\eta \left( t\right) },
\end{equation*}%
for $t=\sqrt{\eta \left( t\right) }=\sqrt[4]{\frac{\delta \left( t\right) }{%
2+\delta \left( t\right) }}$, and in particular $\eta \left( t\right) =\sqrt{%
\frac{\delta \left( t\right) }{2+\delta \left( t\right) }}$. Thus part (1)
of the theorem shows that $f$ can be decomposed as a finite sum of squares
of $C^{2,\delta \left( t\right) _{n-1}}\left( \mathbb{R}^{2}\right) $
functions where $\delta \left( t\right) _{n-1}$ is defined recursively by (%
\ref{recurse}) with $\delta _{0}=\delta \left( t\right) $ and $\eta =t^{2}=%
\sqrt{\frac{\delta \left( t\right) }{2+\delta \left( t\right) }}$.
\end{proof}

\begin{remark}
The inequalities (\ref{diff prov}) in Theorem \ref{efs eps} also hold if the
smoothness assumption on $f$ is relaxed to $f\in C^{k}$, provided that $s$
is replaced by $s-\frac{C}{k}$ in (\ref{eps delta}) for a sufficiently large
constant $C$. See the proof of Theorem \ref{s'^m}.
\end{remark}

\begin{remark}
\label{necc diff ineq}The counterexamples in \cite{BoBrCoPe} show that the
differential inequalities in (\ref{diff prov}) cannot both be dropped. More
precisely, fix $\delta >0$. If we set $\beta =\delta _{n-1}$ in part (1) of
Theorem \ref{log counter'}, then the inequality $s<\delta _{n-1}$ implies
that there is an elliptical flat smooth $\omega _{s}$-monotone function $f$
that cannot be written as a finite sum of squares of $C^{2,\delta _{n-1}}$
functions, contradicting part (2) of Theorem \ref{efs eps} with $\eta =\sqrt{%
\frac{\delta }{2+\delta }}$.
\end{remark}

The utility of Corollary \ref{efs eps cor} for our purposes lies in the fact
that given any $0<s<\frac{1}{\sqrt[4]{5}}$, we can find $0<\delta <1$ so
small that $f$ can be decomposed as a finite sum of squares of $C^{2,\delta
}\left( \mathbb{R}^{2}\right) $ functions. We also conjecture that there
exists an extension of Theorem \ref{efs eps} to $C^{2m,2\delta }$ functions $%
f$ on $\mathbb{R}^{n}$, where the control distance that is used in the proof
is%
\begin{equation}
\rho _{f;\delta }\left( x\right) \equiv \max \left\{ f\left( x\right) ^{%
\frac{1}{2m+2\delta }},\left( \sup_{\theta \in \mathbb{S}^{n-1}}\left[
\partial _{\theta }^{2}f\left( x\right) \right] _{+}\right) ^{\frac{1}{%
2m-2+2\delta }},\left\vert \nabla ^{4}f\left( x\right) \right\vert ^{\frac{1%
}{2m-4+2\delta }},...,\left\vert \nabla ^{2m}f\left( x\right) \right\vert ^{%
\frac{1}{2\delta }}\right\} ,  \label{rho m}
\end{equation}%
and where the differential inequalities imposed include $\left\vert \nabla
^{2\left( m-p\right) }f\left( x\right) \right\vert \leq Cf\left( x\right) ^{%
\frac{p+\delta }{m+\delta }}$ for $0\leq p\leq m-2$. Note that Theorem \ref%
{s'^m} gives $\left\vert \nabla ^{2\left( m-1\right) }f\left( x\right)
\right\vert \lesssim f\left( x\right) ^{\left( s^{\prime }\right) ^{2\left(
m-1\right) }}$, and if we wish to obtain the case $p=1$ of the previous
inequalities from this, we need to dominate the right hand side $f\left(
x\right) ^{\left( s^{\prime }\right) ^{2\left( m-1\right) }}$ by $Cf\left(
x\right) ^{\frac{1+\delta }{m+\delta }}$. But this requires $s>\sqrt[2\left(
m-1\right) ]{\frac{1+\delta }{m+\delta }}$, which forces $s$ closer and
closer to $1$ as $m\rightarrow \infty $ since $\lim_{m\rightarrow \infty }m^{%
\frac{1}{2m-1}}=1$. As a consequence, such an extension of Theorem \ref{efs
eps} to $C^{2m,2\delta }$ functions would not be useful for hypoellipticity
in the third paper \cite{KoSa3} of this series, and so we will not pursue
the conjecture here.

\section{Counterexamples}

Here we begin by constructing an example of an elliptical flat smooth
function $f$ on $B_{\mathbb{R}^{4}}\left( 0,1\right) \times \left(
-1,1\right) $ that cannot be written as a finite sum of squares of $%
C^{2,\beta }$ functions for $\beta >0$. Even more, we prove the following
result that answers a question in \cite[Remark 1.4]{BoBrCoPe}.

\begin{theorem}
Given any modulus of continuity $\omega $, there is an elliptical flat
smooth function $f$ on $B_{\mathbb{R}^{4}}\left( 0,1\right) \times \left(
-1,1\right) $ that cannot be written as a finite sum of squares of $%
C^{2,\omega }$ functions.
\end{theorem}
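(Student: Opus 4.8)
The plan is to splice rescaled copies of Bony's quartic $L$ along the $t$–axis and to run the algebraic obstruction (from \cite{BoBrCoPe}, namely that $L$ is not a finite sum of squares of polynomials) on full–width slices, via a rescaling argument in the spirit of Bony \cite{Bon}. Write points as $(\xi ,t)\in B_{\mathbb{R}^{4}}(0,1)\times (-1,1)$, fix $\lambda \in (0,1)$, and recall from \cite{BoBrCoPe} that $L(\xi )=L(x,y,z,w)=w^{4}+x^{2}y^{2}+y^{2}z^{2}+z^{2}x^{2}-4\lambda xyzw$ is a nonnegative homogeneous quartic vanishing only at $\xi =0$, with $\nabla L(0)=\nabla ^{2}L(0)=0$ and $0\le L(\xi )\le C_{L}|\xi |^{4}$, and which is not a finite sum of squares of polynomials — hence, a fortiori, not a sum of any number $N$ of squares of real quadratic forms on $\mathbb{R}^{4}$.

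I would set $f(\xi ,t)=\phi (t)\,L(\xi )+\Lambda (|(\xi ,t)|)$, where $\phi :(-1,1)\to [0,\infty )$ is smooth, positive off $0$, bounded by $1$, and flat at $0$; and $\Lambda :[0,\infty )\to [0,\infty )$ is smooth, increasing, positive on $(0,\infty )$, flat at $0$, and chosen so that $\Lambda (|\cdot |)$ is smooth on $\mathbb{R}^{5}$ (e.g. $\Lambda (r)=\Theta (r^{2})$ for a flat smooth increasing $\Theta $). Then $f\ge \Lambda (|(\xi ,t)|)>0$ off the origin, $f(0,0)=0$, and $f$ is smooth and flat at the origin because $\phi (t)L(\xi )\le C_{L}\phi (t)=O(|t|^{N})=O(|(\xi ,t)|^{N})$ for every $N$. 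Put $t_{k}=2^{-k}$ and $c_{k}=\phi (t_{k})$; as $\phi $ is flat, $c_{k}$ decays faster than every power of $2^{-k}$. Given the modulus $\omega $, choose $\rho _{k}\downarrow 0$ with $\rho _{k}\le t_{k}$ and $\omega (\rho _{k})\le c_{k}^{2}$ (possible since $\omega (r)\to 0$); then, having fixed $\phi $ and the $\rho _{k}$, choose $\Lambda $ to decay so fast that $e_{k}:=\Lambda (t_{k})$ and $\Lambda '(t_{k})$ are as small as the estimates below require (say $e_{k}\le (c_{k+1}\rho _{k+1}^{4})^{2}$ and $\Lambda '(t_{k})\le t_{k}c_{k}^{2}\rho _{k}^{2}$) — these constraints do not involve the hypothetical decomposition.

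Suppose now $f=\sum _{j=1}^{N}g_{j}^{2}$ with $g_{j}\in C^{2,\omega }$ and $M:=\max _{j}\Vert g_{j}\Vert _{C^{2,\omega }}<\infty $. Fix $k$ and restrict to the slice $t=t_{k}$, where $f(\xi ,t_{k})=c_{k}L(\xi )+\Lambda (\sqrt{|\xi |^{2}+t_{k}^{2}})$ for $|\xi |<1$. Expand $g_{j}(\cdot ,t_{k})$ at $\xi =0$ to second order, writing $A_{j}=g_{j}(0,t_{k})$, $B_{j}=\nabla _{\xi }g_{j}(0,t_{k})$, $C_{j}=\nabla _{\xi }^{2}g_{j}(0,t_{k})$, with remainder $|r_{j}(\xi )|\le \tfrac{M}{2}\omega (|\xi |)|\xi |^{2}$. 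Evaluating $f,\nabla _{\xi }f,\nabla _{\xi }^{2}f$ at $(0,t_{k})$ and using $\nabla L(0)=\nabla ^{2}L(0)=0$ gives $\sum _{j}A_{j}^{2}=e_{k}$, $\sum _{j}A_{j}B_{j}=0$, and $\sum _{j}(B_{j}\otimes B_{j}+A_{j}C_{j})=\tfrac{\Lambda '(t_{k})}{2t_{k}}I$, hence $|A_{j}|\le \sqrt{e_{k}}$ and $\sum _{j}|B_{j}|^{2}\le 2\Lambda '(t_{k})/t_{k}+4M\sqrt{N}\sqrt{e_{k}}=:\epsilon _{k}$. Substituting $\xi =\rho _{k}u$, $|u|\le 1$, into $\sum _{j}g_{j}(\rho _{k}u,t_{k})^{2}=c_{k}\rho _{k}^{4}L(u)+\Lambda (\sqrt{\rho _{k}^{2}|u|^{2}+t_{k}^{2}})$, and writing $P_{j}(u)=\tfrac{\rho _{k}^{2}}{2}\langle C_{j}u,u\rangle $ and $S_{j}(u)=A_{j}+\rho _{k}B_{j}\cdot u+r_{j}(\rho _{k}u)$ (so $|S_{j}|\le \nu _{k}:=\sqrt{e_{k}}+\rho _{k}\sqrt{\epsilon _{k}}+\omega (\rho _{k})\rho _{k}^{2}$ on $|u|\le 1$), the identity $(S_{j}+P_{j})^{2}=P_{j}^{2}+S_{j}^{2}+2S_{j}P_{j}$ isolates $\sum _{j}P_{j}(u)^{2}=c_{k}\rho _{k}^{4}\sum _{j}Q_{j,k}(u)^{2}$ with $Q_{j,k}(u):=\tfrac{1}{2\sqrt{c_{k}}}\langle C_{j}u,u\rangle $, and dividing by $c_{k}\rho _{k}^{4}$ yields
\begin{equation*}
\Bigl\Vert L-\sum _{j=1}^{N}Q_{j,k}^{2}\Bigr\Vert _{L^{\infty }(|u|\le 1)}\le \frac{N\nu _{k}(\nu _{k}+\rho _{k}^{2}M)+\Lambda (\sqrt{\rho _{k}^{2}+t_{k}^{2}})}{c_{k}\rho _{k}^{4}}.
\end{equation*}

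The decisive point is that the right–hand side tends to $0$ as $k\to \infty $ with $M,N$ fixed: the $\omega $–contribution is $\le C(M,N)\,\omega (\rho _{k})/c_{k}\le C(M,N)c_{k}\to 0$ by the choice of $\rho _{k}$ (this is where $\omega $ enters the construction), while the $e_{k}$–, $\epsilon _{k}$–, and $\Lambda $–contributions are killed by the prescribed rate of decay of $\Lambda $. Since $\sum _{j}Q_{j,k}^{2}\to L$ uniformly on $\{|u|\le 1\}$, the forms $Q_{j,k}$ are bounded in the finite–dimensional space of quadratic forms on $\mathbb{R}^{4}$ (equivalence of norms), so along a subsequence $Q_{j,k}\to Q_{j}$ and $L=\sum _{j=1}^{N}Q_{j}^{2}$ identically — contradicting that $L$ is not a finite sum of squares of polynomials. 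The routine work is checking that $f$ is smooth, flat and elliptical and the bookkeeping behind the displayed estimate; I expect the only delicate point to be the parameter juggling, reconciling ``$c_{k}$ forced to be flat in $2^{-k}$'' with ``$\omega (\rho _{k})$ must be negligible against $c_{k}$'', which is precisely why one must run the obstruction on the fixed–width slices $\{t=t_{k}\}$, where $f=c_{k}L+o(1)$ on all of $\{|\xi |<1\}$, and localize to $\{|\xi |\le \rho _{k}\}$ only when invoking the Taylor remainder of the $g_{j}$.
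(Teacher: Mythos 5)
Your proposal is essentially correct and takes a genuinely different, more explicit route than the paper's. Both proofs start from the quartic $L$ on $\mathbb{R}^{4}$ (not a finite sum of squares of polynomials), build $f=\varphi(t)L(\xi)$ plus a small positive bump that restores ellipticity, look near the $t$-axis, and rescale to zoom in on $L$. What differs is the bump and the mechanism for extracting the contradiction. The paper's bump $\sigma(r)h(t/\lambda(r))$ is supported in a cone about the $\xi$-axis, so on the rescaled region it vanishes identically and one obtains exactly $L+\tau(t)$; the contradiction then rests on the abstract Lemma \ref{g control} (a decreasing $\mathcal{C}_{\nu}(\tau)\to\infty$ bounding below the $C^{2,\omega}$ norm of any $\nu$-tuple decomposing $L+\tau$, proved by compactness of $C^{2,\omega}$ in $C^{2}$), followed by a diagonalization over $\nu$ to manufacture $\psi$. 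You instead use a simple radial bump $\Lambda(|(\xi,t)|)$, which does \emph{not} vanish on the slice $t=t_{k}$, and control it by Taylor-expanding the hypothetical $g_{j}$ at $\xi=0$, matching low-order coefficients against $f$, and isolating the quadratic pieces $Q_{j,k}$; the contradiction then drops out of compactness in the finite-dimensional space of quadratic forms on $\mathbb{R}^{4}$. This is self-contained and avoids the abstract $\mathcal{C}_{\nu}$, at the price of heavier bookkeeping.

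One concrete calibration error to flag: with your stated constraint $\Lambda'(t_{k})\le t_{k}c_{k}^{2}\rho_{k}^{2}$, the term $\rho_{k}\sqrt{2\Lambda'(t_{k})/t_{k}}\approx c_{k}\rho_{k}^{2}$ dominates $\nu_{k}$, and then the cross-term
\begin{equation*}
\frac{NM\,\nu_{k}\,\rho_{k}^{2}}{c_{k}\rho_{k}^{4}}=\frac{NM\,\nu_{k}}{c_{k}\rho_{k}^{2}}\approx NM
\end{equation*}
is merely $O(1)$, not $o(1)$, so the displayed error bound does not tend to $0$. You need a bit more room, e.g.\ $\Lambda'(t_{k})\le t_{k}c_{k}^{2}\rho_{k}^{4}$ or $\Lambda'(t_{k})\le t_{k}c_{k}^{2}\rho_{k}^{2}/k$; with any such choice (and noting that every modulus of continuity is concave with $\omega(0)=0$, $\omega(1)=1$, hence $\omega(t)\geq t$, which forces $\rho_{k}\leq c_{k}^{2}$), all three contributions go to $0$ as $k\to\infty$ for fixed $M,N$, and the constraints still depend only on $\omega$ and $\phi$, not on the hypothetical decomposition. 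So the gap is minor and easily repaired, and the overall strategy is sound.
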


Then we investigate the connection between $\omega _{s}$-montonicity and
these counterexamples. To construct our counterexample we modify the example
in \cite[Theorem 1.2 (d)]{BoBrCoPe} by adding an additional term $\eta
\left( t,r\right) $, $r=\left\vert \left( w,x,y,z\right) \right\vert $, and
to prepare for this we modify the construction of the function $C\left(
\varepsilon \right) $ appearing in their argument. But first recall the
following lemma, where%
\begin{equation*}
L\left( w,x,y,z\right) \equiv w^{4}+x^{2}y^{2}+y^{2}z^{2}+z^{2}x^{2}-2wxyz,\
\ \ \ \ \left( w,x,y,z\right) \in \mathbb{R}^{4}.
\end{equation*}%
and for a modulus of continuity $\omega $ and $h$ defined on the unit ball $%
B_{\mathbb{R}^{4}}\left( 0,1\right) $ in $\mathbb{R}^{4}$, 
\begin{equation*}
\left\Vert h\right\Vert _{C^{2,\omega }\left( B_{\mathbb{R}^{4}}\left(
0,1\right) \right) }\equiv \sum_{k=0}^{2}\left\Vert \nabla ^{k}h\right\Vert
_{L^{\infty }\left( B_{\mathbb{R}^{4}}\left( 0,1\right) \right)
}+\sup_{W,W^{\prime }\in B_{\mathbb{R}^{4}}\left( 0,1\right) }\frac{%
\left\vert \nabla ^{2}h\left( W\right) -\nabla ^{2}h\left( W^{\prime
}\right) \right\vert }{\omega \left( \left\vert W-W^{\prime }\right\vert
\right) }.
\end{equation*}

\begin{lemma}[{\protect\cite[Theorem 1.2 (d)]{BoBrCoPe}}]
\label{g control}Let $\omega $ be a modulus of continuity. For every $\nu
\in \mathbb{N}$ there is a decreasing function $\mathcal{C}_{\nu }:\left(
0,1\right) \rightarrow \left( 0,\infty \right) $ such that%
\begin{eqnarray*}
\lim_{\tau \searrow 0}\mathcal{C}_{\nu }\left( \tau \right) &=&\infty , \\
\sum_{j=1}^{\nu }\left\Vert g_{j,\tau }\right\Vert _{C^{2,\omega }\left( B_{%
\mathbb{R}^{4}}\left( 0,1\right) \right) } &\geq &\mathcal{C}_{\nu }\left(
\tau \right) ,
\end{eqnarray*}%
whenever $\left\{ g_{j,\tau }\right\} _{j=1}^{\nu }\subset C^{2,\omega
}\left( B_{\mathbb{R}^{4}}\left( 0,1\right) \right) $ satisfy%
\begin{equation}
L\left( w,x,y,z\right) +\tau =\sum_{j=1}^{\nu }g_{j,\tau }\left(
w,x,y,z\right) ^{2},\ \ \ \ \ \text{\ for }\left( w,x,y,z\right) \in B_{%
\mathbb{R}^{4}}\left( 0,1\right) .  \label{g sat}
\end{equation}
\end{lemma}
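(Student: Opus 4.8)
The plan is to deduce the quantitative blow-up of the $C^{2,\omega}$ norms from the qualitative fact that $L$ is \emph{not} a finite sum of squares of $C^{2}$ functions on any neighbourhood of the origin, via a compactness argument. For $\nu\in\mathbb{N}$ and $\tau\in(0,1)$ set
\[
m_{\nu}(\tau)\equiv\inf\Bigl\{\textstyle\sum_{j=1}^{\nu}\|g_{j}\|_{C^{2,\omega}\left(B_{\mathbb{R}^{4}}\left(0,1\right)\right)}:\ L+\tau=\textstyle\sum_{j=1}^{\nu}g_{j}^{2}\text{ on }B_{\mathbb{R}^{4}}\left(0,1\right)\Bigr\}.
\]
This infimum is finite, since $L+\tau\geq\tau>0$ on $\mathbb{R}^{4}$ and $\sqrt{L+\tau}\in C^{\infty}$ gives one admissible decomposition (take $g_{1}=\sqrt{L+\tau}$, the others $0$); and it is strictly positive, since $\sum_{j}g_{j}^{2}\geq\tau$ somewhere forces $\sum_{j}\|g_{j}\|_{\infty}\geq\sqrt{\tau/\nu}$. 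The lemma then follows by taking $\mathcal{C}_{\nu}(\tau)\equiv e^{-\tau}\inf_{0<\sigma\leq\tau}m_{\nu}(\sigma)$: granting the \textbf{key claim} $m_{\nu}(\tau)\to\infty$ as $\tau\searrow0$, this $\mathcal{C}_{\nu}$ is finite, positive, strictly decreasing, tends to $\infty$ as $\tau\searrow0$, and satisfies $\mathcal{C}_{\nu}(\tau)\leq m_{\nu}(\tau)\leq\sum_{j}\|g_{j,\tau}\|_{C^{2,\omega}}$ for every decomposition as in (\ref{g sat}).

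To prove the key claim I would argue by contradiction. If it failed, there would be $\tau_{k}\searrow0$ and decompositions $L+\tau_{k}=\sum_{j=1}^{\nu}g_{j,k}^{2}$ on $B_{\mathbb{R}^{4}}(0,1)$ with $\sum_{j}\|g_{j,k}\|_{C^{2,\omega}}\leq M<\infty$ for all $k$. Then each family $\{g_{j,k}\}_{k}$, $\{\nabla g_{j,k}\}_{k}$, $\{\nabla^{2}g_{j,k}\}_{k}$ is uniformly bounded, and — using the full $C^{2,\omega}$ bound together with $\omega(t)\to0$ — the Hessians $\nabla^{2}g_{j,k}$ are uniformly equicontinuous on the closed ball $\overline{B_{\mathbb{R}^{4}}(0,1)}$, to which these functions extend in $C^{2}$ since their $C^{2,\omega}$ norms are finite. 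By Arzel\`{a}--Ascoli, after passing to a subsequence and diagonalising over $j=1,\dots,\nu$, we obtain $g_{j,k}\to g_{j}$ in $C^{2}$ on the closed ball with $g_{j}\in C^{2}$; passing to the limit in $L+\tau_{k}=\sum_{j}g_{j,k}^{2}$ (the right side converging uniformly) gives $L=\sum_{j=1}^{\nu}g_{j}^{2}$ with $g_{j}\in C^{2}$ near $0$.

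It remains to exclude this, which is the observation of Bony recalled after the Motzkin/$L$ example. Since $L$ is homogeneous of degree $4$, $L(0)=0$ and all derivatives of $L$ of order $<4$ vanish at $0$; from $\sum_{j}g_{j}(0)^{2}=0$ we get $g_{j}(0)=0$, and then $0=\nabla^{2}L(0)=2\sum_{j}\nabla g_{j}(0)\otimes\nabla g_{j}(0)$ is a vanishing sum of rank-one positive semidefinite matrices, so $\nabla g_{j}(0)=0$ for every $j$. Second-order Taylor expansion gives $g_{j}(\xi)=Q_{j}(\xi)+o(|\xi|^{2})$ with $Q_{j}(\xi)=\tfrac{1}{2}\xi^{T}\nabla^{2}g_{j}(0)\xi$ a quadratic form, whence $g_{j}(\xi)^{2}=Q_{j}(\xi)^{2}+o(|\xi|^{4})$ and $L(\xi)-\sum_{j=1}^{\nu}Q_{j}(\xi)^{2}=o(|\xi|^{4})$ as $\xi\to0$. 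The left-hand side is a homogeneous polynomial of degree $4$, and a nonzero such polynomial is bounded below in modulus by a positive multiple of $|\xi|^{4}$ along any ray on which it does not vanish; hence it vanishes identically, so $L=\sum_{j=1}^{\nu}Q_{j}^{2}$ is a finite sum of squares of quadratic polynomials. This contradicts the cited fact that $L$ is not a polynomial sum of squares; equivalently, matching the coefficients of $x^{4},y^{4},z^{4},w^{2}x^{2},w^{2}y^{2},w^{2}z^{2}$ forces each $Q_{j}\in\operatorname{span}\{w^{2},xy,yz,zx\}$, and no pairwise product of these four monomials contributes a $wxyz$ term, whereas $L$ has $-2wxyz$. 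This proves the key claim, and hence the lemma.

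The step I expect to be the main obstacle is the compactness argument: it is essential to use the \emph{full} $C^{2,\omega}$ bound rather than a mere $C^{2}$ bound, because only the modulus-of-continuity control gives equicontinuity of the second derivatives needed for the Arzel\`{a}--Ascoli extraction — and this is exactly consistent with the conclusion, which asserts that no uniform $C^{2,\omega}$ bound can survive the limit $\tau\searrow0$. Everything else is either soft (the passage from $m_{\nu}$ to the monotone $\mathcal{C}_{\nu}$) or classical (non-representability of $L$ as a polynomial sum of squares).
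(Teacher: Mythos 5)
Your proof is correct and follows the same compactness argument as the paper: a uniform $C^{2,\omega}$ bound along a sequence $\tau_k\searrow 0$ would, by Arzel\`{a}--Ascoli (precompactness of bounded sets of $C^{2,\omega}$ in $C^{2}$), yield a $C^{2}$ sum-of-squares decomposition of $L$ on the unit ball, which is impossible. You merely make explicit two points the paper leaves implicit --- the construction of the decreasing function $\mathcal{C}_{\nu}$ from the infimum $m_{\nu}(\tau)$, and Bony's Taylor-expansion observation that a $C^{2}$ SOS decomposition of $L$ near $0$ would force a quadratic-polynomial SOS decomposition --- whereas the paper cites \cite[Theorem 1.2 (c)]{BoBrCoPe} directly for the latter.
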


\begin{proof}
Fix $\nu \in \mathbb{N}$. Suppose, in order to derive a contradiction, that
for all $0<\tau <1$, there are $\nu $ functions $\left\{ g_{j,\tau }\right\}
_{j=1}^{\nu }\subset C^{2,\omega }\left( B_{\mathbb{R}^{4}}\left( 0,1\right)
\right) $ satisfying (\ref{g sat}) and $\left\Vert g_{j,\tau }\right\Vert
_{C^{2,\omega }}\leq C$, for a constant $C$ independent of $\tau $. Then the
collection of functions $\left\{ g_{j,\tau }\right\} _{1\leq j\leq \nu
,0<\tau <1}$ is bounded in $C^{2,\omega }\left( B_{\mathbb{R}^{4}}\left(
0,1\right) \right) $, and hence compact in $C^{2}\left( B_{\mathbb{R}%
^{4}}\left( 0,1\right) \right) $. Thus there is a decreasing sequence $%
\left\{ \tau _{n}\right\} _{n=1}^{\infty }\subset \left( 0,1\right) $ and a
set of $\nu $ functions $\left\{ g_{j}\right\} _{j=1}^{\nu }\subset
C^{2}\left( B_{\mathbb{R}^{4}}\left( 0,1\right) \right) $ such that $%
g_{j,\tau _{n}}\rightarrow g_{j}$ in $C^{2}\left( B_{\mathbb{R}^{4}}\left(
0,1\right) \right) $ for each $1\leq j\leq \nu $, and it follows from (%
\ref{g sat}) that%
\begin{equation*}
L\left( w,x,y,z\right) =\sum_{j=1}^{\nu }g_{j}\left( w,x,y,z\right) ^{2},\ \
\ \ \ \text{\ for }\left( w,x,y,z\right) \in B_{\mathbb{R}^{4}}\left(
0,1\right) ,
\end{equation*}%
contradicting \cite[Theorem 1.2 (c)]{BoBrCoPe}.
\end{proof}

Now we construct a lower bound $\mathcal{C}\left( \tau \right) $ for $%
\left\{ \mathcal{C}_{\nu }\left( \tau \right) \right\} _{\nu =1}^{\infty }$
as in \cite{BoBrCoPe}. First, use Lemma \ref{g control} to choose a strictly
decreasing sequence $\left\{ \tau _{n}\right\} _{n=1}^{\infty }\subset
\left( 0,1\right) $ such that%
\begin{equation*}
\mathcal{C}_{\nu }\left( \tau \right) \geq n^{2},\ \ \ \ \ \text{for }0<\tau
\leq \tau _{n}\text{ and }\nu \leq n,
\end{equation*}%
and then%
\begin{equation*}
\mathcal{C}\left( \tau \right) \equiv \sum_{n=1}^{\infty }n\mathbf{1}_{\left[
\tau _{n+1},\tau _{n}\right) }\left( \tau \right) ,
\end{equation*}%
so that we have%
\begin{equation}
\lim_{\tau \searrow 0}\mathcal{C}\left( \tau \right) =\infty \text{ and }%
\lim_{\tau \searrow 0}\frac{\mathcal{C}_{\nu }\left( \tau \right) }{\mathcal{%
C}\left( \tau \right) }=\infty ,\ \ \ \ \ \text{for all }\nu \in \mathbb{N}.
\label{ratio}
\end{equation}%
It is clear that we can now modify $\mathcal{C}$ to be strictly decreasing
and still satisfy (\ref{ratio}).

Now let $\varphi :\left( 0,1\right) \rightarrow \left( 0,1\right) $ be a
strictly increasing elliptical flat smooth function on $\left( 0,1\right) $,
and with $r=\left\vert \left( w,x,y,z\right) \right\vert $ define%
\begin{equation}
f\left( w,x,y,z,t\right) \equiv \varphi \left( t\right) L\left(
w,x,y,z\right) +\psi \left( t\right) +\eta \left( t,r\right)  \label{def f}
\end{equation}%
where $\psi \left( t\right) $ and $\eta \left( t,r\right) $ are smooth
nonnegative functions constructed as follows, in order that $f$ is
elliptical on $B_{\mathbb{R}^{4}}\left( 0,1\right) \times \left( -1,1\right) 
$, yet cannot be written as a finite sum of squares of $C^{2,\omega }$
functions.

The function $\psi \left( t\right) $ is constructed similar to that in \cite%
{BoBrCoPe} but incorporating an additional function as follows. First we fix
a smooth strictly increasing function $\lambda :\left( 0,1\right)
\rightarrow \left( 0,1\right) $ with $\lim_{r\searrow 0}\lambda \left(
r\right) =0$, so that the inverse function $\lambda ^{-1}\left( t\right) $
is also strictly increasing with limit $0$ at the origin. We will almost
exclusively choose $\lambda \left( r\right) =r$ for $0<r<1$. Next, we choose
a nondecreasing flat elliptical function $\psi _{0}$ on $\left( -1,1\right) $
such that $\frac{\psi _{0}\left( t\right) }{\varphi \left( t\right) \lambda
^{-1}\left( t\right) ^{4}}$ is also nondecreasing and%
\begin{equation}
\frac{1}{\sqrt{\varphi \left( t\right) }\lambda ^{-1}\left( t\right) ^{2}}%
\leq \mathcal{C}\left( \frac{\psi _{0}\left( t\right) }{\varphi \left(
t\right) \lambda ^{-1}\left( t\right) ^{4}}\right) ,\ \ \ \ \ 0<t<1,
\label{psi_0}
\end{equation}%
e.g. using that $\mathcal{C}$ may be assumed strictly decreasing, we may take%
\begin{equation*}
\psi _{0}\left( t\right) \equiv \varphi \left( t\right) \lambda ^{-1}\left(
t\right) ^{4}\mathcal{C}^{-1}\left( \frac{1}{\sqrt{\varphi \left( t\right) }%
\lambda ^{-1}\left( t\right) ^{2}}\right) .
\end{equation*}%
Then in order to obtain a \emph{smooth} such function, set 
\begin{equation*}
\psi \left( t\right) \equiv \int_{\frac{t}{2}}^{t}\psi _{0}\left( s\right)
g\left( \frac{t-s}{t}\right) \frac{ds}{t},
\end{equation*}%
where $g$ is smooth nonnegative function supported in $\left( 0,\frac{1}{2}%
\right) $ with $\int g=1$. Then $\psi \left( t\right) $ is smooth and
because $\psi _{0}$ is nondecreasing, we conclude from the definition of $%
\psi $, that $\psi $ is also nondecreasing, and moreover that $\psi \left(
t\right) \leq \psi _{0}\left( t\right) $ for $0<t<1$. Finally, since $%
\mathcal{C}\left( \tau \right) $ is decreasing, we obtain from (\ref{psi_0})
that%
\begin{equation}
\frac{1}{\sqrt{\varphi \left( t\right) }\lambda ^{-1}\left( t\right) ^{2}}%
\leq \mathcal{C}\left( \frac{\psi \left( t\right) }{\varphi \left( t\right)
\lambda ^{-1}\left( t\right) ^{4}}\right) ,\ \ \ \ \ 0<t<1.  \label{psi_0'}
\end{equation}

The function $\eta \left( t,r\right) $ is chosen to have the form $\eta
\left( t,r\right) =\sigma \left( r\right) h\left( \frac{t}{\lambda \left(
r\right) }\right) $ where $h$ is a smooth nonnegative function on $\left(
-1,1\right) $ with $h\left( 0\right) =1$, and where $\sigma \left( r\right) $
is an elliptical flat smooth function on $\left( 0,1\right) $, chosen so
small that $\eta \left( t,r\right) $ is a flat smooth function on $\left(
-1,1\right) \times \left( 0,1\right) $. More precisely we need only choose $%
\sigma \left( r\right) $ small enough so that for all $m,n\in \mathbb{N}$,%
\begin{eqnarray}
\frac{\partial ^{m+n}}{\partial t^{m}\partial r^{n}}\eta \left( t,r\right)
&=&\frac{\partial ^{n}}{\partial r^{n}}\left( \frac{\sigma \left( r\right) }{%
\lambda \left( r\right) ^{m}}h^{\left( m\right) }\left( \frac{t}{\lambda
\left( r\right) }\right) \right)  \label{der est} \\
&=&\sum_{k=0}^{n}\left( 
\begin{array}{c}
n \\ 
k%
\end{array}%
\right) \frac{\partial ^{k}}{\partial r^{k}}\left( \frac{\sigma \left(
r\right) }{\lambda \left( r\right) ^{m}}\right) \ \frac{\partial ^{n-k}}{%
\partial r^{n-k}}h^{\left( m\right) }\left( \frac{t}{\lambda \left( r\right) 
}\right)  \notag
\end{eqnarray}%
tends to $0$ as $\left( t,r\right) \rightarrow \left( 0,0\right) $. Thus we
now have%
\begin{equation}
f\left( w,x,y,z,t\right) \equiv \varphi \left( t\right) L\left(
w,x,y,z\right) +\psi \left( t\right) +\sigma \left( r\right) h\left( \frac{t%
}{\lambda \left( r\right) }\right) .  \label{def f'}
\end{equation}

With these constructions completed, we see that $f$ is an elliptical flat
smooth function on $B_{\mathbb{R}^{4}}\left( 0,1\right) \times \left(
-1,1\right) $. Now suppose, in order to derive a contradiction, that $%
f=\sum_{j=1}^{\nu }G_{j}^{2}$ where $G_{j}\in C^{2,\omega }\left( B_{\mathbb{%
R}^{4}}\left( 0,1\right) \times \left( -1,1\right) \right) $, i.e.%
\begin{eqnarray*}
\varphi \left( t\right) L\left( w,x,y,z\right) +\psi \left( t\right) +\sigma
\left( r\right) h\left( \frac{t}{\lambda \left( r\right) }\right)
&=&\sum_{j=1}^{\nu }G_{j}\left( w,x,y,z,t\right) ^{2}, \\
\text{for }\left( w,x,y,z,t\right) &\in &B_{\mathbb{R}^{4}}\left( 0,1\right)
\times \left( -1,1\right) .
\end{eqnarray*}%
Then since $h\left( \frac{t}{\lambda \left( r\right) }\right) $ vanishes for 
$\lambda \left( r\right) \leq \left\vert t\right\vert $, i.e. $r\leq \lambda
^{-1}\left( \left\vert t\right\vert \right) $, we have with $W\equiv \left(
w,x,y,z\right) $ and wlog $t>0$, that%
\begin{equation*}
\varphi \left( t\right) L\left( W\right) +\psi \left( t\right)
=\sum_{j=1}^{\nu }G_{j}\left( W,t\right) ^{2},\ \ \ \ \ \text{for }r\leq
\lambda ^{-1}\left( \left\vert t\right\vert \right) ,
\end{equation*}%
and rescaling $W$ by $\lambda ^{-1}\left( t\right) $ we have, 
\begin{eqnarray*}
\varphi \left( t\right) L\left( \lambda ^{-1}\left( t\right) W\right) +\psi
\left( t\right) &=&\sum_{j=1}^{\nu }G_{j}\left( \lambda ^{-1}\left( t\right)
W,t\right) ^{2}, \\
\text{for }r &=&\left\vert W\right\vert <1,t\in \left( 0,1\right) .
\end{eqnarray*}%
Multiplying by $\frac{1}{\varphi \left( t\right) \lambda ^{-1}\left(
t\right) ^{4}}$, and using that $L$ is homogeneous of degree four, we obtain%
\begin{eqnarray*}
L\left( W\right) +\frac{\psi \left( t\right) }{\varphi \left( t\right)
\lambda ^{-1}\left( t\right) ^{4}} &=&\sum_{j=1}^{\nu }\left( \frac{%
G_{j}\left( \lambda ^{-1}\left( t\right) W,t\right) }{\sqrt{\varphi \left(
t\right) }\lambda ^{-1}\left( t\right) ^{2}}\right) ^{2}, \\
\text{for }r &=&\left\vert W\right\vert <1,t\in \left( 0,1\right) .
\end{eqnarray*}%
Since $G_{j}\in C^{2,\omega }\left( B_{\mathbb{R}^{4}}\left( 0,1\right)
\times \left( -1,1\right) \right) $, the functions $W\rightarrow G_{j}\left(
W,t\right) $ lie in a bounded set in $C^{2,\omega }\left( B_{\mathbb{R}%
^{4}}\left( 0,1\right) \right) $ independent of $t$ and $j$, and hence also
the collection of functions%
\begin{equation*}
H_{j}^{t}\left( W\right) \equiv G_{j}\left( \lambda ^{-1}\left( t\right)
W,t\right) ,\ \ \ \ \ 1\leq j\leq \nu ,t\in \left( 0,1\right) ,
\end{equation*}%
is bounded in $C^{2,\omega }\left( B_{\mathbb{R}^{4}}\left( 0,1\right)
\right) $, say $\sum_{j=1}^{\nu }\left\Vert H_{j}^{t}\right\Vert
_{C^{2,\omega }\left( B_{\mathbb{R}^{4}}\left( 0,1\right) \right) }\leq 
\mathfrak{N}_{\nu }$. Thus with $\tau =\tau \left( t\right) \equiv \frac{%
\psi \left( t\right) }{\varphi \left( t\right) \lambda ^{-1}\left( t\right)
^{4}}$, we have from Lemma \ref{g control} and (\ref{psi_0'}) that%
\begin{eqnarray*}
&&\frac{\mathfrak{N}_{\nu }}{\sqrt{\varphi \left( t\right) }\lambda
^{-1}\left( t\right) ^{2}}\geq \sum_{j=1}^{\nu }\left\Vert \frac{H_{j}^{t}}{%
\sqrt{\varphi \left( t\right) }\lambda ^{-1}\left( t\right) ^{2}}\right\Vert
_{C^{2,\omega }\left( B_{\mathbb{R}^{4}}\left( 0,1\right) \right) }\geq 
\mathcal{C}_{\nu }\left( \tau \left( t\right) \right) \\
&=&\frac{\mathcal{C}_{\nu }\left( \tau \left( t\right) \right) }{\mathcal{C}%
\left( \tau \left( t\right) \right) }\mathcal{C}\left( \tau \left( t\right)
\right) \geq \frac{\mathcal{C}_{\nu }\left( \tau \left( t\right) \right) }{%
\mathcal{C}\left( \tau \left( t\right) \right) }\frac{1}{\sqrt{\varphi
\left( t\right) }\lambda ^{-1}\left( t\right) ^{2}},
\end{eqnarray*}%
which contradicts $\lim_{\tau \searrow 0}\frac{\mathcal{C}_{\nu }\left( \tau
\right) }{\mathcal{C}\left( \tau \right) }=\infty $ in (\ref{ratio}),
provided that we choose $\psi \left( t\right) $ to satisfy in addition that%
\begin{equation}
\lim_{t\searrow 0}\frac{\psi \left( t\right) }{\varphi \left( t\right)
\lambda ^{-1}\left( t\right) ^{4}}=\lim_{t\searrow 0}\tau \left( t\right) =0.
\label{in add}
\end{equation}%
This completes our construction of an elliptical flat smooth function $f$ on 
$B_{\mathbb{R}^{4}}\left( 0,1\right) \times \left( -1,1\right) $ as in (\ref%
{def f}) that cannot be written as a finite sum of squares of $C^{2,\omega }$
functions.

\begin{remark}
\label{nu dependence}In order to derive a contradiction in the above
argument, it is enough to take $t$ so small that 
\begin{equation*}
\frac{\mathcal{C}_{\nu }\left( \tau \left( t\right) \right) }{\mathcal{C}%
\left( \tau \left( t\right) \right) }>\mathfrak{N}_{\nu }=\sum_{j=1}^{\nu
}\left\Vert H_{j}^{t}\right\Vert _{C^{2,\omega }\left( B_{\mathbb{R}%
^{4}}\left( 0,1\right) \right) }.
\end{equation*}
\end{remark}

\subsection{Connection with weak monotonicity}

Here we investigate conditions on $0<s<1$ under which the function $f$ in (%
\ref{def f'}) above, i.e.%
\begin{equation*}
f\left( w,x,y,z,t\right) \equiv \varphi \left( t\right) L\left(
w,x,y,z\right) +\psi \left( t\right) +\sigma \left( r\right) h\left( \frac{t%
}{\lambda \left( r\right) }\right) ,
\end{equation*}%
is $\omega _{s}$-monotone on $B_{\mathbb{R}^{4}}\left( 0,1\right) \times
\left( -1,1\right) $, resulting in the following theorem that connects the
parameter $s$ to the functions $\varphi $ and $\psi $ in the definition of
the flat function $f_{\varphi ,\psi ,\sigma }$. We will assume the following
further restrictions on $f$.

\begin{description}
\item[Further restrictions] We suppose that the functions $\varphi ,\psi
,\sigma, \lambda $, and $h$ in the construction of $f_{\varphi ,\psi ,\sigma }$ in (\ref{def
	f'}) also satisfy

\begin{enumerate}
\item $\psi \left( t\right) =o\left( \varphi \left( t\right) t^{4}\right) $
as $t\searrow 0$,

\item $\sigma \left( t\right) =\varphi \left( t\right) $, for $t>0$,

\item $\lambda \left( r\right) =r$, for $r>0$,

\item there is a constant $0<\rho <1$ such that the function $h=h_{\rho }$
is a smooth nonnegative even function on $\mathbb{R}$ that is decreasing on $%
\left[ 0,\infty \right) $, and satisfies%
\begin{eqnarray*}
h_{\rho }\left( x\right) &=&1\text{, for }0\leq x\leq \rho , \\
0 &<&h_{\rho }\left( x\right) <1\text{, for }\rho <x<1, \\
\text{and }h_{\rho }\left( x\right) &=&0\text{, for }x\geq 1.
\end{eqnarray*}
\end{enumerate}
\end{description}

We denote such a function $f$ by $f_{\varphi ,\psi ,h_{\rho }}$ when we wish
to emphasize the dependence on $\varphi ,\psi ,h_{\rho }$. Recall that for a
modulus of continuity $\omega $, we defined in (\ref{omega mon}) the $\omega 
$-monotone functional of $f$ by 
\begin{equation*}
\left\Vert f\right\Vert _{\omega -\func{mon}}\equiv \sup_{x\in B\left(
0,1\right) ,\ y\in B\left( \frac{x}{2},\frac{\left\vert x\right\vert }{2}%
\right) }\frac{f\left( y\right) }{\omega \left( f\left( x\right) \right) }.
\end{equation*}%
Now for any functions $\varphi ,\psi $ as above, define the three functionals%
\begin{eqnarray*}
\mathcal{R}_{\left( \varphi ,\psi \right) }^{\omega }\left( \gamma \right)
&\equiv &\sup_{0<t\leq 1}\frac{\psi \left( t\right) }{\varphi \left(
t\right) }\frac{\varphi \left( \gamma t\right) }{\omega \left( \psi \left(
t\right) \right) },\ \ \ \ \ \text{for }0<\gamma <\infty , \\
\mathcal{S}_{\left( \varphi ,\psi \right) }^{\omega }\left( \gamma \right)
&\equiv &\sup_{0<t<1}\frac{\varphi \left( \gamma t\right) t^{4}}{\omega
\left( \psi (t)\right) },\ \ \ \ \ \text{for }0<\gamma <\infty , \\
\mathcal{T}_{\varphi }^{\omega }\left( \gamma \right) &\equiv &\sup_{0<t<1}%
\frac{\varphi \left( \gamma t\right) t^{4}}{\omega \left( \varphi
(t)t^{4}\right) },\ \ \ \ \ \text{for }0<\gamma <\infty ,
\end{eqnarray*}%
where $\mathcal{R}_{\left( \varphi ,\psi \right) }^{\omega }\left( \gamma
\right) \lesssim \mathcal{S}_{\left( \varphi ,\psi \right) }^{\omega }\left(
\gamma \right) $ for $0<\gamma <\infty $, because $\psi \left( t\right)
=o\left( \varphi \left( t\right) t^{4}\right) $.

\begin{theorem}
\label{1/4 and epsilon_0}Set 
\begin{equation*}
f_{\varphi ,\psi ,h_{\rho }}\left( W,t\right) =\varphi \left( t\right)
L\left( W\right) +\psi \left( t\right) +\varphi \left( r\right) h_{\rho
}\left( \frac{t}{r}\right) ,\ \ \ \ \ r=\left\vert W\right\vert ,W\in 
\mathbb{R}^{4},t>0,
\end{equation*}%
where $\varphi ,\psi ,h_{\rho }$ satisfy the conditions listed above. Let $%
\gamma _{\alpha }\equiv \frac{1+\sqrt{1+\alpha ^{2}}}{2\alpha }$ for $%
0<\alpha <\infty $.

\begin{enumerate}
\item Then for $0<\rho <1$,\ and every $\delta >0$, the function $%
f=f_{\varphi ,\psi ,h_{\rho }}$ on $\mathbb{R}^{n}$ is elliptical flat and
smooth, and there are positive constants $c_{\rho ,\delta }$ and $C_{\rho
,\delta }$, such that\emph{\ }%
\begin{eqnarray}
&&c\mathcal{S}_{\varphi ,\psi }^{\omega }\left( \frac{1}{2}\right) +c%
\mathcal{T}_{\varphi }^{\omega }\left( \gamma _{1}\right) \leq \left\Vert
f_{\varphi ,\psi ,h_{\rho }}\right\Vert _{\omega -\func{mon}}
\label{mon equiv} \\
&\leq &C_{\delta }\left[ \mathcal{R}_{\left( \varphi ,\psi \right) }^{\omega
}\left( 1+\delta \right) +\mathcal{S}_{\varphi ,\psi }^{\omega }\left( \frac{%
1}{2}+\delta \right) \right] +C_{\rho ,\delta }\mathcal{T}_{\varphi
}^{\omega }\left( \gamma _{\rho }+\delta \right) .  \notag
\end{eqnarray}

\item Now take $\varphi \left( t\right) =e^{-\frac{1}{t^{2}}}$ and suppose $%
0<s<s_{0}$ where%
\begin{equation}
s_{0}\equiv \left( \frac{1+\sqrt{2}}{2}\right) ^{-2}=0.68629.  \label{thresh}
\end{equation}

\begin{enumerate}
\item Then there are no functions $g_{\ell }\in C^{2,\beta }$ with $%
f=f_{\varphi ,\psi ,\sigma ,h_{\rho }}=\sum_{\ell =1}^{\nu }g_{\ell }^{2}$
for any $\nu \in \mathbb{N}$ if%
\begin{equation*}
\lim_{t\searrow 0}\frac{\varphi \left( t\right) ^{\frac{4}{\beta }}t^{\frac{%
16}{\beta }}}{\psi \left( t\right) }=\infty .
\end{equation*}

\item If $0<s<\min \left\{ \beta ,s_{0}\right\} $, then there is $0<\rho <1$
and a function $\psi \left( t\right) $ such that the elliptical flat smooth
function $f=f_{\varphi ,\psi ,h_{\rho }}$ is $\omega _{s}$-monotone but not $%
SOS_{\omega _{\beta }}$, i.e. there are no functions $g_{\ell }\in
C^{2,\beta }$ with $f=f_{\varphi ,\psi ,\sigma ,h_{\rho }}=\sum_{\ell
=1}^{\nu }g_{\ell }^{2}$ for $\nu \in \mathbb{N}$.
\end{enumerate}
\end{enumerate}
\end{theorem}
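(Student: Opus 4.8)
The plan is to treat the three assertions in turn; the genuinely new work is Part~(1), and Parts~(2a) and~(2b) will then follow by, respectively, specializing the non--sum--of--squares construction of the preceding theorem to $\omega=\omega_{\beta}$, and combining that with Part~(1). Thus Part~(2a) will just re--run the rescaling argument behind the previous theorem with the modulus $\omega_{\beta}$, and Part~(2b) will produce the example by choosing $\psi$ (and $\rho$) so that $f$ is simultaneously $\omega_{s}$--monotone (by Part~(1)) and not $SOS_{\omega_{\beta}}$ (by Part~(2a)); the threshold $s<\min\{\beta,s_{0}\}$ will be exactly what makes this simultaneous choice possible.

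For Part~(1), ellipticity, flatness and smoothness of $f=f_{\varphi,\psi,h_{\rho}}$ are already contained in the construction~(\ref{def f'}) and the derivative estimates~(\ref{der est}), so the content is the two--sided bound on $\|f\|_{\omega\text{-mon}}$. Writing $x=(W,t)$, $r=|W|$, and using the evenness of $\varphi,\psi,h_{\rho}$ to take $t\ge0$, one analyzes the three summands of $f(y)=\varphi(t')L(W')+\psi(t')+\varphi(|W'|)h_{\rho}(t'/|W'|)$ as $y=(W',t')$ runs over $B_{x}=\{z:|z|^{2}\le z\cdot x\}$. Two geometric facts organize the estimate: on $B_{x}$ one has $t'\le t$ and $|W'|\le\tfrac12\bigl(r+\sqrt{r^{2}+4t'(t-t')}\bigr)$, the height $t'=t/2$ and direction $W'\parallel W$ being extremal, so that when $r=t$ this maximal $|W'|$ equals $\gamma_{1}t$ with $\gamma_{1}=\tfrac{1+\sqrt2}{2}$ and there $t'/|W'|=\tfrac1{2\gamma_{1}}=\sqrt2-1$; and the $\eta$--summand is nonzero only where $|t'|<|W'|$, which via the constraint confines $|W'|$ to at most $\gamma_{\rho}$ times the diameter scale of $x$, $\gamma_{\rho}=\tfrac{1+\sqrt{1+\rho^{2}}}{2\rho}$, with $\inf_{0<\rho<1}\gamma_{\rho}=\gamma_{1}$. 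For the lower bound I would exhibit explicit near--extremal pairs: $x=(0,\dots,0,t)$ has $f(x)=\psi(t)$, and $y=(W',t/2)$ with $|W'|=t/2$ (where $h_{\rho}(1)=0$ and $L(W')\gtrsim t^{4}$) gives $f(y)\gtrsim\varphi(t/2)t^{4}$, so $\|f\|_{\omega\text{-mon}}\gtrsim\mathcal S_{(\varphi,\psi)}^{\omega}(\tfrac12)$; and $x=(W,t)$ with $|W|=t$ in a generic direction has $f(x)\approx\varphi(t)t^{4}$ (since $L(\widehat W)\approx1$, $\psi(t)=o(\varphi(t)t^{4})$, $h_{\rho}(1)=0$), while $y=(W',t/2)$, $W'\parallel W$, $|W'|=\gamma_{1}t$ has $f(y)\ge\varphi(\gamma_{1}t)h_{\rho}(\sqrt2-1)\gtrsim\varphi(\gamma_{1}t)t^{4}$, so $\|f\|_{\omega\text{-mon}}\gtrsim\mathcal T_{\varphi}^{\omega}(\gamma_{1})$.

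For the upper bound in Part~(1) one splits $f(y)$ into the same three summands and estimates each, over a case analysis on the position of $x$ relative to the $t$--axis and on $t'$: the middle summand obeys $\psi(t')\le\psi(t)\le\omega(\psi(t))\le\omega(f(x))$ since $\psi$ is nondecreasing and $\omega(u)\ge u$; the $\eta$--summand is $\lesssim\varphi((\gamma_{\rho}+\delta)t)t^{4}$ and, weighed against $\omega(f(x))\ge\omega(\varphi(t)r^{4})$, contributes the $C_{\rho,\delta}\mathcal T_{\varphi}^{\omega}(\gamma_{\rho}+\delta)$ term; the $L$--summand $\varphi(t')L(W')$, using $\varphi$ nondecreasing, the displayed $|W'|$--bound, and $f(x)\ge\psi(t)$ together with $f(x)\gtrsim\varphi(t)r^{4}$, is absorbed into $C_{\delta}[\mathcal R_{(\varphi,\psi)}^{\omega}(1+\delta)+\mathcal S_{(\varphi,\psi)}^{\omega}(\tfrac12+\delta)]$ once the continuous optimization in $t'$ and $r$ is replaced, at the cost of an arbitrarily small dilation slack $\delta$, by the sup--functionals. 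For Part~(2a), fix $\varphi(t)=e^{-1/t^{2}}$; the construction preceding the theorem shows that, under the further restrictions, $f_{\varphi,\psi,h_{\rho}}$ fails to be a finite sum of squares of $C^{2,\omega}$ functions once $\tau(t):=\psi(t)/(\varphi(t)t^{4})\to0$ and $\tfrac1{\sqrt{\varphi(t)}\,t^{2}}\le\mathcal C(\tau(t))$, the argument being the rescaling $W\mapsto tW$ on $\{r\le t\}$ (where the $\eta$--term vanishes), which reduces a purported decomposition of $f$ to one of $L+\tau(t)$ by a bounded $C^{2,\omega}$--family and contradicts Lemma~\ref{g control}. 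I would then specialize $\omega=\omega_{\beta}$, use the quantitative bound of~\cite{BoBrCoPe} to identify the growth of $\mathcal C$ as a fixed power of $1/\tau$, and check that the stated decay $\varphi(t)^{4/\beta}t^{16/\beta}/\psi(t)\to\infty$ forces both displayed conditions, so $f$ is not $SOS_{\omega_{\beta}}$.

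Finally, for Part~(2b), Part~(1) with $\omega=\omega_{s}$ gives $\omega_{s}$--monotonicity of $f$ as soon as $\mathcal R_{(\varphi,\psi)}^{\omega_{s}}(1+\delta),\ \mathcal S_{(\varphi,\psi)}^{\omega_{s}}(\tfrac12+\delta)$ and $\mathcal T_{\varphi}^{\omega_{s}}(\gamma_{\rho}+\delta)$ are finite. Since $\varphi(\gamma t)/\varphi(t)^{s}=e^{-(1/\gamma^{2}-s)/t^{2}}$ one has $\mathcal T_{\varphi}^{\omega_{s}}(\gamma)<\infty\iff\gamma\le1/\sqrt s$, and because $\inf_{0<\rho<1}\gamma_{\rho}=\gamma_{1}$ and $s<s_{0}=\gamma_{1}^{-2}$ one may take $\rho$ near $1$ and $\delta$ small with $\gamma_{\rho}+\delta<1/\sqrt s$, making $\mathcal T$ finite. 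Finiteness of $\mathcal S$ (and of $\mathcal R$, together with $\psi=o(\varphi(t)t^{4})$) amounts to sandwiching the decay rate of $\psi$ between a lower bound --- the one that tames the $L$--summand of $f(y)$ against $\omega(f(x))$ for $x$ near the $t$--axis --- and a mild upper bound; one then chooses $\psi$ decaying strictly between this lower bound and the upper bound $\psi(t)=o((\varphi(t)t^{4})^{4/\beta})$ dictated by Part~(2a), which is possible exactly under the hypothesis $s<\min\{\beta,s_{0}\}$, and arranges $\psi$ to be smooth, flat, with $\psi_{0}/(\varphi t^{4})$ nondecreasing as in the construction. The step I expect to be the main obstacle is the upper bound in Part~(1): showing that the joint optimization of all three summands of $f(y)$ over $B_{x}$, across every position of $x$, really does collapse onto the functionals $\mathcal R,\mathcal S,\mathcal T$ with only the small loss $\delta$ in the dilation parameters, the delicate point being that the worst height $t'$ for the $L$--summand $\varphi(t')L(W')$ need not be the one extremal for $|W'|$, so one must optimize $\varphi(t')|W'|^{4}$ in $t'$ before comparing it with $\omega(f(x))$.
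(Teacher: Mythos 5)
Your outline for Part (2)(a) contains a genuine gap. You propose to ``use the quantitative bound of~\cite{BoBrCoPe} to identify the growth of $\mathcal C$ as a fixed power of $1/\tau$,'' but the only result imported from \cite{BoBrCoPe} is the purely qualitative Lemma~\ref{g control}: for each $\nu$ there exists \emph{some} decreasing $\mathcal C_\nu$ with $\mathcal C_\nu(\tau)\to\infty$ as $\tau\searrow 0$, proved by compactness and carrying no rate whatsoever. That cannot be upgraded for free to a power of $1/\tau$, and in fact the specific exponents $4/\beta$ and $16/\beta$ in the decay hypothesis of Part (2)(a) are dictated by the rate in~(\ref{crucial low}), namely $\mathcal C^{\nu}_{2,\omega_\beta}(\tau)\gtrsim \delta_\nu^{2/(4-\beta)}\,\tau^{-\beta/(8-2\beta)}$, which the paper must derive from scratch. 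The new ingredients doing the work there are: the decomposition $G_\ell=a_\ell+S_\ell+Q_\ell+R_\ell$ of a purported $C^{2,\omega_\beta}$ square root into constant, linear, quadratic and $\omega_\beta$-remainder pieces, with $\sum_\ell a_\ell^2=\tau$ and the resulting identity~(\ref{conclude}); the quantity $\delta_\nu$ of~(\ref{def delta}), the $L^\infty(\mathbb S^3)$-distance of $L$ from sums of $\nu$ squares of quadratic forms with bounded coefficients, positive by compactness combined with the algebraic fact that $L$ is not such a sum; a secondary anisotropic rescaling $W\mapsto\lambda W$ of the SOS identity $L+\tau=\sum G_\ell^2$ that upgrades the naive linear estimate on $S_\ell$ to $|S_\ell(W)|\lesssim\|\mathbf G\|_{2,\omega}^{1/2}\tau^{1/4}|W|$; and then evaluation at $|W|\sim\tau^{1/4}$. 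Without these steps, the final verification you describe for Part (2)(a) has nothing quantitative to plug into Lemma~\ref{failure}, and your opening framing that ``the genuinely new work is Part (1)'' is therefore misleading: the derivation of~(\ref{crucial low}) is a substantial new contribution.

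Two smaller remarks on Part (1). Your second lower-bound test pair, $x=(W,t)$ with $|W|=t$ and $y=(W',t/2)$ with $W'\parallel W$, $|W'|=\gamma_1 t$, lands in the support of $h_\rho$ at $t'/|W'|=\sqrt 2-1$, so its yield carries a factor $h_\rho(\sqrt 2 -1)$ that degenerates if $\rho<\sqrt2-1$; the paper's pair $P_2=(W,|W|)$, $Q_2=\bigl(W/2,\gamma_1|W|\bigr)$ has $t'/|W'|=2\gamma_1>1$, so the $h_\rho$-term vanishes and the lower-bound constant is $\rho$-free as stated. For the upper bound you correctly identify the reduction to $V\parallel W$ and the case analysis as the crux but leave it as a sketch; the paper carries this out in detail (cases $r\le t$ versus $r>t$, $u\ge z$ versus $u\le z$, $t\le\rho r$ versus $\rho r<t<r$), and that is precisely where the $\delta$-slack and the appearance of $\gamma_\rho$ come from. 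Your plan there is sound but left unfinished.
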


\begin{remark}
The quantities $\mathcal{R}_{\left( \varphi ,\psi \right) }^{\omega }$ and $%
\mathcal{S}_{\left( \varphi ,\psi \right) }^{\omega }$ in (\ref{mon equiv}) are the key functionals controlling the $%
\omega $-monotone functional of $f_{\varphi ,\psi ,h_{\rho }}$.

\begin{enumerate}
\item The estimate (\ref{mon equiv}) is sharp in the sense that the lower
bound `equals up to multiplicative constants' the limit as $\delta
\rightarrow 0$ and $\rho \rightarrow 1$ of the upper bound, namely 
\begin{eqnarray*}
\mathcal{R}_{\left( \varphi ,\psi \right) }^{\omega }\left( 1+\delta \right)
+\mathcal{S}_{\varphi ,\psi }^{\omega }\left( \frac{1}{2}+\delta \right) +%
\mathcal{T}_{\varphi }^{\omega }\left( \gamma _{\rho }+\delta \right)
&\longrightarrow &\mathcal{R}_{\left( \varphi ,\psi \right) }^{\omega
}\left( 1\right) +\mathcal{S}_{\varphi ,\psi }^{\omega }\left( \frac{1}{2}%
\right) +\mathcal{T}_{\varphi }^{\omega }\left( \gamma _{1}\right) \\
&\approx &\mathcal{S}_{\varphi ,\psi }^{\omega }\left( \frac{1}{2}\right) +%
\mathcal{T}_{\varphi }^{\omega }\left( \gamma _{1}\right)
\end{eqnarray*}%
since $\mathcal{R}_{\left( \varphi ,\psi \right) }^{\omega }\left( 1\right) $
is a constant.

\item Note also that the right hand side of (\ref{mon equiv}) is dominated
by a multiple of the single term $\mathcal{S}_{\varphi ,\psi }^{\omega
}\left( \gamma _{\rho }+\delta \right) $, but the smaller limiting term $%
\mathcal{S}_{\varphi ,\psi }^{\omega }\left( \gamma _{1}\right) $ is already
far larger than the lower bound.

\item The functional $\mathcal{T}_{\varphi }^{\omega }$ is an admissibility
requirement for the function $\varphi $, and plays no other role in
distinguishing which pairs of functions $\left( \varphi ,\psi \right) $ give
rise to $f_{\varphi ,\psi ,h_{\rho }}$ being $\omega $-monotone.
\end{enumerate}
\end{remark}

\subsubsection{Proof of necessity in part (1)}

Here we prove the lower bound 
\begin{equation*}
c\mathcal{S}_{\varphi ,\psi }^{\omega }\left( \frac{1}{2}\right) +c\mathcal{T%
}_{\varphi }^{\omega }\left( \gamma _{1}\right) \leq \left\Vert f_{\varphi
,\psi ,h_{\rho }}\right\Vert _{\omega -\func{mon}}.
\end{equation*}%
Given points $P,Q\in \mathbb{R}^{5}$ with $Q\in \partial B\left( \frac{P}{2},%
\frac{\left\vert P\right\vert }{2}\right) $, we have from $\omega $
-monotonicity that 
\begin{equation*}
\frac{f\left( Q\right) }{\omega \left( f\left( P\right) \right) }\leq
\left\Vert f\right\Vert _{\omega -\func{mon}}.
\end{equation*}%
We now consider two specific pairs of points $\left( P_{1},Q_{1}\right) $
and $\left( P_{2},Q_{2}\right) $, in order to derive the lower bounds above.

Let $P_{1}\equiv \left( 0,t\right) $ and $Q_{1}\equiv \left( W,\frac{t}{2}%
\right) $, where $W$ is any point in $\mathbb{R}^{4}$ with $r=\left\vert
W\right\vert =\frac{t}{2}$, so that $Q_{1}\in \partial B\left( \frac{P_{1}}{2%
},\frac{\left\vert P_{1}\right\vert }{2}\right) $, and 
\begin{eqnarray*}
f\left( P_{1}\right) &=&f\left( 0,t\right) =\psi \left( t\right) , \\
f\left( Q_{1}\right) &=&f\left( W,\left\vert W\right\vert \right) =\varphi
\left( \frac{t}{2}\right) L\left( W\right) +\psi \left( \frac{t}{2}\right)
\approx \varphi \left( \frac{t}{2}\right) t^{4}.
\end{eqnarray*}%
This gives 
\begin{equation*}
\left\Vert f\right\Vert _{\omega -\func{mon}}\geq \frac{f\left( Q\right) }{%
\omega \left( f\left( P\right) \right) }\approx \frac{\varphi \left( \frac{t%
}{2}\right) t^{4}}{\omega \left( \psi \left( t\right) \right) },\ \ \ \ \ 
\text{for all }0<t\leq 1,
\end{equation*}%
and thus 
\begin{equation*}
\left\Vert f\right\Vert _{\omega -\func{mon}}\geq c\mathcal{S}_{\varphi
,\psi }^{\omega }\left( \frac{1}{2}\right) .
\end{equation*}

Next let $P_{2}\equiv \left( W,\left\vert W\right\vert \right) $ and $%
Q_{2}\equiv \left( \frac{W}{2},\left( \frac{1}{2}+\frac{1}{\sqrt{2}}\right)
\left\vert W\right\vert \right) $, so that $Q_{2}\in \partial B\left( \frac{%
P_{2}}{2},\frac{\left\vert P_{2}\right\vert }{2}\right) $, and 
\begin{eqnarray*}
f\left( P_{2}\right) &=&f\left( W,\left\vert W\right\vert \right) =\varphi
\left( r\right) L\left( W\right) +\psi \left( r\right) \approx \varphi
(r)r^{4}, \\
f\left( Q_{2}\right) &=&\varphi \left( \left( \frac{1}{2}+\frac{1}{\sqrt{2}}%
\right) r\right) L\left( \frac{W}{2}\right) +\psi \left( \left( \frac{1}{2}+%
\frac{1}{\sqrt{2}}\right) r\right) \approx \varphi (\gamma _{1}r)r^{4},
\end{eqnarray*}%
where $\gamma _{1}=\frac{1}{2}+\frac{1}{\sqrt{2}}$. Therefore, 
\begin{equation*}
\left\Vert f\right\Vert _{\omega -\func{mon}}\geq \frac{f\left( Q\right) }{%
\omega \left( f\left( P\right) \right) }\approx \frac{\varphi (\gamma
_{1}r)r^{4}}{\omega \left( \varphi (r)r^{4}\right) }
\end{equation*}%
for all $r\in (0,1)$ and thus 
\begin{equation*}
\left\Vert f\right\Vert _{\omega -\func{mon}}\geq C\mathcal{T}_{\varphi
}^{\omega }\left( \gamma _{1}\right) .
\end{equation*}

\subsubsection{Proof of sufficiency in part (1)}

Fix a modulus of continuity and a function $f=f_{\varphi ,\psi ,h_{\rho }}$
given by 
\begin{equation*}
f\left( W,t\right) =\varphi \left( t\right) L\left( W\right) +\psi \left(
t\right) +\varphi \left( r\right) h_{\rho }\left( \frac{t}{\left\vert
W\right\vert }\right) ,
\end{equation*}%
as in the hypotheses of Theorem \ref{1/4 and epsilon_0}. We consider pairs
of points $\left( P,Q\right) \in \mathbb{R}^{5}\times \mathbb{R}^{5}$
restricted by 
\begin{eqnarray*}
\left( P,Q\right) &=&\left( \left( W,t\right) ,\left( V,u\right) \right) \in
\Omega , \\
\Omega &\equiv &\left( \overline{B_{\mathbb{R}^{4}}\left( 0,1\right) }\times %
\left[ 0,1\right] \right) \times \left( \overline{B_{\mathbb{R}^{4}}\left(
0,1\right) }\times \left[ 0,1\right] \right) ,
\end{eqnarray*}%
and will estimate the supremum,%
\begin{equation*}
\mathcal{M}f\left( \omega \right) \equiv \sup_{\left( P,Q\right) \in \Omega
:\ Q\in \overline{B_{P}}}\frac{f\left( Q\right) }{\omega \left( f\left(
P\right) \right) },
\end{equation*}%
where $B_{P}=B\left( \frac{P}{2},\frac{\left\vert P\right\vert }{2}\right) $
is the unique ball centered at $\frac{P}{2}$ that includes both the origin
and $P$ in its boundary. This is a localized version of the functional $%
\left\Vert f\right\Vert _{\omega _{s}-\func{mon}}$.

Now the functions%
\begin{equation*}
z\rightarrow f\left( zW,zt\right) =\varphi \left( zt\right) z^{4}L\left(
W\right) +\psi \left( zt\right) +\varphi \left( zr\right) h\left( \frac{t}{%
\left\vert W\right\vert }\right)
\end{equation*}%
are nondecreasing, which has the consequence that the supremum in $\mathcal{M%
}_{\omega }$ is achieved for $Q\in \partial B_{P}$, so%
\begin{equation*}
\mathcal{M}f\left( \omega \right) =\sup_{\left( P,Q\right) \in \Omega :\
Q\in \partial B_{P}}\frac{f\left( Q\right) }{\omega \left( f\left( P\right)
\right) }.
\end{equation*}

We now claim that it further suffices to restrict the supremum to pairs $%
\left( P,Q\right) =\left( \left( W,t\right) ,\left( V,u\right) \right) \in
\Omega \ $with$\ Q\in \partial B_{P}$ and $V\parallel W$, where $V$ and $W$
are parallel if $V=\lambda W$ or $W=\lambda V$ for some $\lambda \in \mathbb{%
R}$.

\begin{claim}
\begin{equation}
\mathcal{M}f\left( \omega \right) \approx \sup_{\substack{ \left( P,Q\right)
=\left( \left( W,t\right) ,\left( V,u\right) \right) \in \Omega  \\ Q\in
\partial B_{P}\text{ and }V\parallel W}}\frac{f\left( Q\right) }{\omega
\left( f\left( P\right) \right) }  \label{main sup}
\end{equation}
\end{claim}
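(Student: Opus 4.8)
The inequality $\gtrsim$ in (\ref{main sup}) is immediate, the right‑hand supremum being taken over a subfamily of the pairs appearing in $\mathcal{M}f\left(\omega\right)$. The content is therefore the reverse estimate. The plan is to show that every admissible pair $\left(P,Q\right)=\left(\left(W,t\right),\left(V,u\right)\right)$ with $Q\in\partial B_{P}$ can be replaced, keeping the \emph{same} $P$, by an admissible pair $\left(P,Q'\right)$ with $Q'\in\partial B_{P}$ whose spatial part is parallel to $W$ and which satisfies $f\left(Q'\right)\gtrsim f\left(Q\right)$, with implied constant depending only on $L$; dividing by $\omega\left(f\left(P\right)\right)$ and taking the supremum then yields (\ref{main sup}). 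The one structural fact I would record first is that $L$ is comparable to $\left|\cdot\right|^{4}$: since $L$ is positive on $\mathbb{R}^{4}\setminus\left\{0\right\}$ (this is the case $\lambda=\frac{1}{2}$, see \cite{BoBrCoPe}), homogeneity together with compactness of $\mathbb{S}^{3}$ gives $c_{0}\left|V\right|^{4}\leq L\left(V\right)\leq C_{0}\left|V\right|^{4}$. Consequently $f\left(W,t\right)$ depends on the spatial variable only through $\left|W\right|$, up to the fixed multiplicative factor $C_{0}/c_{0}$: replacing $W$ by $\left|W\right|e$ for any unit vector $e$ changes $\varphi\left(t\right)L\left(W\right)$ by at most this factor and leaves the terms $\psi\left(t\right)$ and $\varphi\left(\left|W\right|\right)h_{\rho}\left(t/\left|W\right|\right)$ untouched.

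Now fix $\left(P,Q\right)$ as above; we may assume $W\neq0$, since otherwise every $V$ is parallel to $W$ in the sense used here and there is nothing to prove. Put $e\equiv W/\left|W\right|$ and $Q^{\sharp}\equiv\left(\left|V\right|e,u\right)$, a point whose spatial part is parallel to $W$ and with $\left|Q^{\sharp}\right|=\left|Q\right|$. Using $Q\in\partial B_{P}$, i.e. $\left|Q\right|^{2}=Q\cdot P$, together with Cauchy–Schwarz $V\cdot W\leq\left|V\right|\left|W\right|$, one has
\[
\left|Q^{\sharp}\right|^{2}=\left|V\right|^{2}+u^{2}=V\cdot W+ut\leq\left|V\right|\left|W\right|+ut=Q^{\sharp}\cdot P,
\]
whence $\left|Q^{\sharp}-\frac{P}{2}\right|^{2}=\left|Q^{\sharp}\right|^{2}-Q^{\sharp}\cdot P+\frac{\left|P\right|^{2}}{4}\leq\frac{\left|P\right|^{2}}{4}$, so $Q^{\sharp}\in\overline{B_{P}}$. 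Since $0\in\partial B_{P}$, the quadratic $\mu\mapsto\left|\mu Q^{\sharp}-\frac{P}{2}\right|^{2}$ has value $\frac{\left|P\right|^{2}}{4}$ at $\mu=0$ and value $\leq\frac{\left|P\right|^{2}}{4}$ at $\mu=1$, hence it equals $\frac{\left|P\right|^{2}}{4}$ at some $\mu\geq1$; set $Q'\equiv\mu Q^{\sharp}\in\partial B_{P}$ (if $Q^{\sharp}=0$ then $Q=0$ and the ratio is trivial). Its spatial part $\mu\left|V\right|e$ is again parallel to $W$. By the monotonicity of $z\mapsto f\left(zW',zt'\right)$ recorded just before the claim, applied along the ray through $Q^{\sharp}$ with $\left(W',t'\right)=\left(\left|V\right|e,u\right)$, we get $f\left(Q'\right)\geq f\left(Q^{\sharp}\right)$, and by the first paragraph $f\left(Q^{\sharp}\right)\geq\left(c_{0}/C_{0}\right)f\left(Q\right)$. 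Thus $f\left(Q'\right)\gtrsim f\left(Q\right)$, as required.

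Finally I would check that $Q'$ lies in the region over which $\mathcal{M}f$ is taken; since $Q'\in\partial B_{P}$ one has $\left|Q'\right|\leq\left|P\right|$, so $Q'$ stays in the unit ball of $\mathbb{R}^{5}$ whenever $P$ does, which is precisely the regime relevant to $\mathcal{M}f\left(\omega\right)$ and to the global $\omega$‑monotone functional; the mild enlargement that may be needed to land in $\Omega$ exactly as defined is harmless and handled by a routine localization. The step I expect to be the crux is this ``move in, then dilate out'' manoeuvre: one cannot simply rotate $V$ onto the $W$‑direction while staying on the sphere $\partial B_{P}$, but passing from $V$ to $\left|V\right|e$ lands (generically strictly) inside $\overline{B_{P}}$ — this is where Cauchy–Schwarz and the comparison $L\approx\left|\cdot\right|^{4}$ enter — and the radial monotonicity of $f_{\varphi,\psi,h_{\rho}}$ then lets one push back out to $\partial B_{P}$ without losing size. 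Everything else is bookkeeping.
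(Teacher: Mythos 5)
Your proof is correct, but it follows a genuinely different route from the paper's. The paper's argument is a one‑step rotation: it rotates $B_{P}$ about the diameter through $\frac{P}{2}$ parallel to $\mathbf{e}_{t}$, which fixes $u$ and sends $V$ to $V^{\prime}=\left(\frac{\left\vert W\right\vert}{2}+\left\vert V-\frac{W}{2}\right\vert\right)\frac{W}{\left\vert W\right\vert}$, a point parallel to $W$ with $\left\vert V^{\prime}\right\vert\geq\left\vert V\right\vert$ by the triangle inequality; since $z\mapsto\varphi\left(u\right)z^{4}+\psi\left(u\right)+\varphi\left(z\right)h\left(u/z\right)$ is nondecreasing, $f\left(Q^{\prime}\right)\gtrsim f\left(Q\right)$ with $Q^{\prime}\in\partial B_{P}$ at once. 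You instead perform a two‑step ``move in, dilate out'': first replace $V$ by $\left\vert V\right\vert e$ (which lands in $\overline{B_{P}}$ by Cauchy--Schwarz applied to $\left\vert Q\right\vert^{2}=Q\cdot P$), then scale outward along the ray from the origin until you hit $\partial B_{P}$, invoking the radial monotonicity $z\mapsto f\left(zW^{\prime},zt^{\prime}\right)$ recorded just before the claim. Both routes exploit the same two facts --- $L\approx\left\vert\cdot\right\vert^{4}$ and the appropriate monotonicity of $f$ in the spatial radius --- but they use \emph{different} monotonicities (the paper fixes $u$; you change it) and different geometric moves (rotation of the ball about its vertical axis versus projection plus dilation through the origin). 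The paper's version is slightly more economical since it changes only the spatial coordinate; yours is arguably more transparent in that it isolates the Cauchy--Schwarz gap $\left\vert V\right\vert\left\vert W\right\vert-V\cdot W$ as the quantity that allows the subsequent dilation.

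One remark on the last paragraph: the bound $\left\vert Q^{\prime}\right\vert\leq\left\vert P\right\vert$ gives $\left\vert Q^{\prime}\right\vert\leq\sqrt{\left\vert W\right\vert^{2}+t^{2}}\leq\sqrt{2}$, so $Q^{\prime}$ need not lie in $\Omega=\left(\overline{B_{\mathbb{R}^{4}}\left(0,1\right)}\times\left[0,1\right]\right)^{2}$ as written; the paper's rotation has the same issue (there $\left\vert V^{\prime}\right\vert\leq\frac{1}{2}+\frac{\sqrt{2}}{2}>1$), so this is not a defect specific to your approach, and you rightly flag it as handled by a harmless localization.
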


\begin{proof}
Denote the supremum on the right hand side of (\ref{main sup}) by $\mathcal{M%
}_{\parallel }f\left( \omega \right) $, so that $\mathcal{M}_{\parallel
}f\left( \omega \right) \leq \mathcal{M}f\left( \omega \right) $. We have 
\begin{equation*}
f\left( Q\right) \approx \varphi \left( u\right) \left\vert V\right\vert
^{4}+\psi \left( u\right) +\varphi \left( \left\vert V\right\vert \right)
h\left( \frac{u}{\left\vert V\right\vert }\right) .
\end{equation*}%
Rotate the ball $B_{P}$ about its vertical axis, namely the diameter of $%
B_{P}$ that is parallel to the vector $\mathbf{e}_{t}$, so that $Q=\left(
V,u\right) $ is rotated to the point $Q^{\prime }=\left( V^{\prime
},u\right) $ in the plane spanned by $\mathbf{e}_{t}$ and $\mathbf{e}_{W}$,
for which $\left\vert V^{\prime }\right\vert \geq \left\vert V\right\vert $.
Then%
\begin{equation*}
f\left( Q\right) \lesssim \varphi \left( u\right) \left\vert V^{\prime
}\right\vert ^{4}+\psi \left( u\right) +\varphi \left( \left\vert V^{\prime
}\right\vert \right) h\left( \frac{u}{\left\vert V^{\prime }\right\vert }%
\right) \approx f\left( Q^{\prime }\right) \leq \mathcal{M}_{\parallel
}f\left( \omega \right) \omega \left( f\left( P\right) \right) ,
\end{equation*}%
since $Q^{\prime }\in \partial B_{P}$ and $V^{\prime }\parallel W$. Thus we
have 
\begin{equation*}
\mathcal{M}f\left( \omega \right) =\sup_{\substack{ \left( P,Q\right)
=\left( \left( W,t\right) ,\left( V,u\right) \right) \in \Omega  \\ Q\in
\partial B_{P}}}\frac{f\left( Q\right) }{\omega \left( f\left( P\right)
\right) }\lesssim \mathcal{M}_{\parallel }f\left( \omega \right) .
\end{equation*}
\end{proof}

Now $P=\left( W,t\right) $ and $Q=\left( V,u\right) $ satisfy $Q\in \partial
B_{P}$ if and only if 
\begin{eqnarray*}
\left\vert V-\frac{W}{2}\right\vert ^{2}+\left( u-\frac{t}{2}\right) ^{2}
&=&\left\vert \left( V,u\right) -\left( \frac{W}{2},\frac{t}{2}\right)
\right\vert ^{2}=\left\vert Q-\frac{P}{2}\right\vert ^{2} \\
&=&\left( \frac{\left\vert P\right\vert }{2}\right) ^{2}=\frac{\left\vert
W\right\vert ^{2}+t^{2}}{4}.
\end{eqnarray*}%
Set $r=\left\vert W\right\vert $, $z=\left\vert V\right\vert $ and suppose
that $V\parallel W$, so that $\left\vert V-\frac{W}{2}\right\vert
=\left\vert \lambda -\frac{1}{2}\right\vert r=\left\vert z-\frac{r}{2}%
\right\vert $. Under these conditions, we then have $Q\in \partial B_{P}$ if
and only if%
\begin{equation}
\left( z-\frac{r}{2}\right) ^{2}+\left( u-\frac{t}{2}\right) ^{2}=\frac{%
r^{2}+t^{2}}{4},  \label{bdry_cond}
\end{equation}%
and 
\begin{align}
f(P)& \approx \varphi \left( t\right) r^{4}+\psi \left( t\right) +\varphi
\left( r\right) h\left( \frac{t}{r}\right) ,  \label{fPQ} \\
f(Q)& \approx \varphi \left( u\right) z^{4}+\psi \left( u\right) +\varphi
\left( z\right) h\left( \frac{u}{z}\right) .  \notag
\end{align}

Here we prove the upper bound for $\mathcal{M}f\left( \omega \right) $,
which is comparable to $\left\Vert f\right\Vert _{\omega _{s}-\func{mon}}$.
To estimate the supremum in (\ref{main sup}), we will consider different
cases depending on the sizes of $\frac{t}{r}$ and $\frac{u}{z}$, and
depending on which of three terms dominates in the expression for $f(Q)$ in (%
\ref{fPQ}). We will use the abbreviation $\sup_{\func{restricted}}$ at
various places in the proof to denote the supremum of the ratio $\frac{%
f\left( P\right) }{\omega \left( f\left( Q\right) \right) }$ subject to the
restrictions in force at that time.

\bigskip

\textbf{The case }$r=\left\vert W\right\vert \leq t$: We will first prove
that when $r=\left\vert W\right\vert \leq t$, we have the upper bound, 
\begin{eqnarray}
\mathcal{M}_{1}f_{\varphi ,\psi ,h_{\rho }}\left( \omega \right) &\equiv
&\sup_{\substack{ \left( P,Q\right) =\left( \left( W,t\right) ,\left(
V,u\right) \right) \in \Omega  \\ Q\in \partial B_{P}\text{ and }V\parallel W%
\text{ and }\left\vert W\right\vert \leq t}}\frac{f\left( P\right) }{\omega
\left( f\left( Q\right) \right) }  \label{upper bound} \\
&\leq &C_{\delta }\mathcal{R}_{\left( \varphi ,\psi \right) }^{\omega
}\left( 1+\delta \right) +C_{\delta }\mathcal{T}_{\varphi }^{\omega }\left(
\gamma _{1}+\delta \right) +C_{\delta }\mathcal{S}_{\left( \varphi ,\psi
\right) }^{\omega }\left( \frac{1}{2}+\delta \right) ,  \notag
\end{eqnarray}%
where $\gamma _{\alpha }=\frac{1+\sqrt{1+\alpha ^{2}}}{2\alpha }$ and $%
0<\delta <1$. Note that $h(\frac{t}{r})=0$ in this case, so from (\ref{fPQ})
we have 
\begin{equation*}
f(P)\approx \varphi \left( t\right) r^{4}+\psi \left( t\right) .
\end{equation*}

\begin{proof}
We consider further subcases depending on the size of $\frac{u}{z}$, and on
which term dominates in the expression for $f\left( Q\right) $ in (\ref{fPQ}%
).

\textbf{Case }$u\geq z$: Suppose first that the variables $(V,u)$ satisfy $%
\frac{u}{z}\geq 1$ and $\varphi \left( u\right) z^{4}\geq \psi \left(
u\right) $, so that 
\begin{equation*}
f(Q)\approx \varphi \left( u\right) z^{4}.
\end{equation*}%
Using the restrictions $z\leq u$ and $r^{4}\varphi \left( t\right) \geq \psi
\left( t\right) $ we get 
\begin{equation*}
\sup_{\func{restricted}}\frac{f(Q)}{\omega \left( f\left( P\right) \right) }%
\approx \sup_{\func{restricted}}\frac{\varphi \left( u\right) z^{4}}{\omega
\left( \varphi \left( t\right) r^{4}+\psi \left( t\right) \right) },
\end{equation*}%
and from (\ref{bdry_cond}), $\left( z-\frac{r}{2}\right) ^{2}+\left( u-\frac{%
t}{2}\right) ^{2}=\frac{r^{2}+t^{2}}{4}$, together with the restriction $%
r\leq t$ we obtain $z=\frac{r}{2}$ and%
\begin{equation*}
u=\frac{t}{2}+\sqrt{\frac{r^{2}+t^{2}}{4}},
\end{equation*}%
since $z=\frac{r}{2}\leq \frac{t}{2}+\sqrt{\frac{r^{2}+t^{2}}{4}}=u$. Now in
the case where $\psi \left( t\right) $ dominates in the denominator, we have 
$r^{2}\leq \sqrt{\frac{\psi \left( t\right) }{\varphi \left( t\right) }}%
=o\left( t^{2}\right) $ and so $u=\frac{t}{2}+\sqrt{\frac{r^{2}+t^{2}}{4}}%
=\left( 1+o\left( 1\right) \right) t$ as $t\searrow 0$, and so under all of
these restrictions in $\sup_{\func{restricted}}$ we have for any $\delta >0$%
, 
\begin{eqnarray*}
\sup_{\func{restricted}}\frac{f(Q)}{\omega \left( f\left( P\right) \right) }
&\approx &\sup_{\func{restricted}}\frac{\varphi \left( u\right) z^{4}}{%
\omega \left( \varphi \left( t\right) r^{4}+\psi \left( t\right) \right) } \\
&\lesssim &\sup_{\func{restricted}}\frac{\varphi \left( \left( \frac{1}{2}+%
\sqrt{\frac{o\left( 1\right) +1}{4}}\right) t\right) \frac{\psi \left(
t\right) }{\varphi \left( t\right) }}{\omega \left( \psi \left( t\right)
\right) }\lesssim C_{\delta }\mathcal{R}_{\left( \varphi ,\psi \right)
}^{\omega }\left( 1+\delta \right) .
\end{eqnarray*}%
On the other hand, in the case when $\varphi \left( t\right) r^{4}$
dominates in the denominator, we can use the inequality $\omega \left(
xy\right) \geq \omega \left( x\right) y$ with $x=\varphi \left( t\right)
t^{4}$ to obtain%
\begin{eqnarray*}
\sup_{\func{restricted}}\frac{f(Q)}{\omega \left( f\left( P\right) \right) }
&\approx &\sup_{\func{restricted}}\frac{\varphi \left( u\right) z^{4}}{%
\omega \left( \varphi \left( t\right) r^{4}\right) }\approx \sup_{0<r\leq
t\leq 1}\frac{\varphi \left( \frac{t}{2}+\sqrt{\frac{r^{2}+t^{2}}{4}}\right)
r^{4}}{\omega \left( \varphi \left( t\right) r^{4}\right) } \\
&\lesssim &\sup_{0<r\leq t\leq 1}\frac{\varphi \left( \frac{t}{2}+\sqrt{%
\frac{r^{2}+t^{2}}{4}}\right) r^{4}}{\omega \left( \varphi \left( t\right)
t^{4}\right) \left( \frac{r}{t}\right) ^{4}}=\sup_{0<t\leq 1}\frac{\varphi
\left( \left( \frac{1}{2}+\sqrt{\frac{1}{2}}\right) t\right) t^{4}}{\omega
\left( \varphi \left( t\right) t^{4}\right) }=\mathcal{T}_{\varphi }^{\omega
}\left( \gamma _{1}\right) .
\end{eqnarray*}

\textbf{Case }$u\leq z$: In this case $f\left( Q\right) \lesssim \varphi (z)$
and 
\begin{equation*}
\sup_{\func{restricted}}\frac{f(Q)}{\omega \left( f\left( P\right) \right) }%
\approx \sup_{\func{restricted}}\frac{\varphi (z)}{\omega \left( \varphi
\left( t\right) r^{4}+\psi \left( t\right) \right) }.
\end{equation*}%
The supremum on the right hand side is maximized for $z$ as large as
possible, which by (\ref{bdry_cond}) occurs when $u=\frac{t}{2}$ and 
\begin{equation*}
z=\frac{r}{2}+\frac{1}{2}\sqrt{r^{2}+t^{2}}.
\end{equation*}

We now consider two cases, where $\frac{r}{t}$ is small and large. Note that
with 
\begin{equation*}
\Theta \left( \delta \right) \equiv \frac{\left( \frac{1}{2}+\delta \right)
^{2}-\frac{1}{4}}{\frac{1}{2}+\delta }=\frac{\delta \left( 1+\delta \right) 
}{\frac{1}{2}+\delta },
\end{equation*}%
we have%
\begin{equation*}
z=\frac{r}{2}+\frac{1}{2}\sqrt{r^{2}+t^{2}}\leq \left( \frac{1}{2}+\delta
^{\prime }\right) t,\ \ \ \ \ \text{for }0\leq r\leq \Theta \left( \delta
^{\prime }\right) t,
\end{equation*}%
as is easily seen by squaring the inequality $\frac{1}{2}\sqrt{r^{2}+t^{2}}%
\leq \left( \frac{1}{2}+\delta ^{\prime }\right) t-\frac{r}{2}$. Thus for $%
r\leq \Theta \left( \delta ^{\prime }\right) t$ with $\delta ^{\prime }>0$,
we obtain%
\begin{eqnarray*}
\sup_{\func{restricted}}\frac{f(Q)}{\omega \left( f\left( P\right) \right) }
&\approx &\sup_{\func{restricted}}\frac{\varphi \left( z\right) }{\omega
\left( \varphi \left( t\right) r^{4}+\psi \left( t\right) \right) }\lesssim
\sup_{0<t\leq 1}\frac{\varphi \left( \left( \frac{1}{2}+\delta ^{\prime
}\right) t\right) }{\omega \left( \psi \left( t\right) \right) } \\
&\lesssim &C_{\varepsilon }\sup_{0<t\leq 1}\frac{\varphi \left( \left( \frac{%
1}{2}+\delta ^{\prime }+\varepsilon \right) t\right) t^{4}}{\omega \left(
\psi \left( t\right) \right) },
\end{eqnarray*}%
since $\lim_{t\searrow 0}\frac{\varphi \left( \left( \frac{1}{2}+\delta
^{\prime }\right) t\right) }{\varphi \left( \left( \frac{1}{2}+\delta
^{\prime }+\varepsilon \right) t\right) t^{4}}=\infty $ for $\varepsilon >0$
and any $\varphi $ flat at the origin. We now choose $\delta ^{\prime }$ and 
$\varepsilon $ small enough that $\delta ^{\prime }+\varepsilon \leq \delta $%
, so as to conclude that%
\begin{equation*}
\sup_{\func{restricted}}\frac{f(Q)}{\omega \left( f\left( P\right) \right) }%
\lesssim C_{\varepsilon }\sup_{0<t\leq 1}\frac{\varphi \left( \left( \frac{1%
}{2}+\delta \right) t\right) t^{4}}{\omega \left( \psi \left( t\right)
\right) }=\mathcal{S}_{\left( \varphi ,\psi \right) }^{\omega }\left( \frac{1%
}{2}+\delta \right) .
\end{equation*}%
On the other hand, if $r>\Theta \left( \delta ^{\prime }\right) t$, then 
\begin{eqnarray*}
\sup_{\func{restricted}}\frac{f(Q)}{\omega \left( f\left( P\right) \right) }
&\approx &\sup_{\func{restricted}}\frac{\varphi \left( z\right) }{\omega
\left( \varphi \left( t\right) r^{4}\right) }\approx \sup_{\Theta \left(
\delta ^{\prime }\right) t<r\leq t\leq 1}\frac{\varphi \left( \frac{r}{2}+%
\frac{1}{2}\sqrt{r^{2}+t^{2}}\right) }{\omega \left( \varphi \left( t\right)
r^{4}\right) } \\
&\lesssim &\sup_{0<t\leq 1}\frac{\varphi \left( \gamma _{1}t\right) }{\omega
\left( \varphi \left( t\right) \Theta \left( \delta ^{\prime }\right)
^{4}t^{4}\right) }\lesssim \frac{1}{\Theta \left( \delta ^{\prime }\right)
^{4}}\sup_{0<t\leq 1}\frac{\varphi \left( \gamma _{1}t\right) }{\omega
\left( \varphi \left( t\right) t^{4}\right) } \\
&\lesssim &\frac{1}{\Theta \left( \delta ^{\prime }\right) ^{4}}%
\sup_{0<t\leq 1}\frac{\varphi \left( \left( \gamma _{1}+\delta \right)
t\right) t^{4}}{\omega \left( \varphi \left( t\right) t^{4}\right) }%
=C_{\delta }\mathcal{T}_{\varphi }^{\omega }\left( \gamma _{1}+\delta
\right) ,
\end{eqnarray*}%
using the flatness of $\varphi $ again.
\end{proof}

\textbf{The case }$r=\left\vert W\right\vert >t$: We will now prove that
when $r=\left\vert W\right\vert >t$ we have the following upper bound, 
\begin{equation*}
\mathcal{M}_{2}f_{\varphi ,\psi ,h_{\rho }}\left( \omega \right) \equiv \sup 
_{\substack{ \left( P,Q\right) =\left( \left( W,t\right) ,\left( V,u\right)
\right) \in \Omega  \\ Q\in \partial B_{P}\text{ and }V\parallel W\text{ and 
}\left\vert W\right\vert >t}}\frac{f\left( P\right) }{\omega \left( f\left(
Q\right) \right) }\leq C_{\rho ,\delta }\mathcal{T}_{\varphi }^{\omega
}\left( \gamma _{\rho }+\delta \right) .
\end{equation*}

\begin{proof}
We consider separately the cases $t\leq \rho r$ when $h_{\rho }(\frac{t}{r}%
)=1$, and $r>t>\rho r$ when $0<h_{\rho }(\frac{t}{r})<1$.

\textbf{Case }$t\leq \rho r$: We have $h_{\rho }(\frac{t}{r})=1$ and so from
(\ref{fPQ}) that 
\begin{equation*}
f(P)\approx \varphi \left( r\right) .
\end{equation*}%
In the case $u\leq z$, we have $f(Q)\lesssim \varphi (z)$, and by (\ref%
{bdry_cond}), i.e. $\left( z-\frac{r}{2}\right) ^{2}+\left( u-\frac{t}{2}%
\right) ^{2}=\frac{r^{2}+t^{2}}{4}$, we then have $\varphi (z)\approx
\varphi \left( \frac{r}{2}+\frac{\sqrt{r^{2}+t^{2}}}{2}\right) $ if we
choose $u=\frac{t}{2}$ and $z=\frac{r}{2}+\frac{\sqrt{r^{2}+t^{2}}}{2}$, so
that $u=\frac{t}{2}\leq \frac{r}{2}+\frac{\sqrt{r^{2}+t^{2}}}{2}=z$. We
conclude from the inequality $\omega \left( xy\right) \geq \omega \left(
x\right) y$ with $x=\varphi (r)r^{4}$ and $y=r^{-4}$ that%
\begin{eqnarray*}
\sup_{\func{restricted}}\frac{f(Q)}{\omega \left( f\left( P\right) \right) }
&\lesssim &\sup_{0<t\leq \rho r<1}\frac{\varphi \left( \frac{r}{2}+\frac{%
\sqrt{r^{2}+t^{2}}}{2}\right) }{\omega \left( \varphi \left( r\right)
\right) }\leq \sup_{0<r<1}\frac{\varphi \left( \left( \frac{1}{2}+\frac{%
\sqrt{1+\rho ^{2}}}{2}\right) r\right) }{\omega \left( \varphi (r)\right) }
\\
&\lesssim &\sup_{0<r<1}\frac{\varphi \left( \left( \frac{1}{2}+\frac{\sqrt{%
1+\rho ^{2}}}{2}\right) r\right) r^{4}}{\omega \left( \varphi
(r)r^{4}\right) }=\mathcal{T}_{\varphi }^{\omega }\left( \frac{1}{2}+\frac{%
\sqrt{1+\rho ^{2}}}{2}\right) \leq \mathcal{T}_{\varphi }^{\omega }\left(
\gamma _{1}\right) .
\end{eqnarray*}

On the other hand, if $u\geq z$, then $f(Q)\approx \varphi \left( u\right)
z^{4}+\psi \left( u\right) $, and by (\ref{bdry_cond}) we then have that $u$
is maximized when $z=\frac{r}{2}$ and $u=\frac{t}{2}+\frac{\sqrt{r^{2}+t^{2}}%
}{2}$, which we note satisfies the requirement $u\geq z$. Thus we have $%
u\leq \frac{\rho r}{2}+\frac{\sqrt{r^{2}+\rho ^{2}r^{2}}}{2}=\left( \frac{%
\rho }{2}+\frac{\sqrt{1+\rho ^{2}}}{2}\right) r$ and so%
\begin{eqnarray}
&&\sup_{\func{restricted}}\frac{f(Q)}{\omega \left( f\left( P\right) \right) 
}\approx \sup_{0<r<1}\frac{\varphi \left( u\right) r^{4}+\psi \left(
u\right) }{\omega \left( \varphi (r)\right) }  \label{low} \\
&\approx &\sup_{0<r<1}\frac{\varphi \left( \left( \frac{\rho }{2}+\frac{%
\sqrt{1+\rho ^{2}}}{2}\right) r\right) r^{4}+\psi \left( \left( \frac{\rho }{%
2}+\frac{\sqrt{1+\rho ^{2}}}{2}\right) r\right) }{\omega \left( \varphi
(r)\right) }  \notag \\
&\approx &\sup_{0<r<1}\frac{\varphi \left( \left( \frac{\rho }{2}+\frac{%
\sqrt{1+\rho ^{2}}}{2}\right) r\right) r^{4}}{\omega \left( \varphi
(r)\right) }\lesssim \mathcal{T}_{\varphi }^{\omega }\left( \left( \frac{%
\rho }{2}+\frac{\sqrt{1+\rho ^{2}}}{2}\right) \right) \lesssim \mathcal{T}%
_{\varphi }^{\omega }\left( \gamma _{1}\right) .  \notag
\end{eqnarray}

\textbf{Case }$r>t>\rho r$: Here we have $f\left( P\right) \approx \varphi
\left( t\right) t^{4}+\varphi (r)h\left( \frac{t}{r}\right) $ since $%
r\approx t$ and $\psi \left( t\right) =o\left( \varphi \left( t\right)
t^{4}\right) $. If $u\leq z$ we have $f(Q)\lesssim \varphi (z)$, and by (\ref%
{bdry_cond}), we have $\varphi (z)\approx \varphi \left( \frac{r}{2}+\frac{%
\sqrt{r^{2}+t^{2}}}{2}\right) $ if we maximize $z$ by choosing $u=\frac{t}{2}
$ and $z=\frac{r}{2}+\frac{\sqrt{r^{2}+t^{2}}}{2}$. From this and $r\leq 
\frac{1}{\rho }t$, we obtain 
\begin{equation*}
z=\frac{r}{2}+\frac{1}{2}\sqrt{r^{2}+t^{2}}\leq \frac{t}{2\rho }+\frac{1}{%
2\rho }\sqrt{t^{2}+\rho ^{2}t^{2}}=\frac{1+\sqrt{1+\rho ^{2}}}{2\rho }%
t=\gamma _{\rho }t,
\end{equation*}%
and so 
\begin{eqnarray*}
\sup_{\func{restricted}}\frac{f(Q)}{\omega \left( f\left( P\right) \right) }
&\lesssim &\sup_{\rho r<t<r}\frac{\varphi \left( \frac{r}{2}+\frac{\sqrt{%
r^{2}+t^{2}}}{2}\right) }{\omega \left( \varphi \left( t\right)
t^{4}+\varphi (r)h\left( \frac{t}{r}\right) \right) }\lesssim \sup_{\rho
r<t<r}\frac{\varphi \left( \gamma _{\rho }t\right) }{\omega \left( \varphi
\left( t\right) t^{4}\right) } \\
&\lesssim &\sup_{\rho r<t<r}\frac{\varphi \left( \left( \gamma _{\rho
}+\delta \right) t\right) t^{4}}{\omega \left( \varphi \left( t\right)
t^{4}\right) }=\mathcal{T}_{\varphi }^{\omega }\left( \gamma _{\rho }+\delta
\right) ,
\end{eqnarray*}%
where we have used the flatness of $\varphi $ as before.

Next, if $u\geq z$ we have $f(Q)\approx \varphi \left( u\right) z^{4}+\psi
\left( u\right) \lesssim \varphi \left( u\right) u^{4}$, and so maximizing $%
u $ with $z=\frac{r}{2}$ gives $u=\frac{t}{2}+\frac{\sqrt{r^{2}+t^{2}}}{2}$,
and so%
\begin{eqnarray*}
\sup_{\func{restricted}}\frac{f(Q)}{\omega \left( f\left( P\right) \right) }
&\lesssim &\sup_{\rho r<t<r\text{ and }u\geq z}\frac{\varphi \left( u\right)
u^{4}}{\omega \left( \varphi \left( t\right) t^{4}+\varphi (r)h\left( \frac{t%
}{r}\right) \right) } \\
&\lesssim &\sup_{\rho r<t<r\text{ and }u\geq z}\frac{\varphi \left( u\right)
u^{4}}{\omega \left( \varphi \left( t\right) t^{4}\right) }\lesssim
\sup_{\rho r<t<r}\frac{\varphi \left( \frac{t}{2}+\frac{\sqrt{r^{2}+t^{2}}}{2%
}\right) t^{4}}{\omega \left( \varphi \left( t\right) t^{4}\right) } \\
&\approx &\sup_{0<t<1}\frac{\varphi \left( \left( \frac{1}{2}+\frac{\sqrt{%
\frac{1}{\rho ^{2}}+1}}{2}\right) t\right) t^{4}}{\omega \left( \varphi
\left( t\right) t^{4}\right) }=\mathcal{T}_{\varphi }^{\omega }\left( \frac{1%
}{2}+\frac{\sqrt{\frac{1}{\rho ^{2}}+1}}{2}\right) \leq \mathcal{T}_{\varphi
}^{\omega }\left( \gamma _{\rho }\right) .
\end{eqnarray*}
\end{proof}

Combining the estimates for $\mathcal{M}_{1}f\left( \omega \right) $ and $%
\mathcal{M}_{2}f\left( \omega \right) $ completes the proof of Part (1) of
Theorem \ref{1/4 and epsilon_0}.

\subsubsection{Proof of part (2)(a)}

Denote by $\mathcal{C}_{2,\omega }^{\nu }$ the function. 
\begin{equation*}
\mathcal{C}_{2,\omega }^{\nu }\left( \tau \right) \equiv \inf \left\{
\left\Vert \mathbf{G}\right\Vert _{2,\omega }:\mathbf{G}=\left\{ G_{\ell
}\right\} _{\ell =1}^{\nu }\in \oplus ^{\nu }C^{2,\omega }\left( B_{\mathbb{R%
}^{4}}\left( 0,1\right) \right) \text{ and }L\left( W\right) +\tau
=\sum_{\ell =1}^{\nu }G_{\ell }\left( W\right) ^{2}\right\} .
\end{equation*}%
Note that by Lemma \ref{g control}, we have $\lim_{\tau \rightarrow 0}%
\mathcal{C}_{2,\omega }^{\nu }\left( \tau \right) =\infty $, but we will
require the sharper inequality given in (\ref{crucial low}) below. Suppose
that 
\begin{eqnarray*}
L\left( W\right) +\tau &=&\sum_{\ell =1}^{\nu }G_{\ell }\left( W\right) ^{2},
\\
G_{\ell }\left( W\right) &=&a_{\ell }+S_{\ell }\left( W\right) +Q_{\ell
}\left( W\right) +R_{\ell }\left( W\right) ,
\end{eqnarray*}%
where%
\begin{equation*}
S_{\ell }\left( W\right) =\sum_{\left\vert \alpha \right\vert =1}a_{\ell
,\alpha }W^{\alpha }\text{ and }Q_{\ell }\left( W\right) =\sum_{\left\vert
\alpha \right\vert =2}f_{\ell ,\alpha }W^{\alpha }.
\end{equation*}%
Then setting $W=0$ in the equation gives 
\begin{equation*}
\tau =\sum_{\ell =1}^{\nu }a_{\ell }^{2},
\end{equation*}%
and so%
\begin{eqnarray*}
L\left( W\right) &=&\sum_{\ell =1}^{\nu }\left[ a_{\ell }+S_{\ell }\left(
W\right) +Q_{\ell }\left( W\right) +R_{\ell }\left( W\right) \right]
^{2}-\tau \\
&=&\left( \sum_{\ell =1}^{\nu }a_{\ell }^{2}\right) -\tau +\sum_{\ell
=1}^{\nu }2a_{\ell }S_{\ell }\left( W\right) +\sum_{\ell =1}^{\nu }\left[
S_{\ell }\left( W\right) ^{2}+2a_{\ell }Q_{\ell }\left( W\right) \right] \\
&&+\sum_{\ell =1}^{\nu }2a_{\ell }R_{\ell }\left( W\right) +\sum_{\ell
=1}^{\nu }2S_{\ell }\left( W\right) R_{\ell }\left( W\right) +\sum_{\ell
=1}^{\nu }\left[ Q_{\ell }\left( W\right) +R_{\ell }\left( W\right) \right]
^{2}.
\end{eqnarray*}%
Now the sum of terms in the middle line vanishes identically since it is a
quadratic polynomial, and all of the remaining terms in the identity vanish
to order greater than $2$ at the origin (simply evaluate the identity at $%
W=0 $, then differentiate and evaluate at $W=0$, and finally differentiate
once more and evaluate at $W=0$, using that $R_{\ell }\left( 0\right) $, $%
\nabla R_{\ell }\left( 0\right) $ and $\nabla ^{2}R_{\ell }\left( 0\right)
=0 $ all vanish). Thus we conclude that%
\begin{equation}
L\left( W\right) -\sum_{\ell =1}^{\nu }\left[ Q_{\ell }\left( W\right)
+R_{\ell }\left( W\right) \right] ^{2}=\sum_{\ell =1}^{\nu }2a_{\ell
}R_{\ell }\left( W\right) +\sum_{\ell =1}^{\nu }2S_{\ell }\left( W\right)
R_{\ell }\left( W\right) .  \label{conclude}
\end{equation}

%Note that%
%\begin{equation*}
%\left\vert \lambda ^{-2}R_{\ell }\left( \lambda W\right) \right\vert \leq 
%\mathfrak{N}\left\vert W\right\vert ^{2}\omega \left( \lambda \left\vert
%W\right\vert \right) .
%\end{equation*}

Now define $\delta _{\nu }>0$ by%
\begin{equation}
\delta _{\nu }^{2}\equiv \inf_{\left\{ Q_{\ell }\right\} _{\ell =1}^{\nu
}}\inf_{W\in \mathbb{S}^{3}}\left( L\left( W\right) -\sum_{\ell =1}^{\nu
}Q_{\ell }\left( W\right) ^{2}\right) ^{2},  \label{def delta}
\end{equation}%
where the infimum is taken over all collections $\left\{ Q_{\ell }\right\}
_{\ell =1}^{\nu }$ of quadratic forms $Q_{\ell }\left( W\right)
=\sum_{\left\vert \alpha \right\vert =2}f_{\ell ,\alpha }W^{\alpha }$, with $%
W\in \mathbb{S}^{4}$ and coefficients $f_{\ell ,\alpha }$ of modulus at most
a constant $C_{0}$, which will be determined in (\ref{R bound}) below. Since
the infimum is taken over a compact set, it is achieved, and must then be
positive since $L$ cannot be written as a sum of squares of quadratic forms.

Now fix a modulus of continuity $\omega $, and given $\tau >0$, suppose
there are functions $G_{\ell }\in C^{2,\omega }$ with $\sum_{\ell =1}^{\nu
}\left\Vert G_{\ell }\right\Vert _{C^{2,\omega }}=\left\Vert \mathbf{G}%
\right\Vert _{2,\omega }<\infty $ such that%
\begin{equation}
L\left( W\right) +\tau =\sum_{\ell =1}^{\nu }G_{\ell }\left( W\right) ^{2},\
\ \ \ \ \text{for all }\left\vert W\right\vert \leq 1.  \label{sos tau}
\end{equation}%
Recall that we can write 
\begin{equation*}
G_{\ell }\left( W\right) =a_{\ell }+S_{\ell }\left( W\right) +Q_{\ell
}\left( W\right) +R_{\ell }\left( W\right) ,
\end{equation*}%
where 
\begin{align}
\sum_{\ell =1}^{\nu }a_{\ell }^{2}& =\tau ,  \label{terms_est} \\
\sum_{\ell =1}^{\nu }|S_{\ell }\left( W\right) |& \leq \left\Vert \mathbf{G}%
\right\Vert _{2,\omega }|W| \\
\sum_{\ell =1}^{\nu }|Q_{\ell }\left( W\right) |& \leq \left\Vert \mathbf{G}%
\right\Vert _{2,\omega }|W|^{2} \\
\sum_{\ell =1}^{\nu }|R_{\ell }\left( W\right) |& \leq \left\Vert \mathbf{G}%
\right\Vert _{2,\omega }|W|^{2}\omega (W).
\end{align}%
Also note that from $\sum_{\ell =1}^{\nu }\left\vert Q_{\ell }\left(
W\right) \right\vert \leq C\sqrt{L\left( W\right) +\tau }$, we obtain that
for $0<\tau <1$, we have 
\begin{equation}
\left\vert f_{\ell ,\alpha }\right\vert \leq C_{0}\equiv C\sqrt{L\left(
W\right) +1}.  \label{R bound}
\end{equation}

From (\ref{conclude}) we have%
\begin{eqnarray}
L\left( W\right) -\sum_{\ell =1}^{\nu }Q_{\ell }\left( W\right) ^{2}
&=&L\left( W\right) -\sum_{\ell =1}^{\nu }\left[ Q_{\ell }\left( W\right)
+R_{\ell }\left( W\right) \right] ^{2}+\sum_{\ell =1}^{\nu }\left[ 2Q_{\ell
}\left( W\right) +R_{\ell }\left( W\right) \right] R_{\ell }\left( W\right)
\label{conclude'} \\
&=&h_{1}\left( W\right) +h_{2}\left( W\right) \equiv h\left( W\right) , 
\notag
\end{eqnarray}%
where 
\begin{align}
h_{1}\left( W\right) & \equiv \sum_{\ell =1}^{\nu }2a_{\ell }R_{\ell }\left(
W\right) +\sum_{\ell =1}^{\nu }2S_{\ell }\left( W\right) R_{\ell }\left(
W\right)  \label{h-def} \\
h_{2}\left( W\right) & \equiv \sum_{\ell =1}^{\nu }\left[ 2Q_{\ell }\left(
W\right) +R_{\ell }\left( W\right) \right] R_{\ell }\left( W\right) .
\end{align}%
Using the last line of (\ref{terms_est}) we obtain 
\begin{align*}
\left\vert h_{1}\left( W\right) \right\vert & \leq C\sqrt{\tau }\left\Vert 
\mathbf{G}\right\Vert _{2,\omega }\left\vert W\right\vert ^{2}\omega \left(
\left\vert W\right\vert \right) +C\left\Vert \mathbf{G}\right\Vert
_{2,\omega }^{2}\left\vert W\right\vert ^{3}\omega \left( \left\vert
W\right\vert \right) =C\left\Vert \mathbf{G}\right\Vert _{2,\omega
}^{2}\left\vert W\right\vert ^{2}\omega \left( \left\vert W\right\vert
\right) \left( \frac{\sqrt{\tau }}{\left\Vert \mathbf{G}\right\Vert
_{2,\omega }}+|W|\right) \\
\left\vert h_{2}\left( W\right) \right\vert & \leq C\left\Vert \mathbf{G}%
\right\Vert _{2,\omega }\left\vert W\right\vert ^{2}\left\vert R_{\ell
}\left( W\right) \right\vert \leq C\left\Vert \mathbf{G}\right\Vert
_{2,\omega }^{2}\left\vert W\right\vert ^{4}\omega \left( \left\vert
W\right\vert \right) .
\end{align*}%
So altogether we have%
\begin{align*}
\left\vert h\left( W\right) \right\vert \leq \left\vert h_{1}\left( W\right)
\right\vert +\left\vert h_{2}\left( W\right) \right\vert & \leq C\left\Vert 
\mathbf{G}\right\Vert _{2,\omega }^{2}\omega \left( \left\vert W\right\vert
\right) \left\vert W\right\vert ^{2}\left( \frac{\sqrt{\tau }}{\left\Vert 
\mathbf{G}\right\Vert _{2,\omega }}+|W|+\left\vert W\right\vert ^{2}\right)
\\
& \leq C\left\Vert \mathbf{G}\right\Vert _{2,\omega }^{2}\omega \left(
\left\vert W\right\vert \right) \left\vert W\right\vert ^{2}\left( \frac{%
\sqrt{\tau }}{\left\Vert \mathbf{G}\right\Vert _{2,\omega }}+|W|\right) ,
\end{align*}%
provided $|W|\leq 1$. Note that we can assume without loss of generality
that $\frac{\sqrt{\tau }}{\left\Vert \mathbf{G}\right\Vert _{2,\omega }}\leq
1$. Then if $|W|=\frac{\sqrt{\tau }}{\left\Vert \mathbf{G}\right\Vert
_{2,\omega }}$, we have 
\begin{equation*}
\left\vert h\left( W\right) \right\vert \leq C\left\Vert \mathbf{G}%
\right\Vert _{2,\omega }^{2}\omega \left( \left\vert W\right\vert \right)
\left\vert W\right\vert ^{3}.
\end{equation*}%
However, this estimate is too weak, and we need an improved bound on $%
|S_{\ell }(W)|$.

We return to (\ref{sos tau}) to obtain%
\begin{eqnarray*}
&&L\left( W\right) +\lambda ^{-4}\tau =\lambda ^{-4}\left( L\left( \lambda
W\right) +\tau \right) \\
&=&\sum_{\ell =1}^{\nu }\left[ \frac{S_{\ell }\left( W\right) }{\lambda }+%
\frac{a_{\ell }}{\lambda ^{2}}+Q_{\ell }\left( W\right) +\lambda
^{-2}R_{\ell }\left( \lambda W\right) \right] ^{2} \\
&=&\sum_{\ell =1}^{\nu }\left[ \frac{S_{\ell }\left( W\right) }{\lambda }%
\right] ^{2}+O\left( \sqrt{\sum_{\ell =1}^{\nu }\left[ \frac{S_{\ell }\left(
W\right) }{\lambda }\right] ^{2}}\left[ \frac{\sqrt{\tau }}{\lambda ^{2}}%
+\left\Vert \mathbf{G}\right\Vert _{2,\omega }\left\vert W\right\vert
^{2}+\left\Vert \mathbf{G}\right\Vert _{2,\omega }\left\vert W\right\vert
^{2}\omega \left( \lambda \left\vert W\right\vert \right) \right] \right) \\
&&+O\left( \left[ \frac{\sqrt{\tau }}{\lambda ^{2}}+\left\Vert \mathbf{G}%
\right\Vert _{2,\omega }\left\vert W\right\vert ^{2}+\left\Vert \mathbf{G}%
\right\Vert _{2,\omega }\left\vert W\right\vert ^{2}\omega \left( \lambda
\left\vert W\right\vert \right) \right] ^{2}\right) ,
\end{eqnarray*}%
and hence%
\begin{eqnarray*}
\sum_{\ell =1}^{\nu }\left[ \frac{S_{\ell }\left( W\right) }{\lambda }\right]
^{2} &\leq &C\left\vert W\right\vert ^{4}+\lambda ^{-4}\tau +C\left[ \frac{%
\sqrt{\tau }}{\lambda ^{2}}+\left\Vert \mathbf{G}\right\Vert _{2,\omega
}\left\vert W\right\vert ^{2}+\left\Vert \mathbf{G}\right\Vert _{2,\omega
}\left\vert W\right\vert ^{2}\omega \left( \lambda \left\vert W\right\vert
\right) \right] ^{2}; \\
\text{i.e. }\sum_{\ell =1}^{\nu }\left[ \frac{S_{\ell }\left( W\right) }{%
\left\vert W\right\vert \lambda }\right] ^{2} &\leq &C\left\vert
W\right\vert ^{2}+\frac{\tau }{\left\vert W\right\vert ^{2}\lambda ^{4}}+C%
\left[ \frac{\sqrt{\tau }}{\left\vert W\right\vert \lambda ^{2}}+\left\Vert 
\mathbf{G}\right\Vert _{2,\omega }\left\vert W\right\vert +\left\Vert 
\mathbf{G}\right\Vert _{2,\omega }\left\vert W\right\vert \omega \left(
\lambda \left\vert W\right\vert \right) \right] ^{2} \\
&\leq &C\frac{\tau }{\left\vert W\right\vert ^{2}\lambda ^{4}}+C\left\Vert 
\mathbf{G}\right\Vert _{2,\omega }^{2}\left\vert W\right\vert ^{2},\ \ \ \ \ 
\text{provided }\lambda \left\vert W\right\vert \text{ remains bounded}.
\end{eqnarray*}%
But now we note that 
\begin{equation*}
\left\Vert \frac{S_{\ell }\left( W\right) }{\left\vert W\right\vert }%
\right\Vert _{\infty }=\left\Vert \sum_{\left\vert \alpha \right\vert
=1}a_{\ell ,\alpha }\left( \frac{W}{\left\vert W\right\vert }\right)
^{\alpha }\right\Vert _{\infty }\approx \sum_{\left\vert \alpha \right\vert
=1}\left\vert a_{\ell ,\alpha }\right\vert ,
\end{equation*}%
and so we conclude that%
\begin{equation*}
\sum_{\left\vert \alpha \right\vert =1}\left\vert a_{\ell ,\alpha
}\right\vert \leq C\left( \frac{\tau }{\left\vert W\right\vert ^{2}\lambda
^{2}}+\left\Vert \mathbf{G}\right\Vert _{2,\omega }^{2}\left\vert
W\right\vert ^{2}\lambda ^{2}\right) ^{\frac{1}{2}}\leq 2C\left\Vert \mathbf{%
G}\right\Vert _{2,\omega }^{\frac{1}{2}}\tau ^{\frac{1}{4}}
\end{equation*}%
if we choose $\lambda =\frac{\sqrt[4]{\tau }}{\sqrt{\mathfrak{N}}\left\vert
W\right\vert }$, and thus 
\begin{equation*}
\left\Vert S_{\ell }\left( W\right) \right\vert \leq C\left\Vert \mathbf{G}%
\right\Vert _{2,\omega }^{\frac{1}{2}}\tau ^{\frac{1}{4}}|W|.
\end{equation*}

Using this together with (\ref{terms_est}) in (\ref{h-def}) we obtain 
\begin{equation*}
\left\vert h\left( W\right) \right\vert \leq \left\vert h_{1}\left( W\right)
\right\vert +\left\vert h_{2}\left( W\right) \right\vert \leq C\left\Vert 
\mathbf{G}\right\Vert _{2,\omega }^{2}\omega \left( \left\vert W\right\vert
\right) \left\vert W\right\vert ^{2}\left( \frac{\sqrt{\tau }}{\left\Vert 
\mathbf{G}\right\Vert _{2,\omega }}+\frac{\sqrt[4]{\tau }}{\sqrt{\left\Vert 
\mathbf{G}\right\Vert _{2,\omega }}}|W|+\left\vert W\right\vert ^{2}\right) .
\end{equation*}%
If $|W|=\frac{\sqrt[4]{\tau }}{\sqrt{\mathfrak{N}}}$ we have 
\begin{equation*}
\left\vert h\left( W\right) \right\vert \leq C\left\Vert \mathbf{G}%
\right\Vert _{2,\omega }^{2}\omega \left( \left\vert W\right\vert \right)
\left\vert W\right\vert ^{4},
\end{equation*}%
and from (\ref{conclude'}) we obtain 
\begin{equation*}
L\left( \frac{W}{\left\vert W\right\vert }\right) -\sum_{\ell =1}^{\nu
}Q_{\ell }\left( \frac{W}{\left\vert W\right\vert }\right) ^{2}=\left\vert 
\frac{L\left( W\right) -\sum_{\ell =1}^{\nu }Q_{\ell }\left( W\right) ^{2}}{%
\left\vert W\right\vert ^{4}}\right\vert \leq \frac{\left\vert h\left(
W\right) \right\vert }{\left\vert W\right\vert ^{4}}\leq C\left\Vert \mathbf{%
G}\right\Vert _{2,\omega }^{2}\omega \left( \left\vert W\right\vert \right) ,
\end{equation*}%
if $|W|=\frac{\sqrt[4]{\tau }}{\sqrt{\mathfrak{N}}}$. Using (\ref{def delta}%
) and (\ref{R bound}) we thus have the following estimate 
\begin{equation*}
\delta _{\nu }\leq C\left\Vert \mathbf{G}\right\Vert _{2,\omega }^{2}\omega
\left( \frac{\sqrt[4]{\tau }}{\sqrt{\left\Vert \mathbf{G}\right\Vert
_{2,\omega }}}\right) ,\ \ \ \ \ \text{for }C_{0}\geq C\left\Vert \mathbf{G}%
\right\Vert _{2,\omega }\ ,
\end{equation*}%
where $C_{0}$ is the constant defined in (\ref{R bound}). In the special
case $\omega (r)=r^{\beta }$ we have 
\begin{equation*}
\delta _{\nu }\leq C\left\Vert \mathbf{G}\right\Vert _{2,\omega }^{2-\frac{%
\beta }{2}}\tau ^{\frac{\beta }{4}},
\end{equation*}%
or equivalently 
\begin{equation*}
\left\Vert \mathbf{G}\right\Vert _{2,\omega }\geq \left( \frac{\delta _{\nu }%
}{C}\right) ^{\frac{2}{4-\beta }}\left( \frac{1}{\tau }\right) ^{\frac{\beta 
}{8-2\beta }},\ \ \ \ \ \text{provided }\left\Vert \mathbf{G}\right\Vert
_{2,\omega }\leq \frac{C_{0}}{C}.
\end{equation*}%
Altogether we have obtained thus far the crucial lower bound%
\begin{equation}
\mathcal{C}_{2,\omega _{\beta }}^{\nu }\left( \tau \right) \geq \left( \frac{%
\delta _{\nu }}{C}\right) ^{\frac{2}{4-\beta }}\tau ^{-\frac{\beta }{%
8-2\beta }}.  \label{crucial low}
\end{equation}%
The next lemma finishes the proof of part 2(a) of Theorem \ref{1/4 and
epsilon_0}.

\begin{lemma}
\label{failure}Suppose $0<\beta <1$ and let $f_{\varphi ,\psi }\left(
W,t\right) $ be as in (\ref{def f}). If 
\begin{equation}
\limsup_{t\rightarrow 0}\frac{\psi \left( t\right) }{\varphi \left( t\right)
^{\frac{4}{\beta }}t^{\frac{16}{\beta }}}=0,  \label{contra}
\end{equation}%
then $f_{\varphi ,\psi }$ fails to satisfy $\mathcal{SOS}_{2,\omega _{\beta
}}^{\nu }$ for any $\nu \in \mathbb{N}$. Note in particular we may even take
both $\varphi $ and $\psi $ to be nearly monotone functions on $\left(
-1,1\right) $.
\end{lemma}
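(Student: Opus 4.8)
The plan is to argue by contradiction, turning a putative decomposition $f_{\varphi ,\psi }=\sum_{\ell =1}^{\nu }G_{\ell }^{2}$ with $G_{\ell }\in C^{2,\omega _{\beta }}\!\big(B_{\mathbb{R}^{4}}(0,1)\times (-1,1)\big)$ into a too-economical $C^{2,\omega _{\beta }}$ sum of squares of $L(V)+\tau $ for small $\tau $, which will violate the lower bound (\ref{crucial low}). First I would run the rescaling from the argument preceding Remark~\ref{nu dependence}, but now keeping track of norms. Since $\lambda (r)=r$, the term $\eta (t,r)=\varphi (r)h_{\rho }(t/r)$ vanishes for $r\le |t|$, so on $|W|\le |t|$ one has $\varphi (t)L(W)+\psi (t)=\sum_{\ell }G_{\ell }(W,t)^{2}$; substituting $W=tV$ with $|V|\le 1$ and dividing by $\varphi (t)t^{4}$ gives
\begin{equation*}
L(V)+\tau (t)=\sum_{\ell =1}^{\nu }\tilde H_{\ell }^{t}(V)^{2},\qquad
\tilde H_{\ell }^{t}(V)\equiv \frac{G_{\ell }(tV,t)}{\sqrt{\varphi (t)}\,t^{2}},\qquad
\tau (t)\equiv \frac{\psi (t)}{\varphi (t)\,t^{4}},
\end{equation*}
valid for all small $t>0$; by restriction (1) we have $\tau (t)\to 0$ as $t\searrow 0$.

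The next step is the norm bookkeeping. For $t<1$ the substitution $V\mapsto G_{\ell }(tV,t)$ cannot increase the $C^{2,\omega _{\beta }}\!\big(B_{\mathbb{R}^{4}}(0,1)\big)$ norm, since each $V$-derivative produces a factor $t\le 1$ and the second-order H\"{o}lder difference quotient produces a further factor $t^{\beta }\le 1$; dividing by the constant $\sqrt{\varphi (t)}\,t^{2}<1$ then multiplies the whole norm by $(\sqrt{\varphi (t)}\,t^{2})^{-1}$. Hence $\sum_{\ell =1}^{\nu }\|\tilde H_{\ell }^{t}\|_{C^{2,\omega _{\beta }}}\le \mathfrak{N}_{\nu }(\sqrt{\varphi (t)}\,t^{2})^{-1}$ with $\mathfrak{N}_{\nu }\equiv \sum_{\ell }\|G_{\ell }\|_{C^{2,\omega _{\beta }}}<\infty $ a fixed finite constant. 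Since the $\tilde H_{\ell }^{t}$ form an admissible family in the definition of $\mathcal{C}_{2,\omega _{\beta }}^{\nu }$, this yields $\mathcal{C}_{2,\omega _{\beta }}^{\nu }(\tau (t))\le \mathfrak{N}_{\nu }(\sqrt{\varphi (t)}\,t^{2})^{-1}$ for $0<t<1$.

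I would then invoke (\ref{crucial low}), $\mathcal{C}_{2,\omega _{\beta }}^{\nu }(\tau )\ge (\delta _{\nu }/C)^{2/(4-\beta )}\tau ^{-\beta /(8-2\beta )}$ (recall $\delta _{\nu }>0$ because $L$ is not a sum of squares of quadratic forms), applied at $\tau =\tau (t)$ for $0<t<1$, to obtain
\begin{equation*}
\Big(\frac{\delta _{\nu }}{C}\Big)^{\!\frac{2}{4-\beta }}\frac{1}{\mathfrak{N}_{\nu }}
\ \le\ \tau (t)^{\frac{\beta }{8-2\beta }}\,\frac{1}{\sqrt{\varphi (t)}\,t^{2}}
\ =\ \Big(\frac{\psi (t)}{\varphi (t)^{4/\beta }\,t^{16/\beta }}\Big)^{\!\frac{\beta }{8-2\beta }},
\end{equation*}
the last equality being the elementary exponent identity: with $p=\beta /(8-2\beta )$ one checks $1+\tfrac{1}{2p}=\tfrac{4}{\beta }$ and $4+\tfrac{2}{p}=\tfrac{16}{\beta }$, so $\psi ^{p}\varphi ^{-(p+1/2)}t^{-(4p+2)}=(\psi \,\varphi ^{-4/\beta }t^{-16/\beta })^{p}$. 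By hypothesis (\ref{contra}) the right-hand side tends to $0$ as $t\searrow 0$, contradicting the fixed positive constant on the left; hence no such $G_{\ell }$ exist and $f_{\varphi ,\psi }$ fails $\mathcal{SOS}_{2,\omega _{\beta }}^{\nu }$ for every $\nu $. For the closing remark, note that $t\mapsto \varphi (t)^{4/\beta }t^{16/\beta }$ is flat at the origin whenever $\varphi $ is, so one may take $\varphi (t)=e^{-1/t^{2}}$ (increasing, flat, bounded by $1$, hence nearly monotone) and let $\psi $ be a smoothed flat increasing function of size $\varphi (t)^{8/\beta }t^{32/\beta }=o\big(\varphi (t)^{4/\beta }t^{16/\beta }\big)$, which also satisfies restriction (1); then both $\varphi $ and $\psi $ are nearly monotone on $(-1,1)$. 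The only genuine work in filling this in is the uniform $C^{2,\omega _{\beta }}$ control of $\tilde H_{\ell }^{t}$ under the anisotropic rescaling $W\mapsto tW$ and the exponent matching; the substantive estimate (\ref{crucial low}) is already in hand.
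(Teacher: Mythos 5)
Your argument is correct and follows the paper's proof essentially verbatim: restrict to the region $r\leq t$ where the $\eta$-term vanishes, rescale $W=tV$, track the $C^{2,\omega_\beta}$ norms of $G_\ell(tV,t)/(\sqrt{\varphi(t)}\,t^2)$ (which bounds $\mathcal{C}^\nu_{2,\omega_\beta}(\tau(t))$ from above by $\mathfrak{N}_\nu(\sqrt{\varphi(t)}\,t^2)^{-1}$), and then play this against the lower bound (\ref{crucial low}) via the same exponent identity to force $\delta_\nu\le 0$. The only cosmetic difference is that you build the constant $\sqrt{\varphi(t)}\,t^2$ into the definition of the rescaled functions rather than dividing inside the norm as the paper does.
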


\begin{proof}
Assume, in order to derive a contradiction, that $f_{\varphi ,\psi }\left(
W,t\right) $ has the property $\mathcal{SOS}_{2,\omega _{\beta }}^{\nu }$
for some $\nu \in \mathbb{N}$, i.e. $f_{\varphi ,\psi }=\sum_{\ell =1}^{\nu
}G_{\ell }^{2}$ where $G_{\ell }\in C^{2,\omega }\left( \Omega \right) $,
i.e.%
\begin{eqnarray*}
&&\varphi \left( t\right) L\left( w,x,y,z,t\right) +\left[ \psi \left(
t\right) +\varphi \left( r\right) h\left( \frac{t}{r}\right) \right]
=\sum_{\ell =1}^{\nu }G_{\ell }\left( x,y,z,t\right) ^{2}, \\
&&\text{for }\left( x,y,z,t\right) \in \Omega =B_{\mathbb{R}^{3}}\left(
0,1\right) \times \left( -1,1\right) .
\end{eqnarray*}%
Then since $h\left( \frac{t}{r}\right) $ vanishes for $r\leq \left\vert
t\right\vert $, we have with $W\equiv \left( w,x,y,z\right) $, and without
loss of generality $t>0$, that%
\begin{equation*}
\varphi \left( t\right) L\left( W\right) +\psi \left( t\right) =\sum_{\ell
=1}^{\nu }G_{\ell }\left( W,t\right) ^{2},\ \ \ \ \ \text{for }r\leq t,
\end{equation*}%
and replacing $W$ by $tW$ we have, 
\begin{eqnarray*}
\varphi \left( t\right) L\left( tW\right) +\psi \left( t\right)
&=&\sum_{\ell =1}^{\nu }G_{\ell }\left( tW,t\right) ^{2}, \\
\text{for }\left\vert W\right\vert &\leq &1,t\in \left( 0,1\right) .
\end{eqnarray*}%
Multiplying by $\frac{1}{\varphi \left( t\right) t^{2}}$, and using that $L$
is homogeneous of degree four, we obtain%
\begin{eqnarray*}
\mathbf{L}\left( W\right) +\frac{\psi \left( t\right) }{\varphi \left(
t\right) t^{4}} &=&\sum_{\ell =1}^{\nu }\left( \frac{G_{\ell }\left(
tW,t\right) }{\sqrt{\varphi \left( t\right) }t^{2}}\right) ^{2}, \\
\text{for }\left\vert W\right\vert &\leq &1,t\in \left( 0,1\right) .
\end{eqnarray*}

Since $G_{\ell }\in C^{2,\omega }\left( B_{\mathbb{R}^{4}}\left( 0,1\right)
\times \left( -1,1\right) \right) $, the functions $W\rightarrow G_{\ell
}\left( W,t\right) $ lie in a bounded set in $C^{2,\omega }\left( B_{\mathbb{%
R}^{4}}\left( 0,1\right) \right) $ independent of $t$ and $j$, and hence
also the collection of functions%
\begin{equation*}
H_{\ell }^{t}\left( W\right) \equiv G_{\ell }\left( tW,t\right) ,\ \ \ \ \
1\leq \ell \leq \nu ,t\in \left( 0,1\right) ,
\end{equation*}%
is bounded in $C^{2,\omega }\left( B_{\mathbb{R}^{4}}\left( 0,1\right)
\right) $, say 
\begin{equation}
\sum_{\ell =1}^{\nu }\left\Vert H_{\ell }^{t}\right\Vert _{C^{2,\omega
}\left( B_{\mathbb{R}^{4}}\left( 0,1\right) \right) }\leq \mathfrak{N}_{\nu
},\ \ \ \ \ t\in \left( 0,1\right) .  \label{say bound}
\end{equation}%
Thus with $\tau =\tau \left( t\right) \equiv \frac{\psi \left( t\right) }{%
\varphi \left( t\right) t^{4}}$, we have from (\ref{say bound}) and (\ref%
{crucial low}) that%
\begin{eqnarray*}
&&\frac{\mathfrak{N}_{\nu }}{\sqrt{\varphi \left( t\right) t^{4}}}\geq
\sum_{\ell =1}^{\nu }\left\Vert \frac{H_{\ell }^{t}}{\sqrt{\varphi \left(
t\right) }t^{2}}\right\Vert _{C^{2,\omega }\left( B_{\mathbb{R}^{4}}\left(
0,1\right) \right) }\geq \mathcal{C}_{2,\omega }^{\nu }\left( \tau \left(
t\right) \right) \\
&\geq &\left( \frac{\delta _{\nu }}{C}\right) ^{\frac{2}{4-\beta }}\tau
\left( t\right) ^{-\frac{\beta }{8-2\beta }}=\left( \frac{\delta _{\nu }}{C}%
\right) ^{\frac{2}{4-\beta }}\left( \frac{\psi \left( t\right) }{\varphi
\left( t\right) t^{4}}\right) ^{-\frac{\beta }{8-2\beta }},
\end{eqnarray*}%
and hence%
\begin{equation*}
\left( \frac{\delta _{\nu }}{C}\right) ^{\frac{2}{4-\beta }}\leq
\liminf_{t\rightarrow 0}\frac{\mathfrak{N}_{\nu }}{\sqrt{\varphi \left(
t\right) t^{4}}}\left( \frac{\psi \left( t\right) }{\varphi \left( t\right)
t^{4}}\right) ^{\frac{\beta }{8-2\beta }}=\mathfrak{N}_{\nu
}\liminf_{t\rightarrow 0}\left( \frac{\psi \left( t\right) }{\varphi \left(
t\right) ^{\frac{4}{\beta }}t^{\frac{16}{\beta }}}\right) ^{\frac{\beta }{%
8-2\beta }},
\end{equation*}%
contradicting (\ref{contra}) as required. This completes the proof of Lemma %
\ref{failure}.
\end{proof}

\subsubsection{Proof of part (2)(b)}

Choose $s<s^{\prime }<\beta $. If we set $\psi \left( t\right) =\varphi
\left( \frac{t}{2}\right) ^{\frac{1}{s^{\prime }}}t^{\frac{4}{s^{\prime }}}$%
, then $f$ is $\omega _{s}$-monotone by part (1), and we have%
\begin{equation*}
\lim_{t\searrow 0}\frac{\varphi \left( t\right) ^{\frac{4}{\beta }}t^{\frac{%
16}{\beta }}}{\varphi \left( \frac{t}{2}\right) ^{\frac{1}{s^{\prime }}}t^{%
\frac{4}{s^{\prime }}}}=\lim_{t\searrow 0}\frac{e^{-\frac{1}{t^{2}}\frac{4}{%
\beta }}t^{\frac{16}{\beta }}}{e^{-\frac{1}{t^{2}}\frac{4}{s^{\prime }}}t^{%
\frac{4}{s^{\prime }}}}=\lim_{t\searrow 0}e^{\frac{4}{t^{2}}\left( \frac{1}{%
s^{\prime }}-\frac{1}{\beta }\right) }t^{4\left( \frac{4}{\beta }-\frac{1}{%
s^{\prime }}\right) }=\infty
\end{equation*}%
since $\beta >s^{\prime }$, and hence by part (2)(b), we cannot write $f$ as
a finite sum of squares of $C^{2,\beta }$ functions.

This completes the proof of part (2), and hence that of Theorem \ref{1/4 and
epsilon_0}.

\subsection{Extension to general moduli of continuity and proof of Theorem 
\protect\ref{log counter'}}

We first note that part (1) of Theorem \ref{log counter'} is implied by
Theorem \ref{1/4 and epsilon_0}.\ To prove part (2) let 
\begin{equation*}
\psi \left( t\right) \equiv \omega ^{-1}\left( \varphi \left( t\right)
\right) ,
\end{equation*}%
so that%
\begin{equation*}
f_{\varphi ,\psi ,h_{\rho }}\left( W,t\right) \approx \varphi \left(
t\right) r^{4}+\psi \left( t\right) +\varphi \left( r\right) h\left( \frac{t%
}{r}\right) \lesssim \varphi \left( r\right) .
\end{equation*}%
Then since $\omega ^{-1}$ vanishes to infinite order at the origin, we have 
\begin{equation*}
\lim_{t\searrow 0}\frac{\varphi \left( t\right) ^{\frac{4}{\beta }}t^{\frac{%
16}{\beta }}}{\psi \left( t\right) }=\lim_{t\searrow 0}\frac{\varphi \left(
t\right) ^{\frac{4}{\beta }}t^{\frac{16}{\beta }}}{\omega ^{-1}\left(
\varphi \left( t\right) \right) }\geq c_{N}\lim_{t\searrow 0}\frac{\varphi
\left( t\right) ^{\frac{4}{\beta }}t^{\frac{16}{\beta }}}{\varphi \left(
t\right) ^{N}}=c_{N}\lim_{t\searrow 0}\frac{e^{-\frac{1}{t^{2}}\frac{4}{%
\beta }}t^{\frac{16}{\beta }}}{e^{-\frac{1}{t^{2}}N}}=\infty
\end{equation*}%
for $N>\frac{4}{\beta }$, Part (2)(a) of Theorem \ref{1/4 and epsilon_0}
shows that $f_{\varphi ,\psi _{\rho },h_{\rho }}$ cannot be written as a
finite sum of squares of $C^{2,\beta }$ functions. On the other hand using $%
\delta <1/2$ we have for $N>\max \{2,\left( \gamma _{\rho }+\delta \right)
^{2}\}$ 
\begin{eqnarray*}
\mathcal{R}_{\left( \varphi ,\psi \right) }^{\omega }\left( 1+\delta \right)
&\equiv &\sup_{0<t\leq 1}\frac{\psi \left( t\right) }{\varphi \left(
t\right) }\frac{\varphi \left( (1+\delta )t\right) }{\omega \left( \psi
\left( t\right) \right) }\lesssim \sup_{0<t\leq 1}\frac{\varphi \left(
t\right) ^{N}\varphi \left( (1+\delta )t\right) }{\varphi \left( t\right)
^{2}}=\sup_{0<t<1}\frac{e^{-\frac{N}{t^{2}}-\frac{1}{(1+\delta )^{2}t^{2}}}}{%
e^{-\frac{2}{t^{2}}}}\leq 1, \\
\mathcal{T}_{\varphi }^{\omega }\left( \gamma _{\rho }+\delta \right)
&\equiv &\sup_{0<t\leq 1}\frac{\varphi \left( \left( \gamma _{\rho }+\delta
\right) t\right) t^{4}}{\omega \left( \varphi \left( t\right) t^{4}\right) }%
\lesssim \sup_{0<t\leq 1}\frac{\varphi \left( \left( \gamma _{\rho }+\delta
\right) t\right) }{\varphi \left( t\right) ^{\frac{1}{N}}}=\sup_{0<t<1}\frac{%
e^{-\frac{1}{\left( \gamma _{\rho }+\delta \right) ^{2}t^{2}}}}{e^{-\frac{1}{%
Nt^{2}}}}\leq 1, \\
\mathcal{S}_{\left( \varphi ,\psi \right) }^{\omega }\left( \frac{1}{2}%
+\delta \right) &\equiv &\sup_{0<t\leq 1}\frac{\varphi \left( \frac{t}{2}%
+\delta t\right) t^{4}}{\omega \left( \psi \left( t\right) \right) }\lesssim
\sup_{0<t\leq 1}\frac{\varphi \left( \frac{t}{2}+\delta t\right) }{\varphi
\left( t\right) }=\sup_{0<t<1}\frac{e^{-\frac{1}{\left( (1/2+\delta
)t\right) ^{2}}}}{e^{-\frac{1}{t^{2}}}}\leq 1.
\end{eqnarray*}

Thus, Part (1) of Theorem \ref{1/4 and epsilon_0} shows that $f_{\varphi
,\psi,h_{\rho }}$ is $\omega $-monotone, which completes the proof of
Theorem \ref{log counter'}.

We end the paper by collecting the previous results into a somewhat sharp
theorem in all dimensions, which can be summed up as \emph{roughly saying}
that an elliptical flat smooth function can be written as a finite sum of
squares of regular functions `\emph{if and only}' if it is H\"{o}lder
monotone.

\begin{theorem}
\label{power weakly mon SOS}Suppose that $f$ is elliptical flat smooth and H%
\"{o}lder monotone, i.e. $\omega _{s}$-monotone on $\mathbb{R}^{n}$ for some 
$0<s<1$ and $n\geq 1$. Then there is $\delta >0$ such that $f$ is a finite
sum of squares of $C^{2,\delta }$ functions. Conversely, for every modulus
of continuity $\omega $ satisfying $\omega _{s}\ll \omega $ for all $0<s<1$,
there is an elliptical flat smooth $\omega $-monotone function $f$ on $%
\mathbb{R}^{5}$ that \emph{cannot} be written as a finite sum of squares of $%
C^{2,\delta }$ functions for any $0<\delta <1$.
\end{theorem}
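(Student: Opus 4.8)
The plan is to obtain both assertions as immediate consequences of results already proved, assembling Corollary~\ref{efs eps cor} (equivalently Theorem~\ref{efs eps} together with Theorem~\ref{s'^m}) for the first statement, and Theorem~\ref{1/4 and epsilon_0}, i.e.\ the proof of Theorem~\ref{log counter'}, for the converse.

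For the forward implication, suppose $f$ is elliptical, flat, smooth and $\omega _{s}$-monotone for some $0<s<1$. First I would record the elementary fact that $\omega _{s}$-monotonicity implies $\omega _{s^{\prime }}$-monotonicity for every $0<s^{\prime }<s$: since $\omega _{s^{\prime }}\gg \omega _{s}$ on $\left( 0,1\right) $ and $f$ is flat, hence $\leq 1$ near the origin, the inequality $f(y)\leq C\omega _{s}(f(x))\leq C^{\prime }\omega _{s^{\prime }}(f(x))$ holds on a small ball. Thus we may shrink $s$ so that $0<s<\tfrac{1}{\sqrt[4]{5}}$. Since $f$ is smooth it lies in $C^{4,2\delta \left( s\right) }$ with $\delta \left( s\right) =\tfrac{2s^{4}}{1-s^{4}}\in \left( 0,\tfrac{1}{2}\right) $, so Corollary~\ref{efs eps cor} applies verbatim: choosing any $0<t<s$, the function $f$ is a finite sum of squares of $C^{2,\delta \left( t\right) _{n-1}}$ functions, where $\delta \left( t\right) _{n-1}>0$ is produced by the recursion~(\ref{recurse}) with $\delta _{0}=\delta \left( t\right) $ and $\eta =t^{2}$. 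Taking $\delta :=\delta \left( t\right) _{n-1}>0$ completes this half. (One could equally invoke Theorem~\ref{efs eps}(2): after further shrinking $s$, Theorem~\ref{s'^m} gives the differential inequalities~(\ref{diff prov}) with parameters $\delta ,\eta $ satisfying~(\ref{eps delta}).)

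For the converse, fix a modulus of continuity $\omega $ with $\omega _{s}\ll \omega $ for all $0<s<1$. The one observation to make is that this forces $\omega ^{-1}$ to vanish to infinite order at the origin: from $t^{s}\ll \omega \left( t\right) $ one gets, substituting $u=t^{s}$, the bound $\omega ^{-1}(u)\lesssim u^{1/s}$ for every $s<1$, hence $\omega ^{-1}(u)\lesssim u^{N}$ for every $N\in \mathbb{N}$. Take $\varphi \left( t\right) =e^{-1/t^{2}}$, set $\psi \left( t\right) \equiv \omega ^{-1}\left( \varphi \left( t\right) \right) $, and form the function $f=f_{\varphi ,\psi ,h_{\rho }}$ on $B_{\mathbb{R}^{4}}\left( 0,1\right) \times \left( -1,1\right) \subset \mathbb{R}^{5}$ as in~(\ref{def f'}), with any admissible $0<\rho <1$; this $f$ is elliptical, flat and smooth, and extends trivially to a neighborhood in $\mathbb{R}^{5}$. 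One then checks, exactly as in the final section, that (i) $\psi \left( t\right) =o\left( \varphi \left( t\right) t^{4}\right) $ and the remaining ``Further restrictions'' hold for this choice, both being immediate from $\omega ^{-1}$ vanishing to infinite order; (ii) the upper bound in~(\ref{mon equiv}), together with the estimates $\mathcal{R}_{\left( \varphi ,\psi \right) }^{\omega }\left( 1+\delta \right) ,\ \mathcal{S}_{\left( \varphi ,\psi \right) }^{\omega }\left( \tfrac{1}{2}+\delta \right) ,\ \mathcal{T}_{\varphi }^{\omega }\left( \gamma _{\rho }+\delta \right) \lesssim 1$ (each of which reduces to comparing Gaussian-type exponentials), shows $f$ is $\omega $-monotone by Theorem~\ref{1/4 and epsilon_0}(1); and (iii) since $\psi =\omega ^{-1}\circ \varphi $ vanishes faster than any power of $\varphi \left( t\right) $, we have $\lim_{t\searrow 0}\varphi \left( t\right) ^{4/\beta }t^{16/\beta }/\psi \left( t\right) =\infty $ for every $\beta >0$, so Lemma~\ref{failure} (Theorem~\ref{1/4 and epsilon_0}(2)(a)) shows $f$ cannot be written as a finite sum of squares of $C^{2,\beta }$ functions, for any $0<\beta <1$.

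Because everything needed is already established, there is no substantive obstacle here; the only points demanding a little care are the two reductions at the start of the forward direction---legitimately shrinking $s$ below $\tfrac{1}{\sqrt[4]{5}}$ and matching it to the parameters $\delta ,\eta $ driving the recursion~(\ref{recurse})---and, on the converse side, confirming that the specific pair $\varphi \left( t\right) =e^{-1/t^{2}}$, $\psi =\omega ^{-1}\circ \varphi $ meets the admissibility hypotheses of Theorem~\ref{1/4 and epsilon_0}. Both amount to routine verifications already carried out in the preceding subsections.
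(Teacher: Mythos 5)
Your proposal is correct and takes essentially the same route as the paper: the paper's own proof simply cites Theorem \ref{efs eps} for the forward direction and part (2) of Theorem \ref{log counter'} for the converse, and your unpacking of those citations (shrinking $s$ so that Corollary \ref{efs eps cor} applies, and reproducing the $\psi=\omega^{-1}\circ\varphi$ construction together with the verification that $\omega^{-1}$ vanishes to infinite order) is exactly what the earlier proofs of those two results contain.
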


\begin{proof}
The first assertion is a consequence of Theorem \ref{efs eps}, while the
converse assertion was proved in part (2) of Theorem \ref{log counter'}.
\end{proof}

\end{document}